\documentclass[12pt]{amsart}
\DeclareRobustCommand{\gobblefive}[5]{}
\newcommand*{\SkipTocEntry}{\addtocontents{toc}{\gobblefive}}
\usepackage[margin=3.3cm]{geometry}
\usepackage{amscd,amsmath,amsfonts,amssymb,latexsym}
\usepackage{hyperref,graphicx}
\usepackage{xcolor}
\hypersetup{
    colorlinks=true,
    citecolor=blue,
    linkcolor=blue,
    urlcolor=blue,
}
\usepackage{tikz}
\usepackage[all]{xy}
\usepackage{slashed}
\usepackage{soul}
\usepackage{comment}
\usepackage{extarrows}
\usepackage{mathtools}
\usepackage{url}
\usepackage{lipsum}


\numberwithin{equation}{section}
\theoremstyle{plain}
\newtheorem{lemma}{Lemma}[section]
\newtheorem{proposition}[lemma]{Proposition}

\newtheorem{theorem}[lemma]{Theorem}
\newtheorem{corollary}[lemma]{Corollary}

\theoremstyle{definition}
\newtheorem{definition}[lemma]{Definition}
\newtheorem{remark}[lemma]{Remark}

\let\C\relax
\newcommand{\C}{{\mathbb C}}
\newcommand{\R}{{\mathbb R}}
\newcommand{\Z}{{\mathbb Z}} 

\newcommand{\Hh}{{\mathcal H}}
\newcommand{\Ll}{{\mathcal L}}
\newcommand{\Mm}{{\mathcal M}}

\newcommand{\id}{{\rm id}}
\newcommand{\Om}{{\Omega}}
\newcommand{\om}{{\omega}}

\newcommand{\la}{\langle}
\newcommand{\ra}{\rangle}
\newcommand{\p}{{\partial}}
\newcommand{\bp}{{\bar{\partial}}}
\newcommand{\vol}{\mbox{\rm vol}}
\newcommand{\rmspan}{\mbox{\rm span}}

\newcommand{\Sp}{{\text{\rm Spin}(7)}}

\renewcommand{\Im}{{ \rm Im \,}}

\newcommand{\Aa}{{\mathcal A}}

\newcommand{\Ff}{{\mathcal F}}
 
\newcommand{\Gg}{{\mathcal G}}
\newcommand{\Kk}{{\mathcal K}}

\newcommand{\eps}{{\varepsilon}}

\newcommand{\lb}{{[\![}}
\newcommand{\rb}{{]\!]}}

\newcommand{\FF}{{E}}

\renewcommand{\i}{{\sqrt{-1}}}
\renewcommand{\l}{{\ell}}

\newcommand{\n}{{\nabla}}

\begin{document}


\title
{Mirror of volume functionals on manifolds with special holonomy}

\author{Kotaro Kawai}
\address{Department of Mathematics, Faculty of Science, Gakushuin University, 1-5-1 Mejiro, Toshima-ku, Tokyo 171-8588, Japan}
\email{kkawai@math.gakushuin.ac.jp}

\author{Hikaru Yamamoto}
\address{Department of Mathematics, Faculty of Pure and Applied Science, University of Tsukuba, 1-1-1 Tennodai, Tsukuba, Ibaraki 305-8577, Japan}
\email{hyamamoto@math.tsukuba.ac.jp}

\thanks{The first named author is supported by 
JSPS KAKENHI Grant Number JP17K14181, 
and the second named author is supported by JSPS KAKENHI Grant Number 
JP18K13415 and Osaka City University Advanced Mathematical Institute (MEXT Joint Usage/Research Center on Mathematics and Theoretical Physics)}
\begin{abstract}
We can define the ``volume'' $V$ for Hermitian connections 
on a Hermitian complex line bundle over a Riemannian manifold $X$, which can be considered to be 
the ``mirror'' of the standard volume for submanifolds. 
This is called the Dirac-Born-Infeld (DBI) action in physics. 

In this paper, 
(1) we introduce the negative gradient flow of $V$, which we call the line bundle mean curvature flow. 
Then, we show the short-time existence and uniqueness of this flow. 
When $X$ is K\"ahler, we relate the negative gradient of $V$ to the angle function and 
deduce the mean curvature for Hermitian metrics on a holomorphic line bundle defined by Jacob and Yau.

(2) We relate the functional $V$ to 
a deformed Hermitian Yang--Mills (dHYM) connection,  
a deformed Donaldson--Thomas connection for a $G_2$-manifold (a $G_2$-dDT connection), 
a deformed Donaldson--Thomas connection for a ${\rm Spin}(7)$-manifold (a ${\rm Spin}(7)$-dDT connection), 
which are considered to be the ``mirror'' of 
special Lagrangian, (co)associative and Cayley submanifolds, respectively. 
When $X$ is a compact ${\rm Spin}(7)$-manifold, 
we prove the ``mirror'' of the Cayley equality, which implies the following. 
(a) Any ${\rm Spin}(7)$-dDT connection is a global minimizer of $V$ and its value is topological. 
(b) Any ${\rm Spin}(7)$-dDT connection is flat on a flat line bundle. 
(c) If $X$ is a product of $S^1$ and a compact $G_2$-manifold $Y$, 
any ${\rm Spin}(7)$-dDT connection on the pullback of the Hermitian complex line bundle over $Y$ 
is the pullback of a $G_2$-dDT connection modulo closed 1-forms. 

We also prove analogous statements for $G_2$-manifolds 
and K\"ahler manifolds of dimension 3 or 4. 
\end{abstract}

\keywords{mirror symmetry, deformed Hermitian Yang--Mills, deformed Donaldson--Thomas, moduli space, mean curvature flow, 
special holonomy, calibrated submanifold}
\subjclass[2010]{
Primary: 53C07, 58D27, 53C44 Secondary: 53D37, 53C25, 53C38}

\maketitle

\tableofcontents
\section{Introduction}
Let $X$ be a Calabi--Yau, $G_2$- or $\Sp$-manifold. 
If $X$ is the total space of a torus fiber bundle, 
we can send geometric objects on $X$ to those on the ``mirror'' manifold via the real Fourier--Mukai transform. 
For example, 
special Lagrangian, (co)associative and Cayley submanifolds 
(more generally, cycles) 
are sent to 
deformed Hermitian Yang--Mills (dHYM) connections,  
deformed Donaldson--Thomas connections for a $G_2$-manifold ($G_2$-dDT connections), 
deformed Donaldson--Thomas connections for a $\Sp$-manifold ($\Sp$-dDT connections), respectively. 
These are defined without a torus fiber bundle structure  
and they are 
Hermitian connections on a Hermitian line bundle $L$ over $X$ defined by fully nonlinear PDEs. 
As the names indicate, 
dHYM and $G_2, \Sp$-dDT connections can also be considered as 
analogues of Hermitian Yang--Mills (HYM) connections 
and Donaldson--Thomas connections ($G_2, \Sp$-instantons), respectively. 
$$  
\xymatrix@!C{
\left\{ \mbox{ \begin{tabular}{c} calibrated \\ submanifolds \end{tabular}} \right \} 
& \left\{ \mbox{ \begin{tabular}{c} dHYM, \\ $G_2,\Sp$-dDT \\ connections \end{tabular}} \right \} 
\ar@{<->}[l]_-{mirror} 
\ar@{<->}[r]^-{analogue} 
 & \left\{ \mbox{ \begin{tabular}{c} HYM, \\ $G_2, \Sp$-\\ instantons \end{tabular}} \right \}
  }
$$

Readers may wonder why the discussion is limited to line bundles. 
The reason for the dHYM case is explained in \cite[Section 8.1]{CY}. 
For holomorphic bundles with non-abelian gauge group, 
the analogue of the dHYM equation is not known. 
Though there is a natural guess for the higher rank dHYM equation, the resulting equations are 
fully nonlinear systems, and the analytic difficulties in addressing them are formidable. 
We focus on line bundles in the dDT case for the same reason.

In this paper, we study the ``volume'' for Hermitian connections, 
which can be considered to be a ``mirror'' of the standard volume for submanifolds 
via the real Fourier--Mukai transform. 
This is called the Dirac-Born-Infeld (DBI) action in physics. 
Then, we show that dHYM, $G_2,\Sp$-dDT connections indeed have 
similar properties to calibrated submanifolds and HYM, $G_2, \Sp$-instantons 
in addition to the similarities of moduli spaces given in \cite{KY, KYSpin7}. 
More explicitly, we mainly study the following two topics.

\subsection{The ``mirror'' of the mean curvature flow}
Let $(X,g)$ be a compact oriented $n$-dimensional Riemannian manifold 
and $L \to X$ be a smooth complex line bundle with a Hermitian metric $h$.
Let $\Aa_0$ be the space of Hermitian connections on $(L,h)$. 
We regard the curvature 2-form $F_\n$ of $\n \in \Aa_0$ as a $\i \R$-valued closed 2-form on $X$. 
Define the \emph{volume functional} $V: \Aa_0 \rightarrow \R$ by 
\[
V(\n) = \int_X \sqrt{\det (\id_{TX} + (-\i F_\n)^\sharp)} \ \vol_g, 
\]
where 
$(-\sqrt{-1}F_\n)^\sharp \in \Gamma (X, \mathop{\mathrm{End}} TX)$ is defined by $u\mapsto \left(-\sqrt{-1}i(u)F_{\nabla}\right)^{\sharp}$ and $\vol_g$ is the volume form defined by the Riemannian metric $g$. 
Note that $\det (\id_{TX} + (-\i F_\n)^\sharp) \geq 1$ 
since $(-\i F_\n)^\sharp$ is skew-symmetric. See Lemma \ref{lem:det} for details. 

As the negative gradient of $V$, we define the mean curvature $H(\n) \in \Om^1$ for $\n \in \Aa_0$, 
which is described explicitly in \eqref{eq:MC}. 
Then, following \cite{JY}, we say that 
a smooth family $\{\, \n_{t}\,\}_{t\in[0,T)} \subset \Aa_0$, where $T\in(0,\infty]$, 
satisfies the \emph{line bundle mean curvature flow} if 
\begin{equation}\label{maineq:lbmcf}
	\frac{\p}{\p t}\left(\frac{ \n_t}{\i}\right) = H(\n_t). 
\end{equation}

We first show the short-time existence and uniqueness of \eqref{maineq:lbmcf}.

\begin{theorem}[Theorem \ref{short-ex}] \label{mainthm:MCF}
\begin{enumerate}
\item
For any $\n_0 \in \Aa_0$, there exist $\eps >0$ and a smooth family of Hermitian connections 
$\{ \n_t \}_{t \in [0,\eps]}$ satisfying \eqref{maineq:lbmcf} and $\n_t|_{t=0}=\n_0$. 

\item
Suppose that 
$\{ \n^1_t \}_{t \in [0,\eps]}$ and $\{ \n^2_t \}_{t \in [0,\eps]}$ satisfy \eqref{maineq:lbmcf} for $\eps >0$. 
If $\n^1_t|_{t=0}=\n^2_t|_{t=0}$, then $\n^2_t-\n^1_t$ is 
a (pure imaginary-valued) time-dependent exact 1-form for any $t \in [0,\eps]$. 
\end{enumerate}
\end{theorem}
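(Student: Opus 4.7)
The functional $V$ depends on $\n\in\Aa_0$ only through $F_\n$, so it is invariant under shifts $\n\mapsto\n+\i\alpha$ by arbitrary closed real $1$-forms $\alpha$. Its $L^2$-gradient $H(\n)$ therefore annihilates all closed variations and must itself be coexact, of the form $H(\n) = d^*\eta(F_\n)$ for some $2$-form $\eta(F_\n)$ depending nonlinearly on $F_\n$. A Taylor expansion of $\sqrt{\det(\id+M)}$ around $M=0$ shows that the leading-order term of the linearization of $H$ is $-d^*d$, which is only weakly parabolic: its principal symbol vanishes on exact $1$-forms. This gauge-theoretic degeneracy is the single obstruction to the direct applicability of parabolic theory, and must be resolved before either part of the theorem can be proved.

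\textbf{Existence via DeTurck.} Fix $\n_0$ and parametrize $\Aa_0$ as $\n = \n_0 + \i A$ with $A\in\Om^1$. Consider the modified flow
\begin{equation}\label{eq:modplan}
\frac{\p\widetilde A_t}{\p t} \;=\; H(\n_0 + \i\widetilde A_t) \;-\; dd^*\widetilde A_t, \qquad \widetilde A_0 = 0.
\end{equation}
At $\widetilde A = 0$ the principal symbol of the right-hand side is $-(d^*d + dd^*) = -\Delta$, strictly elliptic on $\Om^1$, and by continuity the nonlinear symbol stays negative definite in a small $C^\infty$-neighborhood of $\n_0$. Standard strongly-parabolic theory (the inverse function theorem in parabolic H\"older spaces, or Hamilton's Nash--Moser scheme) then yields a smooth short-time solution $\widetilde A_t$, $t\in[0,\eps]$. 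Define
\[
\psi_t := \int_0^t d^*\widetilde A_s\,ds, \qquad A_t := \widetilde A_t + d\psi_t.
\]
Because $A_t - \widetilde A_t = d\psi_t$ is exact, the two connections $\n_0+\i A_t$ and $\n_0+\i\widetilde A_t$ have identical curvatures and so identical values of $H$. The direct computation
\[
\p_t A_t \;=\; \p_t\widetilde A_t + d\,d^*\widetilde A_t \;=\; H(\n_0+\i\widetilde A_t) \;=\; H(\n_0+\i A_t)
\]
then shows that $\n_t := \n_0+\i A_t$ satisfies \eqref{maineq:lbmcf}, proving (1).

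\textbf{Uniqueness.} For (2), let $\n^1_t, \n^2_t$ be two solutions with a common initial value and set $B_t := (\n^2_t-\n^1_t)/\i$, so $B_0 = 0$. Since the metric is fixed, the Hodge decomposition $B_t = d\phi_t + h_t + d^*\beta_t$ (exact $+$ harmonic $+$ coexact) has time-independent projectors. The derivative $\p_tB_t = H(\n^2_t) - H(\n^1_t)$ is coexact, so uniqueness of the Hodge splitting forces $\p_t h_t = 0$ and $d\,\p_t\phi_t = 0$; together with $h_0 = 0$ and the normalization $\phi_0 = 0$, this yields $h_t\equiv 0$ and $d\phi_t\equiv 0$, so $B_t$ is coexact throughout. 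On coexact $1$-forms $d^*B_t = 0$ (as $(d^*)^2 = 0$), so the equation $\p_tB_t = H(\n^2_t) - H(\n^1_t)$, viewed as a linear equation in $B_t$ with variable coefficients built from the known $\n^i_t$, has leading operator $-d^*d = -\Delta$ and is genuinely strongly parabolic. Linear parabolic uniqueness with zero initial data forces $B_t \equiv 0$ as a coexact form. Hence $\n^2_t - \n^1_t = \i\, d\phi_t$ is the time-dependent exact pure-imaginary $1$-form claimed.

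\textbf{Main obstacle.} The principal technical difficulty is verifying that the linearization of $H$ has the expected structure \emph{uniformly} near $\n_0$ --- i.e., that its symbol is strictly negative definite on coexact directions and zero exactly on exact ones, for arbitrary initial curvature $F_{\n_0}$, not merely small $F$. This reduces to a pointwise linear-algebra estimate for the Hessian of $M\mapsto\sqrt{\det(\id+M)}$ at skew-symmetric $M$, carried out with explicit positivity bounds sufficient to invoke strongly-parabolic theory for \eqref{eq:modplan} and standard linear parabolic uniqueness in the second half of the argument.
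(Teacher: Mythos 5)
Your existence argument follows the same DeTurck strategy as the paper (the unweighted gauge term $dd^*\widetilde A$ in place of the paper's $d\bigl((\det G_{\nabla_0})^{1/4}d^*a\bigr)$ is an inessential variation), but it leaves out the one computation that carries the whole proof. You assert that at $\widetilde A=0$ the principal symbol of the right-hand side of your modified flow is $-\Delta$; this follows from your Taylor expansion of $\sqrt{\det(\id+M)}$ about $M=0$ and is therefore only valid when $F_{\nabla_0}=0$ (or is small). For an arbitrary initial connection the linearization of $H$ at $\nabla_0$ has symbol $\langle\sigma_{\Ll}(\xi)a,a\rangle=(\det G_{\nabla_0})^{1/4}\bigl(|a|^2_{\nabla_0}|\xi|^2_{\nabla_0}-\langle a,\xi\rangle_{\nabla_0}^2\bigr)$, where $\langle a,b\rangle_{\nabla_0}=a(G_{\nabla_0}^{-1}(b^\sharp))$ (Propositions \ref{prop:fistvar}--\ref{prop:deg symbol} of the paper); establishing this formula, and hence that the degeneracy is exactly one-dimensional along $\xi$ so that adding the gauge term makes the symbol strictly positive, is precisely the content you defer to your ``main obstacle'' paragraph. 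Since that computation is the substance of the theorem rather than a routine estimate, part (1) of your proposal is a correct plan but not a proof.

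Your uniqueness argument departs from the paper's and has a genuine gap. After correctly observing that $B_t=(\nabla^2_t-\nabla^1_t)/\i$ stays coexact, you claim the resulting equation for $B_t$ ``has leading operator $-d^*d=-\Delta$ and is genuinely strongly parabolic.'' It is not: the leading operator is the degenerate operator $\Ll$ above, whose symbol has a one-dimensional kernel $\R\xi$ at every point and every codirection $\xi$, and coexactness is a global $L^2$-orthogonality condition invisible to the principal symbol, so standard pointwise strong-parabolicity theory does not apply. To make your route work you would need a G\aa rding-type energy inequality for $\Ll$ on coexact forms (using $\|B\|\le C\|dB\|$ for coexact $B$) together with control of the quasilinear coefficients; none of this is supplied. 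The paper avoids the issue entirely by solving, for each solution $\nabla^i_t$, an auxiliary strictly parabolic scalar equation for $f^i_t$ so that $\nabla^i_t+\i\,df^i_t$ solves the DeTurck-modified flow, and then invoking uniqueness for that strongly parabolic system; the difference $\nabla^2_t-\nabla^1_t=\i\,d(f^1_t-f^2_t)$ is then exact by construction. As written, part (2) of your proposal does not close.
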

Since $L$ is a line bundle, the curvature is invariant under the addition of closed 1-forms, 
that is, $F_{\n+\i a} = F_\n$ for any $\n \in \Aa_0$ and $a \in Z^1$, where $Z^1$ is the space of closed 1-forms. 
This implies that $V$ is invariant under the addition of closed 1-forms, and hence, 
$H$ degenerates in the direction of $\i Z^1$. 
In particular, \eqref{maineq:lbmcf} is not strongly parabolic.

However, giving a ``nice'' expression of $H(\n)$ in Proposition \ref{prop:fistvar}, 
we show that the principal symbol of $H$ degenerates only in this direction, 
just as in the case of the mean curvature for submanifolds. 
This motivates us to use DeTurck's trick to 
prove the short-time existence and uniqueness of \eqref{maineq:lbmcf}. 
By the similarity between mean curvature flows and line bundle mean curvature flows, 
we can introduce a modified parabolic flow to obtain Theorem \ref{mainthm:MCF}. \\

When $X$ is a K\"ahler manifold, 
we consider the space $\Aa$ of $\n \in \Aa_0$ 
with $F_\n^{0,2} =0$, where $F_\n^{0,2}$ is the $(0,2)$-part of $F_\n$. 
The space $\Aa$ can be considered as the set of holomorphic bundle structures in $L$ by \cite[Section 7.1]{Kob}. 
For $\n \in \Aa$, we relate $H(\n)$ to the exterior derivative of the angle function $\theta_\n$ defined by \eqref{eq:r theta} 
as in the case of Lagrangian submanifolds. 

\begin{theorem}[Theorem \ref{thm:Dazord}] \label{mainthm:Dazord}
Suppose that $X$ is compact and connected K\"ahler manifold. 
For any $\n \in \Aa$, we have 
$$
H(\n)
= - (\det G_\n)^{1/4} (G_\n^{-1})^* (J d \theta_\n).   
$$
Here, $G_\n = \id_{TX} - (-\i F_\n)^\sharp \circ (-\i F_\n)^\sharp \in \Gamma (X, \mathop{\mathrm{End}} TX)$, 
where $(-\sqrt{-1}F_\n)^\sharp \in \Gamma (X, \mathop{\mathrm{End}} TX)$ is defined by $u\mapsto \left(-\sqrt{-1}i(u)F_{\nabla}\right)^{\sharp}$, 
$J$ is the complex structure on $X$ 
and we set $J d \theta_\n = d \theta_\n (J (\cdot))$. 

In particular, for $\n \in \Aa$, 
$H(\n)=0$ if and only if $\n$ is a dHYM connection with phase $e^{\i \theta}$ for some $\theta \in \R$. \end{theorem}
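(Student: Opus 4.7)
The plan is to combine the explicit formula for $H(\nabla)$ given by Proposition \ref{prop:fistvar} with the pointwise Hermitian linear algebra forced by the Kähler hypothesis, and then to identify the resulting real tensorial 1-form with the claimed expression involving $Jd\theta_\nabla$. The ``in particular'' statement will then be immediate from the invertibility of the prefactor and connectedness of $X$.

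First I would set up a pointwise normal form. Since $F_\nabla^{0,2}=0$ and $X$ is Kähler, $-\i F_\nabla$ is of type $(1,1)$, so the skew-symmetric endomorphism $A:=(-\i F_\nabla)^\sharp$ commutes with $J$. A standard normal-form argument gives locally a unitary frame $\{e_j,Je_j\}_{j=1}^m$ (with $n=2m$) and real smooth functions $\lambda_1,\dots,\lambda_m$ with $Ae_j=\lambda_j Je_j$ and $A(Je_j)=-\lambda_j e_j$; equivalently, $A$ acts on the $(1,0)$-vectors $\epsilon_j:=(e_j-\i Je_j)/\sqrt{2}$ diagonally with eigenvalues $\i\lambda_j$. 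A block computation on each plane $\mathrm{span}\{e_j,Je_j\}$ yields
\[
\det(\id_{TX}+A)=\prod_{j=1}^m(1+\lambda_j^2)=r^2,\quad \det G_\nabla=\prod_{j=1}^m(1+\lambda_j^2)^2=r^4,
\]
where $re^{\i\theta_\nabla}:=\det_{\C}(\id_{T^{1,0}X}+A|_{T^{1,0}X})=\prod_{j=1}^m(1+\i\lambda_j)$. In particular $(\det G_\nabla)^{1/4}=r=\sqrt{\det(\id_{TX}+A)}$, so the prefactor matches the volume density appearing in $V$.

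Next I would invoke Proposition \ref{prop:fistvar} to write $H(\nabla)$ explicitly as a divergence of a 2-form built from $\sqrt{\det(\id+A)}$ and the antisymmetric part of $(\id+A)^{-1}$, and simultaneously compute
\[
d\theta_\nabla=\mathrm{Im}\,\mathrm{tr}_{\C}\bigl((\id+A|_{T^{1,0}X})^{-1}\,dA|_{T^{1,0}X}\bigr)=\sum_{j=1}^m\frac{d\lambda_j}{1+\lambda_j^2}.
\]
The crucial observation is that, because $A$ commutes with $J$, the antisymmetric part of the real endomorphism $(\id+A)^{-1}$ on $TX$ corresponds exactly to the imaginary part of the $\C$-linear endomorphism $(\id+A|_{T^{1,0}X})^{-1}$, with block value $-\lambda_j/(1+\lambda_j^2)$ on each plane; this ties the tensor entering $H(\nabla)$ directly to $Jd\theta_\nabla$. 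Using this translation, together with $G_\nabla=\id-A^2=(\id+A)^{\mathsf T}(\id+A)$ (so that $(G_\nabla^{-1})^*$ absorbs the transpose coming from the $\sharp/\flat$-duality), the divergence expression for $H(\nabla)$ rearranges to $H(\nabla)=-(\det G_\nabla)^{1/4}(G_\nabla^{-1})^{*}(Jd\theta_\nabla)$. The ``in particular'' statement then follows at once: $(G_\nabla^{-1})^*$ is pointwise positive-definite, so $H(\nabla)=0$ iff $d\theta_\nabla=0$ identically, and on the compact connected $X$ this means $\theta_\nabla$ is a constant $\theta\in\R$, which is precisely the dHYM equation of phase $e^{\i\theta}$.

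The main obstacle lies in the algebraic translation in the second step: the first variation of $V$ naturally produces a real tensorial 1-form, while $d\theta_\nabla$ is encoded as the imaginary part of a complex logarithmic derivative on $T^{1,0}X$. Carefully tracking transposes between $\sharp$ and $\flat$, the factor of $J$ that appears when passing from a Hermitian trace to a real one, and the precise volume normalization, is what forces the specific prefactor $-(\det G_\nabla)^{1/4}(G_\nabla^{-1})^*$ and the $J$ preceding $d\theta_\nabla$ in the final identity.
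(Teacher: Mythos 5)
Your overall strategy coincides with the paper's: diagonalize $(-\i F_\n)^\sharp$ pointwise in a unitary frame, use the explicit formula $H(\n)=-d^{*}\bigl((\det G_\n)^{1/4}(G_\n^{-1}\circ(-\i F_\n)^\sharp)^{\flat}\bigr)$ from Proposition \ref{prop:fistvar}, identify $d\theta_\n$ with $\mathrm{Im}\,\mathrm{tr}\bigl((\id+A)^{-1}DA\bigr)=\sum_j F_{2j-1,2j;\xi}/(1+\lambda_j^2)$, and match. The normal form, the identities $\det(\id_{TX}+A)=\prod(1+\lambda_j^2)=(\det G_\n)^{1/2}$, and the concluding ``in particular'' argument are all correct and essentially as in the paper.

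However, there is a genuine gap at the central step. Your ``crucial observation'' --- that the antisymmetric part of $(\id+A)^{-1}$ corresponds blockwise to the imaginary part of the complex resolvent --- is a pointwise statement about the \emph{undifferentiated} tensor $(G_\n^{-1}\circ(-\i F_\n)^\sharp)^{\flat}$, whereas $H(\n)$ is a codifferential of that tensor and therefore involves all first covariant derivatives $F_{\mu\nu;\xi}$ of the curvature. In a frame at $p$ the divergence $\sum_\xi i(e_\xi)D_\xi(G_\n^{-1}\circ(-\i F_\n)^\sharp)^{\flat}$ a priori contains components such as $F_{2i-1,2j-1;2i-1}$ and $F_{2i,2j-1;2i}$ that simply do not occur in $J\,d\theta_\n$ (which only sees the ``diagonal'' components $F_{2i-1,2i;\xi}$). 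These extra terms collapse only by combining the second Bianchi identity $dF_\n=0$ (i.e.\ $F_{\mu\nu;\xi}+F_{\nu\xi;\mu}+F_{\xi\mu;\nu}=0$) with the fact that $D_\xi F_\n$ remains of type $(1,1)$ (so $F_{2i-1,2j-1;\xi}=F_{2i,2j;\xi}$ and $F_{2i-1,2j;\xi}=-F_{2i,2j-1;\xi}$), which uses both the K\"ahler hypothesis and $\n\in\Aa$. Neither ingredient appears in your outline, and without them the asserted ``rearrangement'' does not go through; this cancellation is exactly where the bulk of the paper's computation lives. A minor additional caveat: writing $d\theta_\n=\sum_j d\lambda_j/(1+\lambda_j^2)$ presumes the eigenvalues are locally smooth, which can fail where they collide; you should work throughout with the frame-independent trace expression (equivalently, differentiate $(\om+F_\n)^n=\zeta_\n\om^n$ as the paper does).
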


Jacob and Yau \cite{JY} considered a similar volume functional 
in \cite[Definition 3.1]{JY} for Hermitian metrics on a holomorphic line bundle
and defined the mean curvature and the line bundle mean curvature flow. 
In Remark \ref{rem:JY MC}, 
we explain the relation between the volume functional $V$ in this paper and that in \cite{JY} 
and show that we can recover the first variation formula in \cite[Proposition 3.4]{JY} from Theorem \ref{mainthm:Dazord}. \\

Due to the introduction of the flow \eqref{maineq:lbmcf} for Hermitian connections, many issues arise simultaneously. 
For instance, are all $\nabla_{t}$ integrable if the initial connection $\nabla_{t}|_{t=0}$ is integrable (i.e. $\nabla_{t}|_{t=0}\in \Aa$) along the line bundle mean curvature flow? 
It seems to be true comparing the Lagrangian mean curvature flow with a slogan that ``integrable'' corresponds to ``Lagrangian'' in mirror symmetry. 
Actually, in the case of Lagrangian mean curvature flows, the Lagrangian condition is preserved along the mean curvature flow in a K\"ahler-Einstein manifold (see \cite{Smo}). 
Following this slogan, the authors expect that studies of Lagrangian mean curvature flows can be imported into the study of 
our line bundle mean curvature flow \eqref{maineq:lbmcf}. 
Moreover, the authors presume that the Lagrangian deformation corresponds to the deformation in $\Aa$, 
and the Hamiltonian deformation corresponds to the deformation in the space of the Chern connections 
of Hermitian metrics of a line bundle with a fixed holomorphic structure. 
From this perspective, the authors also expect that studies of the Hamiltonian stability of Lagrangian submanifolds can be also imported into the study of Hermitian connections.

\subsection{The ``mirror'' of special Lagrangian, associator, Cayley equalities}
\label{sec:main equality}

Special Lagrangian, associator, Cayley equalities are fundamental identities in calibrated geometry introduced in \cite{HL}. 
Using these, we can show that 
the real part of a holomorphic volume, a $G_2$-structure, a $\Sp$-structure are calibrations 
and characterize special Lagrangian, associative, Cayley submanifolds, respectively, 
by the vanishing of certain tensors. 
We first show the ``mirrors'' of these equalities, 
which are predicted by the real Fourier--Mukai transform in \cite[Lemmas 4.3 and 5.5]{KYFM}. 
The following is considered to be the ``mirror'' of the Cayley equality.

\begin{theorem}[Theorem \ref{thm:Cayley eq}] \label{mainthm:Cayley eq}
Let $X^8$ be an 8-manifold with a ${\rm Spin}(7)$-structure $\Phi$
and $L \to X$ be a smooth complex line bundle with a Hermitian metric $h$.
Let $\mathcal{A}_{0}$ be the space of Hermitian connections of $(L,h)$. 
We regard the curvature 2-form $F_\n$ of $\n$ as a $\i \R$-valued closed 2-form on $X$. 
Denote by $\pi^k_\l: \Om^k \to \Om^k_\l$ the projection onto the $\Sp$-irreducible component of rank $\l$. 
Then, for any $\n \in \Aa_0$, we have 
\begin{align*}
&\left( 1+ \frac{1}{2} \la F_\n^2, \Phi \ra + \frac{* F^4_\n}{24}  \right)^2
+
4 \left| \pi^2_7 \left( F_\n + \frac{1}{6} * F_\n^3\right) \right|^2
+
2 \left| \pi^4_7 \left( F_\n^2 \right) \right|^2 \\
=&
\det (\id_{TX} + (-\i F_\n)^\sharp), 
\end{align*}
where $(-\sqrt{-1}F_\n)^\sharp \in \Gamma (X, \mathop{\mathrm{End}} TX)$ is defined by $u\mapsto \left(-\sqrt{-1}i(u)F_{\nabla}\right)^{\sharp}$. 
In particular, 
\[
\left| 1+ \frac{1}{2} \la F_\n^2, \Phi \ra + \frac{* F^4_\n}{24}  \right|
\leq \sqrt{\det (\id_{TX} + (-\i F_\n)^\sharp)}
\]
for any $\n \in \Aa_0$ and the equality holds 
if and only if $\n$ is a $\Sp$-dDT connection. 
\end{theorem}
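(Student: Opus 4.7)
The identity is pointwise, so it suffices to prove it on a single fibre: at a point $p \in X$, view $F := -\i F_\n|_p$ as a real 2-form on the oriented inner product space $(T_pX, g_p)$ equipped with the $\Sp$-form $\Phi_p$, and check the stated algebraic identity on $\Lambda^2 T_p^*X$. The plan is to place $F$ in a convenient algebraic normal form, expand both sides as polynomials in its coefficients, and match them using $\Sp$-representation theory. The last assertion then follows immediately from the identity: the two ``defect'' terms on the left-hand side are manifestly nonnegative, so the inequality is automatic, with equality iff both defects vanish --- which, by the definition of a $\Sp$-dDT connection recalled earlier, is exactly the $\Sp$-dDT condition.

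First I would use the skew-symmetric normal form: in some oriented orthonormal basis $(e_1,\dots,e_8)$ of $T_pX$, the form $F$ becomes $F = \sum_{k=1}^{4}\lambda_k\, e^{2k-1}\wedge e^{2k}$ for some $\lambda_k \in \R$. In this basis the right-hand side equals $\prod_{k=1}^{4}(1+\lambda_k^2)$, a symmetric polynomial in the $\lambda_k^2$. The $\Sp$-form $\Phi$ is now generic with respect to this frame, and the three summands on the left --- the scalar $1 + \frac{1}{2}\la F^2, \Phi \ra + \frac{\ast F^4}{24}$, the $\Om^2_7$-component $\pi^2_7(F + \frac{1}{6}\ast F^3)$, and the $\Om^4_7$-component $\pi^4_7(F^2)$ --- each depend on how $\Phi$ sits relative to this frame, so one expects polynomial expressions in $\lambda_k$ whose coefficients involve the relative position of the symplectic and $\Sp$-adapted frames.

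Second I would expand every term on the left using the $\Sp$-decompositions $\Lambda^2 = \Lambda^2_7 \oplus \Lambda^2_{21}$ and $\Lambda^4 = \Lambda^4_1 \oplus \Lambda^4_7 \oplus \Lambda^4_{27} \oplus \Lambda^4_{35}$, together with the characterising identities of $\Sp$ (for instance that $\alpha \wedge \Phi$ is a prescribed scalar multiple of $\ast \alpha$ on each of $\Lambda^2_7$ and $\Lambda^2_{21}$, with opposite signs). These identities determine how the cup products $F^2$, $F^3$, $F^4$ distribute across the irreducible summands of $\Lambda^{\bullet}$, and convert each of the three pieces on the left into a concrete polynomial in $\lambda_k$ and the auxiliary coefficients coming from $\Phi$. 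Summing them, one should recover exactly $\prod_{k=1}^4(1+\lambda_k^2)$.

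The main obstacle will be the combinatorial bookkeeping of all mixed terms between the $\Lambda^2_7$ and $\Lambda^2_{21}$ components of $F$: each of $\la F^2,\Phi\ra$, $\ast F^4$, $\pi^2_7(F+\tfrac{1}{6}\ast F^3)$ and $\pi^4_7(F^2)$ absorbs contributions from several irreducible pieces of $F^{\wedge k}$, and one must check that the non-$\Sp$-invariant contributions collect cleanly into the two squared norms with the stated coefficients $4$ and $2$ --- these numerical factors can in practice be pinned down by first testing the identity on $F \in \Lambda^2_7$ alone and on $F \in \Lambda^2_{21}$ alone, and then analysing the mixed case. Once this calculation goes through, the entire theorem, including the characterisation of $\Sp$-dDT connections as the equality case, is an immediate consequence.
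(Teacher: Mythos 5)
Your reduction to a pointwise algebraic identity, your use of the skew normal form to evaluate the determinant, and your treatment of the equality case (both defect terms are nonnegative, and their simultaneous vanishing is precisely the definition \eqref{eq:premod Spin7} of a $\Sp$-dDT connection) all agree with the paper, which proves the identity as Proposition \ref{prop:Cayley eq pt} in Appendix \ref{app:eq pt}. But the entire substance of the theorem is the verification of the algebraic identity itself, and your plan stops where the paper's work begins. The route you sketch --- put $F$ in the normal form $\sum_k\lambda_k\,e^{2k-1}\wedge e^{2k}$ by an ${\rm SO}(8)$ rotation and expand the left-hand side in the $\lambda_k$ together with ``auxiliary coefficients'' recording where $\Phi$ now sits --- is problematic in practice: the normalizing rotation does not preserve $\Phi$, so you would be verifying a polynomial identity depending on a point of ${\rm SO}(8)/\Sp$ (seven extra parameters) in addition to the four $\lambda_k$, and you offer no mechanism for organizing or closing that computation. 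Testing on $F\in\Lambda^2_7$ and on $F\in\Lambda^2_{21}$ separately can pin down the coefficients $4$ and $2$, but, as you yourself note, it leaves the mixed terms untouched, and those are exactly the hard part.

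The paper's proof leaves your adapted frame behind immediately after computing the determinant: Lemma \ref{lem:det} rewrites $\det(I_8+F^\sharp)$ invariantly as $1+|F|^2+|F^2/2!|^2+|F^3/3!|^2+|F^4/4!|^2$, and the two sides are then matched degree by degree in $F$, reducing the theorem to the three identities of Lemma \ref{lem:Caypt deg}. These are proved by writing $F=F_7+F_{21}$, expressing the quantities $|\pi^4_j(F^2)|^2$ in terms of $\xi_1=F_7^2$, $\xi_2=F_7\wedge F_{21}$, $\xi_3=F_{21}^2$ via the wedge-product decomposition \eqref{eq:F2decomp} and self-duality, and invoking two nontrivial inputs that your proposal does not anticipate: the quartic norm identities of Proposition \ref{prop:2form norm} (for instance $|\gamma|^4=|\gamma^2|^2-\tfrac13 *\gamma^4$ for $\gamma\in\Lambda^2_{21}$), and, crucially for the degree-six terms $\la\pi^2_7(F),\pi^2_7(*F^3)\ra$ and $|\pi^2_7(*F^3)|^2$, the identity $*\left(\xi\wedge(*F^3)^2\right)=\tfrac32\la F^2,\xi\ra *F^4$ of Lemma \ref{lem:24form}. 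Without these (or an equivalent device) the mixed-term bookkeeping you flag as the ``main obstacle'' does not close, so the proposal has a genuine gap rather than merely deferred routine computation.
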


Note that the $\Sp$-structure $\Phi$ does not have to be torsion-free and $X^8$ is not necessarily compact. 
We need some tricky and complicated computations for the proof 
and we would not have found this without the prediction by the real Fourier--Mukai transform.

Theorem \ref{mainthm:Cayley eq} is the core of results in Section \ref{sec:main equality}. 
From Theorem \ref{mainthm:Cayley eq}, 
we obtain the ``mirror'' of the associator equality in the $G_2$-case 
(Theorem \ref{thm:asso eq}) 
and that of the special Lagrangian equality in the K\"ahler case of dimension 3 or 4 
(Theorems \ref{thm:SL3 eq} and \ref{thm:SL4 eq}). 
Moreover, there are many applications of these equalities. 
The first application is the following Theorem \ref{mainthm:Cayvol ineq} 
together with the corresponding statements 
for the $G_2$-case 
(Theorem \ref{thm:assovol ineq}, Corollaries \ref{cor:volmin G2} and \ref{cor:flat G2})
and the K\"ahler case of dimension 3 or 4 
(Theorem \ref{thm:SL34vol ineq}, Corollaries \ref{cor:volmin SL34} and \ref{cor:flat SL34}). 
By integrating the ``mirror'' of the Cayley equality, 
we can relate the volume functional $V$ to a $\Sp$-dDT connection and 
we see that the volume of a $\Sp$-dDT connection is topological. 
This further implies some interesting results.

\begin{theorem}[Theorem \ref{thm:Cayvol ineq}, Corollaries \ref{cor:volmin} and \ref{cor:flat Spin7}] \label{mainthm:Cayvol ineq}
In addition to the assumptions of Theorem \ref{mainthm:Cayley eq}, 
suppose that $X^8$ is compact and connected.  
\begin{enumerate}
\item
For any $\n \in \Aa_0$, we have 
\begin{align} \label{maineq:Cayley eq}
\left| \int_X \left( 1 + \frac{1}{2} \la F_\n^2, \Phi \ra + \frac{* F^4_\n}{24}  \right) \vol_g \right|
\leq V(\n)
\end{align}
and the equality holds if and only if $\n$ is a $\Sp$-dDT connection. 

\item
If the $\Sp$-structure $\Phi$ is torsion-free, the left hand side of \eqref{maineq:Cayley eq} 
is given by
$$
\left| {\rm Vol}(X) +\left( -2 \pi^2 c_1(L)^2 \cup [\Phi]  + \frac{2}{3} \pi^4 c_1(L)^4 \right) 
\cdot [X] \right|, 
$$
where $c_1(L)$ is the first Chern class of $L$. 
In particular, 
for any $\Sp$-dDT connection $\n$, 
$V(\n)$ is topological 
and 
any $\Sp$-dDT connection is a global minimizer of $V$. 

\item
Suppose that the $\Sp$-structure $\Phi$ is torsion-free and $L$ is a flat line bundle. 
Then, any $\Sp$-dDT connection is a flat connection. 
In particular, the moduli space of $\Sp$-dDT connections 
is $H^1(X, \R)/2 \pi H^1(X, \Z)$. 
\end{enumerate}
\end{theorem}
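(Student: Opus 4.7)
For part (1), let $f := 1 + \tfrac{1}{2}\la F_\n^2, \Phi\ra + \tfrac{* F_\n^4}{24}$. The pointwise identity of Theorem \ref{mainthm:Cayley eq} gives $f^2 \leq \det(\id_{TX} + (-\i F_\n)^\sharp)$ with equality iff $\n$ is a $\Sp$-dDT connection, hence $|f| \leq \sqrt{\det(\id_{TX} + (-\i F_\n)^\sharp)}$ pointwise. Integrating over $X$ and combining with the triangle inequality $|\int_X f\,\vol_g| \leq \int_X |f|\,\vol_g$ yields \eqref{maineq:Cayley eq}. For the equality case, if $\n$ is $\Sp$-dDT then $|f| = \sqrt{\det(\id_{TX} + (-\i F_\n)^\sharp)} \geq 1 > 0$ everywhere by Lemma \ref{lem:det}; since $X$ is connected and $f$ is continuous, $f$ has constant sign and both inequalities become equalities. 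Conversely, equality in \eqref{maineq:Cayley eq} forces $|f| = \sqrt{\det(\id_{TX} + (-\i F_\n)^\sharp)}$ pointwise, which by Theorem \ref{mainthm:Cayley eq} means $\n$ is $\Sp$-dDT.

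For part (2), Chern--Weil theory gives $[F_\n/(2\pi\i)] = c_1(L)$ in de Rham cohomology. Using that $\Phi$ is self-dual ($*\Phi = \Phi$) and closed ($d\Phi = 0$, as $\Phi$ is torsion-free), the three contributions to the integrand rewrite as
\begin{align*}
\int_X \vol_g &= \V(X),\\
\int_X \tfrac{1}{2}\la F_\n^2, \Phi\ra\,\vol_g &= \tfrac{1}{2}\int_X F_\n^2 \wedge \Phi = -2\pi^2\,\bigl(c_1(L)^2 \cup [\Phi]\bigr)\cdot[X],\\
\int_X \tfrac{*F_\n^4}{24}\,\vol_g &= \tfrac{1}{24}\int_X F_\n^4 = \tfrac{2}{3}\pi^4\, c_1(L)^4\cdot[X],
\end{align*}
and summing yields the topological expression. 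Since this expression depends on $\n$ only through the topological class $c_1(L)$, it is independent of the choice of $\n \in \Aa_0$; by the equality case of (1), every $\Sp$-dDT connection attains $V(\n)$ equal to this topological quantity, and by the inequality of (1) this value is a lower bound for $V$ on all of $\Aa_0$. Hence any $\Sp$-dDT connection is a global minimizer and its volume is topological.

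For part (3), flatness of $L$ gives $c_1(L) = 0$ in $H^2(X,\R)$, so the topological expression of (2) collapses to $\V(X)$, and (1) gives $V(\n) = \V(X)$ for any $\Sp$-dDT connection $\n$. On the other hand, Lemma \ref{lem:det} gives $\sqrt{\det(\id_{TX} + (-\i F_\n)^\sharp)} \geq 1$ pointwise with equality iff $F_\n = 0$, hence $V(\n) \geq \V(X)$ with equality iff $\n$ is flat. Combining the two shows every $\Sp$-dDT connection is flat. Finally, once a reference flat Hermitian connection on $L$ is fixed, every other Hermitian connection differs from it by $\i$ times a real 1-form, flatness selects closed 1-forms, and $U(1)$-valued gauge transformations shift this 1-form by an exact 1-form plus an element of the integral lattice $2\pi H^1(X,\Z)$ (arising from multi-valued logarithms of gauge transformations on generators of $H_1(X,\Z)$). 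This identifies the moduli space with $H^1(X,\R)/2\pi H^1(X,\Z)$.

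The main obstacle is really packaged into Theorem \ref{mainthm:Cayley eq} and Lemma \ref{lem:det}; given those, the delicate points are only the constant-sign observation used for the equality case of (1) and the careful accounting of Chern--Weil and gauge conventions in (2)--(3). I would expect no further analytic difficulty.
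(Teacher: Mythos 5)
Your proposal is correct and follows essentially the same route as the paper: the pointwise ``mirror'' Cayley equality plus the triangle inequality for part (1), with the constant-sign argument on a connected $X$ (the paper's Remark \ref{rem:novanish}) handling the equality case; Chern--Weil together with self-duality and closedness of $\Phi$ for part (2); and the lower bound $\sqrt{\det(\id_{TX}+(-\i F_\n)^\sharp)}\ge 1$ from Lemma \ref{lem:det} combined with the gauge-orbit description of Lemma \ref{lem:orbit} for part (3). The only (immaterial) difference is that in (3) you obtain $V(\n)=\V(X)$ from $c_1(L)=0$ in real cohomology, whereas the paper compares against an explicit flat connection, which is itself a $\Sp$-dDT connection.
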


It is known that there are similar results to (1) and (2) for calibrated submanifolds and HYM, $G_2, \Sp$-instantons. 
That is, each calibrated submanifold is homologically volume minimizing and the volume is topological 
by \cite{HL}. 
Similarly, any HYM, $G_2, \Sp$-instanton is a global minimizer of Yang--Mills functional and the value is topological by \cite{Tian}. 
These make us confirm more the similarities explained at the beginning of the introduction.

Theorem \ref{mainthm:Cayvol ineq} (3) states that 
the moduli space of $\Sp$-dDT connections on a flat line bundle is completely determined. 
Note that there are analogous results to (3) for HYM, $G_2, \Sp$-instantons on line bundles. 
(For example, see \cite[Section 7]{MS2} for the case of $\Sp$-instantons. We can also prove in the other cases similarly.)
However, these results depend on the fact that 
the conditions of HYM, $G_2, \Sp$-instantons on line bundles are linear. 
Since the defining equation of the $\Sp$-dDT is nonlinear, 
(3) is a much more nontrivial result (though results look similar). \\

Next, we give another application of the ``mirror'' of Cayley and associator equalities. 
That is, we study the relation between $\Sp$-dDT connections and $G_2$-dDT connections. 
Let $(Y^7, \varphi)$ be a $G_2$-manifold and 
$L \rightarrow Y$ be a smooth complex line bundle with a Hermitian metric $h$. 
Then, $X^8=S^1 \times Y^7$ admits a canonical torsion-free $\Sp$-structure. 
Denote by $\pi_Y: X^8=S^1 \times Y^7 \to Y^7$ the projection. 

In \cite[Lemma 7.1]{KYFM}, 
we prove that a Hermitian connection $\nabla$ of $(L,h)$ 
is a $G_2$-dDT connection if and only if the pullback $\pi_Y^* \nabla$ is a $\Sp$-dDT connection of $\pi_Y^*L$. 
However, there may be a $\Sp$-dDT connection of $\pi_Y^*L$ which is not of the form $\pi_Y^* \nabla$. 
In fact, as an application of the ``mirror'' of Cayley and associator equalities, 
we can show that any $\Sp$-dDT connection of $\pi_Y^*L$ is essentially 
the pullback of a $G_2$-dDT connection if $Y^7$ is compact and connected. 

\begin{theorem}[Theorem \ref{thm:Spin7G2dDT}] \label{mainthm:Spin7G2dDT}
Suppose that $Y^7$ is a compact and connected $G_2$-manifold. 

\begin{enumerate}
\item
The pullback $\pi_Y^*L \to X^8$ admits a $\Sp$-dDT connection if and only if 
$L \to Y^7$ admits a $G_2$-dDT connection. 

\item
For any $\Sp$-dDT connection $\widetilde \n$ of $\pi_Y^* L$, 
there exist a $G_2$-dDT connection $\n$ of $L$ and 
a closed 1-form $\xi \in \i \Om^1 (X^8)$ 
such that 
$\widetilde \n = \pi_Y^* \n + \xi$. 

\item 
Denote by $\Mm_\Sp$ the moduli space of $\Sp$-dDT connections of $\pi_Y^* L$ 
and denote by $\Mm_{G_2}$ the moduli space of $G_2$-dDT connections of $L$ 
as defined by \eqref{eq:mod Spin7} and \eqref{eq:mod G2}. 
Then, $\Mm_\Sp$ is homeomorphic to $S^1 \times \Mm_{G_2}$. 
\end{enumerate}
\end{theorem}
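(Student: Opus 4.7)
The core of the theorem is statement (2); both (1) and (3) follow from it quickly. For (2), I would first adapt to the product structure $X = S^1 \times Y^7$ with coordinate $t$ on $S^1$. Any Hermitian connection $\widetilde\n$ on $\pi_Y^*L$ decomposes as $\widetilde\n = d + a\,dt + B$ with $a \in C^\infty(X, \i\R)$ and $B$ horizontal (i.e.\ $i_{\p_t}B = 0$), so
\[
  F_{\widetilde\n} = \omega + dt \wedge \alpha, \qquad \omega := d_Y B, \qquad \alpha := \p_t B - d_Y a,
\]
and the slice connection $\n^t := (d + B)|_{\{t\} \times Y}$ is a Hermitian connection on $L \to Y$ with curvature $\omega|_t$.

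The technical heart is a pointwise determinant factorization. In an orthonormal frame $\{\p_t, e_1, \dots, e_7\}$ adapted to the product metric, the real skew-symmetric operator $(-\i F_{\widetilde\n})^\sharp$ has block form $\bigl(\begin{smallmatrix}0 & -\beta^T\\ \beta & A\end{smallmatrix}\bigr)$, where $A$ is the matrix of $(-\i \omega|_t)^\sharp$ on $TY$ and $\beta := (-\i\alpha)^\sharp \in TY$ is the real metric-dual vector. A Schur-complement calculation on $\id + (-\i F_{\widetilde\n})^\sharp$ gives
\[
  \det_{TX}\bigl(\id + (-\i F_{\widetilde\n})^\sharp\bigr)
  = \det_{TY}\bigl(\id + (-\i\omega|_t)^\sharp\bigr) \cdot \bigl(1 + \beta^T (I+A)^{-1}\beta\bigr),
\]
with the correction $\geq 1$ and equal to $1$ iff $\alpha = 0$: indeed the symmetric part of $(I+A)^{-1}$ is $(I - A^2)^{-1}$, which is positive definite because $-A^2 = A^T A \geq 0$. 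Applying Fubini to the square root yields
\[
  V(\widetilde\n) \geq \int_{S^1} V(\n^t)\,dt,
\]
the right-hand integrand being the $G_2$-volume functional for $\n^t$ on $L \to Y$.

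To close the loop I would squeeze this inequality topologically. By Theorem \ref{mainthm:Cayvol ineq}(2), $V(\widetilde\n)$ equals the $\Sp$-topological invariant $|M_\Sp|$ since $\widetilde\n$ is a $\Sp$-dDT connection; by the $G_2$ analogue (Corollary \ref{cor:volmin G2}), $V(\n^t) \geq |M_{G_2}|$ for every $t$. A K\"unneth calculation using $\Phi = dt \wedge \varphi + *_\varphi \varphi$, $c_1(\pi_Y^*L) = \pi_Y^*c_1(L)$, and the vanishing $c_1(L)^4 \in H^8(Y) = 0$ identifies $|M_\Sp| = 2\pi \, |M_{G_2}|$. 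The resulting sandwich
\[
  2\pi |M_{G_2}| \,=\, V(\widetilde\n) \,\geq\, \int_{S^1} V(\n^t)\,dt \,\geq\, 2\pi|M_{G_2}|
\]
forces equality throughout, giving $\alpha \equiv 0$ and each $\n^t$ a $G_2$-dDT connection. From $\alpha = 0$, $\p_t\omega = d_Y\p_t B = d_Y^2 a = 0$, so $\omega$ is $t$-independent and equals $\pi_Y^*F_\n$ for $\n := \n^{t_0}$, whence $\xi := \widetilde\n - \pi_Y^*\n \in \i\Om^1(X)$ is closed. This proves (2). Claim (1) follows: $(\Leftarrow)$ is Lemma~7.1 of \cite{KYFM}, and $(\Rightarrow)$ is immediate from (2). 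For (3), the map $S^1 \times \Mm_{G_2} \to \Mm_\Sp$ sending $([c],[\n]) \mapsto [\pi_Y^*\n + c\,dt]$ is surjective by (2) and injective via the Hodge--K\"unneth splitting $H^1(X, \i\R) = \i\R \oplus H^1(Y, \i\R)$ combined with the integer-lattice quotient defining the moduli spaces.

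The main obstacle is the determinant factorization itself: while the Schur-complement step is mechanical once the block structure is identified, one must carefully track the factors of $\i$ arising because $F$ is $\i\R$-valued, and verify that the quadratic correction $\beta^T(I+A)^{-1}\beta$ is genuinely positive definite --- this is precisely the step that converts the global minimality of $V$ at $\widetilde\n$ into the rigidity statement that the vertical component $\alpha$ must vanish. Everything else (Fubini, the K\"unneth identity for Chern numbers, and the pinning inequalities) is either routine or a direct invocation of results established earlier in the paper.
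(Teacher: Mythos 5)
Your proposal is correct, and the key step (2) is reached by a genuinely different mechanism than the paper's. The paper integrates the Cayley-calibration quantity of Theorem \ref{thm:Cayvol ineq}, kills the $\pi_{S^1}^*dx\wedge\pi_Y^*\varphi$ and $F^4$ contributions cohomologically (since $c_1(L)^4$ and $c_1(L)^2\cup[*_7\varphi]$ live in $H^8_{dR}(Y^7)=0$), and then invokes the pointwise Lemma \ref{lem:Cay asso pt}, whose equality case delivers $i(\p/\p x)F_{\widetilde\n}=0$ \emph{and} the $G_2$-dDT equation in one stroke. You instead prove the pointwise determinant factorization $\det_{TX}(\id+\FF^\sharp)=\det_{TY}(\id+\omega^\sharp)\,(1+\beta^T(I+A)^{-1}\beta)$ by a Schur complement (your positivity argument via the symmetric part $(I-A^2)^{-1}$ is sound, and this computation is essentially an alternative proof of the inequality $\det(I_7+F_2^\sharp)\le\det(I_8+F^\sharp)$ that the paper establishes inside Lemma \ref{lem:Cay asso pt} by expanding both determinants as sums of norms), then run Fubini to get $V(\widetilde\n)\ge\int_{S^1}V(\n^t)\,dt$ and squeeze against the slice-wise bound of Theorem \ref{thm:assovol ineq} and the K\"unneth identity for the topological values. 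The net effect is that you use the mirror associator equality only in its integrated form on each slice, rather than pointwise on $X^8$; this is slightly more conceptual and would adapt to other fibered situations, at the cost of invoking continuity of $t\mapsto V(\n^t)$ to upgrade the a.e.\ equality to all $t$. Two cosmetic points: the splitting $\widetilde\n=d+a\,dt+B$ should be taken relative to a fixed reference connection $\pi_Y^*\n_0$ (as $L$ need not be trivial), and the factor $2\pi$ should be the length of the $S^1$ factor; neither affects the argument. The remaining steps — recovering $\n$ and the closed form $\xi$ from $i(\p/\p x)F_{\widetilde\n}=0$ and the injectivity of $\pi_Y^*$ on $H^2_{dR}$, and the homeomorphism in (3) via Lemma \ref{lem:orbit} and the K\"unneth splitting of $H^1$ — coincide with the paper's, though the paper spells out (3) with explicit mutually inverse maps $\Theta$ and $\Xi$ where you only sketch injectivity.
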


Results similar to Theorem \ref{mainthm:Spin7G2dDT} hold 
between $G_2$-dDT and dHYM connections (Theorem \ref{thm:G2dDTdHYM})
and between $\Sp$-dDT and dHYM connections (Theorem \ref{thm:Spin7dDTdHYM}). 

In (2), we show that any $\Sp$-dDT connection of $\pi_Y^* L$ is essentially the pullback of a $G_2$-dDT connection. 
We can prove (3) from (2) and it implies that the topology of $\Mm_\Sp$ is determined by that of $\Mm_{G_2}$. 
Theorem \ref{mainthm:Spin7G2dDT} (1) might have interesting implications. 
In the case between $G_2$-dDT and dHYM connections (Theorem \ref{thm:G2dDTdHYM}), 
we show that 
the existence of a $G_2$-dDT connection is equivalent to that of a dHYM connection. 
It is conjectured in \cite[Conjecture 1.5]{CJY} that the existence of a dHYM connection is 
equivalent to a certain stability condition and 
the conjecture is partially proved in \cite{Chen}. 
This implies that 
the existence of a $G_2$-dDT connection of $\pi_Y^*L \to X^7$ would be equivalent to this stability condition.  
More generally, 
the existence of a $G_2$- or $\Sp$-dDT connection of a general line bundle over a general $G_2$- or $\Sp$-manifold 
might be related to a certain stability condition.

There are similar results for calibrated submanifolds and HYM, $G_2, \Sp$-instantons. 
That is, the moduli space of irreducible $\Sp$-instantons of $\pi_Y^* L$ is homeomorphic to 
the product of $S^1$ and the moduli space of irreducible $G_2$-instantons of $L$ by \cite[Theorem 1.17]{Wang}. 
By \cite[Proposition 5.20]{Ohst}, we see that 
the moduli space of all local Cayley deformations of $S^1 \times A^3$, 
where $A^3$ is a given compact associative submanifold in $Y^7$, 
is identified with the moduli space of all local associative deformations of $A^3$. 
These also make us confirm more the similarities explained at the beginning of the introduction.

One of the problems of the geometry of $G_2, \Sp$-dDT connections is that few explicit examples are known. 
(Recently, Lotay and Oliveira \cite{LO} constructed 
nontrivial examples of $G_2$-dDT connections on the trivial complex line bundle over a manifold with a coclosed $G_2$-structure.)
Theorems \ref{mainthm:Cayvol ineq} (3),  \ref{mainthm:Spin7G2dDT} and \ref{thm:Spin7dDTdHYM} 
impose several restrictions 
to construct nontrivial examples on a compact and connected $\Sp$-manifold.

\SkipTocEntry \subsection*{Organization of this paper}
This paper is organized as follows. 
Section \ref{sec:basic} gives basic identities in $G_2$- and ${\rm Spin}(7)$-, and 
3,4-dimensional Calabi--Yau geometry that are used in this paper. 
Section \ref{sec:vol} is devoted to the study of the ``mirror'' of the mean curvature flow. 
We prove Theorem \ref{mainthm:MCF} (Theorem \ref{short-ex}) here.
In Section \ref{sec:Spin7dDT}, 
reviewing the definition of $\Sp$-dDT connections we state 
Theorem \ref{mainthm:Cayley eq} (Theorem \ref{thm:Cayley eq})
with some remarks and prove 
Theorem \ref{mainthm:Cayvol ineq} (Theorem \ref{thm:Cayvol ineq}). 
We state and prove the corresponding statement 
on $G_2$-manifolds (Theorems \ref{thm:asso eq}, \ref{thm:assovol ineq}, Corollaries \ref{cor:volmin G2} and \ref{cor:flat G2}) in Section \ref{sec:G2dDT} 
and 
that on 3, 4-dimensional K\"ahler manifolds (Theorems \ref{thm:SL3 eq}, \ref{thm:SL4 eq}, \ref{thm:SL34vol ineq}, Corollaries \ref{cor:volmin SL34} and \ref{cor:flat SL34}) 
in Section \ref{sec:SL34}. 
In Section \ref{sec:SL34}, we also prove related results that hold for any dimension 
including Theorem \ref{mainthm:Dazord} (Theorem \ref{thm:Dazord}) under an additional assumption. 
In Section \ref{sec:hol red}, 
we study the relation between $\Sp$-dDT and $G_2$-dDT connections and prove 
Theorem \ref{mainthm:Spin7G2dDT} (Theorem \ref{thm:Spin7G2dDT}) here. 
We also prove the corresponding statement between $G_2$-dDT and dHYM connections (Theorem \ref{thm:G2dDTdHYM})
and that between $\Sp$-dDT and dHYM connections (Theorem \ref{thm:Spin7dDTdHYM}). 
In Appendix \ref{app:eq pt}, we prove Theorems \ref{thm:Cayley eq}, \ref{thm:asso eq}, 
\ref{thm:SL3 eq} and \ref{thm:SL4 eq}. 
Appendix \ref{app:notation} is the list of notation in this paper.

\vspace{0.5cm}
\noindent{{\bf Acknowledgements}}: 
The authors would like to thank anonymous referees for the careful reading 
of an earlier version of this paper and many useful comments which helped to improve the quality of the paper.


\section{Basics on $G_2$-, ${\rm Spin}(7)$-, ${\rm SU}(3)$- and ${\rm SU}(4)$-geometry} \label{sec:basic}
In this section, 
we collect some basic definitions and equations on 
$G_2$-, ${\rm Spin}(7)$-, ${\rm SU}(3)$- and ${\rm SU}(4)$-geometry which we need in the calculations in this paper.

\subsection{The Hodge star operator}
Let $V$ be an $n$-dimensional oriented real vector space with an inner product $g$. 
Denote by $\la \cdot, \cdot \ra$ the 
induced inner product on $\Lambda^k V^*$ from $g$. 
Let $\ast$ be the Hodge star operator.
The following identities are frequently used throughout this paper. 

For $\alpha, \beta \in \Lambda^k V^*$ and $v \in V$, we have 
\[
\begin{aligned}
\ast^2|_{\Lambda^k V^*} &= (-1)^{k(n-k)} {\rm id}_{\Lambda^k V^*}, & 
\la \ast \alpha, \ast \beta \ra &= \la \alpha, \beta \ra, \\ 
i(v) \ast \alpha &= (-1)^k \ast (v^\flat \wedge \alpha), & 
\ast( i(v) \alpha) &= (-1)^{k+1} v^\flat \wedge \ast \alpha. 
\end{aligned}
\]



\subsection{$G_2$-geometry} \label{sec:G2 geometry}
Let $V$ be an oriented $7$-dimensional vector space. A \emph{$G_2$-structure} on $V$ is 
a 3-form $\varphi \in \Lambda^3 V^*$ such that there is a positively oriented basis 
$\{\, e_i \,\}_{i=1}^7$ of $V$ 
with the dual basis $\{\, e^i \,\}_{i=1}^7$ of $V^\ast$ satisfying 
\begin{equation} \label{varphi}
\varphi = e^{123} + e^{145} + e^{167} + e^{246} - e^{257} - e^{347} - e^{356},
\end{equation}
where $e^{i_1 \dots i_k}$ is short for $e^{i_1} \wedge \cdots \wedge e^{i_k}$. Setting $\vol := e^{1 \cdots 7}$, 
the 3-form $\varphi$ uniquely determines an inner product $g_\varphi$ via 
\begin{equation} \label{eq:form-1def}
g_\varphi(u,v)\; \vol = \dfrac16 i(u) \varphi \wedge i(v) \varphi \wedge \varphi 
\end{equation}
for $u,v \in V$. 
It follows that any oriented basis $\{\, e_i \,\}_{i=1}^7$ for which \eqref{varphi} holds is orthonormal with respect to $g_\varphi$. Thus, the Hodge-dual of $\varphi$ with respect to $g_\varphi$ is given by 
\begin{equation} \label{varphi*}
\ast \varphi = e^{4567} + e^{2367} + e^{2345} + e^{1357} - e^{1346} - e^{1256} - e^{1247}.
\end{equation}
The stabilizer of $\varphi$ is known to be the exceptional $14$-dimensional simple Lie group 
$G_2 \subset {\rm GL}(V)$. The elements of $G_2$ preserve both $g_\varphi$ and $\vol$, that is, 
$G_2 \subset {\rm SO}(V, g_\varphi)$.

We summarize important well-known facts about the decomposition of exterior powers of $G_2$-modules into irreducible summands. 
Denote by $V_k$ the $k$-dimensional irreducible $G_2$-module if there is a unique such module. For instance, $V_7$ is the irreducible $7$-dimensional $G_2$-module $V$ from above, 
and $V_7^* \cong V_7$. For its exterior powers, we obtain the decompositions
\begin{equation} \label{eq:DiffForm-V7}
\begin{array}{rlrl}
\Lambda^0 V^* \cong \Lambda^7 V^* \cong V_1, \quad
& \Lambda^2 V^*  \cong \Lambda^5 V^* \cong V_7 \oplus  V_{14},\\[2mm]
\Lambda^1 V^* \cong \Lambda^6 V^* \cong V_7, \quad
& \Lambda^3 V^* \cong \Lambda^4 V^* \cong V_1 \oplus V_7 \oplus V_{27},
\end{array}
\end{equation}
where $\Lambda^k V^* \cong \Lambda^{7-k} V^*$ due to the $G_2$-invariance of the Hodge isomorphism $\ast: \Lambda^k V^* \to \Lambda^{7-k} V^*$. We denote by $\Lambda^k_\l V^* \subset \Lambda^k V^*$ the subspace isomorphic to $V_\l$. 
Let 
\[
\pi^k_\l: \Lambda^k V^* \rightarrow \Lambda^k_\l V^*
\]
be the canonical projection. 
Note that we have the following. 
\begin{equation}\label{decom-L-V7}
\begin{aligned}
\Lambda^2_7 V^* =& \{\, i(u) \varphi \mid u \in V \,\}
= \{\, \alpha \in \Lambda^2 V^* \mid * (\varphi \wedge \alpha) = 2 \alpha \,\},\\
\Lambda^2_{14} V^* =& \{\, \alpha \in \Lambda^2 V^* \mid \ast \varphi \wedge \alpha = 0\,\} 
= \{\, \alpha \in \Lambda^2 V^* \mid * (\varphi \wedge \alpha) = - \alpha \,\},\\
\Lambda^3_1 V^* =& \R \varphi, \\
\Lambda^3_7 V^* =& \{\, i(u) * \varphi \in \Lambda^3 V^* \mid u \in V \,\}. 
\end{aligned}
\end{equation}
The following equations are well-known and useful in this paper. 

\begin{lemma} \label{lem:G2 identities}
For any $u \in V$, we have the following identities. 
\[
\begin{aligned}
\varphi \wedge i(u) * \varphi &= -4 * u^{\flat}, 
\\
* \varphi \wedge i(u) \varphi &= 3 * u^{\flat}, \\
\varphi \wedge i(u) \varphi &= 2 * (i(u) \varphi) = 2 u^{\flat} \wedge * \varphi. 
\end{aligned}
\]
\end{lemma}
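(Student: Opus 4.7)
The plan is as follows. The third identity is the most structural, so I would address it first. The second equality, $2*(i(u)\varphi) = 2 u^\flat \wedge *\varphi$, is immediate from the general Hodge-intrinsic identity $*(i(u)\alpha) = (-1)^{k+1} u^\flat \wedge *\alpha$ (listed at the top of the section) specialized to $\alpha = \varphi \in \Lambda^3 V^*$, where the sign becomes $(-1)^4 = +1$. For the first equality $\varphi \wedge i(u)\varphi = 2 *(i(u)\varphi)$, I would invoke the characterization $\Lambda^2_7 V^* = \{\alpha \in \Lambda^2 V^* : *(\varphi \wedge \alpha) = 2\alpha\}$ from \eqref{decom-L-V7}. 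Since the same equation \eqref{decom-L-V7} also tells us that $i(u)\varphi \in \Lambda^2_7 V^*$, we get $*(\varphi \wedge i(u)\varphi) = 2\, i(u)\varphi$; applying $*$ and using $*^2 = \id$ on $\Lambda^5 V^*$ (as $5(7-5) = 10$ is even) yields $\varphi \wedge i(u)\varphi = 2 *(i(u)\varphi)$.

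For the first two identities, both sides are $G_2$-equivariant linear maps $V \to \Lambda^6 V^*$. Since $V \cong V_7$ and $\Lambda^6 V^* \cong V_7$ are irreducible as $G_2$-modules by \eqref{eq:DiffForm-V7}, and $u \mapsto * u^\flat$ is a $G_2$-equivariant isomorphism between them, Schur's lemma forces each of $u \mapsto \varphi \wedge i(u)*\varphi$ and $u \mapsto *\varphi \wedge i(u)\varphi$ to be a scalar multiple of $u \mapsto * u^\flat$. To pin down the two constants, I would evaluate both sides at $u = e_1$, for which $*u^\flat = e^{234567}$. Using \eqref{varphi} and \eqref{varphi*} gives
\[
i(e_1)\varphi = e^{23} + e^{45} + e^{67}, \qquad i(e_1)*\varphi = e^{357} - e^{346} - e^{256} - e^{247}.
\]
Expanding $\varphi \wedge i(e_1)*\varphi$ and $*\varphi \wedge i(e_1)\varphi$, only a small number of wedge products produce a nonzero top-degree piece in $\{e^1,\ldots,e^7\}$: for each summand in $i(e_1)*\varphi$ (resp.\ $i(e_1)\varphi$) exactly one summand of $\varphi$ (resp.\ $*\varphi$) has the complementary index set. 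Tracking the signs via the permutation parities, the surviving contributions sum to $-4\, e^{234567}$ and $3\, e^{234567}$ respectively, which fixes the scalars.

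The only real obstacle is the sign bookkeeping in the two explicit wedge-product expansions, but since we work in dimension $7$ with $6$-form outputs, each expansion reduces to just four nonzero terms, and the parity computations are routine. Representation theory (Schur's lemma on the irreducible $7$-dimensional $G_2$-module $V_7$) and the Hodge-star identities listed at the top of the section absorb the remainder of the work.
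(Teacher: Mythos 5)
Your proof is correct. The paper itself states Lemma \ref{lem:G2 identities} without proof, as a collection of well-known identities, so there is no in-paper argument to compare against; your route --- deriving the third identity from the characterization of $\Lambda^2_7 V^*$ in \eqref{decom-L-V7} together with the listed Hodge-star identities, and pinning down the first two by Schur's lemma on the irreducible module $V_7$ followed by a single evaluation at $u=e_1$ --- is a complete and efficient verification. I checked the two explicit expansions: the four surviving terms of $\varphi\wedge i(e_1)*\varphi$ each contribute $-e^{234567}$, and the three surviving terms of $*\varphi\wedge i(e_1)\varphi$ each contribute $+e^{234567}$, giving the coefficients $-4$ and $3$ as claimed.
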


\begin{definition} \label{def:G2mfd}
Let $X$ be an oriented 7-manifold. A \emph{$G_2$-structure} on $X$ is a $3$-form $\varphi \in \Om^3$ 
such that at each $p \in X$ there is a positively oriented basis 
$\{\, e_i \,\}_{i=1}^7$ of $T_p X$ such that $\varphi_p \in \Lambda^3 T^*_p X$ is of the form \eqref{varphi}. As noted above, $\varphi$ determines a unique Riemannian metric $g = g_\varphi$ on $X$ by \eqref{eq:form-1def}, 
and the basis $\{\, e_i \,\}_{i=1}^7$ is orthonormal with respect to $g$.
A $G_2$-structure $\varphi$ is called \emph{torsion-free} if 
it is parallel with respect to the Levi-Civita connection of $g=g_\varphi$. 
A manifold with a torsion-free $G_2$-structure is called a \emph{$G_2$-manifold}. 
\end{definition}

A manifold $X$ admits a $G_2$-structure if and only if 
its frame bundle is reduced to a $G_2$-subbundle. 
Hence, considering its associated subbundles, 
we see that 
$\Lambda^* T^* X$ has the same decomposition as in \eqref{eq:DiffForm-V7}. 
The algebraic identities above also hold. 

\subsection{${\rm Spin}(7)$-geometry} \label{sec:Spin7 geometry}
Let $W$ be an $8$-dimensional oriented real vector space. 
A \emph{$\Sp$-structure on $W$} is 
a 4-form $\Phi \in \Lambda^4 W^*$ such that there is a positively oriented basis 
$\{\, e_i \,\}_{i=0}^7$ of $W$ 
with dual basis $\{\, e^i \,\}_{i=0}^7$ of $W^\ast$ satisfying 
\begin{equation}\label{Phi4}
\begin{aligned}
\Phi := & e^{0123} + e^{0145} + e^{0167} + e^{0246} - e^{0257} - e^{0347} - e^{0356}\\
& + e^{4567} + e^{2367} + e^{2345} + e^{1357} - e^{1346} - e^{1256} - e^{1247}.
\end{aligned}
\end{equation}
where $e^{i_1 \dots i_k}$ is short for $e^{i_1} \wedge \cdots \wedge e^{i_k}$.
Defining forms 
$\varphi$ and $\ast_7 \varphi$ on $V := \rmspan \{\, e_i \,\}_{i=1}^7 \subset W$ 
as in \eqref{varphi} and \eqref{varphi*}, where $*_7$ stands for the Hodge star operator on $V$, 
we have 
\[
\Phi = e^0 \wedge \varphi + \ast_7 \varphi. 
\]
Note that $\Phi$ is self-dual, that is, $*_8 \Phi = \Phi$, where 
$*_8$ is the Hodge star operator on $W$. 
It is known that $\Phi$ uniquely determines an inner product $g_\Phi$ and the volume form 
and the subgroup of ${\rm GL}(W)$ preserving $\Phi$ is isomorphic to $\Sp$. 
As in Definition \ref{def:G2mfd}, we can define 
an 8-manifold with a $\Sp$-structure and a $\Sp$-manifold.

Denote by $W_k$ the $k$-dimensional irreducible $\Sp$-module if there is a unique such module. 
For example, $W_8$ is the irreducible $8$-dimensional $\Sp$-module from above, and $W_8^* \cong W_8$. 
The group $\Sp$ acts irreducibly 
on $W_7 \cong \R^7$ as the double cover of ${\rm SO}(7)$. 
For its exterior powers, we obtain the decompositions 
\begin{equation}\label{decom1-L-W8}
\begin{aligned}
\Lambda^0 W^* &\cong \Lambda^8 W^* \cong W_1, \quad
& \Lambda^2 W^*  \cong \Lambda^6 W^* &\cong W_7 \oplus  W_{21},\\
\Lambda^1 W^* &\cong \Lambda^7 W^* \cong W_8, \quad
& \Lambda^3 W^* \cong \Lambda^5 W^* &\cong W_8 \oplus W_{48},\\
\Lambda^4 W^* &\cong W_1 \oplus W_7 \oplus W_{27} \oplus W_{35}
\end{aligned}
\end{equation}
where $\Lambda^k W^* \cong \Lambda^{8-k} W^*$ due to the $\Sp$-invariance of the Hodge isomorphism 
$*=\ast_8: \Lambda^k W^* \to \Lambda^{8-k} W^*$. Again, we denote by $\Lambda^k_\l W^* \subset \Lambda^k W^*$ the subspace isomorphic to $W_\l$ in the above notation.

The space $\Lambda^k_7 W^*$ for $k = 2,4,6$ 
is explicitly given as follows. 
For the explicit descriptions of the other irreducible summands, 
see for example \cite[(4.7)]{KLS}. 

\begin{lemma} [{\cite[Lemma 4.2]{KLS}, \cite[Lemma 3.4]{KYFM}}]
\label{lem:lambdas}
Let $e^0 \in W^*$ be a unit vector. 
Set $V^* = (\R e^0)^\perp$, the orthogonal complement of $\R e^0$. 
The group $\Sp$ acts irreducibly on $V^*$ as the double cover of ${\rm SO}(7)$, 
and hence, we have the identification $V^* \cong W_7$. 
Then, the following maps are $\Sp$-equivariant isometries. 
\begin{equation} \label{def:lambda}
\lambda^k: V^* \longrightarrow \Lambda^k_7 W^*, \quad 
\begin{array}{ll} 
\lambda^2(\alpha) := \frac{1}{2} \left( e^0 \wedge \alpha + i (\alpha^\sharp) \varphi \right), \\[2mm]
\lambda^4(\alpha) := 
\frac{1}{\sqrt{8}} \left( e^0 \wedge i (\alpha^\sharp) \ast_7 \varphi - \alpha \wedge \varphi \right), \\[2mm]
\lambda^6(\alpha) := \frac{1}{3} \Phi \wedge \lambda^2(\alpha) = \ast_8 \lambda^2(\alpha). 
\end{array}
\end{equation}
Here, $*=\ast_8$ and $\ast_7$ are the Hodge star operators on $W^*$ and $V^*$, respectively.
\end{lemma}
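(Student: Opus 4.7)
My plan is to verify the three parts of the claim -- $\Sp$-equivariance (through the $G_2 \subset \Sp$ stabilizer of $e^0$, which acts on $V^*$ as the defining $W_7$-representation), image in $\Lambda^k_7 W^*$, and isometric normalization -- and then separately check that the two formulas given for $\lambda^6$ agree.

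The first step is the observation that under the orthogonal decomposition $\Phi = e^0 \wedge \varphi + *_7 \varphi$, the stabilizer $G_2 \subset \Sp$ of $e^0$ preserves all three of $e^0$, $\varphi$, and $*_7 \varphi$. All operations used to define $\lambda^k$ (exterior product and $\alpha \mapsto i(\alpha^\sharp)$ via the induced inner product) are natural under this $G_2$-action, so each $\lambda^k$ is $G_2$-equivariant. Since $V^* \cong W_7$ is irreducible as a $G_2$-module and the $W_7$-isotypic component of $\Lambda^k W^*$ for $k = 2, 4, 6$ is exactly the $7$-dimensional summand $\Lambda^k_7 W^*$ (appearing with multiplicity one by \eqref{decom1-L-W8}), Schur's lemma forces any nonzero $G_2$-equivariant map $V^* \to \Lambda^k W^*$ to land in $\Lambda^k_7 W^*$ and to be a scalar multiple of an isometric embedding.

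Next I would pin down the scalar by a direct norm computation, which also establishes non-vanishing. Fix an orthonormal frame $\{\, e_i \,\}_{i=0}^7$ in which $\Phi$ has the form \eqref{Phi4}, and test on $\alpha = e^i$ for $i \in \{1, \ldots, 7\}$. For $\lambda^2$, the terms $e^0 \wedge e^i$ and $i(e_i) \varphi$ are orthogonal because the first contains $e^0$ while the second lies in $\Lambda^2 V^*$; reading off \eqref{varphi} gives $|i(e_i) \varphi|^2 = 3$, so $|\lambda^2(e^i)|^2 = \tfrac{1}{4}(1 + 3) = 1$. For $\lambda^4$ the two terms are orthogonal for the same reason, and \eqref{varphi}, \eqref{varphi*} give $|i(e_i) *_7 \varphi|^2 = |e^i \wedge \varphi|^2 = 4$, so $|\lambda^4(e^i)|^2 = \tfrac{1}{8}(4 + 4) = 1$. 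This finishes the proof for $k = 2, 4$.

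For $\lambda^6$, the two candidate formulas $\tfrac{1}{3}\Phi \wedge \lambda^2(\alpha)$ and $*_8 \lambda^2(\alpha)$ coincide if and only if $\lambda^2(\alpha)$ satisfies the eigenvalue characterization $*_8(\Phi \wedge \omega) = 3\,\omega$ of $\Lambda^2_7 W^*$; since we already know $\lambda^2(\alpha) \in \Lambda^2_7 W^*$, this identity holds and the two formulas for $\lambda^6$ agree. The isometry property of $\lambda^6$ then follows from that of $\lambda^2$ together with $*_8$ being an isometry. The main technical obstacle is that a fully self-contained proof (avoiding Schur) would need to verify the eigenvalue identity $*_8(\Phi \wedge \lambda^2(\alpha)) = 3\,\lambda^2(\alpha)$ by hand, expanding $\Phi \wedge \lambda^2(\alpha)$ via $\Phi = e^0 \wedge \varphi + *_7\varphi$ and assembling wedge products such as $\varphi \wedge i(\alpha^\sharp)\varphi$ and $*_7\varphi \wedge (e^0 \wedge \alpha)$ using Lemma \ref{lem:G2 identities}; but the representation-theoretic route via Schur bypasses this bookkeeping entirely.
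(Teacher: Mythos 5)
This lemma is imported by the paper from \cite[Lemma 4.2]{KLS} and \cite[Lemma 3.4]{KYFM} without proof, so there is no in-paper argument to compare against; judging your proposal on its own terms, it has a genuine gap at its central step. You apply Schur's lemma to a $G_2$-equivariant map $V^*\to\Lambda^k W^*$ and conclude that its image must lie in $\Lambda^k_7W^*$ because ``the $W_7$-isotypic component appears with multiplicity one by \eqref{decom1-L-W8}.'' But \eqref{decom1-L-W8} is the decomposition under $\Sp$, and at this stage of your argument you only have $G_2$-equivariance (the formulas for $\lambda^k$ use $e^0$ and hence break the symmetry down to the stabilizer $G_2$). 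As a $G_2$-module the multiplicity of $V_7$ in $\Lambda^2W^*$ is two, not one: $\Lambda^2W^* = (e^0\wedge V^*)\oplus\Lambda^2V^* \cong V_7\oplus(V_7\oplus V_{14})$, and likewise $W_{21}|_{G_2}\cong\g_2\oplus V_7$. Concretely, $\alpha\mapsto e^0\wedge\alpha$ and $\alpha\mapsto i(\alpha^\sharp)\varphi$ are each $G_2$-equivariant maps $V^*\to\Lambda^2W^*$, and neither lands in $\Lambda^2_7W^*$; only the specific combination $\lambda^2$ does. The same multiplicity-two phenomenon occurs for $k=4,6$. So Schur's lemma for $G_2$ cannot locate the image, and Schur's lemma for $\Sp$ is unavailable because $\Sp$-equivariance is precisely what is not yet known. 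The computation you set aside at the end as avoidable bookkeeping --- verifying $\ast_8(\Phi\wedge\lambda^2(\alpha))=3\lambda^2(\alpha)$ (and the analogous membership of $\lambda^4(\alpha)$ in $\Lambda^4_7W^*$) using Lemma \ref{lem:G2 identities} --- is in fact the essential content of the lemma and cannot be bypassed by this route. Your argument for $\lambda^6$ and the asserted $\Sp$-equivariance both inherit this gap, since they rest on $\lambda^2(\alpha)\in\Lambda^2_7W^*$.

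The parts of your proposal that do work: the pullback of the target inner product under a $G_2$-equivariant map from the absolutely irreducible $V_7$ is automatically a scalar multiple of the given inner product (this holds regardless of the multiplicity in the target), so once membership in $\Lambda^k_7W^*$ is established, a single norm check suffices; and your norm computations $|\lambda^2(e^i)|^2=\tfrac14(1+3)=1$ and $|\lambda^4(e^i)|^2=\tfrac18(4+4)=1$ are correct. Once the image is known to be $\Lambda^k_7W^*\cong W_7$, the upgrade from $G_2$- to $\Sp$-equivariance does follow from the one-dimensionality of both $\mathrm{Hom}_{G_2}(V_7,V_7)$ and $\mathrm{Hom}_{\Sp}(W_7,W_7)$, but that step should be spelled out rather than left as a parenthetical.
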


The following decomposition is also well-known. 
\begin{align*}
\Lambda^2_7 W^* 
=& \{\, \alpha \in \Lambda^2 W^* \mid \alpha \wedge \Phi = 3 * \alpha \,\}, \\
\Lambda^2_{21} W^*
=& 
\{\, \alpha \in \Lambda^2 W^* \mid \alpha \wedge \Phi = - * \alpha \,\}.  
\end{align*}
Denote by 
\begin{align*}
\pi^k_\l : \Lambda^k W^* \rightarrow \Lambda^k_\l W^*
\end{align*}
the canonical projection. 
Since 
$\alpha = \pi^2_7 (\alpha) + \pi^2_{21} (\alpha)$ and 
$* (\Phi \wedge \alpha) = 3 \pi^2_7 (\alpha) - \pi^2_{21} (\alpha)$
for a 2-form $\alpha \in \Lambda^2 W^*$, it follows that 
\begin{equation} \label{eq:Spin7 proj 2}
\pi^2_7 (\alpha) = \frac{\alpha + * (\Phi \wedge \alpha)}{4}, \quad
\pi^2_{21} (\alpha) = \frac{3 \alpha - * (\Phi \wedge \alpha)}{4}. 
\end{equation}
For the wedge product of two 2-forms, we have the following by \cite[Proposition 2.1]{MS2}. 
\begin{align} \label{eq:F2decomp}
\begin{split}
\Lambda^2_7 W^* \wedge \Lambda^2_7 W^* 
&= \Lambda^4_1 W^*  \oplus \Lambda^4_{27} W^*, \\
\Lambda^2_7 W^* \wedge \Lambda^2_{21} W^* 
&= \Lambda^4_7 W^*  \oplus \Lambda^4_{35} W^*, \\
\Lambda^2_{21} W^* \wedge \Lambda^2_{21} W^* 
&= \Lambda^4_1 W^*  \oplus \Lambda^4_{27} W^* \oplus \Lambda^4_{35} W^*. 
\end{split}
\end{align}
The following is important in the proof of Proposition \ref{prop:Cayley eq pt}. 

\begin{proposition}[{\cite[Proposition 2.7]{KYSpin7}}] \label{prop:2form norm}
For any $\beta \in \Lambda^2_7 W^*$ and $\gamma \in \Lambda^2_{21} W^*$, we have 
\begin{align*}
\beta^3 &= \frac{3}{2} |\beta|^2 * \beta, \\
|\beta|^4 &= \frac{2}{3} |\beta^2|^2, \\
|\gamma|^4 &= |\gamma^2|^2 - \frac{1}{3} * \gamma^4, \\
|\beta|^2 |\gamma|^2 &= 2 |\beta \wedge \gamma|^2. 
\end{align*}
\end{proposition}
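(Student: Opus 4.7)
The plan is to exploit $\Sp$-invariance together with the self-dual/anti-self-dual splitting of $\Lambda^4 W^*$ in eight dimensions, reducing each identity to a verification on an explicit representative. Since $\Sp$ acts on $\Lambda^2_7 W^* \cong W_7$ as the double cover of ${\rm SO}(7)$, the action on the unit sphere is transitive and both sides of identity (1) are $\Sp$-equivariant and homogeneous of degree $3$; it therefore suffices to test on $\beta = \lambda^2(e^1) = \tfrac{1}{2}(e^{01}+e^{23}+e^{45}+e^{67})$ (with $|\beta|^2 = 1$), for which a direct expansion yields $\beta^3 = \tfrac{3}{4}(e^{012345}+e^{012367}+e^{014567}+e^{234567}) = \tfrac{3}{2}{\ast}\beta$. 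Identity (2) is then immediate: by \eqref{eq:F2decomp}, $\beta^2 \in \Lambda^4_1 \oplus \Lambda^4_{27} \subset \Lambda^4_+ W^*$, so ${\ast}\beta^2 = \beta^2$ and $|\beta^2|^2\vol = \beta^2 \wedge {\ast}\beta^2 = \beta^4$; combined with $\beta^4 = \beta \wedge \beta^3 = \tfrac{3}{2}|\beta|^4\vol$ coming from (1), this gives $|\beta^2|^2 = \tfrac{3}{2}|\beta|^4$.

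For identity (3), I would decompose $\gamma^2 = (\gamma^2)_+ + (\gamma^2)_-$ with $(\gamma^2)_+ \in \Lambda^4_1 \oplus \Lambda^4_{27} \subset \Lambda^4_+$ and $(\gamma^2)_- \in \Lambda^4_{35} = \Lambda^4_-$ by \eqref{eq:F2decomp}. Since self-dual and anti-self-dual $4$-forms are $\wedge$-orthogonal in eight dimensions, $\gamma^4 = (|(\gamma^2)_+|^2 - |(\gamma^2)_-|^2)\vol$ and $|\gamma^2|^2 = |(\gamma^2)_+|^2 + |(\gamma^2)_-|^2$, so the right-hand side of (3) becomes $\tfrac{2}{3}|(\gamma^2)_+|^2 + \tfrac{4}{3}|(\gamma^2)_-|^2$. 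The $\Lambda^4_1$ summand is handled immediately: from $\gamma \wedge \Phi = -{\ast}\gamma$ we get $\langle \gamma^2, \Phi\rangle\vol = \gamma \wedge (\gamma \wedge \Phi) = -|\gamma|^2\vol$, hence $(\gamma^2)_1 = -\tfrac{|\gamma|^2}{14}\Phi$. For the remaining $\Lambda^4_{27}$ and $\Lambda^4_{35}$ pieces I would reduce $\gamma$ to a Cartan normal form: since $\Lambda^2_{21}W^* \cong \mathfrak{spin}(7)$ has rank $3$, every $\gamma$ is $\Sp$-conjugate to a three-parameter family on which all quartic quantities become explicit polynomials that can be matched term by term. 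Identity (4) is treated analogously: both sides are bidegree-$(2,2)$ $\Sp$-invariants, and $\beta \wedge \gamma \in \Lambda^4_7 \oplus \Lambda^4_{35}$ by \eqref{eq:F2decomp}; fixing $\beta = \lambda^2(e^1)$ via transitivity and using the isomorphism $\lambda^4$ to identify the $\Lambda^4_7$ component reduces (4) to a computation linear in $\gamma^{\otimes 2}$.

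The main obstacle is identity (3): the adjoint action of $\Sp$ on $\Lambda^2_{21}$ is \emph{not} transitive on the unit sphere, so a single-orbit reduction as in (1) is unavailable. The Cartan-reduction route outlined above is workable but requires tracking three parameters through quartic expressions; a potentially cleaner alternative is to observe that the space of $\Sp$-invariant quartic polynomials on $\mathfrak{spin}(7)$ is low-dimensional and to identify (3) as an equality of scalar multiples of a basis of that invariant space, pinning the coefficients down by evaluating on a few test elements whose self-dual and anti-self-dual norms are easy to compute.
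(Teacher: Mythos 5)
Your treatment of the first two identities is complete and correct: transitivity of $\Sp$ on the unit sphere of $\Lambda^2_7 W^*\cong W_7$ together with degree-$3$ homogeneity legitimately reduces the first identity to the explicit check at $\beta=\lambda^2(e^1)$, which you carry out correctly, and the second then follows from self-duality of $\beta^2\in\Lambda^4_1\oplus\Lambda^4_{27}$ exactly as you say. (For reference, the paper does not prove this proposition at all — it imports it from \cite[Proposition 2.7]{KYSpin7} — so there is no in-paper argument to compare against.)

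The genuine gap is that the third and fourth identities are never actually established; you reduce them to computations and then stop. For the third identity your bookkeeping $|\gamma^2|^2-\tfrac13*\gamma^4=\tfrac23|(\gamma^2)_+|^2+\tfrac43|(\gamma^2)_-|^2$ and the evaluation $(\gamma^2)_1=-\tfrac{|\gamma|^2}{14}\Phi$ are fine, but the remaining claim — that this combination of the $\Lambda^4_{27}$ and $\Lambda^4_{35}$ norms collapses to $|\gamma|^4$ — is precisely where the content lies, and neither the Cartan-subalgebra route (a three-parameter quartic identity that must be expanded and matched) nor the invariant-theory route is executed. Note moreover that for $\sp\cong\mathfrak{spin}(7)$ the space of degree-$4$ adjoint invariants is $2$-dimensional (the basic invariants have degrees $2,4,6$), so "low-dimensional" does not by itself force the identity: you must still verify that the nontrivial degree-$4$ invariant drops out of $|\gamma^2|^2-\tfrac13*\gamma^4$, which again requires evaluating on at least two independent test elements (e.g.\ $\gamma=e^{01}-e^{23}$, for which both sides equal $4$, is one such check but not enough). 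Similarly for the fourth identity: after fixing $\beta=\lambda^2(e^1)$, the stabilizer is ${\rm Spin}(6)$ and $\Lambda^2_{21}W^*$ splits into two inequivalent irreducibles ($15+6$), so $2|\beta\wedge\gamma|^2=a|\gamma_{15}|^2+b|\gamma_6|^2$ and you must compute $a=b=1$ on a representative of each summand; this is not done. The strategies are sound, but as written the proposal proves only half of the proposition.
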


The following lemma is very useful, 
which is used in the proof of Lemmas \ref{lem:Caypt deg} and \ref{lem:SL4pt deg} several times. 
Note that the following identity depends only on the metric and the orientation 
and is independent of the $\Sp$-structure.

\begin{lemma} \label{lem:24form}
For any $F \in \Lambda^2 W^*$ and $\xi \in \Lambda^4 W^*$, we have 
$$
*\left( \xi \wedge (* F^3)^2 \right)
= \frac{3}{2} \la F^2, \xi \ra* F^4. 
$$
\end{lemma}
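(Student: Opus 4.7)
My plan is to reduce the claim to a pointwise identity between 4-forms, namely
\[
(*F^3)^2 \;=\; \frac{3}{2}\,(*F^4)\cdot *F^2,
\]
after which the statement follows by pairing with $\xi$. Indeed, once this identity is established, wedging with $\xi$ and applying $*$ gives
\[
*\bigl(\xi\wedge(*F^3)^2\bigr)=\frac{3}{2}(*F^4)\cdot *(\xi\wedge *F^2)=\frac{3}{2}(*F^4)\,\la \xi, F^2\ra,
\]
where we used the basic duality $\xi\wedge *\alpha=\la\xi,\alpha\ra\,\vol$ for 4-forms $\xi,\alpha$ together with $**=\id$ on the top degree. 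This is exactly the desired formula, so the whole problem is really about proving the stated quadratic identity between 4-forms.

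To prove the quadratic identity, I would exploit the normal form for skew 2-forms. Since the statement is pointwise, purely metric, and independent of the $\Sp$-structure, I may choose a positively oriented orthonormal basis $\{e^0,\ldots,e^7\}$ of $W^*$ in which $F$ becomes block-diagonal,
\[
F=\lambda_1 e^{01}+\lambda_2 e^{23}+\lambda_3 e^{45}+\lambda_4 e^{67}.
\]
Then $F^k$ is a sum of $k$-fold products of distinct blocks; in particular $F^4=24\,\lambda_1\lambda_2\lambda_3\lambda_4\,\vol$, so $*F^4=24\,\Lambda$ with $\Lambda:=\lambda_1\lambda_2\lambda_3\lambda_4$. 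Computing $F^3$ involves only triples of distinct blocks and applying $*$ yields
\[
*F^3=6\bigl(\tfrac{\Lambda}{\lambda_1}e^{01}+\tfrac{\Lambda}{\lambda_2}e^{23}+\tfrac{\Lambda}{\lambda_3}e^{45}+\tfrac{\Lambda}{\lambda_4}e^{67}\bigr),
\]
which has the same block structure as $F$. Squaring this expression and comparing term-by-term with
\[
*F^2=2\bigl(\lambda_3\lambda_4\,e^{0123}+\lambda_2\lambda_4\,e^{0145}+\lambda_2\lambda_3\,e^{0167}+\lambda_1\lambda_4\,e^{2345}+\lambda_1\lambda_3\,e^{2367}+\lambda_1\lambda_2\,e^{4567}\bigr)
\]
produces exactly the relation $(*F^3)^2=36\Lambda\cdot *F^2=\tfrac{3}{2}(*F^4)\,*F^2$.

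The argument is essentially routine once the normal form is invoked, so there is no genuine obstacle; the only thing to take care of is bookkeeping of combinatorial coefficients (the factor $k!$ appearing in $F^k$ because all the blocks commute as 2-forms). If one prefers a coordinate-free presentation, one may instead observe that both sides are $\mathrm{SO}(W)$-equivariant polynomial expressions of bidegree $(6,1)$ in $(F,\xi)$, and that the single invariant of that bidegree is $(*F^4)\la F^2,\xi\ra$, reducing the problem to pinning down one universal constant; this can then be evaluated on a single example such as $F=e^{01}+e^{23}+e^{45}+e^{67}$. Either route leads to the claimed identity.
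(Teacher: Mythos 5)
Your proof is correct, but it takes a genuinely different route from the paper's. The paper never diagonalizes $F$: it writes $F=\tfrac12\sum F_{ij}e^{ij}$ in an arbitrary orthonormal basis and grinds through contraction identities such as $*\bigl((i(e_j)F)\wedge F^3\bigr)=-\tfrac{*F^4}{4}e^j$, splitting $\la F^3,(*F^3)\wedge\xi\ra$ into three pieces $I_1,I_2,I_3$ (with $I_3$ split again) and summing them. You instead pass to the canonical block normal form $F=\sum_i\lambda_i e^{2i,2i+1}$ and establish the \emph{stronger} pointwise identity $(*F^3)^2=\tfrac32(*F^4)\,{*F^2}$ between 4-forms, from which the lemma follows in one line by wedging with $\xi$. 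Your computations of $*F^2$, $*F^3$ and $*F^4$ in the normal form check out (and the coefficients $\Lambda/\lambda_i=\prod_{j\neq i}\lambda_j$ are polynomials, so vanishing $\lambda_i$ cause no trouble), so the argument is complete; it is shorter than the paper's and isolates a cleaner underlying identity, at the modest cost of invoking the spectral normal form for skew-symmetric forms. One caveat: your closing invariant-theoretic remark is not rigorous as stated, since $(*F^4)\la F^2,\xi\ra$ is \emph{not} the only $\mathrm{O}(8)$-invariant of bidegree $(6,1)$ in $(F,\xi)$ (e.g.\ $|F|^2\,|F^2|\,$-type contractions and $\la F^2,\xi\ra\,|F^2|^2$ also occur), so pinning down a single universal constant on one example would not suffice; but that aside is dispensable and the normal-form argument stands on its own.
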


\begin{proof}
Set $F_{i j} =F(e_i, e_j)$. Then, $F= (1/2) \sum_{i, j} F_{i j} e^{i j}$. 
We use 
$$
F^{k+1} = \frac{1}{2} \sum_{i, j} F_{i j} e^{i j} \wedge F^k \quad \mbox{and} \quad 
* \left( (i(e_j) F) \wedge F^3 \right) 
= 
* \left( \frac{i(e_j) F^4}{4} \right)
=
-\left( \frac{*F^4}{4} \right) e^j
$$ 
several times to prove the statement. 
Since 
$F^3 = (1/2) \sum_{i,j} F_{i j} e^{i j} \wedge F^2$, 
we have 
\begin{align} \label{eq:24form 1}
\begin{split}
*\left( \xi \wedge (* F^3)^2 \right)
=& \left \la F^3,  (* F^3) \wedge \xi \right \ra \\
=& \frac{1}{2} \sum_{i,j} F_{i j} \left \la F^2, i(e_j) i(e_i) ((* F^3) \wedge \xi) \right \ra
=
I_1 + I_2 + I_3, 
\end{split}
\end{align}
where 
\begin{align*}
I_1 &= \frac{1}{2} \sum_{i,j} F_{i j} \left \la (* F^3) (e_i, e_j) \xi, F^2 \right \ra, \\
I_2 &= - \sum_{i,j} F_{i j} \left \la i(e_i) (* F^3) \wedge i(e_j) \xi, F^2 \right \ra, \\
I_3 &= \frac{1}{2} \sum_{i,j} F_{i j} \left \la (* F^3) \wedge i(e_j) i(e_i) \xi, F^2 \right \ra. \\
\end{align*}
We compute $I_1, I_2$ and $I_3$. 
Since 
\begin{align} \label{eq:24form 2}
\sum_{i,j} F_{i j} (* F^3) (e_i, e_j) = 
\sum_{i,j} F_{i j} *(e^{i j} \wedge F^3) 
= 2 * F^4, 
\end{align}
we have 
\begin{align} \label{eq:24form 3}
I_1 = \la  F^2, \xi \ra * F^4. 
\end{align}

Since 
\begin{align} \label{eq:24form 4}
\begin{split}
\sum_i F_{i j} i(e_i) (* F^3) 
= \sum_i F_{i j} * (e^i \wedge F^3) 
= - * \left( (i(e_j)F) \wedge F^3 \right)
=\frac{* F^4}{4} e^j,
\end{split} 
\end{align}
we obtain 
\begin{align} \label{eq:24form 5}
I_2
= -\frac{* F^4}{4} \sum_j \left \la e^j \wedge i(e_j) \xi, F^2 \right \ra 
= - \la  F^2, \xi \ra * F^4.  
\end{align}

Since $(1/2) \sum_{i,j} F_{i j} i(e_j) i(e_i) \xi = *(F \wedge * \xi)$ 
and $F^2 = (1/2) \sum_{i,j} F_{i j} e^{i j} \wedge F$, we compute 
\begin{align*}
I_3
&= \left \la (* F^3) \wedge  *(F \wedge * \xi), F^2 \right \ra \\
&= (1/2) \sum_{i,j} F_{i j} \left \la i(e_j) i(e_i) \left( (* F^3) \wedge  *(F \wedge * \xi) \right), F \right \ra
= I_{3,1} + I_{3,2} + I_{3,3}, 
\end{align*}
where 
\begin{align*}
I_{3,1}&= \frac{1}{2} \sum_{i,j} F_{i j} (* F^3) (e_i, e_j) \left \la *(F \wedge * \xi), F \right \ra, \\
I_{3,2}&=- \sum_{i,j} F_{i j} \left \la i(e_i) (* F^3) \wedge i(e_j) *(F \wedge * \xi), F \right \ra, \\
I_{3,3}&= \frac{1}{2} \left( \sum_{i,j} F_{i j} i(e_j) i(e_i) *(F \wedge * \xi) \right) \left \la * F^3, F \right \ra. 
\end{align*}
By \eqref{eq:24form 2}, we have 
$$
I_{3,1} = \la  F^2, \xi \ra * F^4. 
$$
By \eqref{eq:24form 4}, we have 
\begin{align*}
I_{3,2}
=- \frac{*F^4}{4} \sum_{j}  \left \la e^j \wedge i(e_j) *(F \wedge * \xi), F \right \ra 
= - \frac{1}{2} \la F^2, \xi \ra * F^4. 
\end{align*}
Since 
$\sum_{i,j} F_{i j} i(e_j) i(e_i) *(F \wedge * \xi) = 2 * (F^2 \wedge * \xi)$, we have 
$$
I_{3,3}= * (F^2 \wedge * \xi) \left \la * F^3, F \right \ra = \la F^2, \xi \ra * F^4. 
$$
Hence, it follows that 
\begin{align} \label{eq:24form 6}
I_3= \frac{3}{2} \la F^2, * \xi \ra * F^4. 
\end{align}
By \eqref{eq:24form 1}, \eqref{eq:24form 3}, \eqref{eq:24form 5} and \eqref{eq:24form 6}, the proof is completed. 
\end{proof}

\subsection{${\rm SU}(3)$-geometry} \label{sec:SU3 geometry}
Set $U=\R^6 \cong \C^3$. 
Denote by $\{\, e_i \,\}_{i=2}^7$ and $\{\, e^i \,\}_{i=2}^7$
the standard basis of $U$ and its dual, respectively. 
Denote by $g$ the standard inner product on $U$. 
This together with the standard orientation induces the Hodge star operator $*$. 
Denote by $\om=e^{23}+e^{45}+e^{67}$ the standard K\"ahler form on $U \cong \C^3$. 
Set $f^2=e^2+ \i e^3, f^3=e^4+ \i e^5$ and $f^4=e^6+ \i e^7$. 
The holomorphic volume form $\Om$ is given by $\Om=f^{234}:=f^2 \wedge f^3 \wedge f^4$. 
It is well-known that 
\begin{align} \label{eq:SU3 wk}
* \om = \frac{1}{2} \om^2, \qquad * {\rm Re} \Om = \Im \Om, \qquad * \Im \Om = - {\rm Re} \Om. 
\end{align}
The standard complex structure $J$ on $U \cong \C^3$ 
induces the decomposition $U \otimes \C = U^{1,0} \oplus U^{0,1}$, 
where $U^{1,0}$ and $U^{0,1}$ are $\i$- and $-\i$-eigenspaces of $J$, respectively.

Set $\Lambda^{p,q} = \Lambda^p (U^{1,0})^* \otimes \Lambda^q (U^{0, 1})^*$. 
Define real vector spaces 
$\lb \Lambda^{p,q} \rb$ for $p \neq q$ and $[\Lambda^{p, p}]$ by 
\[
\begin{aligned}
\lb \Lambda^{p,q} \rb 
=& (\Lambda^{p, q} \oplus \Lambda^{q, p}) \cap \Lambda^{p + q} U^*
= \{\,\alpha \in \Lambda^{p, q} \oplus \Lambda^{q, p} \mid \bar \alpha = \alpha \,\}, \\
[\Lambda^{p, p}]
=&
\Lambda^{p, p} \cap \Lambda^{2p} U^*
= \{\, \alpha \in \Lambda^{p, p} \mid \bar \alpha = \alpha \,\}. 
\end{aligned}
\]
The K\"ahler form $\om$ is contained in $[\Lambda^{1,1}]$ 
and denote by $[\Lambda^{1,1}_0]$ the orthogonal complement of $\R \om$ in $[\Lambda^{1, 1}]$. 
Then, we have the decomposition 
\[
\Lambda^{2} U^* = \lb \Lambda^{2,0} \rb \oplus [\Lambda^{1,1}_0] \oplus \R \om
\]
and $[\Lambda^{1,1}], [\Lambda^{1,1}_0]$ are identified with $\mathfrak{u}(3), \mathfrak{su}(3)$, respectively.  
For a subspace $S \subset \Lambda^k U^*$, denote by 
$$
\pi_S: \Lambda^k U^* \to S
$$ 
the orthogonal projection. 
For simplicity, set 
$$
\pi^{\lb p,q \rb} = \pi_{\lb \Lambda^{p,q} \rb}: \Lambda^{p+q} U^* \to \lb \Lambda^{p,q} \rb,  \qquad
\pi^{[p, p]} = \pi_{[\Lambda^{p,p}]}: \Lambda^{p+q} U^* \to [\Lambda^{p,p}]. 
$$
Since $* \Lambda^{p,q} = \Lambda^{3-q, 3-p}$, we see that 
\begin{align} \label{eq:Hodge proj SU3}
* \circ \pi^{\lb p,q \rb} = \pi^{\lb 3-p, 3-q \rb} \circ *. 
\end{align}

\begin{lemma} \label{lem:SU3 id}
For any $u \in U$, we have 
\begin{align*}
*\left( (i(u) {\rm Re} \Om) \wedge {\rm Re} \Om \right) = 2 (J u)^\flat, \\
*\left( (i(u) {\rm Re} \Om) \wedge {\rm Im} \Om \right) = -2 u^\flat, \\
|i(u) {\rm Re} \Om|^2 = 2 |u|^2. 
\end{align*}
\end{lemma}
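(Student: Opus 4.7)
The plan is to embed the ${\rm SU}(3)$-structure $(U,\om,\Om)$ into a $G_2$-structure on $V:=\R e_1\oplus U$ and deduce the three identities from Lemma \ref{lem:G2 identities}. In the conventions of Section \ref{sec:G2 geometry}, the $G_2$-structure
\[
\varphi=e^1\wedge\om+{\rm Re}\,\Om,\qquad *_7\varphi=\tfrac12\om^2-e^1\wedge{\rm Im}\,\Om
\]
on $V$ is the standard one (direct check against \eqref{varphi} and \eqref{varphi*}). I will use the Hodge-star relations $*_7\alpha=(-1)^k e^1\wedge *_6\alpha$ for $\alpha\in\Lambda^k U^*$ and $*_7(e^1\wedge\alpha)=*_6\alpha$, together with the ${\rm SU}(3)$-identities $i(u)\om=(Ju)^\flat$, $\tfrac12\om^2\wedge(Ju)^\flat=*_6 u^\flat$, and $\om\wedge{\rm Re}\,\Om=\om\wedge{\rm Im}\,\Om=0$ (the last holding by type, since $\Om\in\Lambda^{3,0}$ and $\dim_\C U=3$).

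For $u\in U\subset V$, substituting
\[
i(u)\varphi=-e^1\wedge(Ju)^\flat+i(u){\rm Re}\,\Om,\qquad i(u)*_7\varphi=\tfrac12\, i(u)\om^2+e^1\wedge i(u){\rm Im}\,\Om
\]
into the $G_2$-identity $\varphi\wedge i(u)\varphi=2*_7(i(u)\varphi)$ and separating the components with and without $e^1$, the $e^1$-free part collapses to ${\rm Re}\,\Om\wedge i(u){\rm Re}\,\Om=-2*_6(Ju)^\flat$; applying $*_6$ and using $**=-\id$ on $1$-forms yields identity (1). Similarly, substituting into $*_7\varphi\wedge i(u)\varphi=3*_7 u^\flat$, the $e^1$-free part reads $\tfrac12\om^2\wedge i(u){\rm Re}\,\Om$, which vanishes automatically because $i(u){\rm Re}\,\Om\in\lb\Lambda^{2,0}\rb$ while $\Lambda^{4,2}=0$ on $\C^3$. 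The coefficient of $e^1$ on both sides, after invoking $\tfrac12\om^2\wedge(Ju)^\flat=*_6 u^\flat$, then yields identity (2).

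For identity (3), the map $u\mapsto|i(u){\rm Re}\,\Om|^2$ is an ${\rm SU}(3)$-invariant real quadratic form on $U\cong\C^3$; since ${\rm SU}(3)$ acts transitively on the unit sphere $S^5\subset U$, it must equal $c|u|^2$ for a single constant $c$. Evaluating on $u=e_2$, where $i(e_2){\rm Re}\,\Om=e^{46}-e^{57}$ has squared norm $2$, fixes $c=2$.

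The only genuine work in the above scheme is bookkeeping: tracking the signs produced by commuting $e^1$ past forms of various degrees and checking the type-based cancellations that kill the $e^1$-free terms in the second derivation. There is no analytic or structural obstacle, and a purely computational alternative (expand both sides in the basis $e^2,\dots,e^7$ and compare coefficients) is always available as a fallback.
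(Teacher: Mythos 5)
Your proof is correct, but it takes a genuinely different route from the paper's. The paper disposes of the first two identities by citing \cite[(2), (3)]{MNS}, and then obtains the third from the second via $|i(u)\,{\rm Re}\,\Om|^2 = *\left((i(u)\,{\rm Re}\,\Om)\wedge *(i(u)\,{\rm Re}\,\Om)\right) = *\left((i(u)\,{\rm Re}\,\Om)\wedge u^\flat\wedge{\rm Im}\,\Om\right) = 2|u|^2$, using $*(i(u)\,{\rm Re}\,\Om)=u^\flat\wedge{\rm Im}\,\Om$ and \eqref{eq:SU3 wk}. You instead derive the first two identities from the $G_2$-identities of Lemma \ref{lem:G2 identities} via the embedding $V=\R e_1\oplus U$, $\varphi=e^1\wedge\om+{\rm Re}\,\Om$, $*_7\varphi=\tfrac12\om^2-e^1\wedge{\rm Im}\,\Om$; I checked the bookkeeping ($i(u)\varphi=-e^1\wedge(Ju)^\flat+i(u)\,{\rm Re}\,\Om$, the splitting of $\Lambda^5V^*$ and $\Lambda^6V^*$ into $e^1$-parts and $e^1$-free parts, the auxiliary facts $\tfrac12\om^2\wedge(Ju)^\flat=*_6u^\flat$ and $\om^2\wedge i(u)\,{\rm Re}\,\Om\in\Lambda^{4,2}\oplus\Lambda^{2,4}=0$, and the final signs from $*_6*_6=-\id$ on $\Lambda^1U^*$) and it all works out. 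Your argument for the third identity --- ${\rm SU}(3)$-invariance of the quadratic form, transitivity on $S^5$, and one evaluation at $u=e_2$ --- is also valid and independent of the second identity, unlike the paper's. What your approach buys is self-containedness (modulo Lemma \ref{lem:G2 identities}, which the paper also states without proof) and consistency with the paper's recurring device of reducing ${\rm SU}(3)$-statements to $G_2$-statements by adjoining a circle direction; the paper's version is shorter but leans on an external reference. One tiny remark: the formula you record for $i(u)*_7\varphi$ is never used, since both $G_2$-identities you invoke require only $\varphi$, $*_7\varphi$ and $i(u)\varphi$.
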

\begin{proof}
The first two equations follow from \cite[(2), (3)]{MNS}. By the second equation and \eqref{eq:SU3 wk}, we have 
$$
|i(u) {\rm Re} \Om|^2 
= * \left(i(u) {\rm Re} \Om \wedge * (i(u) {\rm Re} \Om) \right)
= * \left((i(u) {\rm Re} \Om) \wedge u^\flat \wedge {\rm Im} \Om \right)
= 2 |u|^2. 
$$
\end{proof}

\begin{lemma} \label{lem:SU3 2form}
\begin{align*}
\lb \Lambda^{2,0} \rb &= \{i(u) {\rm Re} \Om \mid u \in U \} = \{ \beta \in \Lambda^2 U^* \mid *(\om \wedge \beta) = \beta \}, \\
[\Lambda^{1,1}_0]      &= \{ \beta \in \Lambda^2 U^* \mid *(\om \wedge \beta) = - \beta \}, \\
\R \om                   &= \{ \beta \in \Lambda^2 U^* \mid *(\om \wedge \beta) = 2 \beta \}. 
\end{align*}
\end{lemma}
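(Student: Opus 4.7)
The plan is to use Schur's lemma applied to the $\mathrm{SU}(3)$-equivariant endomorphism $T: \Lambda^2 U^* \to \Lambda^2 U^*$ defined by $T(\beta) = *(\omega \wedge \beta)$. Since both $\omega$ and the inner product $g$ (hence $*$) are $\mathrm{SU}(3)$-invariant, $T$ commutes with the $\mathrm{SU}(3)$-action. The decomposition
$$
\Lambda^2 U^* = \mathbb{R}\omega \oplus \lb \Lambda^{2,0} \rb \oplus [\Lambda^{1,1}_0]
$$
breaks $\Lambda^2 U^*$ into three pairwise inequivalent real irreducible $\mathrm{SU}(3)$-modules (of real dimensions $1$, $6$, $8$), so $T$ acts by a scalar on each summand. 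It therefore suffices to compute the eigenvalue on one convenient representative per summand and compare with the claimed characterization.

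For $\mathbb{R}\omega$: from $*\omega = \omega^2/2$ (the first identity in \eqref{eq:SU3 wk}), we obtain $\omega \wedge \omega = 2 \ast \omega$, hence $T(\omega) = *(2\ast \omega) = 2\omega$, giving eigenvalue $2$. For $[\Lambda^{1,1}_0]$: I would invoke the Weil identity for primitive $(1,1)$-forms on a K\"ahler $3$-fold, which gives $\ast \beta = -\omega \wedge \beta$; using $\ast^2 = \id$ on $\Lambda^2 U^*$ (since $n=6$, $k=2$), this yields $T(\beta) = \ast(-\ast \beta) = -\beta$, so eigenvalue $-1$. For $\lb \Lambda^{2,0} \rb$: in complex dimension $3$ every $(2,0)$-form $\alpha$ is automatically primitive (because $\omega^2 \wedge \alpha \in \Lambda^{4,2} = 0$), and the corresponding Weil identity gives $\ast \alpha = \omega \wedge \alpha$, with the conjugate relation for $(0,2)$; hence $T(\alpha + \bar\alpha) = \alpha + \bar\alpha$, eigenvalue $+1$. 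Since the three eigenvalues $2, -1, +1$ are distinct, the eigenspace description of each summand follows.

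It remains to verify the equality $\lb \Lambda^{2,0} \rb = \{\, i(u) \mathrm{Re}\,\Omega \mid u \in U \,\}$. I would decompose $u = u^{1,0} + u^{0,1}$; then $i(u^{0,1})\Omega = 0$ and $i(u^{1,0})\bar\Omega = 0$, so
$$
i(u)\,\mathrm{Re}\,\Omega = \tfrac{1}{2}\bigl(i(u^{1,0})\Omega + i(u^{0,1})\bar\Omega\bigr) \in \Lambda^{2,0} \oplus \Lambda^{0,2},
$$
and it is manifestly real, so $i(u)\mathrm{Re}\,\Omega \in \lb \Lambda^{2,0} \rb$. The map $u \mapsto i(u)\mathrm{Re}\,\Omega$ is injective by the norm identity $|i(u)\mathrm{Re}\,\Omega|^2 = 2|u|^2$ from Lemma \ref{lem:SU3 id}, and both its domain $U$ and codomain $\lb \Lambda^{2,0} \rb$ are real $6$-dimensional, so the map is a bijection onto $\lb \Lambda^{2,0} \rb$.

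The only potential obstacle is justifying the two primitive Hodge identities used above; if one prefers to avoid quoting the general Weil formula, each can be verified quickly on a single generator (e.g.\ on $f^2 \wedge f^3$ for the $(2,0)$ case and on $f^2 \wedge \bar f^3$ for the primitive $(1,1)$ case) using the standard basis, since by Schur's lemma a single successful evaluation determines the scalar on the entire irreducible summand.
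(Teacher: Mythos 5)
Your argument is correct, and it reaches the lemma by a more self-contained route than the paper does. Both proofs ultimately rest on the same skeleton: the map $T(\beta)=*(\om\wedge\beta)$ preserves the decomposition $\Lambda^2 U^*=\R\om\oplus\lb\Lambda^{2,0}\rb\oplus[\Lambda^{1,1}_0]$ and acts on the summands by the distinct scalars $2$, $1$, $-1$, so the eigenspace descriptions follow. The difference is in how the scalars and the identification $\lb\Lambda^{2,0}\rb=\{\,i(u){\rm Re}\,\Om\mid u\in U\,\}$ are obtained: the paper simply cites \cite{MNS} for the eigenvalue on $[\Lambda^{1,1}_0]$ and for the description of $\lb\Lambda^{2,0}\rb$ as contractions of ${\rm Re}\,\Om$, and then gets the eigenvalue $+1$ by the one-line computation $*(\om\wedge i(u){\rm Re}\,\Om)=-*(i(u)\om\wedge{\rm Re}\,\Om)=i(Ju)\Im\Om=i(u){\rm Re}\,\Om$, using $\om\wedge{\rm Re}\,\Om=0$ and \eqref{eq:SU3 wk}. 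You instead derive all three eigenvalues from the Weil identities for primitive forms (or from single-generator checks, which is exactly how the paper treats the ${\rm SU}(4)$ analogue in Lemma \ref{lem:SU4 2form}), and you prove the contraction description independently via the type decomposition of $i(u){\rm Re}\,\Om$, the norm identity of Lemma \ref{lem:SU3 id} (which is established before this lemma, so there is no circularity), and a dimension count. Your version buys independence from \cite{MNS} at the cost of invoking, or re-verifying on generators, the primitive Hodge identities. One technical point worth making explicit: $\lb\Lambda^{2,0}\rb$ is a real irreducible of complex type, so Schur's lemma by itself only says that an equivariant endomorphism acts as $a+bJ$ for an invariant complex structure $J$; this is harmless here both because $T$ is self-adjoint with respect to the induced inner product (forcing $b=0$) and because a single evaluation $T\beta_0=\beta_0$ with $\beta_0\neq 0$ already forces $a=1$, $b=0$, but it deserves a sentence.
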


\begin{proof}
The equations for $\R \om$ and $[\Lambda^{1,1}_0]$ hold by \eqref{eq:SU3 wk} and \cite[(13)]{MNS}, respectively. 
The first equation for $\lb \Lambda^{2,0} \rb$ follows from \cite[p.59]{MNS}. 
By the equation $\om \wedge {\rm Re} \Om = 0$ and \eqref{eq:SU3 wk}, we have for $u \in U$ 
\begin{align*}
* \left( \om \wedge i(u) {\rm Re} \Om \right)
=&
- * \left(i(u) \om \wedge {\rm Re} \Om \right) \\
=&
i(J u) \Im \Om
=
\Im \left( i(J u) \Om \right)
= \Im \left( \i i(u) \Om \right) 
=i(u) {\rm Re} \Om, 
\end{align*}
which implies the last equation for $\lb \Lambda^{2,0} \rb$. 
\end{proof}

By Lemmas \ref{lem:SU3 id} and \ref{lem:SU3 2form}, we obtain the following. 

\begin{corollary} \label{cor:SU3 2form norm}
For any $\beta \in \Lambda^2 U^*$, we have 
$$
|\beta \wedge {\rm Re} \Om|^2 = |\beta \wedge {\rm Im} \Om|^2 = 2 |\pi^{\lb 2,0 \rb}(\beta) |^2
$$
\end{corollary}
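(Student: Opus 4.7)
The plan is to exploit the orthogonal decomposition $\Lambda^2 U^* = \lb \Lambda^{2,0} \rb \oplus [\Lambda^{1,1}_0] \oplus \R\om$ provided by Lemma \ref{lem:SU3 2form}, together with the pointwise identities of Lemma \ref{lem:SU3 id}. First I would write $\beta = \beta_{2,0} + \beta_0 + c\,\om$ with $\beta_{2,0} \in \lb \Lambda^{2,0} \rb$, $\beta_0 \in [\Lambda^{1,1}_0]$, $c \in \R$, and note that $\pi^{\lb 2,0 \rb}(\beta) = \beta_{2,0}$.

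The next step is to observe that the $[\Lambda^{1,1}_0]$ and $\R \om$ components contribute nothing when wedged with ${\rm Re}\,\Om$ or $\Im\,\Om$. Indeed, $\om \wedge \Om$ and $\beta_0 \wedge \Om$ lie in $\Lambda^{4,1} U^*$, and since $\dim_\C U = 3$ we have $\Lambda^{4,0} U^* = 0$, so these wedge products vanish; taking real and imaginary parts gives $\om \wedge {\rm Re}\,\Om = \om \wedge \Im\,\Om = 0$ and similarly for $\beta_0$. Consequently,
\[
\beta \wedge {\rm Re}\,\Om = \beta_{2,0} \wedge {\rm Re}\,\Om, \qquad \beta \wedge \Im\,\Om = \beta_{2,0} \wedge \Im\,\Om.
\]

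By Lemma \ref{lem:SU3 2form} we can write $\beta_{2,0} = i(u) {\rm Re}\,\Om$ for some $u \in U$. Applying Lemma \ref{lem:SU3 id} and using that $\ast$ is an isometry and $J$ preserves the norm, I would compute
\[
|\beta_{2,0} \wedge {\rm Re}\,\Om|^2 = |\ast(\beta_{2,0} \wedge {\rm Re}\,\Om)|^2 = |2(Ju)^\flat|^2 = 4|u|^2,
\]
and likewise $|\beta_{2,0} \wedge \Im\,\Om|^2 = |{-2}u^\flat|^2 = 4|u|^2$. Since $|\beta_{2,0}|^2 = |i(u){\rm Re}\,\Om|^2 = 2|u|^2$ by the third identity of Lemma \ref{lem:SU3 id}, we conclude
\[
|\beta \wedge {\rm Re}\,\Om|^2 = |\beta \wedge \Im\,\Om|^2 = 4|u|^2 = 2|\beta_{2,0}|^2 = 2|\pi^{\lb 2,0 \rb}(\beta)|^2,
\]
as required. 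There is essentially no obstacle: this is a short consequence of the two preceding lemmas, with the only mild point being the type-degree argument that forces the $(1,1)$-components to drop out.
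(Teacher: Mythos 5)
Your proposal is correct and follows essentially the same route as the paper's proof: reduce to the $\lb\Lambda^{2,0}\rb$-component (the paper states $\beta\wedge{\rm Re}\,\Om=\pi^{\lb 2,0\rb}(\beta)\wedge{\rm Re}\,\Om$ without spelling out the type-degree reason you give), write that component as $i(u){\rm Re}\,\Om$ via Lemma \ref{lem:SU3 2form}, and conclude with the identities of Lemma \ref{lem:SU3 id}. No issues.
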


\begin{proof}
Since $\beta \wedge {\rm Re} \Om = \pi^{\lb 2,0 \rb}(\beta) \wedge {\rm Re} \Om$ 
and $\pi^{\lb 2,0 \rb}(\beta) = i(u) {\rm Re} \Om$ for some $u \in U$ by Lemma \ref{lem:SU3 2form}, 
Lemma \ref{lem:SU3 id} implies that 
$$
|\beta \wedge {\rm Re} \Om|^2 = 4|u|^2 = 2 |\pi^{\lb 2,0 \rb}(\beta) |^2. 
$$
We can compute $|\beta \wedge {\rm Im} \Om|^2$ similarly and the proof is completed. 
\end{proof}

\subsection{${\rm SU}(4)$-geometry} \label{sec:SU4 geometry}
Set $W=\R^8 \cong \C^4$. 
Denote by $\{\, e_i \,\}_{i=0}^7$ and $\{\, e^i \,\}_{i=0}^7$
the standard basis of $W$ and its dual, respectively. 
Denote by $g$ the standard inner product on $W$. 
This together with the standard orientation induces the Hodge star operator $*$. 
Denote by $\om=e^{01}+e^{23}+e^{45}+e^{67}$ the standard K\"ahler form on $W \cong \C^4$. 
Set $f^1=e^0+ \i e^1, f^2=e^2+ \i e^3, f^3=e^4+ \i e^5$ and $f^4=e^6+ \i e^7$. 
The holomorphic volume form $\Om$ is given by $\Om=f^{1234}:=f^1 \wedge f^2 \wedge f^3 \wedge f^4$. 
Using the standard complex structure $J$ on $W \cong \C^4$, 
we can define spaces 
$\lb \Lambda^{p,q} \rb$ and $[\Lambda^{p, p}]$ as in Section \ref{sec:SU3 geometry}. 
For a subspace $S \in \Lambda^k W^*$, denote by 
$$
\pi_S: \Lambda^k W^* \to S
$$ 
the orthogonal projection. 
For simplicity, set 
$$
\pi^{\lb p,q \rb} = \pi_{\lb \Lambda^{p,q} \rb}: \Lambda^{p+q} W^* \to \lb \Lambda^{p,q} \rb,  \qquad
\pi^{[p, p]} = \pi_{[\Lambda^{p,p}]}: \Lambda^{p+q} W^* \to [\Lambda^{p,p}]. 
$$
Since $* \Lambda^{p,q} = \Lambda^{4-q, 4-p}$, we see that 
\begin{align} \label{eq:Hodge proj SU4}
* \circ \pi^{\lb p,q \rb} = \pi^{\lb 4-p, 4-q \rb} \circ *. 
\end{align}
It is well-known that 
\begin{align} \label{eq:SU4 wk}
* \om = \frac{1}{6} \om^3, \qquad * \om^2=\om^2, \qquad * {\rm Re} \Om = {\rm Re} \Om, \qquad * \Im \Om = \Im \Om. 
\end{align}
and 
\begin{align}\label{eq:SU4 40}
\lb \Lambda^{4,0} \rb = \R {\rm Re} \Om \oplus \R \Im \Om. 
\end{align}
The K\"ahler form $\om$ is contained in $[\Lambda^{1,1}]$ 
and denote by $[\Lambda^{1,1}_0]$ the orthogonal complement of $\R \om$ in $[\Lambda^{1,1}]$. 
We have the following irreducible decomposition with respect to the standard ${\rm U}(4)$-action 
$$
\Lambda^{2} W^* = \lb \Lambda^{2,0} \rb \oplus [\Lambda^{1,1}_0] \oplus \R \om, 
$$
where $[\Lambda^{1,1}], [\Lambda^{1,1}_0]$ are identified with $\mathfrak{u}(4), \mathfrak{su}(4)$, respectively.  
Note that 
$$
\pi_{\R \om} (\beta) = \frac{\la \beta, \om \ra}{4} \om. 
$$
These spaces are characterized as follows. 

\begin{lemma} \label{lem:SU4 2form}
\begin{align*}
\lb \Lambda^{2,0} \rb &= \{ \beta \in \Lambda^2 W^* \mid *(\om^2 \wedge \beta) = 2 \beta \}, \\
[\Lambda^{1,1}_0]      &= \{ \beta \in \Lambda^2 W^* \mid *(\om^2 \wedge \beta) = -2 \beta \}, \\
\R \om                   &= \{ \beta \in \Lambda^2 W^* \mid *(\om^2 \wedge \beta) = 6 \beta \}. 
\end{align*}
\end{lemma}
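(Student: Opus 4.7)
The strategy is to analyze the $\R$-linear operator
$$L \colon \Lambda^2 W^* \to \Lambda^2 W^*, \qquad L(\beta) := *(\om^2 \wedge \beta),$$
and show that it acts as the scalar $6$, $-2$, and $2$ on $\R \om$, $[\Lambda^{1,1}_0]$, and $\lb \Lambda^{2,0} \rb$ respectively. Since these three scalars are pairwise distinct, the three summands will then be exactly the eigenspaces of $L$ with these eigenvalues, which is the content of the lemma.

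To reduce matters to a single calculation on each summand, I would first argue that $L$ respects the given decomposition. The operator $L$ is $\mathrm{U}(4)$-equivariant, because both $\om \in [\Lambda^{1,1}]$ and the Hodge star $*$ are $\mathrm{U}(4)$-invariant. Since $\om^2 \in [\Lambda^{2,2}]$ and $* \colon \Lambda^{p,q} \to \Lambda^{4-q,4-p}$, the operator $L$ preserves each bidegree $\Lambda^{p,q}$ with $p+q=2$, and thus preserves $\lb \Lambda^{2,0} \rb$ and $[\Lambda^{1,1}]$. Moreover $L$ is self-adjoint, because the identity $*\alpha \wedge *\gamma = \alpha \wedge \gamma$ for a 6-form $\alpha$ and a 2-form $\gamma$ on $W$ yields
$$\la L\beta, \gamma \ra \vol = \om^2 \wedge \beta \wedge \gamma,$$
which is symmetric in $\beta$ and $\gamma$. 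Hence $L$ also preserves the orthogonal decomposition $[\Lambda^{1,1}] = \R \om \oplus [\Lambda^{1,1}_0]$. Each summand is a real irreducible $\mathrm{U}(4)$-module: the first is a line, the second is isomorphic to the adjoint representation $\mathfrak{su}(4)$, and the third is the real form of the complex $\mathrm{U}(4)$-module $\Lambda^{2,0}$, whose complex conjugate $\Lambda^{0,2}$ has a different determinantal weight and therefore is not isomorphic to $\Lambda^{2,0}$. Consequently, for any $\mu \in \R$, the $\mu$-eigenspace of $L$ intersected with a given summand is a $\mathrm{U}(4)$-invariant subspace of that summand, hence either zero or the whole summand.

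It then remains to exhibit one nonzero eigenvector per summand with the advertised eigenvalue. For $\beta = \om$, the identities in \eqref{eq:SU4 wk} immediately give $L(\om) = *\om^3 = 6\om$. For the primitive $(1,1)$-form $\beta = e^{01}-e^{23}$, expanding $\om^2$ as a sum of six 4-forms leaves, after wedging with $\beta$, only the two contributions coming from $e^{4567}$, so $\om^2 \wedge \beta = 2(e^{014567} - e^{234567})$, whose Hodge dual equals $2(e^{23}-e^{01}) = -2\beta$. Finally, for $\beta = e^{02}-e^{13} = \mathrm{Re}\bigl((e^0+\i e^1)\wedge (e^2+\i e^3)\bigr) \in \lb \Lambda^{2,0} \rb$, the analogous expansion gives $\om^2 \wedge \beta = 2(e^{024567} - e^{134567})$ and hence $L(\beta) = 2\beta$. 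The remaining work is the careful sign bookkeeping in these three Hodge-star computations; this is the only real obstacle, and it is routine once the $\mathrm{U}(4)$-equivariance argument has reduced us to checking a single representative per summand.
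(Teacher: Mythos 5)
Your proposal is correct and follows essentially the same route as the paper: the paper also computes $*(\om^2\wedge\cdot)$ on the representatives $\om$, $e^{01}-e^{23}$, and $e^{02}-e^{13}=\mathrm{Re}(f^{12})$, obtaining the eigenvalues $6$, $-2$, $2$, and then concludes by ${\rm U}(4)$-equivariance and Schur's lemma. The extra details you supply (self-adjointness, preservation of bidegree, irreducibility of each summand) are correct elaborations of the same argument.
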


\begin{proof}
The equation for $\R \om$ holds by \eqref{eq:SU4 wk}. 
Set 
$\beta={\rm Re}(f^{12})=e^{02}-e^{13} \in \lb \Lambda^{2,0} \rb$ and 
$\beta'= e^{01}-e^{23} \in [\Lambda^{1,1}_0]$. 
Since 
$\om^2=2\left( e^{0123} + e^{0145} + e^{0167} + e^{2345} + e^{2367} + e^{4567} \right)$, we see that 
$$
*(\om^2 \wedge \beta) = 2* (e^{024567}-e^{134567}) = 2 \beta, \qquad
*(\om^2 \wedge \beta') = 2* (e^{014567}-e^{234567}) = -2 \beta'. 
$$
Since $*(\om^2 \wedge \cdot): \Lambda^2 W^* \to \Lambda^2 W^*$ is ${\rm U}(4)$-equivariant, 
the proof is completed by Schur's lemma. 
\end{proof}

The spaces $[\Lambda^{1,1}_0]$ and $\R \om$ are also irreducible with respect to the standard ${\rm SU}(4)$-action.
As a representation of  ${\rm SU}(4)$, $\lb \Lambda^{2,0} \rb$ further decomposes as follows. 

\begin{lemma} \label{lem:Apm}
We have the following irreducible decomposition with respect to the standard ${\rm SU}(4)$-action: 
$$
\lb \Lambda^{2,0} \rb = A_+ \oplus A_-, 
$$
where 
\begin{align*}
A_+ &= \{ \beta \in \Lambda^2 W^* \mid *({\rm Re} \Om \wedge \beta) = 2 \beta \}, \\ 
A_- &= \{ \beta \in \Lambda^2 W^* \mid *({\rm Re} \Om \wedge \beta) = -2 \beta \}. 
\end{align*}
\end{lemma}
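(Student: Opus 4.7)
The plan is to study the $SU(4)$-equivariant linear operator $T := *({\rm Re}\,\Omega \wedge \cdot)$ on $\Lambda^2 W^*$ and to show three things in turn: (i) $T$ preserves $\lb \Lambda^{2,0}\rb$; (ii) $T^2 = 4\,{\rm id}$ on this subspace; and (iii) each $\pm 2$-eigenspace is $SU(4)$-irreducible of real dimension six. For (i), writing $\beta = \gamma + \bar\gamma$ with $\gamma \in \Lambda^{2,0}$, the vanishing $\Lambda^{6,0} = \Lambda^{0,6} = 0$ in complex dimension four forces $\Omega \wedge \gamma = 0 = \bar\Omega \wedge \bar\gamma$, so that ${\rm Re}\,\Omega \wedge \beta = \tfrac12(\bar\Omega \wedge \gamma + \Omega \wedge \bar\gamma) \in \lb \Lambda^{4,2}\rb$; applying $*$ and invoking \eqref{eq:Hodge proj SU4} lands the result back in $\lb \Lambda^{2,0}\rb$.

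For (ii), the key input is the explicit sign identity
\[
*(\bar\Omega \wedge f^{ij}) \;=\; 4\,\varepsilon_{ijkl}\,\bar f^{kl},
\]
where $\{k,l\} = \{1,2,3,4\}\setminus\{i,j\}$ with $k<l$ and $\varepsilon_{ijkl}$ is the sign of the permutation $(i,j,k,l)$. I would derive this by expanding $\bar\Omega \wedge f^{ij}$ in the real basis $\{e^0,\ldots,e^7\}$ using $f^a \wedge \bar f^a = -2i\,e^{2a-2,\,2a-1}$, then applying $*$ to each of the four resulting $6$-form monomials with explicit inversion counts; one case such as $(i,j)=(1,2)$ pins down the universal constant, and the remaining cases follow by the same procedure. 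Taking complex conjugates gives the companion identity $*(\Omega \wedge \bar f^{ij}) = 4\varepsilon_{ijkl}f^{kl}$. Plugging both into $T(\gamma + \bar\gamma) = \tfrac12\bigl(*(\bar\Omega \wedge \gamma) + *(\Omega \wedge \bar\gamma)\bigr)$ with $\gamma = f^{ij}$ yields $T\beta = 2\varepsilon_{ijkl}(f^{kl} + \bar f^{kl})$; iterating and using $\varepsilon_{klij} = \varepsilon_{ijkl}$ gives $T^2\beta = 4\beta$. Since the forms $f^{ij} + \bar f^{ij}$ together with $-i(f^{ij} - \bar f^{ij})$ span $\lb \Lambda^{2,0}\rb$, the identity $T^2 = 4\,{\rm id}$ holds throughout.

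The spectral decomposition $\lb \Lambda^{2,0}\rb = A_+ \oplus A_-$ into $\pm 2$-eigenspaces is then immediate, and Step (ii) exhibits explicit nonzero elements of each, for instance ${\rm Re}(f^{12}) \pm {\rm Re}(f^{34}) \in A_\pm$. For the irreducibility claim (iii), I would invoke the fact that $\lb \Lambda^{2,0}\rb$ is isomorphic as a real $SU(4)$-module to $V_\R \oplus V_\R$, where $V_\R$ is the $6$-dimensional real vector representation of $SO(6)$ obtained via the double cover $SU(4) \cong {\rm Spin}(6) \to SO(6)$. Indeed, $\Lambda^{2,0} \cong \Lambda^2 \C^4$ is the complex vector representation of $SO(6)$ and carries the $SU(4)$-equivariant antilinear involution $c(\gamma) := \tfrac14 *(\Omega \wedge \bar\gamma)$, whose square is the identity by the same key formula, giving a real structure and the identification $\Lambda^{2,0} = V_\R \otimes_\R \C$. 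Since $V_\R$ is irreducible with ${\rm End}_{SU(4)}(V_\R) \cong \R$, every $SU(4)$-invariant subspace of $\lb \Lambda^{2,0}\rb$ has real dimension $0$, $6$, or $12$; as $A_\pm$ are both invariant and both nonzero with $\dim A_+ + \dim A_- = 12$, each has dimension six and is therefore irreducible.

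The main technical obstacle is the sign calculation underpinning the key formula $*(\bar\Omega \wedge f^{ij}) = 4\varepsilon_{ijkl}\bar f^{kl}$: the expression $\bar\Omega \wedge f^{ij}$ unpacks into four $6$-form monomials in $\{e^0,\ldots,e^7\}$, each of whose Hodge dual requires a careful permutation-sign count, and the clean reassembly of the sum into a scalar multiple of $\bar f^{kl}$ is not evident a priori and must be verified by patient bookkeeping of the kind exemplified in Lemma \ref{lem:24form}.
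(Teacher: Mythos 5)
Your proof is correct, but it follows a genuinely different route from the paper's. The paper does not analyze the operator $T=*({\rm Re}\,\Om\wedge\cdot)$ from scratch: it imports the splitting $\lb\Lambda^{2,0}\rb=A_+\oplus A_-$ and its irreducibility from Mu\~{n}oz \cite{Munoz}, together with the inclusions $A_+\subset\Lambda^2_7W^*$ and $A_-\subset\Lambda^2_{21}W^*$ for the induced $\Sp$-structure $\Phi=\tfrac12\om^2+{\rm Re}\,\Om$, and then extracts the eigenvalue characterizations by subtracting the identity $*(\om^2\wedge a_\pm)=2a_\pm$ of Lemma \ref{lem:SU4 2form} from $*(\Phi\wedge a_+)=3a_+$ and $*(\Phi\wedge a_-)=-a_-$, finishing with the remark that $T$ vanishes on $[\Lambda^{1,1}]$. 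Your argument is instead self-contained: the sign identity $*(\bar\Om\wedge f^{ij})=4\varepsilon_{ijkl}\bar f^{kl}$ is correct (for instance $*(\bar\Om\wedge f^{12})=4\bar f^{34}$ checks out directly), it yields $T^2=4\,\id$ on $\lb\Lambda^{2,0}\rb$ and hence the eigenspace splitting with both eigenspaces visibly nonzero, and the identification of $\lb\Lambda^{2,0}\rb$ with two copies of the real vector representation of ${\rm SO}(6)$ under ${\rm SU}(4)\cong{\rm Spin}(6)$ (a representation of real type, so with endomorphism algebra $\R$) correctly forces each eigenspace to be six-dimensional and irreducible. What you gain is independence from the external reference, at the price of the Hodge-star bookkeeping; what the paper gains is brevity. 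The one point you should state explicitly, as the paper does, is that since $A_\pm$ are defined as eigenspaces inside all of $\Lambda^2W^*$, you also need $T$ to vanish on $[\Lambda^{1,1}]$ (immediate from ${\rm Re}\,\Om\wedge\Lambda^{1,1}\subset\Lambda^{5,3}\oplus\Lambda^{3,5}=0$), so that no $\pm2$-eigenvector has a $[\Lambda^{1,1}]$-component and the two eigenspaces genuinely exhaust $\lb\Lambda^{2,0}\rb$.
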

\begin{proof}
The decomposition $\lb \Lambda^{2,0} \rb = A_+ \oplus A_-$ is given by \cite[(3)]{Munoz}. 
Our definition of $A_\pm$ may look different from that of \cite{Munoz}, but 
we see the equivalence as follows. 
As we see below, the ${\rm SU}(4)$-structure induces the $\Sp$-structure $\Phi$ given by \eqref{eq:Phi4 SU4}. 
By \cite[Proposition 2]{Munoz}, $A_+ \subset \Lambda^2_7 W^*$ and 
$A_- \subset \Lambda^2_{21} W^*$. 
Then, for $a_+ \in A_+$, we have 
$$
3 a_+ = * (\Phi \wedge a_+) = \frac{1}{2} *(\om^2 \wedge a_+) + *({\rm Re} \Om \wedge a_+). 
$$
By Lemma \ref{lem:SU4 2form}, we have $*(\om^2 \wedge a_+) = 2 a_+$. 
Hence, we obtain $*({\rm Re} \Om \wedge a_+) = 2 a_+$. 
Similarly, we see $*({\rm Re} \Om \wedge a_-) = -2 a_-$ for  $a_- \in A_-$. 
Since $*({\rm Re} \Om \wedge \cdot): \Lambda^2 W^* \to \Lambda^2 W^*$ is the zero map 
on $[\Lambda^{1,1}]$, the proof is completed. 
\end{proof}

Since ${\rm SU}(4) \subset \Sp$, 
the ${\rm SU}(4)$-structure induces the $\Sp$-structure. 
Explicitly, the 4-form in \eqref{Phi4} is given by 
\begin{align} \label{eq:Phi4 SU4} 
\Phi = \frac{1}{2} \om^2 + {\rm Re} \Om. 
\end{align}
The relation among the irreducible representations of ${\rm SU}(4)$ and $\Sp$ is studied in detail in \cite[Proposition 2]{Munoz}. 
The following decomposition is useful in this paper. 
\begin{align} \label{eq:SU4 id7}
\Lambda^2_7 W^* = \R \om \oplus A_+, \qquad \Lambda^4_7 W^* = \R \Im \Om \oplus (\om \wedge A_-). 
\end{align}

We use the following in the proof of Proposition \ref{prop:SL4 eq pt}. 

\begin{lemma} \label{lem:SU4 4form}
For any 4-form $\xi \in \Lambda^4 W^*$, we have 
$$
2 |\pi_{\om \wedge A_-} (\xi)|^2 = \left |\pi_{A_-} \left(* (\om \wedge \xi) \right) \right|^2. 
$$
\end{lemma}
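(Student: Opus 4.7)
The plan is to establish two auxiliary identities and then combine them by a short duality argument. The first is the pointwise self-duality
$$*(\om\wedge\eta)=\om\wedge\eta \qquad (\eta\in A_-).$$
I would deduce this from \eqref{eq:SU4 id7}: since $\om\wedge A_-\subset\Lambda^4_7 W^*$, and since the $\Sp$-irreducible summand $\Lambda^4_7 W^*$ contains the self-dual form $\Im\Om$ (by \eqref{eq:SU4 wk}), Schur's lemma forces every element of $\Lambda^4_7 W^*$ to be self-dual. The second auxiliary identity is the norm scaling
$$\langle \om\wedge\beta,\om\wedge\eta\rangle = 2\langle \beta,\eta\rangle \qquad (\beta,\eta\in A_-),$$
which follows by computing $(\om\wedge\beta)\wedge *(\om\wedge\eta)$: the first identity replaces $*(\om\wedge\eta)$ by $\om\wedge\eta$, and then rearranging the $2$-forms and applying Lemma \ref{lem:SU4 2form} in the form $\om^2\wedge\eta=2{*}\eta$ yields $2\langle\beta,\eta\rangle\vol$.

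Next I would observe that the operator $*(\om\wedge\,\cdot\,)\colon \Lambda^4 W^*\to\Lambda^2 W^*$ is self-adjoint, i.e.
$$\langle *(\om\wedge\xi),\eta\rangle = \langle \xi,\,*(\om\wedge\eta)\rangle$$
for all $\xi\in\Lambda^4 W^*$ and $\eta\in\Lambda^2 W^*$; this is a routine consequence of $**|_{\Lambda^k W^*}=\id$ for even $k$ on $\R^8$ together with the symmetry of the inner product. Combining with the first identity yields $\langle *(\om\wedge\xi),\eta\rangle = \langle\xi,\om\wedge\eta\rangle$ whenever $\eta\in A_-$. The second identity shows that $L_\om\colon A_-\to \om\wedge A_-$ is a linear isomorphism, so I can write $\pi_{\om\wedge A_-}(\xi)=\om\wedge\beta$ for a unique $\beta\in A_-$, and then the right-hand side becomes $\langle\om\wedge\beta,\om\wedge\eta\rangle = 2\langle\beta,\eta\rangle$ by the second identity. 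Since $\eta\in A_-$ is arbitrary, this forces $\pi_{A_-}(*(\om\wedge\xi)) = 2\beta$.

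Taking squared norms I would conclude
$$|\pi_{A_-}(*(\om\wedge\xi))|^2 = 4|\beta|^2 = 2\cdot 2|\beta|^2 = 2|\om\wedge\beta|^2 = 2|\pi_{\om\wedge A_-}(\xi)|^2,$$
which is the claim. The main obstacle is verifying the first identity, i.e.\ that $\om\wedge A_-$ lies in the self-dual part of $\Lambda^4 W^*$; once this is recognized, everything else reduces to formal manipulations with wedge products, Hodge stars, and the symmetry of the inner product, with no nontrivial algebra required.
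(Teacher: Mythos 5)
Your argument is correct and rests on exactly the same two ingredients as the paper's proof: the self-duality of $\om \wedge A_- \subset \Lambda^4_7 W^*$ and the norm relation $|\om\wedge\beta|^2 = 2|\beta|^2$ from Lemma \ref{lem:SU4 2form}, with your adjointness identity $\la *(\om\wedge\xi),\eta\ra = \la\xi,\om\wedge\eta\ra$ being precisely the step $\la\xi,\om\wedge\beta_j\ra = *(\xi\wedge\om\wedge\beta_j) = \la *(\om\wedge\xi),\beta_j\ra$ in the paper. The only difference is presentational: the paper expands against the orthonormal basis $\{\om\wedge\beta_j/\sqrt2\}$ of $\om\wedge A_-$, while you phrase the same computation basis-freely by identifying $\pi_{A_-}(*(\om\wedge\xi)) = 2\beta$ where $\pi_{\om\wedge A_-}(\xi)=\om\wedge\beta$.
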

\begin{proof}
Let $\{ \beta_j \}_{j=1}^6$ be an orthonormal basis of $A_-$. 
Since any element of $\Lambda^4_7 W^*$ is self dual, $\om \wedge \beta_j$ is self dual by \eqref{eq:SU4 id7}. Then, 
by Lemma \ref{lem:SU4 2form}, we have 
$$
\la \om \wedge \beta_j, \om \wedge \beta_k \ra
=
* (\beta_j \wedge \beta_k \wedge \om^2)
=
2 *( \beta_j \wedge * \beta_k) 
= 2 \delta_{j k}. 
$$
Thus, we see that $\{ \om \wedge \beta_j/\sqrt{2} \}_{j=1}^6$ is an orthonormal basis of $\om \wedge A_-$. 
Hence, 
\begin{align*}
2 |\pi_{\om \wedge A_-} (\xi)|^2
&= \sum_j \la \xi, \om \wedge \beta_j \ra^2 \\
&= \sum_j \left(* (\xi \wedge \om \wedge \beta_j) \right )^2
= \sum_j  \la *(\om \wedge \xi), \beta_j \ra^2
= \left |\pi_{A_-} \left(* (\om \wedge \xi) \right) \right|^2. 
\end{align*}
\end{proof}

\section{The volume functional} \label{sec:vol}
In this section, we introduce the volume functional $V$, 
which corresponds to the volume functional for submanifolds 
via the real Fourier--Mukai transform 
and is called the Dirac-Born-Infeld (DBI) action in theoretical physics \cite{MMMS}. 
The relation to the similar volume functional in \cite[Definition 3.1]{JY} for Hermitian metrics on a holomorphic line bundle 
is explained in Remark \ref{rem:JY MC}.  

Let 
$(X,g)$ be a compact oriented $n$-dimensional Riemannian manifold 
and $L \to X$ be a smooth complex line bundle with a Hermitian metric $h$. 
Set 
\[
\begin{aligned}
\mathcal{A}_{0}=\{\, \mbox{Hermitian connections of }(L,h) \,\}
= \nabla + \i \Om^1 \cdot \id_L, 
\end{aligned}
 \]
where $\nabla \in \Aa_{0}$ is any fixed connection. 
We regard the curvature 2-form $F_\n$ of $\n$ as a $\i \R$-valued closed 2-form on $X$. 
For simplicity, set 
$$
\FF_\n := - \i F_\n \in \Om^2. 
$$
Define the \emph{volume functional} $V: \Aa_0 \rightarrow \R$ by 
\[
V(\n) = \int_X \sqrt{\det (\id_{TX} + \FF_\n^\sharp)}\ \vol_g, 
\]
where 
$\FF_\n^\sharp \in \Gamma (X, {\rm End} TX)$ 
is defined by $u\mapsto \left(i(u) \FF_\n \right)^{\sharp}$ 
and $\vol_g$ is the volume form defined by the Riemannian metric $g$. 
Note that $\det (\id_{TX} + \FF_\n^\sharp) \geq 1$ by the argument in Lemma \ref{lem:det}. 
We see that 
the functional $V$ corresponds to the volume functional for submanifolds 
via the real Fourier--Mukai transform by the proof of \cite[Lemma 4.3]{KYFM}.

\begin{remark} \label{rem:det Gn}
It is useful to introduce $G_{\nabla}:TX \to TX$ for $\n \in \Aa_0$ defined by 
\begin{align} \label{eq:Gn}
\begin{split}
G_{\nabla}:=&\id_{TX} - \FF_\n^\sharp\circ \FF_\n^\sharp \\
=& \left(\id_{TX} - \FF_\n^\sharp \right) \circ \left(\id_{TX} + \FF_\n^\sharp \right)
= {}^t\! \left(\id_{TX} + \FF_\n^\sharp \right) \circ \left(\id_{TX} + \FF_\n^\sharp \right), 
\end{split}
\end{align}
where we denote by ${}^t T$ the transpose of a linear map $T:TX \to TX$. 
We see that $G_\n$ is positive definite. 
Since 
$\FF_\n^\sharp \circ G_\n = G_\n \circ \FF_\n^\sharp$, 
we have 
\begin{align} \label{eq:FG comm}
\FF_\n^\sharp \circ G_\n^{-1} = G_\n^{-1}\circ \FF_\n^\sharp, 
\end{align}
which we use frequently. 
We see that $G_\n^{-1}\circ \FF_\n^\sharp$ is skew-symmetric by \eqref{eq:FG comm}.

The volume functional $V$ is rewritten as 
\[
V(\n) = \int_X \left(\det G_\n \right)^{1/4} \ \vol_g. 
\]
In this section, we mainly use this expression for $V$, 
which enables us to deduce the following results. 
Note that setting $K_\n=G_\n^{1/4}:TX \to TX$, we see that $V(\n)=\int_X \vol_{K_\n^* g}$, 
where $\vol_{K_\n^* g}$ is the volume form defined by the metric $K_\n^* g$. 
\end{remark}

We compute the first variation of $V$. 

\begin{proposition} \label{prop:fistvar}
Let $\delta_\n V: T_\n \Aa_0= \i \Om^1 \to \R$ be the linearization of $V:\Aa_0 \to \R$
at $\n \in \Aa_0$. Then, we have 
\[
(\delta_\n V)(\i a) = - \la a, H(\n) \ra_{L^2}. 
\]
for $a \in \Om^1$. Here, $\la \,\cdot\,, \,\cdot\, \ra_{L^2}$ 
is the $L^2$ inner product with respect to the metric $g$ 
and 
$H(\n) \in \Om^1$ is defined by 
\begin{align} \label{eq:MC}
H(\n) =- d^{\ast}\left(\left(\det G_\n \right)^{1/4}\left(G_\n^{-1}\circ \FF_\n^\sharp \right)^{\flat}\right), 
\end{align}
where for a skew-symmetric endomorphism $K$, $K^{\flat}$ is a 2-form 
defined by $K^{\flat}(u, v)=g(K(u), v)$ for $u,v \in TX$. 
We call $H(\n)$ the \emph{mean curvature 1-form} of $\nabla$. 
\end{proposition}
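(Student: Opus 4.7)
The plan is to differentiate $V$ in the direction $\i a$ for $a \in \Omega^1$ by writing $\nabla_t = \nabla + t\, \i a$, so that $\FF_{\nabla_t} = \FF_\nabla + t\, da$, and then use the expression $V(\nabla) = \int_X (\det G_\nabla)^{1/4} \vol_g$ from Remark \ref{rem:det Gn}. Applying Jacobi's formula,
\[
\frac{d}{dt}\Big|_{t=0} (\det G_{\nabla_t})^{1/4} = \frac{1}{4}(\det G_\nabla)^{1/4}\operatorname{tr}\!\left(G_\nabla^{-1}\,\dot G_\nabla\right),
\]
and the chain rule gives $\dot G_\nabla = -(da)^{\sharp}\circ \FF_\nabla^{\sharp} - \FF_\nabla^{\sharp}\circ (da)^{\sharp}$.

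Next I would use the commutativity relation \eqref{eq:FG comm} together with the cyclic invariance of trace to show that the two terms in $\operatorname{tr}(G_\nabla^{-1}\dot G_\nabla)$ are equal, reducing it to $-2\operatorname{tr}\!\bigl(G_\nabla^{-1}\FF_\nabla^{\sharp}\circ (da)^{\sharp}\bigr)$. The endomorphism $K := G_\nabla^{-1}\circ\FF_\nabla^{\sharp}$ is skew-symmetric (again by \eqref{eq:FG comm} and symmetry of $G_\nabla^{-1}$), so I can invoke the standard pointwise identity
\[
\operatorname{tr}(K\circ \alpha^{\sharp}) = -2\langle K^{\flat},\alpha\rangle
\]
valid for any skew-symmetric $K$ and 2-form $\alpha$, applied with $\alpha = da$. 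Combining everything yields
\[
\frac{d}{dt}\Big|_{t=0}(\det G_{\nabla_t})^{1/4} = (\det G_\nabla)^{1/4}\bigl\langle (G_\nabla^{-1}\circ\FF_\nabla^{\sharp})^{\flat},\, da\bigr\rangle.
\]

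Finally I would integrate over $X$ and apply the formal adjoint of $d$ to move the derivative off $a$, producing
\[
(\delta_\nabla V)(\i a) = \int_X \bigl\langle d^{\ast}\!\bigl((\det G_\nabla)^{1/4}(G_\nabla^{-1}\circ\FF_\nabla^{\sharp})^{\flat}\bigr),\, a\bigr\rangle\,\vol_g,
\]
which rearranges to the asserted formula with $H(\nabla)$ as in \eqref{eq:MC}. The only real obstacles are bookkeeping: keeping the signs and the factor of $2$ in the trace-to-inner-product identity straight, and verifying skew-symmetry of $G_\nabla^{-1}\circ\FF_\nabla^{\sharp}$ so that the identity applies. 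I would also note explicitly that variations in the direction of $T_\nabla\Aa_0 = \i\Omega^1$ suffice because $\Aa_0$ is an affine space modeled on this vector space.
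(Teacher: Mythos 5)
Your proposal is correct and follows essentially the same route as the paper's proof: Jacobi's formula for $\frac{d}{dt}(\det G_{\n_t})^{1/4}$, merging the two terms of $\operatorname{tr}(G_\n^{-1}\dot G_\n)$ via \eqref{eq:FG comm} and cyclicity of the trace, the identity $\operatorname{tr}(A^\sharp\circ B^\sharp)=-2\la A,B\ra$ applied to the skew-symmetric $G_\n^{-1}\circ\FF_\n^\sharp$, and integration by parts to produce $d^\ast$. The signs and the factor of $2$ work out exactly as you describe, so no further changes are needed.
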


In \cite[Definition 2.3]{JY}, a similar notion of the mean curvature 1-form is defined. 
We will explain the relation in Remark \ref{rem:JY MC}.

\begin{proof}
Set $\n_t = \n +t \i a$ for $t \in \R$. 
Then, 
\[
\begin{aligned}
(\delta_\n V)(\i a) 
=&\frac{d}{dt} \bigg|_{t=0} V(\n_t)\\
=& - \frac{1}{4}\int_X {\rm tr} 
\left(G_\n^{-1}\circ  \left( (da)^\sharp \circ \FF_\n^{\sharp} + \FF_\n^{\sharp} \circ (da)^\sharp \right) \right) \left(\det G_\n\right)^{1/4} \  \vol_g, \\
=&
- \frac{1}{2}\int_X {\rm tr} \left(G_\n^{-1}\circ \FF_\n^{\sharp} \circ (da)^\sharp\right)\left(\det G_\n\right)^{1/4} \  \vol_g,  
\end{aligned}
\]
where we use \eqref{eq:FG comm} and $(da)^\sharp \in \Gamma (X, {\rm End}(TX))$ is defined by 
$g ((da)^\sharp (u),v) = da (u,v)$ for $u,v \in TX$. 
Since ${\rm tr} \left(A^{\sharp} \circ B^\sharp\right)
=-2\left\la A, B \right\ra$ for any 2-forms $A,B$, 
where $\langle\,\cdot\,,\,\cdot\,\rangle$ on the right hand side 
is the inner product for 2-forms, 
we have 
\begin{equation}\label{202010112359-1}
	\begin{aligned}
		{\rm tr} \left(G_\n^{-1}\circ \FF_\n^{\sharp} \circ (da)^\sharp\right)=-2\left\la da,\left(G_\n^{-1}\circ \FF_\n^\sharp \right)^{\flat} \right\ra. 
	\end{aligned}
\end{equation}
Multiplying \eqref{202010112359-1} by $(-1/2) (\det G_\n)^{1/4}$, 
integrating it over $X$ with respect to $\vol_g$ and using the integration by parts, 
we have 
$(\delta_\n V)(a) = - \la a, H(\n) \ra_{L^2}$ with 
\[
H(\n) =- d^{\ast}\left(\left(\det G_\n \right)^{1/4}\left(G_\n^{-1}\circ \FF_\n^\sharp \right)^{\flat}\right), 
\]
and the proof is completed. 
\end{proof}

\begin{proposition}\label{202012122315}
Define $\Ll: \Om^1 \to \Om^1$ by 
$\Ll (a) = \delta_{\nabla}H (\i a)$, where 
$\delta_{\nabla}H$ is the linearization of the operator $H:\Aa_0\to  \Om^1$ at $\nabla \in \Aa_0$. 
Then, the principal symbol $\sigma_{\Ll}(\xi): T_{p}^{*}X \to T_{p}^{*}X$ of $\Ll$ 
satisfies 
\begin{equation}\label{202102031137}
\begin{aligned}
&\left\la\sigma_{\Ll}(\xi)(a),a\right\ra\\
=&\left(\det G_{\nabla}\right)^{1/4}\left(
a\left(G_{\nabla}^{-1}(a^\sharp)\right)\xi\left(G_{\n}^{-1}(\xi^\sharp)\right)
-\left\{a\left(G_{\nabla}^{-1}(\xi^\sharp)\right)\right\}^2
\right)
\end{aligned}
\end{equation}
for $\xi,a\in T_{p}^{*}X$ at each $p\in X$. 
\end{proposition}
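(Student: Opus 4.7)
The plan is to compute the principal symbol directly from the formula \eqref{eq:MC} for $H(\n)$ by linearizing at $\n$ and then making the standard symbol substitutions $\sigma_{d}(\xi) = \xi \wedge \cdot$ and $\sigma_{d^{\ast}}(\xi) = - i(\xi^\sharp)$ (interior product). Writing $H(\n) = -d^{\ast} \alpha(\n)$ with $\alpha(\n) := (\det G_\n)^{1/4} \left(G_\n^{-1} \circ \FF_\n^\sharp\right)^\flat$, one has $\Ll(a) = -d^{\ast}\left(\delta_\n \alpha(\i a)\right)$, so $\sigma_{\Ll}(\xi)$ is obtained by applying $i(\xi^\sharp)$ to the pointwise linearization of $\alpha$ in which every occurrence of $(da)^\sharp$ is replaced by $D_{\xi} := (\xi \wedge a)^\sharp$.

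First, I expand $\delta_\n \alpha(\i a)$ using $\delta \FF_\n^\sharp = (da)^\sharp =: D$, $\delta G_\n = -\left(D\, \FF_\n^\sharp + \FF_\n^\sharp D\right)$ and $\delta(\det G_\n) = \det G_\n \cdot {\rm tr}\left(G_\n^{-1} \delta G_\n\right)$. Setting $\beta := \left(G_\n^{-1} \circ \FF_\n^\sharp\right)^\flat$, the commutation \eqref{eq:FG comm} together with $(\FF_\n^\sharp)^2 = \id_{TX} - G_\n$ collapses the three contributions into a multiple of $\beta$ plus the 2-form associated to $G_\n^{-1} D G_\n^{-1} + G_\n^{-1} \FF_\n^\sharp D \FF_\n^\sharp G_\n^{-1}$. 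Using the standard identity ${\rm tr}\left(A^\sharp B^\sharp\right) = -2 \la A, B \ra$ on $\Lambda^2 T^{\ast} X$, the coefficient of $\beta$ rewrites as $\la \beta, da \ra$, giving
\[
\delta_\n \alpha(\i a) = (\det G_\n)^{1/4} \left[\la \beta, da \ra \cdot \beta + \left(G_\n^{-1} D G_\n^{-1} + G_\n^{-1} \FF_\n^\sharp D \FF_\n^\sharp G_\n^{-1}\right)^\flat\right].
\]

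Next, after substituting $D \to D_\xi$, applying $i(\xi^\sharp)$ and pairing with $a$ via $\la i(\xi^\sharp) \omega, a \ra = \la \omega, \xi \wedge a \ra$, the identity \eqref{202102031137} reduces to the pointwise algebraic statement
\[
\la \beta, \xi \wedge a \ra^2 - \tfrac{1}{2} {\rm tr}\left((G_\n^{-1} D_\xi)^2\right) - \tfrac{1}{2} {\rm tr}\left((\FF_\n^\sharp G_\n^{-1} D_\xi)^2\right) = -\left[a(G_\n^{-1} a^\sharp)\, \xi(G_\n^{-1} \xi^\sharp) - \{a(G_\n^{-1} \xi^\sharp)\}^2\right].
\]
The key computational point is that $D_\xi = \xi \otimes a^\sharp - a \otimes \xi^\sharp$ has rank at most two, so for any endomorphism $T$ of $T_pX$ the trace ${\rm tr}\left((T D_\xi)^2\right)$ expands directly into a $2 \times 2$ Gram-type polynomial in the pairings $a(Ta^\sharp), \xi(T\xi^\sharp), a(T\xi^\sharp)$. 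For $T = G_\n^{-1}$ (symmetric) this yields $-2\left[a(G_\n^{-1}a^\sharp)\xi(G_\n^{-1}\xi^\sharp) - \{a(G_\n^{-1}\xi^\sharp)\}^2\right]$, reproducing the desired right-hand side. For $T = \FF_\n^\sharp G_\n^{-1}$, which is \emph{skew}-symmetric (its flat being the 2-form $\beta$), the diagonal pairings $a(Ta^\sharp)$ and $\xi(T\xi^\sharp)$ vanish, and one instead gets ${\rm tr}\left((\FF_\n^\sharp G_\n^{-1} D_\xi)^2\right) = 2\la \beta, \xi \wedge a\ra^2$, which cancels exactly against the $\la\beta,\xi\wedge a\ra^2$ term coming from the $(\det G_\n)^{1/4}$ variation.

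The main obstacle is organizing this cancellation cleanly while consistently handling the sign conventions in the trace-pairing identity and in the adjoint relation $\sigma_{d^{\ast}}(\xi) = -i(\xi^\sharp)$. The cancellation itself is natural in hindsight: it is the symbol-level manifestation of the gauge invariance $V(\n + \i df) = V(\n)$, since the Gram-type quantity $a(G_\n^{-1}a^\sharp)\xi(G_\n^{-1}\xi^\sharp) - \{a(G_\n^{-1}\xi^\sharp)\}^2$ vanishes precisely when $a$ and $\xi$ are linearly dependent, matching the expected kernel of $\sigma_{\Ll}(\xi)$ along symbolic exact 1-forms.
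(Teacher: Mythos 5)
Your proposal is correct in substance and follows essentially the same route as the paper: linearize the explicit formula \eqref{eq:MC}, make the symbol substitutions $da \mapsto \xi\wedge a$ and $-d^{\ast}\mapsto i(\xi^{\sharp})$, and simplify using \eqref{eq:FG comm} and \eqref{eq:EGE}; your collapse of the linearization to $\la\beta,da\ra\,\beta + \left(G_\n^{-1}D\,G_\n^{-1}+G_\n^{-1}\FF_\n^{\sharp}D\,\FF_\n^{\sharp}G_\n^{-1}\right)^{\flat}$ is just the paper's four terms $J_1,\dots,J_4$ with the $J_2$/$J_4$ cancellation performed at the endomorphism level, and your rank-two trace expansion of ${\rm tr}\bigl((TD_\xi)^2\bigr)$ reproduces the paper's computations of $\sigma_{J_2}+\sigma_{J_4}$ and $\sigma_{J_1}+\sigma_{J_3}$. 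One small slip: the right-hand side of your displayed ``reduces to'' identity should be $+\bigl[a(G_\n^{-1}a^{\sharp})\xi(G_\n^{-1}\xi^{\sharp})-\{a(G_\n^{-1}\xi^{\sharp})\}^2\bigr]$ rather than its negative, as your own component computations ($-\tfrac12{\rm tr}((G_\n^{-1}D_\xi)^2)=+[\cdots]$ and the exact cancellation of the $\la\beta,\xi\wedge a\ra^2$ terms) show; this is a transcription error that does not affect the argument.
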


\begin{proof}
First, we compute $\Ll$. 
Put $\nabla_{t}:= \nabla+t\i a$. Then, 
\[\Ll (a)
= - \frac{d}{dt}\bigg|_{t=0} d^{\ast}\left(\left(\det G_{\n_{t}}\right)^{1/4}\left(G_{\n_{t}}^{-1}\circ \FF_{\n_{t}}^\sharp \right)^{\flat}\right). 
\]
Since 
$$
0= \frac{d}{dt}\bigg|_{t=0} G_{\n_t} \circ G_{\n_t}^{-1}
=  \left(- (da)^\sharp \circ \FF_{\n}^\sharp - \FF_{\n}^\sharp\circ (da)^\sharp \right) \circ G_{\nabla}^{-1}
+G_\n \circ \frac{d G_{\n_t}^{-1} }{dt}\bigg|_{t=0}, 
$$
we have 
\begin{align} \label{eq:diff Ginv}
\frac{d G_{\n_t}^{-1} }{dt}\bigg|_{t=0} = G_\n^{-1} \circ 
\left( (da)^\sharp \circ \FF_{\n}^\sharp + \FF_{\n}^\sharp\circ (da)^\sharp \right) \circ G_{\nabla}^{-1}. 
\end{align}
Then, we compute 
\begin{align} \label{eq:J1234}
\Ll (a)
= 
J_1(a) + J_2(a) + J_3(a) + J_4(a), 
\end{align}
where 
\begin{align*}
J_1(a) &= \frac{1}{2} d^{\ast}\bigg( \left(\det G_{\nabla}\right)^{1/4} \mathop{\rm tr}\left(G_{\nabla}^{-1}\circ \FF_{\n}^\sharp\circ (da)^\sharp\right)\left(G_{\n}^{-1}\circ \FF_{\n}^\sharp \right)^{\flat} \bigg), \\
J_2(a) &= 
- d^{\ast}\bigg( \left(\det G_{\nabla}\right)^{1/4} 
\left(\left(G_{\nabla}^{-1}\circ (da)^\sharp \circ \FF_{\n}^\sharp\circ G_{\nabla}^{-1}\right) \circ \FF_{\n}^\sharp \right)^\flat \bigg), \\
J_3(a) &=
- d^{\ast}\bigg( \left(\det G_{\nabla}\right)^{1/4} 
\left(\left(G_{\nabla}^{-1}\circ \FF_{\n}^\sharp\circ (da)^\sharp \circ G_{\nabla}^{-1}\right) \circ \FF_{\n}^\sharp \right)^\flat \bigg), \\
J_4(a) &=
- d^{\ast}\bigg( \left(\det G_{\nabla}\right)^{1/4} 
\left(G_{\nabla}^{-1}\circ (da)^\sharp\right)^\flat \bigg). 
\end{align*}
Denote by $\sigma_{J_k}(\xi)$ the principal symbol of the linear differential operator $J_k: \Om^1 \to \Om^1$ ($k=1, \cdots, 4$) 
for $\xi \in T^*_p X$ at $p \in X$. 
Then, for $a \in T^*_p X$, we have 
\begin{align} \label{eq:symb1}
\begin{split}
\frac{\left\la\sigma_{J_{1}}(\xi)(a),a\right\ra}{\left(\det G_{\nabla}\right)^{1/4}}
=&
- \frac{\mathop{\rm tr}\left(G_{\nabla}^{-1}\circ \FF_{\n}^\sharp 
\circ (\xi \wedge a)^\sharp\right) }{2}
\left\la i(\xi^\sharp) 
\left(G_{\n}^{-1}\circ \FF_{\n}^\sharp \right)^{\flat}, 
a\right\ra \\
=&
\left\la \left(G_{\n}^{-1}\circ \FF_{\n}^\sharp \right)^{\flat}, 
\xi \wedge a \right\ra^2 
= 
\left\{a\left(G_{\nabla}^{-1}\circ \FF_{\n}^\sharp(\xi^\sharp)\right)\right\}^2,
\end{split}
\end{align}
where we use the same argument as in \eqref{202010112359-1} for the second equality. 
\\

Next, we work on $J_2$. We have 
$$
\left\la\sigma_{J_2}(\xi)(a),a\right\ra 
= 
\left(\det G_{\nabla}\right)^{1/4} 
\left \la \left(G_{\nabla}^{-1} \circ (\xi \wedge a)^\sharp \circ \FF_{\n}^\sharp \circ G_{\nabla}^{-1} \circ \FF_{\n}^\sharp \right)^\flat, 
\xi \wedge a \right \ra. 
$$
By \eqref{eq:FG comm} and $G_\n=\id_{TX} - \FF_\n^\sharp\circ \FF_\n^\sharp$, we have 
\begin{align} \label{eq:EGE}
\FF_\n^\sharp \circ G_\n^{-1} \circ \FF_\n^\sharp
= G_\n^{-1} \circ \FF^\sharp \circ \FF_\n^\sharp
= - \id_{TX} + G_\n^{-1}. 
\end{align}
This together with $(\xi \wedge a)^\sharp = \xi \otimes a^\sharp- a \otimes \xi^\sharp$ implies that 
\begin{align} \label{eq:symb2}
\begin{split}
&\left(\det G_{\nabla}\right)^{-1/4} \left\la\sigma_{J_2}(\xi)(a),a\right\ra \\
=& 
\left \la - \left(G_{\nabla}^{-1} \circ (\xi \wedge a)^\sharp \right)^\flat
+ \left(G_{\nabla}^{-1} \circ (\xi \wedge a)^\sharp \circ G_\n^{-1} \right)^\flat, \xi \wedge a \right \ra \\
=&
\left \la - \left(G_{\nabla}^{-1} \circ (\xi \wedge a)^\sharp \right) (\xi^\sharp)
+ \left(G_{\nabla}^{-1} \circ (\xi \wedge a)^\sharp \circ G_\n^{-1} \right) (\xi^\sharp), a^\sharp \right \ra \\
=&
- a(G_\n^{-1}(a^\sharp))|\xi|^2 + a(G_\n^{-1}(\xi^\sharp))a(\xi^{\sharp}) 
+ a(G_\n^{-1}(a^\sharp))\xi(G_\n^{-1}(\xi^\sharp)) -\left\{a(G_\n^{-1}(\xi^\sharp))\right\}^2. 
\end{split}
\end{align}

Next, we work on $J_3$. 
By $(\xi \wedge a)^\sharp = \xi \otimes a^\sharp- a \otimes \xi^\sharp$, we compute 
\begin{align} \label{eq:symb3}
\begin{split}
&\left(\det G_{\nabla}\right)^{-1/4} \left\la\sigma_{J_3}(\xi)(a),a\right\ra \\
=& 
\left \la \left(G_{\nabla}^{-1} \circ \FF_{\n}^\sharp \circ (\xi \wedge a)^\sharp \circ G_{\nabla}^{-1} \circ \FF_{\n}^\sharp \right)^\flat, 
\xi \wedge a \right \ra \\
=&
\left \la \left(G_{\nabla}^{-1} \circ \FF_{\n}^\sharp \circ (\xi \wedge a)^\sharp \circ G_{\nabla}^{-1} \circ \FF_{\n}^\sharp \right) 
(\xi^\sharp), a^\sharp \right \ra \\
=&
\left \la \left(G_{\nabla}^{-1} \circ \FF_{\n}^\sharp \right) (a^\sharp), a^\sharp \right \ra
\xi  \left ( \left(G_{\nabla}^{-1} \circ \FF_{\n}^\sharp \right) (\xi^\sharp) \right) \\
&- 
\left \la \left(G_{\nabla}^{-1} \circ \FF_{\n}^\sharp \right) (\xi^\sharp), a^\sharp \right \ra
a  \left ( \left(G_{\nabla}^{-1} \circ \FF_{\n}^\sharp \right) (\xi^\sharp) \right) 
=
- \left\{a\left(G_{\nabla}^{-1}\circ \FF_{\n}^\sharp(\xi^\sharp)\right)\right\}^2,
\end{split}
\end{align}
where use 
$\left \la \left(G_{\nabla}^{-1} \circ \FF_{\n}^\sharp \right) (a^\sharp), a^\sharp \right \ra =0$ 
since $G_{\nabla}^{-1} \circ \FF_{\n}^\sharp$ is skew-symmetric. \\

Finally, we work on $J_4$. We have 
\begin{align} \label{eq:symb4}
\begin{split}
\left(\det G_{\nabla}\right)^{-1/4} \left\la\sigma_{J_4}(\xi)(a),a\right\ra 
=&
\left \la \left(G_\n^{-1} \circ (\xi \wedge a)^\sharp \right)^\flat, \xi \wedge a \right \ra \\
=&
\left \la \left(G_\n^{-1} \circ (\xi \wedge a)^\sharp \right) (\xi^\sharp), a \right \ra \\
=&
a\left(G_\n^{-1}(a^\sharp)\right) |\xi|^2 - a\left(G_\n^{-1} (\xi^\sharp)\right) a(\xi^\sharp). 
\end{split}
\end{align}
Then, by \eqref{eq:J1234}, \eqref{eq:symb1}, \eqref{eq:symb2}, \eqref{eq:symb3} and \eqref{eq:symb4}, 
we obtain the desired formula. 
\end{proof}

Since $G_\n$ is positive definite, a bilinear form defined by 
\[\langle a,b\rangle_{\nabla}:=a\left(G_{\nabla}^{-1}(b^\sharp)\right)\]
for $a,b\in T_{p}^{*}X$ is an inner product. 
Introducing the notation $|a|^2_{\nabla}:=\langle a,a\rangle_{\nabla}$, 
\eqref{202102031137} is represented as 
\begin{equation}\label{202102031155}
\left\la\sigma_{\Ll}(\xi)(a),a\right\ra
=\left(\det G_{\nabla}\right)^{1/4}\left(
|a|^2_{\nabla}|\xi|^2_{\nabla}
-\langle a, \xi \rangle_{\nabla}^2
\right). 
\end{equation}
Then, by the Cauchy--Schwarz inequality, the following proposition is clear. 

\begin{proposition} \label{prop:deg symbol}
The principal symbol $\sigma_{\Ll}(\xi): T_{p}^{*}X \to T_{p}^{*}X$ of $\Ll$ 
has only one zero eigenvalue with eigenvector $\xi$ and the other eigenvalues are positive. 
\end{proposition}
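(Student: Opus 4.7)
The plan is to read off both conclusions directly from the symbol formula \eqref{202102031155}, by combining the Cauchy--Schwarz inequality with the fact that $\sigma_{\Ll}(\xi)$ is a symmetric endomorphism.

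Since $G_{\nabla}$ is positive definite (cf.~Remark \ref{rem:det Gn}), the bilinear form $\langle \cdot , \cdot \rangle_{\nabla}$ is a genuine inner product on $T^*_p X$. The Cauchy--Schwarz inequality with respect to this inner product gives
\[
|a|^2_{\nabla}\, |\xi|^2_{\nabla} - \langle a, \xi \rangle_{\nabla}^2 \geq 0,
\]
with equality if and only if $a$ is proportional to $\xi$. Since $(\det G_{\nabla})^{1/4} > 0$, formula \eqref{202102031155} then shows that the quadratic form $a \mapsto \langle \sigma_{\Ll}(\xi)(a), a \rangle$ is non-negative on $T^*_p X$ and vanishes precisely on the one-dimensional subspace $\R \xi$.

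Next, by Proposition \ref{prop:fistvar}, $H$ is (up to the identification $T_{\nabla}\Aa_0 = \i\Om^1$) the $L^2$-negative gradient of $V$, so its linearization $\Ll = \delta_{\nabla} H$ equals, up to a sign, the Hessian of $V$ at $\nabla$ and is therefore formally self-adjoint. Consequently, the principal symbol $\sigma_{\Ll}(\xi) : T^*_p X \to T^*_p X$ is symmetric with respect to the Riemannian inner product on $T^*_p X$. A symmetric endomorphism whose associated quadratic form is positive semi-definite is diagonalizable with non-negative real eigenvalues, and its kernel coincides with the null cone of that quadratic form. Combining this with the previous paragraph forces $\ker \sigma_{\Ll}(\xi) = \R \xi$, so that $0$ is a simple eigenvalue with eigenvector $\xi$ and all remaining eigenvalues are strictly positive.

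The most delicate point is the symmetry of $\sigma_{\Ll}(\xi)$: the formula \eqref{202102031155} only determines the symmetric part of $\sigma_{\Ll}(\xi)$, so the positive semi-definiteness of the diagonal form does not by itself control any potential skew-symmetric component. The cleanest way to rule this out is the variational argument above, but one could alternatively verify symmetry by inspecting the four pieces $J_1, \ldots, J_4$ in the proof of Proposition \ref{202012122315} term by term.
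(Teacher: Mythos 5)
Your proof is correct and follows essentially the same route as the paper: the paper's own argument is exactly to apply the Cauchy--Schwarz inequality for the inner product $\langle\cdot,\cdot\rangle_{\nabla}$ to the symbol formula \eqref{202102031155} and declare the proposition "clear." Your additional observation that \eqref{202102031155} only pins down the symmetric part of $\sigma_{\Ll}(\xi)$, and your derivation of that symmetry from the variational origin of $H$ (so that $\Ll$ is formally self-adjoint), is a legitimate refinement that fills in a step the paper leaves implicit.
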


Since $L$ is a line bundle, the curvature is invariant under the addition of closed 1-forms, 
that is, $F_{\n+\i a} = F_\n$ for any $\n \in \Aa_0$ and $a \in Z^1$, where $Z^1$ is the space of closed 1-forms. 
This implies that $V$ is invariant under the addition of closed 1-forms, and hence, 
$H$ degenerates in the direction of $\i Z^1$. 
Proposition \ref{prop:deg symbol} implies that the principal symbol of $H$ degenerates only in this direction, 
just as in the case of the mean curvature for submanifolds. 
The statement for submanifolds is given, for example, in \cite[Chapter 2]{Zhu}. 
This motivates us to prove the short-time existence and uniqueness of the following flow by DeTurck's trick. \\

Fix $T\in(0,\infty]$. Assume that $\{\, \n_{t}\,\}_{t\in[0,T)}$ is a continuous path in $\Aa_0$ and 
smooth on $(0,T)$.  
For such $\{ \n_t \}$, we may define 
the \emph{line bundle mean curvature flow} if 
it satisfies 
\begin{equation}\label{eq:lbmcf}
	\frac{\p}{\p t}\left(\frac{ \n_t}{\i}\right) = H(\n_t). 
\end{equation}
for all $t\in(0,T)$ and call $\n_{0}$ the initial connection of the flow. 
We remark that the left hand side is a real valued 1-form.

Since the functional $V$ corresponds to the volume functional for submanifolds via the real Fourier--Mukai transform, 
the flow \eqref{eq:lbmcf} can be considered as the ``mirror'' of the mean curvature flow for submanifolds. 
Note that this type of flow is introduced in \cite[Definition 3.1]{JY} 
for Hermitian metrics on a holomorphic line bundle to find dHYM metrics.

We prove the short-time existence and uniqueness of the flow \eqref{eq:lbmcf} using DeTurck's trick.
DeTurck's trick is known as an easy way to prove the short-time existence and uniqueness of the Ricci flow, 
and it also works for the mean curvature flow. 
In general, instead of some original (degenerate) flow, a kind of modified flow is studied first and its result is reduced to the original flow. 
In this case, by the similarity between mean curvature flows and line bundle mean curvature flows, 
we can introduce the following modified flow. \\

Fix $\n_0 \in \Aa_0$. 
Instead of \eqref{eq:lbmcf}, we consider 
\begin{equation}\label{eq:lbmcf2}
	\frac{\p}{\p t}\left(\frac{ \n_t}{\i}\right) = H(\n_t)-d\left(\left(\det G_{\n_{0}}\right)^{1/4}d^{\ast}a_{t}\right). 
\end{equation}
We make some remarks. 
The $d^{*}$ in the second term is defined by the fixed background metric $g$ on $X$. 
The connection to define $G_{\n_0}$ 
in the second term is the fixed $\n_{0}$ for all time. 
The form $a_{t}$ in the second term is defined by $a_t:=(\n_t - \n_0)/\i$, in other words, $\n_t=\n_0+\i a_{t}$. 
Thus, $a_t$ is a real valued 1-form on $X$. 

Denote by $\tilde{H}(\n_t)$ the right hand side of \eqref{eq:lbmcf2}. 
Define $\tilde{\Ll}: \Om^1 \to \Om^1$ by $\tilde{\Ll} (a) = \delta_{\n_0} \tilde{H} (\i a)$, where 
$\delta_{\n_0} \tilde{H}$ is the linearization of $\tilde{H}:\Aa_0 \to \Om^1$ at $\n_0 \in \Aa_0$. 
Let $\sigma_{\tilde{\Ll}}(\xi): T_{p}^{*}X \to T_{p}^{*}X$ be the principal symbol of $\tilde{\Ll}$ for $\xi \neq 0$. 

\begin{lemma}\label{202101282305}
All eigenvalues of $\sigma_{\tilde{\Ll}}(\xi)$ are positive. 
\end{lemma}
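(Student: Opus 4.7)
The plan is to express $\sigma_{\tilde{\Ll}}(\xi)$ as the sum of $\sigma_{\Ll}(\xi)$ and the principal symbol of the DeTurck correction, and then to show that the single degenerate direction of $\sigma_{\Ll}(\xi)$ identified in Proposition~\ref{202012122315} is exactly compensated by that correction.

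First I would note that the subtracted term $-d\bigl((\det G_{\n_0})^{1/4} d^{*} a_t\bigr)$ is already linear in $a_t$, with the scalar factor $(\det G_{\n_0})^{1/4}$ frozen at $\n_0$; hence its linearization at $\n_0$ in the direction $\i a$ equals itself, so
\[
\tilde{\Ll}(a) = \Ll(a) - d\bigl((\det G_{\n_0})^{1/4} d^{*} a\bigr).
\]
Using the symbol conventions employed in the proof of Proposition~\ref{202012122315} (namely $\sigma_{d}(\xi)(f) = f\xi$ on functions and $\sigma_{d^{*}}(\xi)(a) = -\la \xi, a\ra$ on $1$-forms), the principal symbol of the correction at $\xi \neq 0$ is
\[
\sigma\bigl(-d((\det G_{\n_0})^{1/4} d^{*})\bigr)(\xi)(a) = (\det G_{\n_0})^{1/4}\,\la \xi, a\ra\,\xi,
\]
which pairs with $a$ to give the non-negative scalar $(\det G_{\n_0})^{1/4}\la \xi, a\ra^2$. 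Combining this with \eqref{202102031155} evaluated at $\n_0$ yields
\[
\la \sigma_{\tilde{\Ll}}(\xi)(a), a\ra = (\det G_{\n_0})^{1/4}\Bigl(|a|^2_{\n_0}|\xi|^2_{\n_0} - \la a, \xi\ra^2_{\n_0} + \la a, \xi\ra^2\Bigr).
\]

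The substance of the argument is then to show that this quadratic form in $a$ is strictly positive definite for each fixed $\xi \neq 0$. By Cauchy--Schwarz in the $\n_0$-inner product, the first bracket is non-negative and vanishes iff $a$ is a scalar multiple of $\xi$; the added term $\la a, \xi\ra^2$ is non-negative and vanishes iff $a$ is $g$-orthogonal to $\xi$. These two degenerate loci meet only at $a = 0$: if $a = c\xi$ with $c \neq 0$, then $\la a, \xi\ra = c|\xi|^2 \neq 0$, so the added term is strictly positive; and if $a$ is not proportional to $\xi$, then strict Cauchy--Schwarz forces the first bracket to be strictly positive. Hence the total form is strictly positive for every $a \neq 0$.

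Finally, $\sigma_{\tilde{\Ll}}(\xi)$ is a symmetric endomorphism of $T_p^{*}X$, because each of $J_1,\ldots,J_4$ contributes a symmetric bilinear form (direct inspection of \eqref{eq:symb1}--\eqref{eq:symb4}) and the DeTurck operator $-d\bigl((\det G_{\n_0})^{1/4} d^{*}\bigr)$ is manifestly formally self-adjoint; thus positive definiteness of the pairing translates into strict positivity of all eigenvalues. The main subtle point I anticipate is that the two non-negative summands in the displayed quadratic form live in two different inner products on $T_p^{*}X$ (the $\n_0$-modified one and the background $g$), so their sum is positive definite only because their degenerate directions are transverse; this transversality is precisely the reason why the DeTurck term has been chosen in exactly this form.
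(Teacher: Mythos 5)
Your proposal is correct and follows essentially the same route as the paper: you split $\sigma_{\tilde{\Ll}}(\xi)$ into $\sigma_{\Ll}(\xi)$ plus the symbol of the DeTurck correction, arrive at the same quadratic form $(\det G_{\n_0})^{1/4}\bigl(|a|^2_{\n_0}|\xi|^2_{\n_0}-\langle a,\xi\rangle_{\n_0}^2+\langle a,\xi\rangle^2\bigr)$, and prove strict positivity by observing that the degenerate loci of the two non-negative summands (the line $\R\xi$ for the Cauchy--Schwarz term and the $g$-orthogonal complement of $\xi$ for the correction) intersect only at $a=0$. Your added remark on the symmetry of the symbol, ensuring that positive definiteness of the pairing yields positive eigenvalues, is a small but welcome clarification that the paper leaves implicit.
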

\begin{proof}
For $a\in\Omega^{1}$, we have 
\[\tilde{\Ll}(a)=\Ll(a)-d\left(\left(\det G_{\n_{0}}\right)^{1/4}d^{\ast} a\right), \]
where we set $\Ll = \delta_{\n_0} H (\i (\cdot))$. 
Then, by \eqref{202102031155}, we have 
\[
\begin{aligned}
\left\la\sigma_{\tilde \Ll}(\xi)(\i a),a\right\ra
=&
\left\la\sigma_\Ll (\xi)(\i a),a\right\ra 
+ \left(\det G_{\n_0}\right)^{1/4} \la \xi \cdot i(\xi^\sharp)a, a \ra \\
=&
\left(\det G_{\n_0}\right)^{1/4}\left(
|a|^2_{\n_0}|\xi|^2_{\n_0}
-\langle a,\xi\rangle_{\n_0}^2
+\langle a, \xi\rangle^2
\right). 
\end{aligned}
\]
Assume that $a\neq 0$. 
By the Cauchy--Schwarz inequality, $|a|^2_{\n_0}|\xi|^2_{\n_0}-\langle a,\xi\rangle_{\n_0}^2\geq 0$. 
Thus, it is clear that $|a|^2_{\n_0}|\xi|^2_{\n_0}
-\langle a,\xi\rangle_{\n_0}^2
+\langle a, \xi\rangle^2$ is nonnegative. 
Suppose that this is zero for some $a\in T_{p}^{\ast} X$. 
Then, we should have  $|a|^2_{\n_0}|\xi|^2_{\n_0}-\langle a,\xi\rangle_{\n_0}^2=0$ and $\langle a, \xi\rangle=0$. 
The first equality implies that $a=\alpha\xi$ for some $\alpha\neq 0$. 
But, this contradicts to the second one, $\langle a, \xi\rangle=0$. 
Thus, we have proved that $|a|^2_{\n_0}|\xi|^2_{\n_0}
-\langle a,\xi\rangle_{\n_0}^2
+\langle a, \xi\rangle^2$ is strictly positive for $a,\xi\neq 0$. 
Thus, the proof is completed. 
\end{proof}

By Lemma \ref{202101282305}, the linearization of \eqref{eq:lbmcf2} at $\nabla_{0} \in \Aa_0$ is strongly parabolic. 
Thus, by the standard theory of partial differential equations, we obtain the following.

\begin{proposition}\label{202101282317}
\, 
\begin{enumerate}
\item
For any $\n_0 \in \Aa_0$, there exist $\eps >0$ and a smooth family of Hermitian connections 
$\{ \n_t \}_{t \in [0, \eps]}$ satisfying \eqref{eq:lbmcf2} and $\n_t|_{t=0}=\n_0$. 

\item
Suppose that 
$\{ \n^1_t \}_{t \in [0,\eps]}$ and $\{ \n^2_t \}_{t \in [0,\eps]}$ satisfy \eqref{eq:lbmcf2} for $\eps >0$. 
If $\n^1_t|_{t=0}=\n^2_t|_{t=0}$, then $\n^1_t=\n^2_t$ for any $t \in [0,\eps]$. 
\end{enumerate}
\end{proposition}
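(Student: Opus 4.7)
The plan is to recognize that \eqref{eq:lbmcf2} is a quasilinear strongly parabolic initial value problem for the time-dependent real 1-form $a_t = (\n_t - \n_0)/\i$, and then invoke the standard short-time existence and uniqueness theory for such systems on a compact manifold.

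First I would rewrite the flow as
\[
\frac{\partial a_t}{\partial t} = \tilde{H}(\n_0 + \i a_t),
\]
and verify that the right hand side is a quasilinear second-order differential operator in $a_t$. This is visible from the formula \eqref{eq:MC} for $H$, which applies $d^*$ to a 2-form built from $F_{\n_t} = F_{\n_0} + \i\, da_t$ and from $G_{\n_t}$ (hence depending on first derivatives of $a_t$), together with the added DeTurck-type term $-d\bigl((\det G_{\n_0})^{1/4}\,d^{\ast}a_t\bigr)$. Unpacking these, the top-order derivatives of $a_t$ enter linearly, with coefficients depending smoothly on $a_t$ and $da_t$; the scalar factor $(\det G_{\n_0})^{1/4}$ in the DeTurck term is smooth and time-independent.

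Next, Lemma \ref{202101282305} shows that the principal symbol of the linearization $\tilde{\Ll}$ at $a_t = 0$ has all eigenvalues strictly positive. By continuity of the coefficients in $C^1$, the same is true for the linearization at any $a$ in a small $C^1$-neighbourhood of $0$. This is exactly strong parabolicity. The standard short-time existence theorem for quasilinear strongly parabolic systems on a compact Riemannian manifold—via parabolic Schauder estimates on $C^{2+\alpha,1+\alpha/2}$ combined with a contraction mapping, or alternatively via analytic semigroup theory—then yields a smooth solution $a_t$ on some $[0,\eps]$ with $a_0 = 0$, which is conclusion (1).

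For (2), the difference $b_t = a^2_t - a^1_t$ of two solutions satisfies a linear strongly parabolic equation obtained by subtracting the two nonlinear PDEs and expressing their difference, via the mean value theorem in a Banach space, as a linear operator applied to $b_t$ with coefficients depending on $a^1_t, a^2_t$ and their first derivatives; the initial datum is $b_0 = 0$. A standard energy estimate (or the backward-uniqueness/Grönwall argument for linear parabolic systems) then forces $b_t \equiv 0$. The main obstacle here is not analytic but bookkeeping: confirming that after writing out $H$ and the DeTurck correction, the full operator $\tilde{H}$ really is quasilinear in $a_t$, with a strongly elliptic top-order part. Once this is verified, the proposition follows from classical parabolic theory.
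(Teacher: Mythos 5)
Your proposal is correct and follows the same route as the paper: the paper's own proof consists of citing Lemma \ref{202101282305} for strong parabolicity of the linearization at $\n_0$ and then invoking ``the standard theory of partial differential equations.'' You have merely spelled out what that standard theory is (quasilinearity of $\tilde H$ in $a_t$, persistence of strong parabolicity in a $C^1$-neighbourhood of $a=0$, short-time existence for quasilinear strongly parabolic systems, and a linear parabolic equation for the difference of two solutions to get uniqueness), which is consistent with and fills in the paper's argument.
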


Using Proposition \ref{202101282317}, we show the following. 

\begin{theorem}\label{short-ex}
\, 
\begin{enumerate}
\item
For any $\n_0 \in \Aa_0$, there exist $\eps >0$ and a smooth family of Hermitian connections 
$\{ \n_t \}_{t \in [0,\eps]}$ satisfying \eqref{eq:lbmcf} and $\n_t|_{t=0}=\n_0$. 

\item
Suppose that 
$\{ \n^1_t \}_{t \in [0,\eps]}$ and $\{ \n^2_t \}_{t \in [0,\eps]}$ satisfy \eqref{eq:lbmcf} for $\eps >0$. 
If $\n^1_t|_{t=0}=\n^2_t|_{t=0}$, then $\n^2_t-\n^1_t$ is 
a (pure imaginary-valued) time-dependent exact 1-form for any $t \in [0,\eps]$. 
\end{enumerate}
\end{theorem}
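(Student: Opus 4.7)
The strategy is DeTurck's trick: by Lemma \ref{202101282305} and Proposition \ref{202101282317}, the modified flow \eqref{eq:lbmcf2} is strongly parabolic and admits a unique smooth short-time solution from any initial connection. We translate solutions of \eqref{eq:lbmcf2} to solutions of \eqref{eq:lbmcf} by a (pure imaginary) exact 1-form gauge, and vice versa. The key identity enabling both directions is $F_{\n + \i d\phi} = F_\n$ for any function $\phi$, which implies $H(\n + \i d\phi) = H(\n)$.

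\textbf{Existence.} Let $\{ \n_t \}_{t\in [0,\eps]}$ be the solution of \eqref{eq:lbmcf2} with $\n_t|_{t=0} = \n_0$ produced by Proposition \ref{202101282317}(1), and set $a_t := (\n_t - \n_0)/\i$. Define $\phi_t : X \to \R$ by integrating the pointwise ODE $\partial_t \phi_t = (\det G_{\n_0})^{1/4} d^* a_t$ with $\phi_0 = 0$; this yields a smooth family. Put $\tilde \n_t := \n_t + \i d\phi_t$. Then $\tilde \n_t|_{t=0} = \n_0$ and
\[
\partial_t (\tilde \n_t / \i) = \partial_t(\n_t/\i) + d(\partial_t \phi_t) = H(\n_t) - d\bigl((\det G_{\n_0})^{1/4} d^* a_t\bigr) + d(\partial_t \phi_t) = H(\n_t) = H(\tilde \n_t),
\]
so $\tilde \n_t$ solves \eqref{eq:lbmcf}, proving (1).

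\textbf{Uniqueness.} Let $\{\n^1_t\},\{\n^2_t\}$ be two solutions of \eqref{eq:lbmcf} with $\n^1_0 = \n^2_0 = \n_0$, and set $a^i_t := (\n^i_t - \n_0)/\i$. For each $i=1,2$, solve the linear scalar equation
\[
\partial_t \phi^i_t + (\det G_{\n_0})^{1/4} \, d^* d \phi^i_t = -(\det G_{\n_0})^{1/4} \, d^* a^i_t, \qquad \phi^i_0 = 0.
\]
Since $d^* d$ is the positive scalar Laplacian on functions and $(\det G_{\n_0})^{1/4}$ is a smooth positive weight, this is a standard strictly parabolic equation on $X$ and admits a unique smooth short-time solution (after possibly shrinking $\eps$). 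Setting $\tilde \n^i_t := \n^i_t + \i d\phi^i_t$, a direct computation using $H(\tilde \n^i_t) = H(\n^i_t)$ and the defining PDE for $\phi^i_t$ shows that $\tilde \n^1_t$ and $\tilde \n^2_t$ both satisfy \eqref{eq:lbmcf2} with initial value $\n_0$. Applying Proposition \ref{202101282317}(2) gives $\tilde \n^1_t = \tilde \n^2_t$, whence
\[
\n^2_t - \n^1_t = \i\, d(\phi^1_t - \phi^2_t)
\]
is a pure imaginary, time-dependent exact 1-form, proving (2).

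\textbf{Main obstacle.} The conceptual work — identifying the degenerate direction $\i Z^1$ of the symbol of $H$ (Proposition \ref{prop:deg symbol}) and adding an appropriate gauge-fixing term to obtain the strongly parabolic modification \eqref{eq:lbmcf2} — has already been carried out. The only remaining delicate point is the gauge reduction in the uniqueness step: one must solve the auxiliary scalar heat equation for $\phi^i_t$ on a common interval $[0,\eps]$ with enough regularity that $\tilde \n^i_t \in \Aa_0$ is a smooth family satisfying \eqref{eq:lbmcf2}, so that Proposition \ref{202101282317}(2) may legitimately be invoked.
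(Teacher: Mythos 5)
Your proposal is correct and follows essentially the same DeTurck-trick argument as the paper: in the existence step your $\i\,d\phi_t$ with $\partial_t\phi_t=(\det G_{\n_0})^{1/4}d^*a_t$ is exactly the paper's closed correction form $\eta_t=\int_0^t d\bigl((\det G_{\n_0})^{1/4}d^*a_s\bigr)ds$, and in the uniqueness step your auxiliary scalar parabolic equation for $\phi^i_t$ is identical to the paper's equation for $f^i_t$, followed by the same appeal to uniqueness for the modified flow \eqref{eq:lbmcf2}. (As a minor remark, no shrinking of $\eps$ is needed for the linear scalar heat equation on a compact manifold.)
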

\begin{proof}
First, we prove the existence. 
Let $\tilde{\nabla}_{t}:=\nabla_{0}+\sqrt{-1}a_{t}$ be the unique short-time solution of \eqref{eq:lbmcf2}. 
Its existence is ensured by Proposition \ref{202101282317}. 
Define a time-dependent 1-form $\eta$ by 
\[
\eta_{t}:=\int_{0}^{t}d\left(\left(\det G_{\n_{0}}\right)^{1/4}d^{\ast} a_{s} \right)ds. 
\]
We remark that $d\eta_{t}=0$ for all $t$ and $\eta_{0}=0$. 
Let $\nabla_{t}:=\tilde{\nabla}_{t}+\sqrt{-1}\eta_{t}$. Then, we have $\nabla_{t}|_{t=0}=\nabla_{0}$ and 
$F_{\nabla_{t}}=F_{\tilde{\nabla}_{t}}$ since $d\eta_{t}=0$. 
Moreover,  we have 
\[
\begin{aligned}
\frac{\partial}{\partial t}\left(\frac{\nabla_{t}}{\i}\right)=& \frac{\partial}{\partial t}\left(\frac{\tilde{\nabla}_{t}}{\sqrt{-1}}\right)+\frac{\partial}{\partial t}\eta_{t}\\
=&H(\tilde{\n}_t)-d\left(\left(\det G_{\n_{0}}\right)^{1/4}d^{\ast} a_{t}\right)
+d\left(\left(\det G_{\n_{0}}\right)^{1/4}d^{\ast} a_{t}\right)\\
=& H(\n_t), 
\end{aligned}
\]
where the last equality follows from the fact that $H(\tilde{\n})= H(\nabla)$ for $\tilde{\nabla}$ and $\nabla$ with $F_{\tilde{\n}}=F_{\nabla}$. 
Thus, $\nabla_{t}$ is a solution of the line bundle mean curvature flow with initial connection $\nabla_{0}$. 

Next, we consider the uniqueness of the solution. 
Let $\nabla^{1}_{t}$ and $\nabla^{2}_{t}$ be solutions of the line bundle mean curvature flow with initial connection $\nabla_{0}$. 
Let $f^{i}_t$ ($i=1,2$) be the unique solution of the following parabolic PDE: 
\[\frac{\partial}{\partial t }f^{i}_t
=\left(\det G_{\nabla_{0}}\right)^{1/4}(-d^{\ast}df^{i}_t)-\left(\det G_{\nabla_{0}}\right)^{1/4}d^{\ast}\left(\frac{\nabla^{i}_{t}-\nabla_{0}}{\sqrt{-1}}\right)\]
with the initial condition $f^{i}_{0}=0$ for $i=1,2$. 
The uniqueness follows from the fact that the PDE is strongly parabolic. 
Put $\tilde{\nabla}_{t}^{i}:=\nabla_{t}^{i}+\sqrt{-1}df^{i}_{t}$ for $i=1,2$. 
Then, we have 
\[
\begin{aligned}
\frac{\partial}{\partial t}\left(\frac{\tilde{\nabla}_{t}^{i}}{\sqrt{-1}}\right)=&H(\nabla^{i}_{t})+d\left(\left(\det G_{\nabla_{0}}\right)^{1/4}(-d^{\ast}df^{i}_t)\right)
-d\left(\left(\det G_{\nabla_{0}}\right)^{1/4}d^{\ast}\left(\frac{\nabla^{i}_{t}-\nabla_{0}}{\sqrt{-1}}\right)\right)\\
=&
H(\nabla^{i}_{t}) 
-d\left(\left(\det G_{\nabla_{0}}\right)^{1/4}d^{\ast}\left(\frac{\nabla^{i}_{t}+\sqrt{-1}df^{i}_t-\nabla_{0}}{\sqrt{-1}}\right)\right)\\
=&H(\tilde{\nabla}^{i}_{t})-d\left(\left(\det G_{\nabla_{0}}\right)^{1/4}d^{\ast}\left(\frac{\tilde{\nabla}^{i}_{t}-\nabla_{0}}{\sqrt{-1}}\right)\right), 
\end{aligned}
\]
where we use $H(\nabla^{i}_{t})=H(\tilde{\nabla}^{i}_{t})$. 
Thus, $\tilde{\nabla}_{t}^{i}$ is the solution of \eqref{eq:lbmcf2} with initial condition $\nabla_{0}$. 
Since the solution is unique, we have $\tilde{\nabla}_{t}^{1}=\tilde{\nabla}_{t}^{2}$. 
Thus, we have $\nabla^{1}_{t}=\nabla^{2}_{t}+\sqrt{-1}d(f^{2}_{t}-f^{1}_{t})$ and 
the proof is completed. 
\end{proof}

\section{The ``mirror" of the Cayley equality}  \label{sec:Spin7dDT}
In this section, 
we first recall the definition of deformed Donaldson--Thomas connections 
for a manifold with a ${\rm Spin}(7)$-structure ($\Sp$-dDT connections) 
and its moduli space from \cite{KYSpin7} together with a detailed description of the orbit of the unitary gauge group. 
Then, we show a ``mirror" of the Cayley equality. 
Using this, if the $\Sp$-structure is torsion-free, 
we show that $\Sp$-dDT connections are global minimizers of the volume functional $V$, 
just as Cayley submanifolds are homologically volume-minimizing. 
As an application of this, 
we show that any $\Sp$-dDT connection of a flat line bundle 
over a compact connected $\Sp$-manifold is a flat connection.

\SkipTocEntry \subsection{Preliminaries for the moduli space}
Use the notation (and identities) of Subsection \ref{sec:Spin7 geometry}. 
Let $X^8$ be an 8-manifold with a ${\rm Spin}(7)$-structure $\Phi$
and $L \to X$ be a smooth complex line bundle with a Hermitian metric $h$.
Let $\mathcal{A}_{0}$ be the space of Hermitian connections on $(L,h)$. 
We regard the curvature 2-form $F_\n$ of $\n$ as a $\i \R$-valued closed 2-form on $X$. 
Define maps 
$\Ff^1_{{\rm Spin}(7)}:\Aa_{0} \rightarrow \i \Om^{2}_7$ and 
$\Ff^2_{{\rm Spin}(7)}:\Aa_{0} \rightarrow \Om^{4}_7$ by 
\begin{equation*}
\begin{aligned}
\Ff^{1}_{{\rm Spin}(7)}(\nabla)
= 
\pi^2_{7} \left( F_\nabla + \frac{1}{6} * F_\nabla^3 \right),  \qquad
\Ff^{2}_{{\rm Spin}(7)}(\nabla)
=
\pi^{4}_{7}(F_{\nabla}^2). 
\end{aligned}
\end{equation*}
Each element of 
\begin{align} \label{eq:premod Spin7}
\widehat \Mm_\Sp := (\Ff^{1}_{{\rm Spin}(7)})^{-1}(0) \cap (\Ff^{2}_{{\rm Spin}(7)})^{-1}(0)
\end{align}
is called a \emph{deformed Donaldson--Thomas connection}. 
We call this a $\Sp$-dDT connection for short. 
It is known by \cite{KYSpin7} that 
for $\n \in \Aa_0$ satisfying $* F_\n^4/24 \neq 1$, 
$\Ff^1_{{\rm Spin}(7)} (\n) = 0$
implies 
$\Ff^2_{{\rm Spin}(7)} (\n) = 0$. 

The moduli space of  $\Sp$-dDT connections is defined as follows. 
Let 
$\Gg_U = \{\, f \cdot \id_L \mid f \in C^\infty (X, S^1) \,\}$ 
be the group of unitary gauge transformations of $(L,h)$,  
where we consider $S^1= \{ z \in \C \mid |z|=1 \}$ 
and $C^\infty (X, S^1)$ is the space of smooth maps from $X$ to $S^1$. 
The group $\Gg_U$ acts canonically on $\Aa_0$ 
by $(\lambda, \nabla) \mapsto \lambda^{-1} \circ \nabla \circ \lambda$ 
for $\lambda \in \Gg_U$ and $\n \in \Aa_0$. 
When $\lambda=f \cdot \id_L$, we have 
$\lambda^{-1} \circ \nabla \circ \lambda = \nabla + f^{-1}df \cdot \id_L.$ 
Thus, the $\Gg_U$-orbit through $\n \in \Aa_0$ is given by 
$\n + \Kk_U \cdot \id_L$, where  
\begin{align} \label{eq:Gu orbit Spin7}
\Kk_U = \left \{ f^{-1} d f \in \i \Om^1 \mid f \in C^\infty (X, S^1) \right \}. 
\end{align}
Since the curvature 2-form $F_\nabla$ is invariant under the action of $\Gg_U$, 
the {\em moduli space} $\Mm_\Sp$ of $\Sp$-dDT connections of $(L,h)$ 
is given by 
\begin{align} \label{eq:mod Spin7}
\Mm_\Sp 
:= \widehat \Mm_\Sp/\Gg_U. 
\end{align}
The space $\Kk_U$ is described more explicitly as follows if $X$ is compact and connected. 
It would be well-known for experts, but we give the proof for completeness. 
We use this in Section \ref{sec:hol red}. 
Note that the following holds for any smooth compact connected oriented Riemannian manifold $X$ of any dimension. 

\begin{lemma} \label{lem:orbit}
Suppose that $X$ is compact and connected. 
Then, we have 
\begin{align} \label{eq:orbit coh}
\left \{ [f^{-1} df] \in \i H^1_{dR} (X) \mid f \in C^\infty (X, S^1) \right \} = 2 \pi \i H^1 (X, \Z),  
\end{align} 
where we identify $H^1 (X, \Z)$ with its image in $H^1_{dR} (X)$, 
that is, 
$$
H^1 (X, \Z) = \left \{ [\alpha] \in H^1_{dR} (X) \mid \int_A \alpha \in \Z \mbox{ for any } A \in H_1 (X, \Z) \right \}. 
$$ 
In particular, we have 
\begin{align} \label{eq:KU}
\begin{split}
\Kk_U =& \i \left( 2\pi \Hh^1_\Z (X) \oplus d \Om^0(X) \right) \\
:=& \left \{ \i( 2\pi \alpha_\Z + d f_0) \mid \alpha_\Z \in \Hh^1_\Z (X), f_0 \in \Om^0(X) \right \}, 
\end{split}
\end{align} 
where $\Hh^1_\Z (X)$ is the space of harmonic forms representing $H^1 (X, \Z)$. 
\end{lemma}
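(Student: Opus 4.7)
The plan is to view \eqref{eq:orbit coh} as the de Rham incarnation of the classical identification $[X,S^{1}]\cong H^{1}(X,\Z)$ (since $S^{1}$ is a $K(\Z,1)$), and then to deduce \eqref{eq:KU} from \eqref{eq:orbit coh} together with the Hodge decomposition. Both inclusions in \eqref{eq:orbit coh} admit direct proofs by hand, which I will sketch.

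For the inclusion $\subseteq$ in \eqref{eq:orbit coh}, fix $f\in C^{\infty}(X,S^{1})$. Writing $f=e^{\i g}$ locally for a real-valued $g$, the form $f^{-1}df=\i\,dg$ is a closed $\i\R$-valued 1-form. For any smooth loop $\gamma:S^{1}\to X$ I will show, directly from the definition of the winding number, that
\[
\frac{1}{2\pi\i}\int_{\gamma}f^{-1}df=\deg(f\circ\gamma)\in\Z.
\]
Since every class in $H_{1}(X,\Z)$ is represented by an integral combination of smooth loops, the de Rham class $[f^{-1}df]/(2\pi\i)$ pairs integrally with $H_{1}(X,\Z)$, hence lies in the image of $H^{1}(X,\Z)$ inside $H^{1}_{dR}(X)$ in the sense specified in the statement of the lemma.

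For the inclusion $\supseteq$, given $\alpha\in\Hh^{1}_{\Z}(X)$ I will fix a basepoint $x_{0}\in X$ (using connectedness) and define
\[
f(x):=\exp\!\left(2\pi\i\int_{x_{0}}^{x}\alpha\right),
\]
the integral being taken along any smooth path from $x_{0}$ to $x$. Two such paths differ by a loop, over which $\alpha$ integrates to an integer by the integrality hypothesis, so $f$ descends to a well-defined smooth map $X\to S^{1}$, and a direct computation gives $f^{-1}df=2\pi\i\,\alpha$.

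Finally, to establish \eqref{eq:KU} I will apply the Hodge decomposition. Given $f\in C^{\infty}(X,S^{1})$, the closed real 1-form $f^{-1}df/\i$ splits uniquely as the sum of a harmonic form and an exact form. By \eqref{eq:orbit coh} its de Rham class lies in $2\pi H^{1}(X,\Z)$, so the harmonic part equals $2\pi\alpha_{\Z}$ for a unique $\alpha_{\Z}\in\Hh^{1}_{\Z}(X)$, showing $\Kk_{U}\subseteq\i\bigl(2\pi\Hh^{1}_{\Z}(X)\oplus d\Om^{0}(X)\bigr)$. Conversely, for any $\alpha_{\Z}\in\Hh^{1}_{\Z}(X)$ and $f_{0}\in\Om^{0}(X)$, I will produce $f_{1}:X\to S^{1}$ as above with $f_{1}^{-1}df_{1}=2\pi\i\,\alpha_{\Z}$ and set $f:=e^{\i f_{0}}\cdot f_{1}$; then $f^{-1}df=\i(df_{0}+2\pi\alpha_{\Z})$, yielding the reverse inclusion.

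The main technical point is the reverse direction of \eqref{eq:orbit coh}: one must check that the exponential of a multivalued primitive of $\alpha$ defines a single-valued smooth map into $S^{1}$, and this is precisely where integrality of the periods and the connectedness of $X$ are used. No special features of $X$ beyond its smoothness, compactness, and connectedness (the last two only to invoke Hodge theory and choose a basepoint) enter the argument.
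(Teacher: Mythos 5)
Your proposal is correct, and the half that matters is done by a genuinely different (and more elementary) route than the paper's. For the inclusion $\subseteq$ in \eqref{eq:orbit coh} you and the paper do essentially the same thing: the paper writes $f^{-1}df = 2\pi\i\, f^*\alpha_{S^1}$ for the normalized angular form $\alpha_{S^1}$ and notes that $[\alpha_{S^1}]\in H^1(S^1,\Z)$, which is exactly your winding-number computation $\frac{1}{2\pi\i}\int_\gamma f^{-1}df=\deg(f\circ\gamma)$. For the inclusion $\supseteq$, however, the paper argues homotopy-theoretically: given $[\alpha]\in H^1(X,\Z)$ it builds the homomorphism $T_{[\alpha]}:\pi_1(X)\to\Z$, invokes the $K(\Z,1)$ property of $S^1$ (Hatcher, Proposition 1B.9) to realize it by a continuous map $f:X\to S^1$, smooths $f$, and then matches periods to conclude $[f^{-1}df]=2\pi\i[\alpha]$. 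You instead construct $f$ explicitly as $\exp\bigl(2\pi\i\int_{x_0}^x\alpha\bigr)$ for a representative $\alpha$ with integral periods; well-definedness is exactly the integrality of periods, smoothness is local (locally $f=e^{2\pi\i g}$ for a primitive $g$), and one gets the on-the-nose identity $f^{-1}df=2\pi\i\,\alpha$ rather than merely a cohomological one. This buys two things: you avoid the obstruction-theory citation and the smoothing step, and in the deduction of \eqref{eq:KU} your map built from the harmonic representative $\alpha_\Z$ already satisfies $f_1^{-1}df_1=2\pi\i\,\alpha_\Z$ exactly, so you need not absorb an exact correction term $\i\,dg_\Z$ into the exponential factor as the paper does. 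The final step (Hodge decomposition of $f^{-1}df/\i$ into harmonic plus exact, and twisting by $e^{\i f_0}$ for the converse) coincides with the paper's. Both arguments are complete; yours is the more self-contained.
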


\begin{proof}
We first prove \eqref{eq:orbit coh}. 
Define a 1-form $\alpha_{S^1} \in \Om^1(S^1)$ on $S^1$ by 
$$
(\alpha_{S^1})_z = \frac{1}{2 \pi \i} \frac{d z}{z} = \frac{d \theta}{2 \pi} \qquad \mbox{for $z=e^{\i \theta} \in S^1$}. 
$$
Then, $\alpha_{S^1}$ is closed and $[\alpha_{S^1}] \in H^1(S^1, \Z)$. 
Take any $f \in C^\infty (X, S^1)$. 
Since $f^{-1} df =2 \pi \i f^* \alpha_{S^1}$, we see that $[f^{-1} df] \in 2 \pi \i H^1 (X, \Z)$.

Conversely, take any $[\alpha] \in H^1 (X, \Z)$. 
Define a homomorphism $T_{[\alpha]}: \pi_1(X) \to \Z$ by 
$$
T_{[\alpha]} ([g]) = \int_{g_* [S^1]} \alpha
$$
where $g:S^1 \to X$ is a continuous map representing $[g] \in \pi_1(X)$ and $[S^1]$ is the fundamental class of $S^1$.  
Recall that the isomorphism ${\rm deg}: \pi_1(S^1) \to \Z$ is given by 
$$
{\rm deg} ([h]) = \int_{h_* [S^1]} \alpha_{S^1}, 
$$
where $h: S^1 \to S^1$ is a continuous map representing $[h] \in \pi_1(S^1)$. 
Then, by \cite[Proposition 1B.9]{Hatcher}, there is a continuous map $f:X \to S^1$ 
such that $f_*={\rm deg}^{-1} \circ T_{[\alpha]}: \pi_1 (X) \to \pi_1 (S^1)$. 
By \cite[Proposition 17.8]{BT}, we may assume that $f \in C^\infty (X, S^1)$. 
Then, we see that 
$$
T_{[\alpha]} ([g]) = ({\rm deg} \circ f_*) ([g]) \qquad \Longleftrightarrow \qquad 
\int_{g_* [S^1]} \alpha = \int_{f_* g_*[S^1]} \alpha_{S^1} = \int_{g_*[S^1]} f^* \alpha_{S^1} 
$$
for any continuous map $g:S^1 \to X$. Hence, we obtain 
$$
2 \pi \i [\alpha] = 2 \pi \i [f^* \alpha_{S^1}] = [f^{-1} df], 
$$
which implies \eqref{eq:orbit coh}.

Next, we prove \eqref{eq:KU}. By \eqref{eq:orbit coh}, we see that 
$\Kk_U \subset \i \left( 2\pi \Hh^1_\Z (X) \oplus d \Om^0(X) \right)$. 
Conversely, take any $\alpha_\Z \in \Hh^1_\Z (X)$ and  $f_0 \in \Om^0(X)$. 
By \eqref{eq:orbit coh}, there exist $f_\Z \in C^\infty (X, S^1)$ and $g_\Z \in \Om^0(X)$ 
such that $2 \pi \i \alpha_\Z = f_\Z^{-1} df_\Z + \i d g_\Z$. 
Then, $f = e^{\i (g_\Z +f_0)} f_\Z$ satisfies $f^{-1} df = \i( 2\pi \alpha_\Z + d f_0)$ 
and the proof is completed.  
\end{proof}

\SkipTocEntry \subsection{Statements} 
The following statement is considered to be a ``mirror" of 
the Cayley equality \cite[Chapter I\hspace{-.1em}V,Theorem 1.28]{HL}
via the real Fourier--Mukai transform 
as stated in \cite[Lemma 5.5]{KYFM}. 
Theorem \ref{thm:Cayley eq} is the core of Sections \ref{sec:Spin7dDT}-\ref{sec:hol red}. 
The proof is given in Proposition \ref{prop:Cayley eq pt}. 

\begin{theorem} \label{thm:Cayley eq}
For any $\n \in \Aa_0$, we have 
\begin{align*}
&\left( 1+ \frac{1}{2} \la F_\n^2, \Phi \ra + \frac{* F^4_\n}{24}  \right)^2
+
4 \left| \pi^2_7 \left( F_\n + \frac{1}{6} * F_\n^3\right) \right|^2
+
2 \left| \pi^4_7 \left( F_\n^2 \right) \right|^2 \\
=&
\det (\id_{TX} + (-\i F_\n)^\sharp), 
\end{align*}
where $(-\sqrt{-1}F_\n)^\sharp \in \Gamma (X, \mathop{\mathrm{End}} TX)$ is defined by $u\mapsto \left(-\sqrt{-1}i(u)F_{\nabla}\right)^{\sharp}$. 
In particular, 
\[
\left| 1+ \frac{1}{2} \la F_\n^2, \Phi \ra + \frac{* F^4_\n}{24}  \right|
\leq \sqrt{\det (\id_{TX} + (-\i F_\n)^\sharp)}
\]
for any $\n \in \Aa_0$ and the equality holds 
if and only if $\n$ is a $\Sp$-dDT connection. 
\end{theorem}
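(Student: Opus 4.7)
The equality in the theorem is pointwise and algebraic on $(T_pX, \Phi_p)$, so I would fix $p \in X$, write $W := T_p X$ with its $\Sp$-structure $\Phi$, and treat $F := -\sqrt{-1}\, F_\n \in \Lambda^2 W^*$ as an arbitrary real 2-form. The ``in particular'' clause is then immediate from the main identity: the LHS is a sum of three non-negative terms, the first of which is the square of a scalar, so taking square roots gives the stated inequality, and equality holds precisely when the two error terms $4|\pi^2_7(F + \tfrac{1}{6}*F^3)|^2$ and $2|\pi^4_7(F^2)|^2$ both vanish; by \eqref{eq:premod Spin7} this is exactly the $\Sp$-dDT condition. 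All the content is therefore in the pointwise polynomial identity.

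My approach to that polynomial identity is to decompose $F = \beta + \gamma$ with $\beta \in \Lambda^2_7 W^*$ and $\gamma \in \Lambda^2_{21} W^*$, then re-express each side in $\beta, \gamma$. Using \eqref{eq:F2decomp} one has $\beta\wedge\beta \in \Lambda^4_1 \oplus \Lambda^4_{27}$, $\beta\wedge\gamma \in \Lambda^4_7 \oplus \Lambda^4_{35}$, and $\gamma\wedge\gamma \in \Lambda^4_1 \oplus \Lambda^4_{27} \oplus \Lambda^4_{35}$; since $\R\Phi = \Lambda^4_1 W^*$ the cross term $\beta\wedge\gamma$ drops out of $\la F^2, \Phi\ra$, while $\pi^4_7(F^2) = 2\,\pi^4_7(\beta\wedge\gamma)$. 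Expanding $F^3 = \beta^3 + 3\beta^2\wedge\gamma + 3\beta\wedge\gamma^2 + \gamma^3$, Proposition \ref{prop:2form norm} immediately gives $*\beta^3 = \tfrac{3}{2}|\beta|^2\beta$, and Lemma \ref{lem:24form} provides the key mechanism to rewrite expressions of the form $*(\xi \wedge (*F^3)^2)$ in terms of $\la F^2, \xi\ra \cdot *F^4$, which is exactly how the cross terms in $(1 + \tfrac{1}{2}\la F^2, \Phi\ra + \tfrac{*F^4}{24})^2$ interact with $|*F^3|^2$. For the right-hand side I would skew-diagonalize $F^\sharp$: in an oriented orthonormal frame with $F = \sum_{k=0}^{3} \lambda_k\, e^{2k,2k+1}$ one has $\det(\id_{TX} + F^\sharp) = \prod_{k=0}^{3} (1+\lambda_k^2)$, whose expansion in the elementary symmetric polynomials of $\lambda_k^2$ reassembles into the $\Sp$-invariants $|F|^2$, $|F^2|^2$, $*F^4$, and $\la F^2, \Phi\ra$ appearing on the left. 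To cut down free parameters, I would exploit $\Sp$-equivariance: since ${\rm Spin}(7) \to {\rm SO}(7)$ acts transitively on unit vectors of $W_7 \cong \Lambda^2_7 W^*$ via the isometry $\lambda^2$ of Lemma \ref{lem:lambdas}, I may arrange $\beta = \lambda^2(t e^1)$ for a fixed $e^1$ and $t \geq 0$; the residual stabilizer $G_2$ then acts on $\gamma \in \Lambda^2_{21} W^*$ and allows further normalization.

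The main obstacle will be the two mixed cross terms $3\beta^2\wedge\gamma$ and $3\beta\wedge\gamma^2$: their $\Lambda^6_7$-components feed into $\pi^2_7(*F^3)$ and must combine with $\beta$ so that, after squaring in the LHS, they reproduce precisely the quartic and sextic parts of $\prod_{k}(1+\lambda_k^2)$. Organizing this cancellation requires the full force of Proposition \ref{prop:2form norm} (especially $|\gamma|^4 = |\gamma^2|^2 - \tfrac{1}{3}*\gamma^4$ and $|\beta|^2|\gamma|^2 = 2|\beta\wedge\gamma|^2$), the $\Sp$-equivariant isomorphisms $\lambda^k$ of Lemma \ref{lem:lambdas}, and repeated application of Lemma \ref{lem:24form}. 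This is presumably the ``tricky and complicated computation'' the authors refer to, and it is very non-obvious without the prediction from the real Fourier--Mukai transform in \cite[Lemma 5.5]{KYFM}; I would expect the final matching to be completed in the appendix as Proposition \ref{prop:Cayley eq pt}.
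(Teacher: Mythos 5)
Your plan coincides with the paper's proof: the authors also reduce to the pointwise algebraic identity on $(\R^8,\Phi)$ (Proposition \ref{prop:Cayley eq pt}), compute $\det(I_8+F^\sharp)=\prod_k(1+\lambda_k^2)=\sum_k|F^k/k!|^2$ by skew-diagonalization (Lemma \ref{lem:det}), decompose $F=F_7+F_{21}$ and match the graded pieces degree by degree using \eqref{eq:F2decomp}, Proposition \ref{prop:2form norm} and Lemma \ref{lem:24form} (Lemma \ref{lem:Caypt deg}), with the ``in particular'' clause following exactly as you describe from non-negativity of the two error terms and \eqref{eq:premod Spin7}. The only cosmetic difference is that the paper keeps the computation fully $\Sp$-invariant rather than normalizing $\beta$ and $\gamma$ by equivariance as you suggest.
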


\begin{remark} \label{rem:novanish}
By Theorem \ref{thm:Cayley eq}, 
it is immediate to see that 
$1+\la F_\n^2, \Phi \ra/2 + * F^4_\n/24$  is nowhere vanishing 
for any $\Sp$-dDT connection $\n$. 
This is also proved in \cite[Remark A.10]{KYSpin7}. 
\end{remark}

By integrating the ``mirror'' of the Cayley equality, 
we can relate the volume functional $V$ to a $\Sp$-dDT connection and 
we see that the volume of a $\Sp$-dDT connection is topological in the torsion-free case.

\begin{theorem} \label{thm:Cayvol ineq}
Suppose that $X$ is compact and connected.  
For any $\n \in \Aa_0$, we have 
\begin{align} \label{eq:Cayley eq}
\left| \int_X \left( 1 + \frac{1}{2} \la F_\n^2, \Phi \ra + \frac{* F^4_\n}{24}  \right) \vol_g \right|
\leq V(\n)
\end{align}
and the equality holds if and only if $\n$ is a $\Sp$-dDT connection. 

If the $\Sp$-structure $\Phi$ is torsion-free, the left hand side of \eqref{eq:Cayley eq} 
is given by
$$
\left| {\rm Vol}(X) +\left( -2 \pi^2 c_1(L)^2 \cup [\Phi]  + \frac{2}{3} \pi^4 c_1(L)^4 \right) 
\cdot [X] \right|, 
$$
where $c_1(L)$ is the first Chern class of $L$. 
In particular, 
for any $\Sp$-dDT connection $\n$, 
$V(\n)$ is topological. 
\end{theorem}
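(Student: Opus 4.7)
The plan is to derive the integral inequality by integrating the pointwise Cayley identity of Theorem~\ref{thm:Cayley eq}, then identify the resulting integrand as a topological quantity using Chern--Weil theory.

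First I would integrate the pointwise inequality
\[
\left| 1 + \tfrac{1}{2}\langle F_\n^2, \Phi\rangle + \tfrac{\ast F_\n^4}{24} \right| \leq \sqrt{\det(\id_{TX} + (-\i F_\n)^\sharp)}
\]
from Theorem~\ref{thm:Cayley eq} over $X$ and apply the elementary triangle inequality $\left|\int_X f\,\vol_g\right| \leq \int_X |f|\,\vol_g$. The right-hand side is then $V(\n)$ by definition, yielding \eqref{eq:Cayley eq}.

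For the equality case, the direction ``equality $\Rightarrow$ $\Sp$-dDT'' is the easier one: equality forces the pointwise inequality to be sharp almost everywhere, hence everywhere by smoothness, which is precisely the defining condition of a $\Sp$-dDT connection. The converse is the main obstacle. Here I need two things: (a)~sharpness of the pointwise inequality, which is immediate from Theorem~\ref{thm:Cayley eq} once $\n$ is $\Sp$-dDT; and (b)~equality in the triangle inequality, which requires the integrand $1 + \tfrac{1}{2}\langle F_\n^2, \Phi\rangle + \tfrac{\ast F_\n^4}{24}$ to have constant sign on $X$. For this I would invoke Remark~\ref{rem:novanish}, which ensures this function is nowhere vanishing for a $\Sp$-dDT connection, combined with the connectedness hypothesis on $X$ to conclude constant sign.

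For the torsion-free part, $\Phi$ torsion-free gives $d\Phi = 0$ (Fern\'andez's theorem), so $[\Phi] \in H^4_{dR}(X)$ is well-defined. Using self-duality $\ast\Phi = \Phi$ (hence $\langle F_\n^2, \Phi\rangle\vol_g = F_\n^2 \wedge \Phi$) and $(\ast F_\n^4)\vol_g = F_\n^4$ (since $F_\n^4$ is a top form), the integrand rewrites as a sum of wedge products:
\[
\int_X \left( 1 + \tfrac{1}{2}\langle F_\n^2,\Phi\rangle + \tfrac{\ast F_\n^4}{24} \right)\vol_g = \V(X) + \tfrac{1}{2}\int_X F_\n^2 \wedge \Phi + \tfrac{1}{24}\int_X F_\n^4.
\]
Then by Chern--Weil, $c_1(L) = [\i F_\n/(2\pi)]$, so $[F_\n^2] = -4\pi^2 c_1(L)^2$ and $[F_\n^4] = 16\pi^4 c_1(L)^4$, turning the two integrals into $-2\pi^2(c_1(L)^2 \cup [\Phi])\cdot[X]$ and $\tfrac{2\pi^4}{3}c_1(L)^4\cdot[X]$ respectively. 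Taking absolute values recovers the stated topological formula for the left-hand side of \eqref{eq:Cayley eq}. The final claim that $V(\n)$ is topological for any $\Sp$-dDT connection $\n$ is then immediate by combining the equality case of the first part with the topological identification of the second part.
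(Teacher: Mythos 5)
Your proposal is correct and follows essentially the same route as the paper's own proof: integrate the pointwise identity of Theorem \ref{thm:Cayley eq}, handle the equality case via Remark \ref{rem:novanish} together with connectedness to get constant sign of the integrand, and identify the left-hand side topologically via $c_1(L)=[\i F_\n/2\pi]$. No gaps.
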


\begin{proof}
By Theorem \ref{thm:Cayley eq}, we have 
\begin{align*}
\left| \int_X \left( 1+\frac{1}{2} \la F_\n^2, \Phi \ra + \frac{* F^4_\n}{24}  \right) \vol_g \right| 
\leq 
\int_X \left| 1+\frac{1}{2} \la F_\n^2, \Phi \ra + \frac{* F^4_\n}{24}  \right| \vol_g
\leq
V(\n). 
\end{align*}
Then, if the equality holds in \eqref{eq:Cayley eq}, 
Theorem \ref{thm:Cayley eq} implies that $\n$ is a $\Sp$-dDT connection. 

Conversely, if  $\n$ is a $\Sp$-dDT connection, we have 
$$
V(\n) = \int_X \left| 1+\frac{1}{2} \la F_\n^2, \Phi \ra + \frac{* F^4_\n}{24}  \right| \vol_g
$$
by Theorem \ref{thm:Cayley eq}. 
By Remark \ref{rem:novanish} and the assumption that $X$ is connected, 
$1+\la F_\n^2, \Phi \ra/2 + * F^4_\n/24$ has constant sign 
for any $\Sp$-dDT connection $\n$. 
Hence, 
the equality holds in \eqref{eq:Cayley eq}. 

If the $\Sp$-structure $\Phi$ is torsion-free, we have 
$$
\left( -2 \pi^2 c_1(L)^2 \cup [\Phi]  + \frac{2}{3} \pi^4 c_1(L)^4 \right) 
\cdot [X] 
=
\int_X \left(\frac{1}{2} \la F_\n^2, \Phi \ra + \frac{* F^4_\n}{24}  \right) \vol_g
$$
by $c_1(L) = \left[ \i F_\n/2 \pi \right]$, 
and the proof is completed. 
\end{proof}

Then, Theorem \ref{thm:Cayvol ineq} implies the following.

\begin{corollary} \label{cor:volmin}
If $X$ is compact and connected and the $\Sp$-structure $\Phi$ is torsion-free, 
any $\Sp$-dDT connection is a global minimizer of $V$. 
\end{corollary}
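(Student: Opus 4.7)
The plan is to combine the two parts of Theorem \ref{thm:Cayvol ineq} to produce a uniform lower bound on $V$ that is attained precisely at $\Sp$-dDT connections. The key observation is that the quantity
\[
T(L,\Phi) := \left| \operatorname{Vol}(X) + \left( -2\pi^2\, c_1(L)^2 \cup [\Phi] + \tfrac{2}{3}\pi^4\, c_1(L)^4 \right) \cdot [X] \right|
\]
appearing on the left-hand side of \eqref{eq:Cayley eq} (after rewriting via Chern--Weil) depends only on $L$ and the torsion-free $\Sp$-structure $\Phi$, not on the specific connection $\n$. Indeed, since $\Phi$ is torsion-free (in particular closed) and $c_1(L) = [\i F_\n/2\pi]$ is an invariant of the line bundle, both pairings on $[X]$ are genuinely topological.

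First, I would apply the integrated inequality \eqref{eq:Cayley eq} from Theorem \ref{thm:Cayvol ineq} to an arbitrary $\n \in \Aa_0$. Together with the torsion-free identification of the left-hand side, this yields
\[
V(\n) \ \geq\ T(L,\Phi)
\]
for every Hermitian connection $\n$. Second, for any $\Sp$-dDT connection $\n_0$, the equality clause of Theorem \ref{thm:Cayvol ineq} gives $V(\n_0) = T(L,\Phi)$. Combining the two displays,
\[
V(\n) \ \geq\ T(L,\Phi) \ =\ V(\n_0)
\]
for every $\n \in \Aa_0$, which is exactly the assertion that $\n_0$ globally minimizes $V$.

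There is no genuine obstacle in this step: all the analytic and algebraic work has already been carried out in proving the pointwise Cayley-type equality (Theorem \ref{thm:Cayley eq}) and its integrated form (Theorem \ref{thm:Cayvol ineq}). The corollary is just the remark that a connection-independent topological lower bound on $V$ that is saturated by $\Sp$-dDT connections automatically forces those connections to be global minimizers, in perfect analogy with how calibrated submanifolds are homologically volume minimizing because the calibration pairing depends only on the homology class.
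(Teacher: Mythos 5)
Your argument is correct and is essentially identical to the paper's own proof: both combine the equality case of Theorem \ref{thm:Cayvol ineq} for a $\Sp$-dDT connection with the inequality \eqref{eq:Cayley eq} for an arbitrary connection, using that the torsion-free hypothesis makes the common lower bound a topological quantity independent of the connection. Nothing further is needed.
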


\begin{proof}
Let $\n$ be a $\Sp$-dDT connection and $\n' \in \Aa_0$ 
be any Hermitian connection. 
Then, by Theorem \ref{thm:Cayvol ineq}, we have 
\[
V(\n) = 
\left| {\rm Vol}(X) +\left( -2 \pi^2 c_1(L)^2 \cup [\Phi]  + \frac{2}{3} \pi^4 c_1(L)^4 \right) 
\cdot [X] \right|
\leq V(\n'), 
\]
and the proof is completed. 
\end{proof}

By Corollary \ref{cor:volmin}, 
we see that 
$H(\n) = 0$ for any $\Sp$-dDT connection $\n$ 
if $X$ is a compact connected $\Sp$-manifold, 
where $H(\n)$ is the mean curvature at $\n$ defined by \eqref{eq:MC}. 

By Theorems \ref{thm:Cayley eq} and \ref{thm:Cayvol ineq}, 
we see the following. 

\begin{corollary}\label{cor:flat Spin7}
Let $(X, \Phi)$ be a compact connected $\Sp$-manifold 
and $L \to X$ be a smooth complex line bundle with a Hermitian metric $h$. 
Then, we have the following. 
\begin{enumerate}

\item 
Suppose that there is a $\Sp$-dDT connection $\n_0$ such that 
$1+ \la F_{\n_0}^2, \Phi \ra/2 + *F_{\n_0}^4/24 >0$ (resp. $<0$). 
Then, for any $\Sp$-dDT connection $\n$, we have 
$1+ \la F_\n^2, \Phi \ra/2 + *F_\n^4/24 >0$ (resp. $<0$). 

\item
Suppose that $L$ is a flat line bundle. 
Then, any $\Sp$-dDT connection is a flat connection. 
In particular, the moduli space of $\Sp$-dDT connections 
is $H^1(X, \R)/2 \pi H^1(X, \Z)$. 
\end{enumerate}
\end{corollary}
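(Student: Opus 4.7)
The plan for (1) is to combine the topological expression in Theorem \ref{thm:Cayvol ineq} with the pointwise non-vanishing from Remark \ref{rem:novanish}. First I would observe that the integral
$$I(\n) := \int_X \left(1 + \tfrac{1}{2}\la F_\n^2, \Phi\ra + \tfrac{*F_\n^4}{24}\right)\vol_g$$
is purely topological in the torsion-free case (Theorem \ref{thm:Cayvol ineq}), so $I(\n) = I(\n_0)$ for every $\n \in \Aa_0$, and in particular for every $\Sp$-dDT connection. For a $\Sp$-dDT connection the integrand is nowhere zero by Remark \ref{rem:novanish}, so by connectedness of $X$ it has constant sign on $X$, and that sign must agree with the sign of $I$. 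Applying this to $\n_0$ and to any other $\Sp$-dDT connection $\n$ then yields the claim.

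For (2), I would first note that the flat Hermitian connection on $L$ (which exists because $L$ is flat) is trivially a $\Sp$-dDT connection and has integrand identically equal to $1 > 0$. By part (1), the integrand of every $\Sp$-dDT connection is then strictly positive on $X$, and since $c_1(L) = 0$, Theorem \ref{thm:Cayvol ineq} yields $V(\n) = \mathrm{Vol}(X)$ for every such $\n$. On the other hand, Lemma \ref{lem:det} gives the pointwise bound $\det(\id_{TX} + (-\i F_\n)^\sharp) \ge 1$, with equality if and only if $F_\n = 0$, since for a skew-symmetric endomorphism $A$ with imaginary eigenvalues $\pm\i\lambda_j$ one has $\det(\id + A) = \prod_j(1+\lambda_j^2)$, which equals $1$ only when every $\lambda_j$ vanishes. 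Integrating, $V(\n) \ge \mathrm{Vol}(X)$ with equality iff $\n$ is flat, which combined with $V(\n) = \mathrm{Vol}(X)$ forces $F_\n \equiv 0$.

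Finally, for the moduli space description I would fix a flat Hermitian reference connection $\n_0$. Every $\Sp$-dDT (equivalently flat Hermitian) connection is then of the form $\n_0 + \i a$ with $a \in Z^1$, and two such are $\Gg_U$-equivalent iff their difference lies in $\Kk_U = \i\left(2\pi\Hh^1_\Z(X) \oplus d\Om^0(X)\right)$ by Lemma \ref{lem:orbit}. Quotienting $Z^1$ first by $d\Om^0(X)$ yields $H^1_{dR}(X) = H^1(X,\R)$ via de Rham, and modding further by the lattice $2\pi\Hh^1_\Z(X)$ gives $H^1(X,\R)/2\pi H^1(X,\Z)$, as desired. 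I do not anticipate any serious analytic obstacle here: the key ingredients — the topological formula for $V$, the pointwise bound $\det \ge 1$ together with its equality case, and the explicit form of $\Kk_U$ — are all in place, and the proof essentially amounts to assembling them.
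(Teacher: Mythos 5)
Your proposal is correct and follows essentially the same route as the paper: part (1) uses the topological invariance of $\int_X (1+\tfrac12\la F_\n^2,\Phi\ra+\tfrac{*F_\n^4}{24})\vol_g$ together with the nowhere-vanishing of the integrand (Remark \ref{rem:novanish}) and connectedness, and part (2) compares $V(\n)$ with $V$ of a flat reference connection via Lemma \ref{lem:det} to force $F_\n=0$, then identifies the moduli space through Lemma \ref{lem:orbit}. The only cosmetic difference is your extra invocation of part (1) to fix the sign in part (2), which the paper sidesteps by keeping the absolute value in Theorem \ref{thm:Cayvol ineq}.
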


\begin{proof}
By the proof of Theorem \ref{thm:Cayvol ineq}, 
$$
\int_X \left( 1 + \frac{1}{2} \la F_\n^2, \Phi \ra + \frac{* F^4_\n}{24}  \right) \vol_g
$$
is independent of $\n \in \Aa_0$. 
By Remark \ref{rem:novanish} and the assumption that $X$ is connected, 
$1+\la F_\n^2, \Phi \ra/2 + * F^4_\n/24$ has constant sign 
for any $\Sp$-dDT connection $\n$. 
Thus, we obtain (1). 

Next, we prove (2). 
By Theorem \ref{thm:Cayvol ineq}, 
$V(\n_0)=V(\n)$ for $\Sp$-dDT connections $\n_0$ and $\n$. 
Let $\n_0$ be a flat connection. 
Then, by Lemma \ref{lem:det}, we have 
$$
\int_X \vol_g
=
\int_X \sqrt{
1+ |F_\n|^2 + \left| \frac{F_\n^2}{2!} \right|^2
+ \left| \frac{F_\n^3}{3!} \right|^2
+ \left| \frac{F_\n^4}{4!} \right|^2
}
\vol_g, 
$$
which implies that $F_\n=0$. 
Hence, we have $\n=\n_0+\i a$ for $a \in Z^1(X)$, where $Z^1(X)$ is the space of closed 1-forms on $X$. 
Then, by Lemma \ref{lem:orbit}, we see that 
$$
\Mm_\Sp \cong (\i Z^1(X))/ \Kk_U \cong H^1(X, \R)/2 \pi H^1(X, \Z). 
$$ 
\end{proof}


\section{The ``mirror" of the associator equality} 
\label{sec:G2dDT}
In this section, we first recall the definition of deformed Donaldson--Thomas connections 
for a manifold with a $G_2$-structure ($G_2$-dDT connections) and its moduli space from \cite{KY}. 
Then, we show a ``mirror" of the associator equality. 
Using this, if the $G_2$-structure is closed, we show that 
$G_2$-dDT connections are global minimizers of the volume functional $V$, 
just as associative submanifolds are homologically volume-minimizing. 
As an application of this, 
we show that any $G_2$-dDT connection of a flat line bundle 
over a compact connected manifold with a closed $G_2$-structure 
is a flat connection.

\SkipTocEntry \subsection{Preliminaries for the moduli space}
First, recall some definitions. 
Let $X^7$ be a 7-manifold with a $G_2$-structure $\varphi \in \Om^3$ 
and $L \to X$ be a smooth complex line bundle with a Hermitian metric $h$. 
Let $\mathcal{A}_{0}$ be the space of Hermitian connections of $(L,h)$. 
We regard the curvature 2-form $F_\n$ of $\n$ as a $\i \R$-valued closed 2-form on $X$. 
Define a map $\Ff_{G_2}: \Aa_{0} \rightarrow \i \Om^{6}$ by 
\begin{equation} \label{eq:deform map G2}
\Ff_{G_2} (\nabla) = \frac{1}{6} F_\nabla^3 + F_\nabla \wedge * \varphi. 
\end{equation} 
Each element of 
\begin{align} \label{eq:premod G2}
\widehat \Mm_{G_2} = \Ff_{G_2}^{-1}(0)
\end{align} 
is called a \emph{deformed Donaldson--Thomas connection}. 
We call this a \emph{$G_2$-dDT connection} for short.
As in Section \ref{sec:Spin7dDT}, 
the group $\Gg_U$ of unitary gauge transformations of $(L,h)$ acts on $\Aa_0$ 
preserving the curvature 2-forms. 
Note that $\Gg_U$-orbits are described explicitly as in Lemma \ref{lem:orbit} 
when $X$ is compact and connected. 
The {\em moduli space} $\Mm_\Sp$ of $G_2$-dDT connections of $(L,h)$ 
is given by 
\begin{align} \label{eq:mod G2}
\Mm_{G_2} := \widehat \Mm_{G_2}/\Gg_U. 
\end{align}

\SkipTocEntry \subsection{Statements} 
The following statement is considered to be a ``mirror" of 
the associator equality \cite[Chapter I\hspace{-.1em}V,Theorem 1.6]{HL}
via the real Fourier--Mukai transform
as stated in \cite[Lemma 4.3]{KYFM}. 
The proof is given in Corollary \ref{cor:asso eq pt}. 

\begin{theorem} \label{thm:asso eq}
For any $\n \in \Aa_0$, we have 
\begin{align*}
&\left( 1+ \frac{1}{2} \la F_\n^2, * \varphi \ra \right)^2
+
\left| * \varphi \wedge F_\n + \frac{1}{6} F_\n^3 \right|^2
+
\frac{1}{4} \left|\varphi \wedge * F_\n^2 \right|^2 \\
=&
\det \left( \id_{TX} + (-\i F_\n)^\sharp \right), 
\end{align*}
where 
$(-\sqrt{-1}F_\n)^\sharp \in \Gamma (X, \mathop{\mathrm{End}} TX)$ is defined by 
$u\mapsto \left(-\sqrt{-1}i(u)F_{\nabla}\right)^{\sharp}$. 
In particular, 
$$
\left| 1+ \frac{1}{2} \la F_\n^2, * \varphi \ra \right| 
\leq \sqrt{\det (\id_{TX} + (-\i F_\n)^\sharp)}
$$ 
for any $\n \in \Aa_0$ and the equality holds 
if and only if $\n$ is a $G_2$-dDT connection. 
\end{theorem}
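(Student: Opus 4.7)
The plan is to deduce Theorem \ref{thm:asso eq} from the ``mirror'' of the Cayley equality (Theorem \ref{thm:Cayley eq}) applied to the product 8-manifold $X^7 \times S^1$ equipped with the canonically induced $\Sp$-structure
\[
\Phi = e^0 \wedge \varphi + *_7 \varphi,
\]
where $e^0$ denotes the standard coordinate 1-form on $S^1$ and $*_7$ (resp. $*_8$) is the Hodge star on $X^7$ (resp. $X^7 \times S^1$). I would pull the Hermitian line bundle $L$ and the connection $\n$ back via the projection $\pi_X : X^7 \times S^1 \to X^7$ and set $\widetilde{\n} = \pi_X^* \n$, so that $F_{\widetilde{\n}} = \pi_X^* F_\n$ has no component in the $e^0$ direction.

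Two of the three terms in the Cayley equality collapse under this pullback. First, $F_{\widetilde{\n}}^4 = \pi_X^*(F_\n^4) = 0$ since $F_\n^4$ would have degree $8 > 7$, and $\langle F_{\widetilde{\n}}^2, e^0 \wedge \varphi \rangle = 0$ because the pullback has no $e^0$ component; hence the scalar factor reduces to $1 + \tfrac{1}{2}\langle F_\n^2, *_7 \varphi \rangle$, matching the first summand of the desired identity. Second, the block structure of $\id + (-\i F_{\widetilde{\n}})^\sharp$, which acts as the identity on the $S^1$ factor and as $\id_{TX} + (-\i F_\n)^\sharp$ on $TX$, gives
\[
\det \bigl( \id_{T(X \times S^1)} + (-\i F_{\widetilde{\n}})^\sharp \bigr) = \det \bigl( \id_{TX} + (-\i F_\n)^\sharp \bigr),
\]
so the right-hand sides already agree.

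The substance of the proof is matching the two $\Sp$-projection terms with their $G_2$-counterparts. For this I would use the isometries $\lambda^k : V^* \to \Lambda^k_7 W^*$ of Lemma \ref{lem:lambdas}, writing $\pi^k_7(\cdot) = \lambda^k(\alpha^k)$ for a unique $\alpha^k \in T^*X^7$ and determining $\alpha^k$ by pairing against $\lambda^k(\beta)$ for arbitrary $\beta \in V^*$. Using the Hodge identity $*_8(\pi_X^* \eta) = (-1)^{\deg \eta} e^0 \wedge \pi_X^*(*_7 \eta)$ to rewrite $F_{\widetilde{\n}} + \tfrac{1}{6}\, *_8 F_{\widetilde{\n}}^3 = \pi_X^* F_\n + \tfrac{1}{6}\, e^0 \wedge \pi_X^*(*_7 F_\n^3)$, together with the adjointness $\langle F_\n, i(\beta^\sharp) \varphi \rangle = \langle \beta \wedge F_\n, \varphi \rangle$, a direct computation yields
\[
\alpha^2 = \tfrac{1}{2}\, *_7\!\bigl( F_\n \wedge *_7 \varphi + \tfrac{1}{6} F_\n^3 \bigr), \qquad \alpha^4 = -\tfrac{1}{\sqrt{8}}\, *_7\!\bigl( \varphi \wedge *_7 F_\n^2 \bigr),
\]
so that $4|\pi^2_7(F_{\widetilde{\n}} + \tfrac{1}{6}\, *_8 F_{\widetilde{\n}}^3)|^2 = |*_7 \varphi \wedge F_\n + \tfrac{1}{6} F_\n^3|^2$ and $2|\pi^4_7(F_{\widetilde{\n}}^2)|^2 = \tfrac{1}{4}|\varphi \wedge *_7 F_\n^2|^2$, assembling into the desired equality.

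The main obstacle is keeping track of sign conventions in the various Hodge-star and adjointness identities while computing $\alpha^2$ and $\alpha^4$; everything else is a direct specialization of Theorem \ref{thm:Cayley eq}. Once the identification is done, the pointwise inequality and its equality case follow immediately, and the equality case recovers the $G_2$-dDT defining equation $\tfrac{1}{6} F_\n^3 + F_\n \wedge *_7 \varphi = 0$, consistent with \cite[Lemma 7.1]{KYFM}.
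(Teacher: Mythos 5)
Your proposal is essentially the paper's own argument: the paper proves the identity pointwise (Proposition \ref{prop:Cayley eq pt} $\Rightarrow$ Corollary \ref{cor:asso eq pt}) by embedding $V^*=\R^7$ into $W^*=\R^8$ and using exactly the isometries $\lambda^2,\lambda^4$ of Lemma \ref{lem:lambdas} to convert the $\Sp$-projection terms into the $G_2$-expressions, obtaining the same formulas $2\pi^2_7(F)=\lambda^2(*_7(F\wedge *_7\varphi))$, $2\pi^2_7(*_8F^3)=\lambda^2(*_7F^3)$ and $\sqrt{8}\,\pi^4_7(F^2)=-\lambda^4(*_7(\varphi\wedge *_7F^2))$ that you describe; your global phrasing via pullback to $X^7\times S^1$ is only a cosmetic repackaging of this.

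One point you gloss over is the ``if'' direction of the final equivalence: equality in the inequality forces \emph{both} $*\varphi\wedge F_\n+\tfrac16F_\n^3=0$ \emph{and} $\varphi\wedge *F_\n^2=0$, and only the first of these is the $G_2$-dDT equation. To conclude that every $G_2$-dDT connection attains equality you must also know that the dDT equation implies $\varphi\wedge *F_\n^2=0$; this is a nontrivial algebraic fact which the paper imports from \cite[Remark 3.3]{KY} (see also \cite[Corollary C.3]{KY}, used again in Lemma \ref{lem:Cay asso pt}). With that citation added, your argument is complete.
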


\begin{proof}
The first equation is proved in Corollary \ref{cor:asso eq pt}. 
By \cite[Remark 3.3]{KY}, 
$\varphi \wedge * F_\n^2=0$ if $\n$ is a $G_2$-dDT connection. 
Then, the last statement holds. 
\end{proof}

\begin{remark} \label{rem:novanish G2}
By Theorem \ref{thm:asso eq}, 
it is immediate to see that 
$1+ \la F_\n^2, * \varphi \ra/2$ is nowhere vanishing 
for any $G_2$-dDT connection $\n$. 
This is also implied by \cite[Section 3.3]{KY}.

We might similarly consider a ``mirror" of the coassociator equality, 
but the resulting equation would be the same as in Theorem \ref{thm:asso eq}. 
It is because the real Fourier--Mukai transform of a coassociative submanifold 
is again a $G_2$-dDT connection, which is obtained by 
the real Fourier--Mukai transform of an associative submanifold. 
See \cite[Propositions 4.1 and 6.1]{KYFM}. 
Since $G_2$-dDT connections behave like associative submanifolds in deformation theory as shown in \cite{KY}, it would be appropriate to call the identity in Theorem \ref{thm:asso eq} 
 a ``mirror" of the associator equality. 
\end{remark}

By Theorem \ref{thm:asso eq}, 
we can prove the following as in Theorem \ref{thm:Cayvol ineq}, 
Corollaries \ref{cor:volmin} and \ref{cor:flat Spin7}. 

\begin{theorem} \label{thm:assovol ineq}
Suppose that $X$ is compact and connected. 
For any $\n \in \Aa_0$, we have 
\begin{align} \label{eq:asso eq}
\left|\int_X \left( 1+ \frac{1}{2} \la F_\n^2, * \varphi \ra \right) \vol_g \right|
\leq V(\n) 
\end{align}
and the equality holds if and only if $\n$ is a $G_2$-dDT connection. 
If the $G_2$-structure $\varphi$ is closed, 
the left hand side of \eqref{eq:asso eq} is given by 
$$
\left| {\rm Vol}(X) + \left( -2 \pi^2 c_1(L)^2 \cup [\varphi]  \right) \cdot [X] \right|, 
$$
where $c_1(L)$ is the first Chern class of $L$. 
In particular, for any $G_2$-dDT connection $\n$, 
$V(\n)$ is topological. 
\end{theorem}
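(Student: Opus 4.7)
The plan is to mirror the proof of Theorem \ref{thm:Cayvol ineq}, with Theorem \ref{thm:asso eq} playing the role that Theorem \ref{thm:Cayley eq} played in the $\Sp$-case. The three claims of the theorem (the inequality \eqref{eq:asso eq}, the equality characterization, and the topological expression under the closedness hypothesis) are all consequences of the pointwise inequality already established.

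First I would integrate the pointwise inequality from Theorem \ref{thm:asso eq}. Setting $f(\n):=1+\tfrac{1}{2}\langle F_\n^2, *\varphi\rangle$, the chain
\[
\left|\int_X f(\n)\,\vol_g\right|
\;\le\; \int_X |f(\n)|\,\vol_g
\;\le\; \int_X \sqrt{\det(\id_{TX}+(-\i F_\n)^\sharp)}\,\vol_g
\;=\; V(\n)
\]
gives \eqref{eq:asso eq}. For the equality case, suppose the leftmost and rightmost terms agree. Then both inequalities above must be equalities, so the pointwise bound in Theorem \ref{thm:asso eq} is saturated everywhere on $X$, which by that theorem forces $\n$ to be a $G_2$-dDT connection. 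Conversely, if $\n$ is a $G_2$-dDT connection, Remark \ref{rem:novanish G2} together with the connectedness of $X$ implies that $f(\n)$ is of constant sign, so the first triangle inequality is an equality; saturation of the pointwise bound (from Theorem \ref{thm:asso eq}, since $\n$ is $G_2$-dDT) makes the second an equality too.

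Next I would translate the left-hand side of \eqref{eq:asso eq} into cohomological language when $\varphi$ is closed. Using $\alpha\wedge *\beta=\langle\alpha,\beta\rangle\vol_g$ for $2$-forms, I rewrite
\[
\int_X \tfrac{1}{2}\langle F_\n^2, *\varphi\rangle\,\vol_g
\;=\; \tfrac{1}{2}\int_X F_\n^2\wedge\varphi.
\]
Since $c_1(L)=[\i F_\n/2\pi]$, the class of $F_\n^2$ is $-4\pi^2 c_1(L)^2$, and closedness of $\varphi$ makes $\int_X F_\n^2\wedge\varphi$ a topological pairing, yielding
\[
\int_X f(\n)\,\vol_g = \V(X) + \bigl(-2\pi^2 c_1(L)^2\cup[\varphi]\bigr)\cdot[X],
\]
which is the asserted expression (up to the outer absolute value). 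Combining this with the equality case above shows that for every $G_2$-dDT connection $\n$, $V(\n)$ equals the absolute value of this topological pairing and is therefore independent of $\n$.

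Every step is routine once Theorem \ref{thm:asso eq} is in hand, so there is no real obstacle; the only point requiring care is the sign issue in the first triangle inequality, which is exactly where Remark \ref{rem:novanish G2} and the connectedness hypothesis are used to guarantee constancy of the sign of $f(\n)$ on $G_2$-dDT solutions.
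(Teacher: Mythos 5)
Your proposal is correct and is precisely the argument the paper intends: the paper states that Theorem \ref{thm:assovol ineq} follows from Theorem \ref{thm:asso eq} "as in Theorem \ref{thm:Cayvol ineq}," and your chain of inequalities, the use of Remark \ref{rem:novanish G2} plus connectedness for the equality case, and the cohomological rewriting via $c_1(L)=[\i F_\n/2\pi]$ and $d\varphi=0$ reproduce that proof exactly.
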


\begin{corollary} \label{cor:volmin G2}
If $X$ is compact and connected and the $G_2$-structure $\varphi$ is closed, 
any $G_2$-dDT connection is a global minimizer of $V$. 
\end{corollary}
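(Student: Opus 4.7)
The plan is to mimic verbatim the proof of Corollary \ref{cor:volmin} in the $\Sp$-setting, since Theorem \ref{thm:assovol ineq} supplies for the $G_2$-case exactly the two ingredients that Theorem \ref{thm:Cayvol ineq} supplied there: (a) the sharp lower bound $\left|\int_X (1+\tfrac12\la F_\n^2,*\varphi\ra)\vol_g\right|\le V(\n)$ with equality precisely at $G_2$-dDT connections, and (b) a topological formula for this lower bound under the closedness assumption $d\varphi=0$.

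The argument would go as follows. Let $\n$ be a $G_2$-dDT connection and let $\n'\in\Aa_0$ be any other Hermitian connection. Since $\n$ achieves equality in the inequality of Theorem \ref{thm:assovol ineq}, and since $\varphi$ is closed so that the left hand side of \eqref{eq:asso eq} is given by the topological expression
\[
\bigl|\,\V(X) + \bigl(-2\pi^2\, c_1(L)^2\cup[\varphi]\bigr)\cdot [X]\,\bigr|,
\]
we obtain
\[
V(\n) \;=\; \bigl|\,\V(X) + \bigl(-2\pi^2\, c_1(L)^2\cup[\varphi]\bigr)\cdot [X]\,\bigr|.
\]
Applying Theorem \ref{thm:assovol ineq} to $\n'$ gives
\[
V(\n') \;\ge\; \left|\int_X \!\left(1+\tfrac12\la F_{\n'}^2,*\varphi\ra\right)\vol_g\right| \;=\; \bigl|\,\V(X) + \bigl(-2\pi^2\, c_1(L)^2\cup[\varphi]\bigr)\cdot [X]\,\bigr|,
\]
where in the last equality we again used that $d\varphi=0$ together with $c_1(L)=[\i F_{\n'}/2\pi]$, so that the value depends only on the cohomology classes $c_1(L)$ and $[\varphi]$ and not on the specific representative $\n'$. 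Combining the two displays yields $V(\n)\le V(\n')$, which is precisely the global minimality claim.

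There is essentially no obstacle: the content has already been absorbed into Theorem \ref{thm:assovol ineq}, whose proof in turn rests on the pointwise ``mirror'' of the associator equality (Theorem \ref{thm:asso eq}). The only small point to note for cleanliness is that the topological identification in Theorem \ref{thm:assovol ineq} holds for \emph{any} $\n'\in\Aa_0$, not just for $G_2$-dDT connections, because $F_{\n'}^2$ is closed and $\varphi$ is closed, so $\int_X F_{\n'}^2\wedge\varphi$ is a pairing of fixed cohomology classes; this is what makes the lower bound on $V(\n')$ independent of $\n'$ and equal to $V(\n)$.
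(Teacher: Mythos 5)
Your proposal is correct and follows exactly the route the paper intends: the paper proves Corollary \ref{cor:volmin G2} "as in Corollary \ref{cor:volmin}", i.e.\ by combining the equality case of Theorem \ref{thm:assovol ineq} at a $G_2$-dDT connection with the fact that, since $F_{\n'}^2$ and $\varphi$ are closed, the lower bound $\left|\int_X\left(1+\tfrac12\la F_{\n'}^2,*\varphi\ra\right)\vol_g\right|$ is the same topological quantity for every $\n'\in\Aa_0$. Your closing remark that the topological identification must hold for arbitrary $\n'$ (not just dDT connections) is precisely the point that makes the comparison work, and it is the same observation underlying the paper's proof of Corollary \ref{cor:volmin}.
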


\begin{remark}
Karigiannis and  Leung \cite{KL} constructed a Chern-Simons type functional 
whose critical points are $G_2$-dDT connections. 

By Corollary \ref{cor:volmin G2}, 
we see that 
$H(\n) = 0$ for any $G_2$-dDT connection $\n$ 
if $X$ is compact and connected and the $G_2$-structure $\varphi$ is closed, 
where $H(\n)$ is the mean curvature at $\n$ defined by \eqref{eq:MC}. 
The functional $V$ can have critical points other than $G_2$-dDT connections, 
just as associative submanifolds are not only critical points of the (standard) volume functional. 

We give an example of a connection $\n$ that satisfies $H(\n)=0$ but is not a $G_2$-dDT connection 
on a noncompact manifold $X$. (Note that we can define $H(\n)$ on a noncompact manifold by the formula \eqref{eq:MC}.) 
Set $X=\R^3 \times T^4$. Denote by $(x^1, \cdots, x^7)$ the coordinates on $X=\R^3 \times T^4$. 
Set $\n=d+ \i x^1 dx^4$, which is a Hermitian connection of the trivial line bundle $X \times \C \to X$. 
Note that $\n$ is obtained by the real Fourier--Mukai transform 
of $\{ (x^1, x^2, x^3, x^1, 0, 0, 0) \in \R^3 \times T^4 \mid x^j \in \R \}$, 
which corresponds to a plane that is not associative. 
(For more details about the real Fourier--Mukai transform, see \cite{KYFM}.) 
Set $e^j = dx^j, e_j = \p/\p x^j$ and we may assume that $G_2$-structure $\varphi$ is given by \eqref{varphi}. 
Then, since $F_\n= \i e^{14}$, 
we see that 
$$
F_\n^3=0, \qquad F_\n \wedge * \varphi = \i e^{123467} \neq 0,  
$$
which implies that $\n$ is not a $G_2$-dDT connection. 

Next, we show that $H(\n)=0$. 
Recall the notation in Section \ref{sec:vol}. 
Since 
$\FF_\n^\sharp= e^1 \otimes e_4 - e^4 \otimes e_1$, we see that 
$G_{\nabla}:=\id_{TX} + e^1 \otimes e_1 + e^4 \otimes e_4$. 
These equations imply that 
$\det G_\n$ is constant and 
$\left(G_\n^{-1}\circ \FF_\n^\sharp \right)^{\flat}$ is a parallel 2-form. 
Then, by the definition of $H(\n)$ in \eqref{eq:MC}, we see that $H(\n)=0$. 
\end{remark}

\begin{corollary} \label{cor:flat G2}
Let $X^7$ be a compact connected manifold with a $G_2$-structure $\varphi \in \Om^3$ 
with $d \varphi =0$. 
Let $L \to X$ be a smooth complex line bundle with a Hermitian metric $h$. 
Then, we have the following. 
\begin{enumerate}

\item 
Suppose that there is a $G_2$-dDT connection $\n_0$ such that 
$1+ \la F_{\n_0}^2, * \varphi \ra/2 >0$ (resp. $<0$). 
Then, for any $G_2$-dDT connection $\n$, we have 
$1+ \la F_{\n}^2, * \varphi \ra/2 >0$ (resp. $<0$). 

\item
Suppose that $L$ is a flat line bundle. 
Then, any $G_2$-dDT connection is a flat connection. 
In particular, the moduli space of $G_2$-dDT connections 
is $H^1(X, \R)/2 \pi H^1(X, \Z)$. 
\end{enumerate}
\end{corollary}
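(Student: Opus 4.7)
The approach is to mirror the proof of Corollary \ref{cor:flat Spin7} using the $G_2$-analogues of its ingredients: Theorem \ref{thm:assovol ineq} (topological invariance of the integrated associator functional), Remark \ref{rem:novanish G2} (pointwise nonvanishing of the integrand on $G_2$-dDT connections), Lemma \ref{lem:det} (the algebraic formula for $\det(\id_{TX} + (-\i F_\n)^\sharp)$), and Lemma \ref{lem:orbit} (description of the unitary gauge orbits).

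For (1), I would first appeal to the proof of Theorem \ref{thm:assovol ineq}: since $d\varphi = 0$, the integral
\[
\int_X \Bigl( 1 + \tfrac{1}{2} \la F_\n^2, *\varphi \ra \Bigr) \vol_g
\]
depends only on ${\rm Vol}(X)$, $c_1(L)$, and $[\varphi]$, so it is independent of $\n \in \Aa_0$. By Remark \ref{rem:novanish G2}, the integrand $1 + \la F_\n^2, *\varphi \ra/2$ is nowhere vanishing for any $G_2$-dDT connection $\n$, and since $X$ is connected it has constant sign. Consequently, if $\n_0$ is a $G_2$-dDT connection with positive (resp. negative) integrand, then for any other $G_2$-dDT $\n$ the sign of its integrand must match that of $\n_0$: otherwise the two integrals would have opposite signs while agreeing as a topological invariant, a contradiction.

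For (2), assuming $L$ is flat, I would fix a flat Hermitian connection $\n_0$ of $(L,h)$; since $F_{\n_0} = 0$, this $\n_0$ is trivially a $G_2$-dDT connection and $V(\n_0) = {\rm Vol}(X)$. By the topological character of $V$ on $G_2$-dDT connections (Theorem \ref{thm:assovol ineq}), $V(\n) = {\rm Vol}(X)$ for every $G_2$-dDT $\n$. Then applying Lemma \ref{lem:det} to write
\[
V(\n) = \int_X \sqrt{\,1 + |F_\n|^2 + \Bigl| \tfrac{F_\n^2}{2!} \Bigr|^2 + \Bigl| \tfrac{F_\n^3}{3!} \Bigr|^2\,}\, \vol_g,
\]
the equality $V(\n) = {\rm Vol}(X)$ forces $F_\n \equiv 0$. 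Writing $\n = \n_0 + \i a$ gives $a \in Z^1(X)$, and Lemma \ref{lem:orbit} then identifies
\[
\Mm_{G_2} \cong (\i Z^1(X))/\Kk_U \cong H^1(X, \R)/2\pi H^1(X, \Z).
\]

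I do not anticipate any substantive obstacle, as each step is essentially a line-by-line transcription of the corresponding step in the $\Sp$-case of Corollary \ref{cor:flat Spin7}. The only minor checks are that Lemma \ref{lem:det} takes the stated seven-dimensional form (which follows by the same argument using the block-diagonal form of a skew-symmetric endomorphism, noting that in odd dimension one eigenvalue is zero so the expansion truncates at the $|F_\n^3/3!|^2$ term) and that a flat Hermitian connection exists on a flat Hermitian line bundle, which is standard.
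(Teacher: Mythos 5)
Your proposal is correct and is essentially the paper's own argument: the paper proves Corollary \ref{cor:flat G2} by explicitly declaring it follows "as in" Corollary \ref{cor:flat Spin7}, and your transcription (constancy of $\int_X (1+\tfrac12\la F_\n^2,*\varphi\ra)\vol_g$ via $d\varphi=0$ plus nonvanishing of the integrand for (1); $V(\n)={\rm Vol}(X)$ combined with the seven-dimensional form of Lemma \ref{lem:det} to force $F_\n=0$, then Lemma \ref{lem:orbit}, for (2)) is exactly that argument with the correct $G_2$-substitutions.
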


\begin{remark}
Corollary \ref{cor:flat G2} (2) implies that 
we need to consider a non-flat line bundle to construct non-trivial examples of $G_2$-dDT connections 
on a compact connected 7-manifold with a closed $G_2$-structure. 

Recently, Lotay and Oliveira \cite{LO} constructed 
nontrivial examples of $G_2$-dDT connections on the trivial complex line bundle over a 3-Sasakian 7-manifold. 
This does not contradict Corollary \ref{cor:flat G2} (2) 
since the $G_2$-structure in \cite{LO} is not closed but coclosed. 
\end{remark}

\section{The ``mirror'' of the special Lagrangian equality} \label{sec:SL34}

In this section, we first recall the definition of deformed Hermitian Yang--Mills (dHYM) connections 
for a K\"ahler manifold and its moduli space from \cite{KY}. 
Then, we show a ``mirror" of the special Lagrangian equality 
on a 3 or 4 dimensional K\"ahler manifold. 
We limit ourselves to these dimensions since 
we use the ``mirror" of the Cayley equality in Theorem \ref{thm:Cayley eq} 
and reduce from there. 
Using this ``mirror" of the special Lagrangian equality, we show that 
dHYM connections with phase $e^{\i \theta}$ are global minimizers of the volume functional $V$, 
just as special Lagrangian submanifolds are homologically volume-minimizing. 
As an application of this, 
we show that any dHYM connection with phase 1 of a flat line bundle 
over a compact connected K\"ahler manifold is a flat connection. 
Assuming that a Hermitian connection $\n$ satisfies $F_\n^{0,2}=0$, 
where $F_\n^{0,2}$ is the $(0,2)$-part of the curvature $F_\n$, 
we show that some of these results also hold in any dimension. 
We also relate $H(\n)$ in \eqref{eq:MC} to the angle function and explain the relation to \cite{JY}.

\subsection{Preliminaries for the moduli space} 
First, recall some definitions. 
Let $X$ be an $n$-dimensional K\"ahler manifold with a K\"ahler form $\om$. 
Set $\Lambda^{p,q} = \Lambda^p (T^{1,0} X)^* \otimes \Lambda^q (T^{0, 1} X)^*$. 
Define real vector bundles 
$\lb \Lambda^{p,q} \rb$ for $p \neq q$ and $[\Lambda^{p, p}]$ by 
\[
\begin{aligned}
\lb \Lambda^{p,q} \rb 
=& (\Lambda^{p, q} \oplus \Lambda^{q, p}) \cap \Lambda^{p + q} T^* X
= \{\,\alpha \in \Lambda^{p, q} \oplus \Lambda^{q, p} \mid \bar \alpha = \alpha \,\}, \\
[\Lambda^{p, p}]
=&
\Lambda^{p, p} \cap \Lambda^{2p} T^* X
= \{\, \alpha \in \Lambda^{p, p} \mid \bar \alpha = \alpha \,\}. 
\end{aligned}
\]
Denote by $\lb \Om^{p,q} \rb$ and $[\Om^{p,p}]$
the space of sections of $\lb \Lambda^{p,q} \rb$ and $[\Lambda^{p, p}]$, respectively. 
Denote by $\pi^{\lb p,q \rb}$ the projection $\Om^{p+q} \rightarrow \lb \Om^{p,q} \rb$.

Let $L \to X$ be a smooth complex line bundle with a Hermitian metric $h$. 
Let $\mathcal{A}_{0}$ be the space of Hermitian connections of $(L,h)$. 
We regard the curvature 2-form $F_\n$ of $\n$ as a $\i \R$-valued closed 2-form on $X$. 
Fixing $\theta \in \R$, define a map 
$\Ff = (\Ff_1, \Ff_2): \Aa_0 \rightarrow \lb \Om^{2, 0} \rb \oplus \Om^{2n}$ by 
\begin{equation} \label{eq:deform map dHYM}
\begin{aligned}
\Ff_{dHYM} (\n) =& (\Ff^1_{dHYM} (\n), \Ff^2_{dHYM} (\n))\\
=& \left( \pi^{\lb 2,0 \rb} (-\sqrt{-1}F_\n), \ \mathop{\mathrm{Im}}\left(e^{-\i \theta} (\omega + F_\nabla)^n\right) \right). 
\end{aligned}
\end{equation} 
Each element of 
\begin{align} \label{eq:premod dHYM}
\widehat \Mm_{dHYM} = \Ff_{dHYM}^{-1}(0)
\end{align} 
is called a \emph{deformed Hermitian Yang--Mills (dHYM) connection} with phase $e^{\sqrt{-1}\theta}$. 
As in Section \ref{sec:Spin7dDT}, 
the group $\Gg_U$ of unitary gauge transformations of $(L,h)$ acts on $\Aa_0$ 
preserving the curvature 2-forms. 
Note that $\Gg_U$-orbits are described explicitly as in Lemma \ref{lem:orbit} 
when $X$ is compact and connected. 
The {\em moduli space} $\Mm_{dHYM}$ of dHYM connections of $(L,h)$ 
is given by 
\begin{align} \label{eq:mod dHYM}
\Mm_{dHYM} := \widehat \Mm_{dHYM}/\Gg_U. 
\end{align}
By \cite[Theorem 1.2 (1)]{KY}, $\Mm_{dHYM}$ is an empty set or a $b^1$-dimensional torus, 
where $b^1$ is the first Betti number of $X$.

\subsection{Statements} 

If $\dim_\C X=3$, we have the following. 
The proof is given in Corollary \ref{cor:SL3 eq pt}. 

\begin{theorem}\label{thm:SL3 eq}
For any $\n \in \Aa_0$, we have 
\begin{align*}
\left| \frac{(\om+ F_\n)^3}{3!} \right|^2
+ 2 \left| \pi^{\lb 3,1 \rb} \left( \frac{(\om+ F_\n)^2}{2!} \right) \right|^2 
=
\det (\id_{TX} + (-\i F_\n)^\sharp), 
\end{align*}
where 
$(-\sqrt{-1}F_\n)^\sharp \in \Gamma (X, \mathop{\mathrm{End}} TX)$ 
is defined by $u\mapsto \left(-\sqrt{-1}i(u)F_{\nabla}\right)^{\sharp}$. 
In particular, 
$$
\left| {\rm Re} \left( e^{-\i \theta} \frac{(\om+ F_\n)^3}{3!} \right) \right| 
\leq \sqrt{\det (\id_{TX} + (-\i F_\n)^\sharp)}
$$ 
for any $\theta \in \R$ and $\n \in \Aa_0$. 
The equality holds 
if and only if $\n$ is a dHYM connection with phase $e^{\i \theta}$. 
\end{theorem}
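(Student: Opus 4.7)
The plan is to reduce pointwise to the $\Sp$ Cayley equality (Proposition \ref{prop:Cayley eq pt}), exploiting the inclusions $\mathrm{SU}(3) \subset \mathrm{SU}(4) \subset \Sp$. Fix a point of $X$, identify $V := T_p X$ with $\R^6$ carrying the standard $\mathrm{SU}(3)$-structure $(\om_V, \Omega_V)$ of Section \ref{sec:SU3 geometry}, and let $U = \R^2$ carry a compatible complex structure. On $W := V \oplus U$ the induced $\mathrm{SU}(4)$-structure has K\"ahler form $\om_W = \om_V + \om_U$ and holomorphic volume $\Omega_W = \eta \wedge \Omega_V$, where $\eta$ is a unit $(1,0)$-form on $U$; by \eqref{eq:Phi4 SU4} this produces the $\Sp$-structure $\Phi = \tfrac{1}{2}\om_W^2 + \mathrm{Re}\,\Omega_W$. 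Extend $F := F_\n \in \Lambda^2 V^*$ trivially to $\Lambda^2 W^*$. Since $F^\sharp$ vanishes on $U$, $\det(\id_W + (-\i F)^\sharp) = \det(\id_V + (-\i F)^\sharp)$, so the right-hand sides of Proposition \ref{prop:Cayley eq pt} and of Theorem \ref{thm:SL3 eq} coincide.

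Apply Proposition \ref{prop:Cayley eq pt} to $F$ viewed on $W$ to get
\[
S_0^2 + 4\bigl|\pi^2_7\bigl(F + \tfrac{1}{6}\ast_8 F^3\bigr)\bigr|^2 + 2\bigl|\pi^4_7(F^2)\bigr|^2 = \det(\id_V + (-\i F)^\sharp),
\]
with $S_0 = 1 + \tfrac{1}{2}\la F^2, \Phi\ra + \tfrac{\ast_8 F^4}{24}$. First, $F^4 = 0$ since $F \in \Lambda^2 V^*$ and $\dim V = 6$; combined with the orthogonal splitting $\Lambda^k W^* = \bigoplus_{i+j=k} \Lambda^i V^* \otimes \Lambda^j U^*$, this gives $S_0 = 1 + \tfrac{1}{4}\la F^2, \om_V^2 \ra$. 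Writing $F = \i \FF$ with $\FF$ real and using $\ast_V \om_V = \om_V^2/2$ from \eqref{eq:SU3 wk}, one checks $S_0 = \ast_V \mathrm{Re}\bigl((\om_V + F)^3/3!\bigr)$. With $B := \mathrm{Im}\bigl((\om_V + F)^3/3!\bigr)$, this yields $|(\om_V + F)^3/3!|_V^2 = S_0^2 + (\ast_V B)^2$, and the theorem reduces to the identity
\[
4\bigl|\pi^2_7\bigl(F + \tfrac{1}{6}\ast_8 F^3\bigr)\bigr|^2 + 2\bigl|\pi^4_7(F^2)\bigr|^2 = (\ast_V B)^2 + 2\bigl|\pi^{\lb 3,1\rb}\bigl((\om_V + F)^2/2!\bigr)\bigr|_V^2.
\]
To verify this I would use the refined decompositions \eqref{eq:SU4 id7}, $\Lambda^2_7 W^* = \R \om_W \oplus A_+$ and $\Lambda^4_7 W^* = \R\,\mathrm{Im}\,\Omega_W \oplus (\om_W \wedge A_-)$. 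The $\R\,\mathrm{Im}\,\Omega_W$ component of $F^2$ vanishes by orthogonality, and the $\om_W \wedge A_-$ component is rewritten via Lemma \ref{lem:SU4 4form} as $|\pi_{A_-}(\ast_8(\om_W \wedge F^2))|^2$. The $\R\om_W$-component of $\pi^2_7(F + \tfrac{1}{6}\ast_8 F^3)$ contributes the scalar $(\ast_V B)^2$ through Lemma \ref{lem:SU3 2form} and \eqref{eq:SU3 wk}, while the $A_+$-component together with the $A_-$-contribution reassembles, via Lemma \ref{lem:SU3 id} and Corollary \ref{cor:SU3 2form norm}, into $2|\pi^{\lb 3,1\rb}((\om_V+F)^2/2!)|^2$.

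Given the equality, the inequality follows by combining $|\mathrm{Re}(e^{-\i\theta}\gamma)|^2 \le |\gamma|^2$ for the complex top form $\gamma := (\om + F)^3/3!$ with non-negativity of the $\pi^{\lb 3,1\rb}$ term. Equality forces simultaneously $\mathrm{Im}(e^{-\i\theta}(\om+F)^3) = 0$, which is $\Ff^2_{dHYM}(\n) = 0$, and $\pi^{\lb 3,1\rb}((\om+F)^2/2!) = 0$; by hard Lefschetz on $V$ the map $\om_V \wedge \,\cdot\, : \Lambda^{2,0}V^* \to \Lambda^{3,1}V^*$ is an isomorphism in complex dimension three, so the latter condition gives $F^{2,0} = 0$, i.e.\ $\Ff^1_{dHYM}(\n) = 0$. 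The converse is immediate from the definition of a dHYM connection with phase $e^{\i\theta}$. The principal technical obstacle is the matching in the second paragraph: careful bookkeeping of how the $\mathrm{SU}(3)$-invariant data on $V$ embed into the $\Sp$-irreducible decomposition on $W$, and verification that the cross terms produced by $\pi^2_7$ and $\pi^4_7$ recombine with the correct numerical coefficients into $(\ast_V B)^2$ and $2|\pi^{\lb 3,1\rb}((\om+F)^2/2!)|^2$.
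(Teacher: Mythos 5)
Your top-level strategy coincides with the paper's: the theorem is deduced pointwise from the Cayley equality of Proposition \ref{prop:Cayley eq pt} by embedding $\R^6$ into $\R^8$ and restricting the $\Sp$-irreducible decomposition to the smaller structure group. The difference is the intermediate step. The paper factors through the $G_2$ associator equality (Corollary \ref{cor:asso eq pt}): it embeds $\R^7\subset\R^8$, uses the isometries $\lambda^2,\lambda^4$ of Lemma \ref{lem:lambdas} to convert $4|\pi^2_7(F-\tfrac16*F^3)|^2$ and $2|\pi^4_7(F^2)|^2$ into the $G_2$-wedge expressions $|F\wedge *_7\varphi-\tfrac16 F^3|^2$ and $\tfrac14|\varphi\wedge *_7F^2|^2$, and then restricts to $\R^6$ via $\varphi=e^1\wedge\om+{\rm Re}\,\Om$, $*_7\varphi=\tfrac12\om^2-e^1\wedge\Im\Om$, finishing with Lemma \ref{lem:SU3 2form} and Corollary \ref{cor:SU3 2form norm}. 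You instead go through ${\rm SU}(4)\subset\Sp$ and the refinements $\Lambda^2_7=\R\om\oplus A_+$, $\Lambda^4_7=\R\,\Im\Om\oplus(\om\wedge A_-)$ of \eqref{eq:SU4 id7}, which the paper reserves for the four-dimensional case. The two embeddings produce the same $\Sp$-structure on $W$ (one checks $e^0\wedge\varphi+*_7\varphi=\tfrac12\om_W^2+{\rm Re}\,\Om_W$), and the identity you reduce to in your second paragraph is equivalent to the one the paper establishes, so your target is correct.

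The gap is that this reduction is only sketched: the displayed matching identity is exactly where all the work of the proof lives, and you acknowledge you have not carried it out. Two concrete warnings if you do. First, for $\beta\in\lb \Lambda^{2,0}V^*\rb\subset\Lambda^2 V^*$ the form $*_8({\rm Re}\,\Om_W\wedge\beta)$ lies in $\lb f^1\wedge\Lambda^{1,0}V^*\rb$ (compute with Lemma \ref{lem:SU3 id} and ${\rm Re}\,\Om_W=e^0\wedge{\rm Re}\,\Om_V-e^1\wedge\Im\Om_V$), so $\pi_{A_\pm}$ mixes the blocks $\Lambda^2V^*$ and $f^1\wedge\Lambda^{1,0}V^*$ of $\Lambda^2W^*$; the $A_+$- and $A_-$-contributions therefore do not separate along the splitting $W=U\oplus V$ and the cross terms must be tracked. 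This is precisely the bookkeeping that Lemma \ref{lem:lambdas} does for free on the paper's $G_2$ route. Second, Proposition \ref{prop:Cayley eq pt} is stated for a real 2-form with the signs $1-\tfrac12\la F^2,\Phi\ra$ and $\pi^2_7(F-\tfrac16*F^3)$; since $F_\n$ is $\i\R$-valued you must substitute the real form $-\i F_\n$ consistently before asserting $S_0=1+\tfrac14\la F_\n^2,\om^2\ra$. Your treatment of the equality case is fine (the Lefschetz argument for $\pi^{\lb 2,0\rb}(F)=0$ is an acceptable substitute for the paper's use of Corollary \ref{cor:SU3 2form norm}), but as written the proposal establishes the inequality only conditionally on the unproven matching identity.
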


If $\dim_\C X=4$, we have the following. 
The proof is given in Proposition \ref{prop:SL4 eq pt}. 

\begin{theorem}\label{thm:SL4 eq}
For any $\n \in \Aa_0$, we have 
\begin{align*}
&\left| \frac{(\om+ F_\n)^4}{4!} \right|^2
+ 2 \left| \pi^{\lb 4,2 \rb} \left( \frac{(\om+ F_\n)^3}{3!} \right) \right|^2 
+ 8 \left| \pi^{\lb 4,0 \rb} \left( \frac{(\om+ F_\n)^2}{2!} \right) \right|^2 \\
=&
\det (\id_{TX} + (-\i F_\n)^\sharp), 
\end{align*}
where 
$(-\sqrt{-1}F_\n)^\sharp \in \Gamma (X, \mathop{\mathrm{End}} TX)$ 
is defined by $u\mapsto \left(-\sqrt{-1}i(u)F_{\nabla}\right)^{\sharp}$. 
In particular, 
$$
\left| {\rm Re} \left( e^{-\i \theta} \frac{(\om+ F_\n)^4}{4!} \right) \right| 
\leq \sqrt{\det (\id_{TX} + (-\i F_\n)^\sharp)}
$$ 
for any  $\theta \in \R$ and $\n \in \Aa_0$. 
The equality holds 
if and only if $\n$ is a dHYM connection with phase $e^{\i \theta}$. 
\end{theorem}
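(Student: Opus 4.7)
The plan is to deduce Theorem \ref{thm:SL4 eq} from the mirror Cayley equality (Theorem \ref{thm:Cayley eq}) via the standard reduction from ${\rm SU}(4)$ to $\Sp$. Both sides of the asserted identity are pointwise algebraic expressions in $F_\n$ and $\om$ only, so it suffices to verify the identity at each point $p \in X$. At $p$, choose a local reduction of the ${\rm U}(4)$-frame bundle to ${\rm SU}(4)$; this furnishes an auxiliary holomorphic $4$-form $\Om \in \lb\Lambda^{4,0}\rb_p$ and induces a $\Sp$-structure with $\Phi := \tfrac{1}{2}\om^2 + {\rm Re}\,\Om$ by \eqref{eq:Phi4 SU4}. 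Applying Theorem \ref{thm:Cayley eq} at $p$ with this induced $\Sp$-structure gives $C_1^2 + 4|C_2|^2 + 2|C_3|^2 = \det\bigl(\id_{TX} + (-\i F_\n)^\sharp\bigr)$, where $C_1 := 1 + \tfrac{1}{2}\la F_\n^2, \Phi\ra + \tfrac{*F_\n^4}{24}$, $C_2 := \pi^2_7(F_\n + \tfrac{1}{6}{*}F_\n^3)$, and $C_3 := \pi^4_7(F_\n^2)$. Since the right-hand side agrees with that of Theorem \ref{thm:SL4 eq}, it suffices to show the algebraic identity $C_1^2 + 4|C_2|^2 + 2|C_3|^2 = S_1^2 + 2|S_2|^2 + 8|S_3|^2$, where $S_1 := *\bigl((\om+F_\n)^4/4!\bigr)$, $S_2 := \pi^{\lb 4,2\rb}\bigl((\om+F_\n)^3/3!\bigr)$, and $S_3 := \pi^{\lb 4,0\rb}\bigl((\om+F_\n)^2/2!\bigr)$. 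Since the latter expression is independent of $\Om$, any apparent $\Om$-dependence in the Cayley LHS must cancel after reorganization.

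The key computational ingredient is the ${\rm SU}(4)$-refinement of the $\Sp$-irreducible $7$-dimensional summands from \eqref{eq:SU4 id7}: $\Lambda^2_7 W^* = \R\om \oplus A_+$ and $\Lambda^4_7 W^* = \R\,\Im \Om \oplus (\om\wedge A_-)$. Decomposing $\pi^2_7$ and $\pi^4_7$ into their ${\rm SU}(4)$-pieces and expanding each $(\om+F_\n)^k/k!$ by the binomial theorem, one matches bi-degree components using $F_\n = \pi_{\R\om}(F_\n) + \pi^{[1,1]_0}(F_\n) + \pi_{A_+}(F_\n) + \pi_{A_-}(F_\n)$. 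For example, $\pi^{\lb 4,0\rb}\bigl((\om+F_\n)^2/2\bigr) = \tfrac{1}{2}\pi^{\lb 4,0\rb}(F_\n^2)$ because $\om^2$ and $\om\wedge F_\n$ carry no $(4,0)+(0,4)$ content; and Lemma \ref{lem:SU4 4form} rewrites $2|\pi_{\om\wedge A_-}(F_\n^2)|^2$ as $|\pi_{A_-}(*(\om\wedge F_\n^2))|^2$, which is the expression that arises naturally inside $|S_2|^2$ after expansion.

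The main obstacle is the detailed bookkeeping: one must verify that every apparently $\Om$-dependent cross-term cancels. The delicate feature is that the $\R\,{\rm Re}\,\Om$-part of $\pi^{\lb 4,0\rb}(F_\n^2)$ appears in $|S_3|^2$ on the ${\rm SU}(4)$-side but is not captured by $\pi^4_7$ on the $\Sp$-side; it instead contributes to $\tfrac{1}{2}\la F_\n^2, \Phi\ra$ inside $C_1$. Similarly, the $\R\om$-projections of $F_\n$ and of $\tfrac{1}{6}{*}F_\n^3$ sit inside $|C_2|^2$ on the $\Sp$-side while on the ${\rm SU}(4)$-side they are absorbed into $S_1$ via the binomial terms $\la\om,F_\n\ra$ and $\tfrac{1}{6}\la\om,*F_\n^3\ra$. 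Showing that all such cross-terms conspire to cancel is the heart of the argument; a systematic application of Lemmas \ref{lem:SU4 2form}, \ref{lem:Apm}, and \ref{lem:SU4 4form}, together with pointwise ${\rm SU}(4)$-algebra, reduces the verification to a routine if lengthy calculation, which is carried out in Proposition \ref{prop:SL4 eq pt}.
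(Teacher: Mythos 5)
Your plan for the displayed identity is essentially the paper's own route: the paper likewise deduces it from the ``mirror'' Cayley equality by specializing to the induced $\Sp$-structure $\Phi=\tfrac12\om^2+{\rm Re}\,\Om$ of \eqref{eq:Phi4 SU4}, splitting $\pi^2_7$ and $\pi^4_7$ along \eqref{eq:SU4 id7}, invoking Lemmas \ref{lem:SU4 2form}, \ref{lem:Apm} and \ref{lem:SU4 4form}, and matching binomial terms (this is exactly Lemma \ref{lem:SL4pt deg} and Proposition \ref{prop:SL4 eq pt}). One caution: the cross-term cancellation you defer as ``routine if lengthy'' is the one place where a nonobvious input is needed, namely the identity relating $\la F^2,\om^2\ra\la F^2,{\rm Re}\,\Om\ra$ to the $A_\pm$-projections of $F$, $*F^3$ and $*(\om\wedge F^2)$; the paper obtains it (Lemma \ref{lem:SL4pt4 rw}) by differentiating the conclusion of Lemma \ref{lem:24form} twice along $F+t\om$. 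That is a calculational device rather than a conceptual gap, but it does not follow from the lemmas you list alone.

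The genuine gap is that your proposal stops at the identity and never addresses the equality characterization, which is part of the statement and is not formal in dimension $4$. Writing $E=-\i F_\n$, equality in $\bigl|{\rm Re}\bigl(e^{-\i\theta}(\om+F_\n)^4/4!\bigr)\bigr|\le\sqrt{\det(\id_{TX}+E^\sharp)}$ forces, besides ${\rm Im}\bigl(e^{-\i\theta}(\om+F_\n)^4\bigr)=0$, the vanishing of $\pi^{\lb 4,2\rb}\bigl((\om+F_\n)^3\bigr)$ and $\pi^{\lb 4,0\rb}\bigl((\om+F_\n)^2\bigr)$, which unpack to $\pi^{\lb 2,0\rb}\bigl(E-\tfrac16 *E^3\bigr)=0$, $\pi^{\lb 4,2\rb}(\om\wedge E^2)=0$ and $\pi^{\lb 4,0\rb}(E^2)=0$; the dHYM condition, however, demands $\pi^{\lb 2,0\rb}(E)=0$ outright. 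Deducing the latter from the former is a separate algebraic argument: the paper writes $E=E'+\beta+\bar\beta$ with $E'\in[\Lambda^{1,1}]$ and $\beta\in\Lambda^{2,0}$, diagonalizes $E'$ by a ${\rm U}(4)$-rotation with eigenvalues $\mu_k$, and shows that the resulting system $\beta_j(\mu_k+\mu_l)=0$, $\beta_j(1-\mu_k\mu_l)=0$, $\beta^2=0$ (with $(k,l)$ complementary to the index $j$) forces $\beta_j(1+\mu_k^2)=0$, hence $\beta=0$. Without this step the ``only if'' direction of the equality case is unproved, so you should add it (the ``if'' direction is immediate since $\pi^{\lb 2,0\rb}(E)=0$ makes $\om+F_\n$ of type $(1,1)$).
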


\begin{remark} \label{rem:novanish SL34}
By Theorems \ref{thm:SL3 eq} and \ref{thm:SL4 eq}, 
it is immediate to see that for $n=3,4$, 
${\rm Re} \left( e^{-\i \theta} (\om+ F_\n)^n/n! \right)$ is nowhere vanishing 
for any $\theta \in \R$ and 
dHYM connection $\n$ with phase $e^{\i \theta}$. 
\end{remark}

By Theorems \ref{thm:SL3 eq} and \ref{thm:SL4 eq}, 
we can prove the following as in Theorem \ref{thm:Cayvol ineq}, 
Corollaries \ref{cor:volmin} and \ref{cor:flat Spin7}. 

\begin{theorem} \label{thm:SL34vol ineq}
Let $X$ be a compact connected $n$-dimensional K\"ahler manifold for $n=3,4$. 
For any $\theta \in \R$ and $\n \in \Aa_0$, we have 
\begin{align*}
\left| {\rm Re} \left( e^{-\i \theta} \frac{([\om] - 2 \pi \i c_1(L))^n}{n!} \right)  \cdot [X] \right|
=
\left|\int_X {\rm Re} \left( e^{-\i \theta} \frac{(\om+ F_\n)^n}{n!} \right) \right|
\leq V(\n),  
\end{align*}
where $c_1(L)$ is the first Chern class of $L$. 
The equality holds if and only if $\n$ is a dHYM connection with phase $e^{\i \theta}$. 
In particular, for any dHYM connection $\n$ with phase $e^{\i \theta}$, $V(\n)$ is topological. 
\end{theorem}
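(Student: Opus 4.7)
The plan is to mimic the strategy used for Theorem~\ref{thm:Cayvol ineq} and Corollary~\ref{cor:flat Spin7}, but now starting from the pointwise ``mirror'' of the special Lagrangian equality (Theorem~\ref{thm:SL3 eq} for $n=3$ and Theorem~\ref{thm:SL4 eq} for $n=4$) instead of the Cayley equality. The three ingredients I need are a pointwise inequality, an integrated inequality with a topological identification, and a connectedness argument for the equality case.

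First, I would establish the middle inequality $\bigl|\int_X {\rm Re}\!\left(e^{-\i\theta}(\om+F_\n)^n/n!\right)\bigr|\leq V(\n)$. For each $p\in X$, Theorem~\ref{thm:SL3 eq} or~\ref{thm:SL4 eq} gives the pointwise bound
\[
\bigl|{\rm Re}\bigl(e^{-\i\theta}(\om+F_\n)^n/n!\bigr)\bigr|\;\leq\;\sqrt{\det(\id_{TX}+(-\i F_\n)^\sharp)}\,\vol_g,
\]
where the left-hand side is read as the absolute value of the density of the real top-form $ {\rm Re}\!\left(e^{-\i\theta}(\om+F_\n)^n/n!\right)$ against $\vol_g$. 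Then the triangle inequality for integrals followed by the pointwise bound yields
\[
\left|\int_X {\rm Re}\!\left(e^{-\i\theta}\tfrac{(\om+F_\n)^n}{n!}\right)\right|
\;\leq\;\int_X\bigl|{\rm Re}\!\left(e^{-\i\theta}\tfrac{(\om+F_\n)^n}{n!}\right)\bigr|\,\vol_g
\;\leq\;V(\n).
\]

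Next I would identify the integral on the middle with the topological expression. Since $\om$ is closed and $c_1(L)=[\i F_\n/(2\pi)]$ in $H^2(X,\R)$, we have $[\om+F_\n]=[\om]-2\pi\i\,c_1(L)$ in $H^2(X,\C)$. Because $(\om+F_\n)^n/n!$ is a closed top-form and $X$ is a compact oriented $2n$-manifold, Stokes' theorem yields
\[
\int_X \frac{(\om+F_\n)^n}{n!}=\frac{([\om]-2\pi\i\,c_1(L))^n}{n!}\cdot[X],
\]
which is independent of $\n$. Taking $e^{-\i\theta}$ times the real part gives the first equality in the statement and also shows that $V(\n)\geq \bigl|{\rm Re}\!\left(e^{-\i\theta}([\om]-2\pi\i c_1(L))^n/n!\right)\cdot[X]\bigr|$ for all $\n\in\Aa_0$.

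For the characterization of equality, suppose first that $\n$ is dHYM with phase $e^{\i\theta}$. Then Theorem~\ref{thm:SL3 eq} or~\ref{thm:SL4 eq} gives pointwise equality in the bound above, and Remark~\ref{rem:novanish SL34} says that the density of ${\rm Re}\!\left(e^{-\i\theta}(\om+F_\n)^n/n!\right)$ is nowhere vanishing; since $X$ is connected this density has constant sign, so the triangle inequality on integrals is also saturated, giving $\bigl|\int_X{\rm Re}(\cdots)\bigr|=V(\n)$. Conversely, if equality holds in the integrated bound, then equality must hold in both steps, and the pointwise equality in Theorem~\ref{thm:SL3 eq} or~\ref{thm:SL4 eq} forces $\n$ to be a dHYM connection with phase $e^{\i\theta}$. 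The last assertion that $V(\n)$ is topological for any dHYM connection with phase $e^{\i\theta}$ is then immediate from the first equality. The main (very minor) obstacle is simply bookkeeping the difference between a top-form and its density with respect to $\vol_g$ so that the pointwise identity from Theorems~\ref{thm:SL3 eq} and~\ref{thm:SL4 eq} can be integrated against $\vol_g$ cleanly; no new analytic input is needed beyond the two pointwise equalities already proved.
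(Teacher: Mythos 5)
Your proposal is correct and follows exactly the route the paper intends: the paper proves this theorem by citing Theorems \ref{thm:SL3 eq} and \ref{thm:SL4 eq} and repeating verbatim the argument of Theorem \ref{thm:Cayvol ineq} (pointwise inequality, triangle inequality, cohomological invariance of the closed top-form $(\om+F_\n)^n$, and Remark \ref{rem:novanish SL34} plus connectedness to get constant sign in the equality case). No discrepancies.
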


\begin{corollary} \label{cor:volmin SL34}
If $X$ is a compact connected $n$-dimensional K\"ahler manifold for $n=3,4$, 
any dHYM connection with phase $e^{\i \theta}$ is a global minimizer of $V$. 
\end{corollary}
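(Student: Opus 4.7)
The plan is to mimic the proofs of Corollary \ref{cor:volmin} and Corollary \ref{cor:volmin G2} word for word, using Theorem \ref{thm:SL34vol ineq} in place of Theorem \ref{thm:Cayvol ineq} or Theorem \ref{thm:assovol ineq}. The crucial observation embedded in Theorem \ref{thm:SL34vol ineq} is that the outermost quantity
$$
\left| {\rm Re} \left( e^{-\i \theta} \frac{([\om] - 2 \pi \i c_1(L))^n}{n!} \right)  \cdot [X] \right|
$$
depends only on $[\om]$, $c_1(L)$, $\theta$ and $n$, and is therefore completely independent of the chosen connection $\n \in \Aa_0$.

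The steps I would carry out are as follows. First, fix a dHYM connection $\n$ with phase $e^{\i \theta}$. By the equality statement in Theorem \ref{thm:SL34vol ineq}, the inequality collapses to an equality at $\n$, so
$$
V(\n) = \left| {\rm Re} \left( e^{-\i \theta} \frac{([\om] - 2 \pi \i c_1(L))^n}{n!} \right) \cdot [X] \right|.
$$
Second, take any other Hermitian connection $\n' \in \Aa_0$. Applying the same Theorem \ref{thm:SL34vol ineq} to $\n'$ yields
$$
V(\n') \geq \left| {\rm Re} \left( e^{-\i \theta} \frac{([\om] - 2 \pi \i c_1(L))^n}{n!} \right) \cdot [X] \right|.
$$
Chaining the two displays together gives $V(\n) \leq V(\n')$, which is exactly the asserted global minimizing property.

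There is no real analytic obstacle: all the content sits in the pointwise identities of Theorems \ref{thm:SL3 eq} and \ref{thm:SL4 eq}, whose integration (together with the connectedness argument that controls the sign of ${\rm Re}(e^{-\i \theta}(\om + F_\n)^n/n!)$, analogous to Remark \ref{rem:novanish} in the ${\rm Spin}(7)$ case) is packaged into Theorem \ref{thm:SL34vol ineq}. The only point to flag is that the dimension restriction $n = 3, 4$ in the hypothesis is inherited from the pointwise equalities, which are proved in this range by reduction to the ${\rm Spin}(7)$-Cayley equality; this is why the statement is confined to low dimension rather than stated for general K\"ahler $X$.
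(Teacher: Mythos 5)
Your argument is exactly the paper's: Theorem \ref{thm:SL34vol ineq} shows the common lower bound is topological and is attained precisely at dHYM connections with phase $e^{\i \theta}$, so chaining the equality at $\n$ with the inequality at an arbitrary $\n'$ gives $V(\n) \leq V(\n')$, just as in the proof of Corollary \ref{cor:volmin}. The proposal is correct and matches the paper's route.
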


\begin{corollary} \label{cor:flat SL34}
Let $X$ be a compact connected $n$-dimensional K\"ahler manifold for $n=3,4$. 
Let $L \to X$ be a smooth complex line bundle with a Hermitian metric $h$. 
Then, we have the following. 
\begin{enumerate}
\item 
Suppose that there is a dHYM connection $\n_0$ with phase $e^{\i \theta}$ 
such that 
${\rm Re} \left( e^{-\i \theta} * (\om+ F_{\n_0})^n/n! \right) >0$ (resp. $<0$). 
Then, for any dHYM connection $\n$ with phase $e^{\i \theta}$, we have 
${\rm Re} \left( e^{-\i \theta} * (\om+ F_\n)^n/n! \right) >0$ (resp. $<0$). 

\item
Suppose that $L$ is a flat line bundle. 
Then, any dHYM connection with phase 1 is a flat connection. 
In particular, the moduli space of dHYM connections with phase 1 is $H^1(X, \R)/2 \pi H^1(X, \Z)$. 
\end{enumerate}
\end{corollary}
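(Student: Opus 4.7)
The plan is to mirror the proof of Corollary \ref{cor:flat Spin7}, replacing Theorem \ref{thm:Cayvol ineq} by Theorem \ref{thm:SL34vol ineq} and Remark \ref{rem:novanish} by Remark \ref{rem:novanish SL34}. The crucial input from Theorem \ref{thm:SL34vol ineq} is that the quantity
\[
\int_X {\rm Re}\!\left( e^{-\i \theta} \frac{(\om + F_\n)^n}{n!} \right)
\]
is purely cohomological (depending only on $[\om]$, $c_1(L)$, $\theta$, and $n$), hence independent of $\n \in \Aa_0$.

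For part (1), I would invoke Remark \ref{rem:novanish SL34}, which asserts that ${\rm Re}(e^{-\i \theta}(\om+F_\n)^n/n!)$ is nowhere zero on $X$ at any dHYM connection $\n$ with phase $e^{\i \theta}$. Connectedness of $X$ then forces this function to have constant sign on $X$ at every such $\n$, so the sign of the integral agrees with the sign of the integrand. Since the integral takes one and the same value at $\n_0$ and $\n$, the two integrands must share the sign of that common value.

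For part (2), I would first note that flatness of $L$ produces a reference flat Hermitian connection $\n_0 \in \Aa_0$ with $F_{\n_0}=0$, which is automatically a dHYM connection with phase $1$ (both defining equations \eqref{eq:deform map dHYM} hold trivially), and satisfies $V(\n_0)=\V(X)$. By Theorem \ref{thm:SL34vol ineq}, any dHYM connection $\n$ with phase $1$ gives $V(\n)=V(\n_0)=\V(X)$, i.e.
\[
\int_X \vol_g \;=\; \int_X \sqrt{\det(\id_{TX} + (-\i F_\n)^\sharp)}\ \vol_g.
\]
Since Lemma \ref{lem:det} identifies the integrand with $\sqrt{1 + |F_\n|^2 + |F_\n^2/2!|^2 + \cdots}\geq 1$ with equality if and only if $F_\n \equiv 0$, this forces $F_\n \equiv 0$. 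Hence $\n - \n_0 \in \i Z^1(X)$, and the description of the unitary gauge orbits in Lemma \ref{lem:orbit} combined with the Hodge decomposition $Z^1(X) = \Hh^1(X) \oplus d\Om^0(X)$ identifies the moduli space with $H^1(X,\R)/2\pi H^1(X,\Z)$.

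The argument is essentially mechanical once Theorems \ref{thm:SL3 eq}, \ref{thm:SL4 eq}, \ref{thm:SL34vol ineq} and Lemma \ref{lem:det} are in hand; there is no substantial obstacle. The only subtlety worth flagging is that in part (2) one must verify that \emph{every} $\n$ of the form $\n_0 + \i a$ with $a \in Z^1(X)$ is itself a dHYM connection with phase $1$ (so that the quotient really is the full moduli space and not merely contained in it), but this follows at once from $F_\n=F_{\n_0}=0$ and the fact that both components of $\Ff_{dHYM}$ vanish on flat connections when $\theta=0$.
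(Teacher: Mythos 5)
Your proof is correct and follows essentially the same route as the paper, which proves this corollary by the exact argument of Corollary \ref{cor:flat Spin7}: constancy of the cohomological integral plus the nowhere-vanishing of the integrand (Remark \ref{rem:novanish SL34}) for part (1), and $V(\n)=V(\n_0)=\V(X)$ combined with the pointwise identity of Lemma \ref{lem:det} and the orbit description of Lemma \ref{lem:orbit} for part (2). The subtlety you flag at the end (that every $\n_0+\i a$ with $a\in Z^1$ is again dHYM with phase $1$) is correctly resolved and consistent with the remark following the corollary in the paper.
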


\begin{remark}
A flat connection $\n$ satisfies 
$$
\mathop{\mathrm{Im}}\left(e^{-\i \theta} (\omega + F_\nabla)^n\right) 
= 
\mathop{\mathrm{Im}}\left(e^{-\i \theta} \omega^n\right)
= 
- \sin \theta \cdot \om^n.  
$$
Thus, a flat connection $\n$ is dHYM if and only if $\sin \theta =0$. 
Then, we see that 
any dHYM connection with phase $\pm 1$ is a flat connection. 
Since the set of dHYM connections with phase 1 is the same as  that with phase $-1$, we only state the phase 1 case in Corollary \ref{cor:flat SL34} (2). 
Though Corollary \ref{cor:flat SL34} (2) is a special case of \cite[Theorem 1.2 (1)]{KY}, 
we put this to emphasize the correspondence with Corollary \ref{cor:flat G2} (2).

Theorems \ref{thm:SL3 eq} and \ref{thm:SL4 eq}
are considered to be a ``mirror" of 
the special Lagrangian equality \cite[p.90, Remark]{HL}. 
We can also deduce these identities on a trivial torus bundle 
via the real Fourier--Mukai transform as in \cite[Lemmas 4.3 and 5.5]{KYFM}. 
For general dimensions, we conjecture the following 
from the special Lagrangian equality \cite[p.90, Remark]{HL} and Theorems \ref{thm:SL3 eq} and \ref{thm:SL4 eq}. 
\begin{enumerate}
\item
When $\dim_\C X=2p+1$, 
$\det (\id_{TX} + (-\i F_\n)^\sharp)$ will be described by a linear combination of 
$\left| \pi^{\lb 2p+1,2(p-k)+1 \rb} \left( (\om+ F_\n)^{p-k}/(p-k)! \right) \right|^2$ 
for $0 \leq k \leq p$. 

\item
When $\dim_\C X=2p$, 
$\det (\id_{TX} + (-\i F_\n)^\sharp)$ will be described by a linear combination of 
$\left| \pi^{\lb 2p,2(p-k) \rb} 
\left( (\om+ F_\n)^{p-k}/(p-k)! \right) \right|^2$
for $0 \leq k \leq p$. 
\end{enumerate}
We did not try to prove them because the proof will require a large amount of computations and 
we could not find an interesting application, 
while there are applications for 3 and 4 dimensional cases in Section \ref{sec:hol red}. 
However, if we assume from the beginning that 
the $(0,2)$-part $F_\n^{0,2}$ of $F_\n$ vanishes, which is equivalent to 
$\pi^{\lb 2,0 \rb}(F_\n)=0$, 
computations become easier considerably and we obtain the similar results as we see in the next subsection. 
\end{remark}

\subsection{The case $F_\n^{0,2}=0$} \label{subsec:F02 0}

In this subsection, we consider Hermitian connections $\n \in \Aa_0$ with $F_\n^{0,2}=0$. 
The condition $F_\n^{0,2}=0$ will correspond to the Lagrangian condition via mirror symmetry. 
We first recall some definitions and identities from \cite[Section 4.1]{KY}. 
Set
\begin{align} \label{eq:A}
\Aa = \{ \n \in \Aa_0 \mid F_\n^{0,2}=0 \}
      = \n_0 + \i (Z^1 \oplus d_c \Om^0) \cdot \id_L, 
\end{align}
where $\n_0 \in \Aa$ is any fixed connection, $Z^1$ is the space of closed 1-forms on $X$ 
and $d_c$ is the complex differential. 
The second equation follows from \cite[Lemma A.2]{KY}. 

For $\n \in \Aa$, define a Hermitian metric $\eta_\n$ and its associated 2-form $\om_\n$ by 
$$
\eta_\n =  (\id_{TX} + (-\i F_\n)^\sharp)^* g, \qquad 
\om_\n =  (\id_{TX} + (-\i F_\n)^\sharp)^* \om
$$
and define $\zeta_\n:X \to \C$ by 
$$
(\om+F_\n)^n=\zeta_\n \om^n. 
$$
By \cite[Lemma 4.4]{KY}, we have $|\zeta_\n| \geq 1$. 
Then, there exist $r_\n:X \to [1, \infty)$ and $\theta_\n:X \to \R/2 \pi \Z$ such that 
\begin{align} \label{eq:r theta}
\zeta_\n =r_\n e^{\i \theta_\n}. 
\end{align}

We first prove the following. 

\begin{proposition} \label{prop:SLn eq}
Let $(X, \om, g, J)$ be a K\"ahler manifold with $\dim_\C X=n$ and 
$L \to X$ be a smooth complex line bundle with a Hermitian metric $h$. 
Then, for any Hermitian connection $\n \in \Aa$, 
we have 
\begin{align*}
\left| \frac{(\om+ F_\n)^n}{n!} \right|^2 = \det (\id_{TX} + (-\i F_\n)^\sharp). 
\end{align*}
\end{proposition}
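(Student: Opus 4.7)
The proof is a pointwise calculation exploiting the hypothesis $F_\nabla^{0,2}=0$. Since $F_\nabla$ is $\sqrt{-1}\mathbb{R}$-valued and $F_\nabla^{0,2}=0$ forces $F_\nabla^{2,0}=0$ as well, the real 2-form $-\sqrt{-1}F_\nabla$ is of type $(1,1)$, hence $J$-invariant. The plan is therefore:

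\textbf{Step 1: Diagonalize at a point.} Fix $p\in X$. Since $-\sqrt{-1}F_\nabla|_p$ is a real $J$-invariant 2-form on $(T_pX, g_p, J_p)$, it is represented by a Hermitian endomorphism via the Kähler metric, so one can find a unitary orthonormal basis $\{e_1, Je_1, \ldots, e_n, Je_n\}$ of $T_pX$ (with dual basis $e^i, f^i := Je^i$) that simultaneously puts both $\omega$ and $-\sqrt{-1}F_\nabla$ in standard form:
\begin{equation*}
\omega = \sum_{i=1}^n e^i\wedge f^i, \qquad -\sqrt{-1}F_\nabla = \sum_{i=1}^n \lambda_i\, e^i\wedge f^i,
\end{equation*}
for suitable $\lambda_i\in\mathbb{R}$.

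\textbf{Step 2: Expand the left-hand side.} In this frame $F_\nabla = \sqrt{-1}\sum_i \lambda_i e^i\wedge f^i$, so
\begin{equation*}
\omega + F_\nabla = \sum_{i=1}^n (1+\sqrt{-1}\lambda_i)\, e^i\wedge f^i,
\end{equation*}
and because the $e^i\wedge f^i$ pairwise commute in the exterior algebra and together wedge to $\mathrm{vol}_g$,
\begin{equation*}
\frac{(\omega+F_\nabla)^n}{n!} = \Bigl(\prod_{i=1}^n (1+\sqrt{-1}\lambda_i)\Bigr)\mathrm{vol}_g,
\end{equation*}
which gives $\bigl|(\omega+F_\nabla)^n/n!\bigr|^2 = \prod_i (1+\lambda_i^2)$.

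\textbf{Step 3: Expand the right-hand side.} In the same frame, $(-\sqrt{-1}F_\nabla)^\sharp$ sends $e_i\mapsto \lambda_i Je_i$ and $Je_i\mapsto -\lambda_i e_i$, so $\mathrm{id}_{TX}+(-\sqrt{-1}F_\nabla)^\sharp$ is block-diagonal with $2\times 2$ blocks $\bigl(\begin{smallmatrix}1 & -\lambda_i\\ \lambda_i & 1\end{smallmatrix}\bigr)$. Each block has determinant $1+\lambda_i^2$, so
\begin{equation*}
\det\bigl(\mathrm{id}_{TX}+(-\sqrt{-1}F_\nabla)^\sharp\bigr) = \prod_{i=1}^n (1+\lambda_i^2).
\end{equation*}

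Comparing Steps 2 and 3 finishes the proof. The only potentially subtle point is Step 1, namely verifying that the simultaneous diagonalizing frame can be chosen to be positively oriented so that $e^1\wedge f^1\wedge\cdots\wedge e^n\wedge f^n = \mathrm{vol}_g$; this is automatic because any $J$-invariant orthonormal frame of a Kähler vector space is positively oriented for the canonical orientation $\omega^n/n!$. Everything else is a bookkeeping calculation.
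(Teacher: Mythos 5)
Your proof is correct and follows essentially the same route as the paper: both fix a point, choose a $J$-adapted orthonormal frame simultaneously diagonalizing $\om$ and $-\i F_\n$ (using $F_\n^{0,2}=0$), and identify both sides with $\prod_i(1+\lambda_i^2)$. The only difference is that the paper outsources the computation of $\left|(\om+F_\n)^n/n!\right|^2=\prod_i(1+\lambda_i^2)$ to \cite[(4.6)]{KY}, whereas you carry it out directly.
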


\begin{proof}
We prove this using computations in \cite[Section 4.1]{KY}. 
Since the statement is pointwise, we fix a point $p\in X$ arbitrarily. 
Then, there exists an orthonormal basis 
$u_{1},\dots,u_{n}, v_{1}\dots,v_{n}$ satisfying $v_{i}=Ju_{i}$ and $\lambda_{1},\dots,\lambda_{n} \in \R$ such that 
$(-\sqrt{-1}F_\n)^\sharp
=\sum_{i=1}^{n} \lambda_{i} (u^i \otimes v_i-v^i \otimes u_i )$ 
at $p$, where $\{\, u^{i},v^{i} \,\}_{i=1}^{n}$ is the dual basis of $\{\, u_{i},v_{i} \,\}_{i=1}^{n}$. 
Then, by \cite[(4.6)]{KY}, we have 
$$
\left| \frac{(\om+ F_\n)^n}{n!} \right|^2
=
r_\n^2
= \prod_{i=1}^{n} (1+\lambda_{i}^2)
= \det  (\id_{TX} + (-\i F_\n)^\sharp). 
$$
\end{proof}

Theorems \ref{thm:SL3 eq} and \ref{thm:SL4 eq} imply Proposition \ref{prop:SLn eq} if $\dim_\C X=3$ or $4$. 
Indeed, if $\n \in \Aa$, we have $F_\n \in \Om^{1,1}$. 
This implies that $\om + F_\n \in \Om^{1,1}$, and hence, 
$\pi^{\lb 3,1 \rb} \left( (\om+ F_\n)^2 \right) =0$ 
and $\pi^{\lb 4,2 \rb} \left( (\om+ F_\n)^3 \right) = 
\pi^{\lb 4,0 \rb} \left( (\om+ F_\n)^2 \right) = 0$.

\begin{remark} \label{rem:diff JY}
Proposition \ref{prop:SLn eq} is essentially proved in \cite[p.875]{JY}. 
We explain the minor difference from \cite{JY}. 
\begin{enumerate}
\item
The operator $-\i K: T^{1,0} X \to T^{1,0} X$ in \cite{JY} is given by 
extending $(-\i F_\n)^\sharp$ complex linearly and restricting it to $T^{1,0} X$. 
Since $F_\n^{0,2}=0$, $(-\i F_\n)^\sharp$ commutes with $J$, and hence, $K (T^{1,0} X) \subset T^{1,0} X$. 
Then, we see that 
\begin{align*}
\mathrm{det}_\R (\id_{TX} + (-\i F_\n)^\sharp)
&= \mathrm{det}_\C \left(\id_{TX} + (-\i F_\n)^\sharp: TX \otimes \C \to TX \otimes \C \right) \\
&= \mathrm{det}_\C (\id_{T^{1,0} X} - \i K) \cdot \overline{\mathrm{det}_\C (\id_{T^{1,0} X} - \i K)} \\ 
&= |\mathrm{det}_\C (\id_{T^{1,0} X} - \i K)|^2. 
\end{align*}

\item
In \cite{JY}, $\n \in \Aa_0$ is called a dHYM connection with phase $e^{\i \theta}$ 
if $F_\n^{0,2}=0$ and $\mathop{\mathrm{Im}}\left(e^{-\i \theta} (\omega - F_\nabla)^n\right) = 0$, 
which is different from \eqref{eq:deform map dHYM} by sign of $F_\n$. 
This causes the difference in the sign of $K$ from \cite{JY}. 
In particular, the angle function $\theta$ in \cite[(2.4)]{JY} corresponds to $- \theta_\n$ in our paper. 
\end{enumerate}

By Proposition \ref{prop:SLn eq}, 
we see that the following statements hold for compact and connected K\"ahler manifolds $X$ of any dimension. 
\begin{enumerate}
\item
For any $\theta \in \R$ and $\n \in \Aa$, we have 
\begin{align*}
\left| {\rm Re} \left( e^{-\i \theta} \frac{([\om] - 2 \pi \i c_1(L))^n}{n!} \right)  \cdot [X] \right|
=
\left|\int_X {\rm Re} \left( e^{-\i \theta} \frac{(\om+ F_\n)^n}{n!} \right) \right|
\leq V(\n).  
\end{align*}
The equality holds if and only if $\n$ is a dHYM connection with phase $e^{\i \theta}$. 

\item
For any dHYM connection $\n$ with phase $e^{\i \theta}$, $V(\n)$ is topological. 

\item
Any dHYM connection with phase $e^{\i \theta}$ is a minimizer of $V|_\Aa :\Aa \to \R$. 

\item
The statement of Corollary \ref{cor:flat SL34} holds for any dimension of $X$. 
\end{enumerate}

We can regard Corollary \ref{cor:volmin SL34} as a generalization of (3) above. 
That is, any dHYM connection with phase $e^{\i \theta}$ minimizes $V$ not only on $\Aa$ but also on $\Aa_0$ 
in the case of dimension 3 or 4. 
\end{remark}

Next, we relate our mean curvature $H_\n$ in \eqref{eq:MC} to $d \theta_\n$. 
They are expected to be related closely as in the case of Lagrangian submanifolds. 
Indeed, Jacob and Yau \cite{JY} considers 
the space of Chern connections of Hermitian metrics on a holomorphic line bundle 
and define the mean curvature by the exterior derivative of the angle function.

\begin{theorem} \label{thm:Dazord} 
Suppose that $X$ is compact and connected K\"ahler manifold. 
For any $\n \in \Aa$, we have 
$$
H(\n)
= - (\det G_\n)^{1/4} (G_\n^{-1})^* (J d \theta_\n),  
$$
where $G_\n$ is defined by \eqref{eq:Gn}, $J$ is the complex structure on $X$ 
and we set $J d \theta_\n = d \theta_\n (J (\cdot))$. 

In particular, 
for $\n \in \Aa$, 
$H(\n)=0$ if and only if $\n$ is a dHYM connection with phase $e^{\i \theta}$ for some $\theta \in \R$. 
\end{theorem}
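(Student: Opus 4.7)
The plan is to establish the theorem as a global identity obtained from a pointwise algebraic re-expression of the mean curvature 2-form followed by exterior differentiation. The key intermediate identity I aim to prove is
\begin{equation*}
*\!\left((\det G_\n)^{1/4}\,(G_\n^{-1}\circ\FF_\n^\sharp)^\flat\right)
= -\Im\!\left(e^{-\i\theta_\n}\,\frac{(\om+F_\n)^{n-1}}{(n-1)!}\right).
\end{equation*}
Being purely pointwise, I verify it at each $p\in X$: since $\FF_\n\in[\Om^{1,1}]$ (because $F_\n^{0,2}=0$), I choose a $J$-adapted orthonormal basis $\{u_i,\, v_i=Ju_i\}_{i=1}^n$ of $T_pX$ simultaneously diagonalising $\om$ and $\FF_\n$, so that $\FF_\n|_p=\sum_i\lambda_i\,u^i\wedge v^i$ and $\om|_p=\sum_i u^i\wedge v^i$. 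Then $\zeta_\n|_p=\prod_i(1+\i\lambda_i)$, $r_\n|_p=\prod_i\sqrt{1+\lambda_i^2}=(\det G_\n)^{1/4}|_p$, and using the cancellation $e^{-\i\theta_\n}\cdot\prod_{k\neq i}(1+\i\lambda_k)=(1-\i\lambda_i)\,r_\n/(1+\lambda_i^2)$ one sees that both sides reduce to $\sum_i\frac{r_\n\lambda_i}{1+\lambda_i^2}\,u^i\wedge v^i$.

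Applying $d$ to this identity, $d(\om+F_\n)=0$ makes $(\om+F_\n)^{n-1}$ closed, so only the $d\theta_\n$ term survives on the right. Using $\Im(\i z)=\Re z$ this gives
$$
d\!\left[*\!\left((\det G_\n)^{1/4}(G_\n^{-1}\circ\FF_\n^\sharp)^\flat\right)\right]
= \Re\!\left(e^{-\i\theta_\n}\,d\theta_\n\wedge\frac{(\om+F_\n)^{n-1}}{(n-1)!}\right).
$$
Applying $-*$ and recalling that $d^*=-*\,d\,*$ on forms of any degree on a $2n$-dimensional manifold, one obtains
$$
H(\n)= *\,\Re\!\left(e^{-\i\theta_\n}\,d\theta_\n\wedge\frac{(\om+F_\n)^{n-1}}{(n-1)!}\right).
$$

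To finish, I will establish the pointwise algebraic identity
$$
*\,\Re\!\left(e^{-\i\theta_\n}\,\eta\wedge\frac{(\om+F_\n)^{n-1}}{(n-1)!}\right)
= -(\det G_\n)^{1/4}\,(G_\n^{-1})^*(J\eta)
$$
for an arbitrary 1-form $\eta$; by linearity it suffices to check on $\eta=u^i,\,v^i$ in the same diagonal frame, using $Ju^i=-v^i$, $Jv^i=u^i$ and the Hodge-star computations $*\!\left(u^i\wedge\prod_{k\neq i}(u^k\wedge v^k)\right)=v^i$ and $*\!\left(v^i\wedge\prod_{k\neq i}(u^k\wedge v^k)\right)=-u^i$. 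Specialising $\eta=d\theta_\n$ yields the theorem. For the ``in particular'' assertion, $H(\n)=0$ forces $d\theta_\n\equiv 0$ because $(G_\n^{-1})^*$ and $J$ are pointwise invertible; connectedness of $X$ then makes $\theta_\n\equiv\theta$ constant, so $\Im(e^{-\i\theta}(\om+F_\n)^n)=\Im(r_\n\om^n)=0$ and $\n$ is dHYM with phase $e^{\i\theta}$. Conversely, a dHYM connection with phase $e^{\i\theta}$ satisfies $r_\n\sin(\theta_\n-\theta)\equiv 0$, whence $\theta_\n\equiv\theta$ (mod $\pi$) by continuity and connectedness, giving $d\theta_\n\equiv 0$ and $H(\n)=0$. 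The main technical obstacle is the two pointwise algebraic identities above: although each amounts to bookkeeping in the diagonal frame, the signs from Hodge duality on $(2n-1)$- and $(n-1,n-1)$-forms together with the complex exponential $e^{-\i\theta_\n}$ must be tracked carefully.
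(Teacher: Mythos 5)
Your proposal is correct, and it takes a genuinely different route from the paper. The paper proves the identity by a direct frame computation: it fixes normal coordinates at $p$, writes $D\FF_\n$ in components $F_{\mu\nu;\xi}$, expands the codifferential in \eqref{eq:MC} term by term using the Bianchi identity and the $(1,1)$-condition on $D_\xi\FF_\n$, and separately computes $e_\xi(\theta_\n)$ by differentiating $(\om+\i\FF_\n)^n=\zeta_\n\om^n$ (Lemma \ref{lem:diff theta}), finally matching the two coefficient expressions. You instead isolate the purely algebraic content into the pointwise identity
$*\bigl((\det G_\n)^{1/4}(G_\n^{-1}\circ\FF_\n^\sharp)^\flat\bigr)=-\Im\bigl(e^{-\i\theta_\n}(\om+F_\n)^{n-1}/(n-1)!\bigr)$,
and then the entire differential part of the argument collapses to $d(\om+F_\n)=0$ plus $d^*=-*d*$ on an even-dimensional manifold; I checked your diagonal-frame verifications (the cancellation $e^{-\i\theta_\n}\prod_{k\neq i}(1+\i\lambda_k)=(1-\i\lambda_i)r_\n/(1+\lambda_i^2)$, the Hodge-star signs $*(u^i\wedge\mu_i)=v^i$, $*(v^i\wedge\mu_i)=-u^i$, and the convention $Ju^i=-v^i$) and they are all consistent with the paper's conventions. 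Your approach buys conceptual transparency --- it is the exact mirror of the classical Dazord/Harvey--Lawson argument that the Maslov form of a Lagrangian is $Jd\theta$, with the closed form $(\om+F_\n)^{n-1}$ playing the role of the holomorphic volume form --- and it avoids all covariant-derivative bookkeeping. What the paper's computation buys in exchange is the explicit coefficient formula \eqref{eq:MC02 2} for $H(\n)$ in a frame, which it does not otherwise need but which makes the comparison with Lemma \ref{lem:diff theta} mechanical. Your treatment of the ``in particular'' clause (invertibility of $(G_\n^{-1})^*$ and $J$, constancy of $\theta_\n$ by connectedness, and $r_\n\geq 1$ forcing $\sin(\theta_\n-\theta)\equiv 0$ in the converse) is also correct.
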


\begin{proof}
In the proof, we use Greek indices $\mu, \nu, \xi$ to run over $1,\cdots, 2n$ 
and Latin indices $i, j$ range over $1,\cdots, n$. 
Use the notation of Section \ref{sec:vol}. 
We denote by $D$ the Levi-Civita connection with respect to $g$ to distinguish from elements in $\Aa_0$. 
Fix $p \in X$. Take an orthonormal frame $\{ e_\mu \}_{\mu=1}^{2n}$ of $TX$ around $p$ with respect to $g$ such that 
$$
D_{e_\mu} e_\nu =0, \qquad e_{2 i} = J (e_{2i-1}), \qquad 
\FF_\n = \sum_{i=1}^n \lambda_i e^{2i-1} \wedge e^{2i} 
$$
for $\lambda_i \in \R$ at $p \in X$. Denote by $\{ e^\mu \}_{\mu=1}^{2n}$ the dual. 
Then, we see that at $p \in X$ 
\begin{align}\label{eq:Daz 1}
\begin{split}
\FF_\n^\sharp &= \sum_{i=1}^n \lambda_i (e^{2i-1} \otimes e_{2i} - e^{2i} \otimes e_{2i-1}), \\
G_\n &= \sum_{i=1}^n (1+ \lambda_i^2) (e^{2i-1} \otimes e_{2i-1} + e^{2i} \otimes e_{2i}), \\
G_\n^{-1} &= \sum_{i=1}^n \frac{1}{1+ \lambda_i^2} (e^{2i-1} \otimes e_{2i-1} + e^{2i} \otimes e_{2i}). 
\end{split}
\end{align}
Set 
$$
D \FF_\n = \frac{1}{2} \sum_{\mu, \nu, \xi=1}^{2n} F_{\mu \nu; \xi} e^\xi \otimes e^{\mu} \wedge e^{\nu}
$$
with $F_{\mu \nu; \xi} = - F_{\nu \mu; \xi}$. 
Note that $F_{\mu \nu; \xi} \in \R$. 
For simplicity, set $D_\xi \FF_\n = D_{e_\xi} \FF_\n$ and $D_\xi \FF_\n^\sharp = D_{e_\xi} \FF_\n^\sharp$. 
Then, 
\begin{align}\label{eq:Daz 2}
D_\xi \FF_\n^\sharp = \sum_{\mu, \nu=1}^{2n} F_{\mu \nu; \xi} e^{\mu} \otimes e_{\nu}. 
\end{align}
Since $D_\xi \FF_\n^\sharp \in \Om^{1,1}$, we have at $p \in X$ 
\begin{align}\label{eq:Daz 3}
F_{2i-1,2j-1;\xi} = F_{2i,2j;\xi}, \qquad F_{2i-1,2j;\xi} = - F_{2i,2j-1;\xi}. 
\end{align}
By the Bianchi identity, we have 
$0=\sum_{\xi=1}^{2n} e^\xi \wedge D_\xi \FF_\n 
= \sum_{\mu, \nu, \xi=1}^{2n} (F_{\mu \nu; \xi}/2) e^\mu \wedge e^\nu \wedge e^\xi$, and hence,  
\begin{align}\label{eq:Daz 4}
F_{\mu \nu; \xi} + F_{\nu \xi; \mu} + F_{\xi \mu; \nu} = 0. 
\end{align}
Using these equations, we compute $H(\n)$ at $p \in X$. 
By \eqref{eq:MC}, we have 
\begin{align} \label{eq:MC02 1}
\begin{split}
H(\n) 
&= - d^{\ast}\left(\left(\det G_\n \right)^{1/4}\left(G_\n^{-1}\circ \FF_\n^\sharp \right)^{\flat}\right) \\
&= \sum_{\xi=1}^{2n} e_\xi \left(\det G_\n \right)^{1/4} \cdot i(e_\xi) \left(G_\n^{-1}\circ \FF_\n^\sharp \right)^{\flat} 
+ \left(\det G_\n \right)^{1/4} i(e_\xi) D_\xi \left(G_\n^{-1}\circ \FF_\n^\sharp \right)^{\flat}. 
\end{split}
\end{align}
At $p \in X$, we have 
\begin{align*}
e_\xi \left(\det G_\n \right)^{1/4}
=& 
\frac{\left(\det G_\n \right)^{1/4}}{4} {\rm tr} \left(G_\n^{-1}\circ D_\xi G_\n \right) \\
=& 
\frac{\left(\det G_\n \right)^{1/4}}{4} {\rm tr} \left(G_\n^{-1}\circ 
\left( - (D_\xi \FF_\n^\sharp) \circ \FF_\n^\sharp - \FF_\n^\sharp \circ (D_\xi \FF_\n^\sharp) \right) \right) \\
=& 
- \frac{\left(\det G_\n \right)^{1/4}}{2} {\rm tr} \left( (D_\xi \FF_\n^\sharp) \circ G_\n^{-1} \circ E_\n^\sharp \right), 
\end{align*}
where we use \eqref{eq:FG comm}. 
Substituting \eqref{eq:Daz 1} and \eqref{eq:Daz 2} into this, it follows that 
\begin{align*}
e_\xi \left(\det G_\n \right)^{1/4} 
=&
- \frac{\left(\det G_\n \right)^{1/4}}{2} 
\sum_{i=1}^{n}
\left \{ g \left( \left( (D_\xi \FF_\n^\sharp) \circ G_\n^{-1} \circ E_\n^\sharp \right)(e_{2i-1}), e_{2i-1} \right) \right. \\
&\left. + g \left( \left( (D_\xi \FF_\n^\sharp) \circ G_\n^{-1} \circ E_\n^\sharp \right)(e_{2i}), e_{2i} \right)
\right \} \\
=&
\left(\det G_\n \right)^{1/4} \sum_{i=1}^{n} \frac{\lambda_i}{1 + \lambda_i^2} F_{2i-1,2i;\xi}. 
\end{align*}
Hence, we obtain at $p \in X$
\begin{align} \label{eq:MC02 11}
\begin{split}
&\sum_{\xi=1}^{2n} 
e_\xi \left(\det G_\n \right)^{1/4} \cdot i(e_\xi) \left(G_\n^{-1}\circ \FF_\n^\sharp \right)^{\flat} \\
=&
\sum_{j=1}^{n} e_{2j-1} \left(\det G_\n \right)^{1/4} \cdot \left( \left(G_\n^{-1}\circ \FF_\n^\sharp \right) (e_{2j-1}) \right)^{\flat}
+
e_{2j} \left(\det G_\n \right)^{1/4} \cdot \left( \left(G_\n^{-1}\circ \FF_\n^\sharp \right) (e_{2j}) \right)^{\flat} \\
=&
\left(\det G_\n \right)^{1/4} 
\sum_{i,j=1}^{n} 
\frac{\lambda_i \lambda_j}{(1+\lambda_i^2) (1+\lambda_j^2)} 
\left( F_{2i-1, 2i; 2j-1} e^{2j} - F_{2i-1, 2i; 2j} e^{2j-1} \right). \\
\end{split}
\end{align}

Next, we compute 
$$
\sum_{\xi=1}^{2n} i(e_\xi) D_\xi \left(G_\n^{-1}\circ \FF_\n^\sharp \right)^{\flat} = 
\sum_{\xi=1}^{2n} i(e_\xi) \left( (D_\xi G_\n^{-1}) \circ \FF_\n^\sharp + G_\n^{-1}\circ (D_\xi \FF_\n^\sharp) \right)^{\flat}. 
$$
By the same computation as in \eqref{eq:diff Ginv}, we have 
$$
D_\xi G_\n^{-1} = 
G_\n^{-1} \circ 
\left( (D_\xi \FF_\n^\sharp) \circ \FF_{\n}^\sharp + \FF_{\n}^\sharp \circ (D_\xi \FF_\n^\sharp) \right) \circ G_{\nabla}^{-1}. 
$$
Then, by \eqref{eq:EGE} and \eqref{eq:FG comm}, it follows that 
\begin{align*}
&i(e_\xi) D_\xi \left(G_\n^{-1}\circ \FF_\n^\sharp \right)^{\flat} \\
=& 
\left( \left( G_\n^{-1} \circ (D_\xi \FF_\n^\sharp) \circ G_\n^{-1} 
+ \FF_\n^\sharp \circ G_\n^{-1} \circ (D_\xi \FF_\n^\sharp) \circ G_\n^{-1} \circ \FF_\n^\sharp 
\right) (e_\xi) \right)^\flat. 
\end{align*}
By \eqref{eq:Daz 1} and \eqref{eq:Daz 2}, we have at $p \in X$ 
\begin{align*}
\left( G_\n^{-1} \circ (D_\xi \FF_\n^\sharp) \circ G_\n^{-1} \right) (e_{2i-1})
=&
\sum_{j=1}^n 
\frac{F_{2i-1,2j-1;\xi} e_{2j-1} + F_{2i-1,2j;\xi} e_{2j}}{(1+ \lambda_i^2) (1+ \lambda_j^2)}, \\
\left( G_\n^{-1} \circ (D_\xi \FF_\n^\sharp) \circ G_\n^{-1} \right) (e_{2i})
=&
\sum_{j=1}^n \frac{F_{2i,2j-1;\xi} e_{2j-1} + F_{2i,2j;\xi} e_{2j}}{(1+ \lambda_i^2) (1+ \lambda_j^2)}, 
\end{align*}
which implies that 
\begin{align*}
\left ( \FF_\n^\sharp \circ G_\n^{-1} \circ (D_\xi \FF_\n^\sharp) \circ G_\n^{-1} \circ \FF_\n^\sharp 
\right) (e_{2i-1})
&= \sum_{j=1}^n \frac{\lambda_i \lambda_j \left( F_{2i,2j-1;\xi} e_{2j} - F_{2i,2j;\xi} e_{2j-1} \right)}{(1+ \lambda_i^2) (1+ \lambda_j^2)}, \\
\left ( \FF_\n^\sharp \circ G_\n^{-1} \circ (D_\xi \FF_\n^\sharp) \circ G_\n^{-1} \circ \FF_\n^\sharp 
\right) (e_{2i})
&= \sum_{j=1}^n \frac{- \lambda_i \lambda_j \left( F_{2i-1,2j-1;\xi} e_{2j} - F_{2i-1,2j;\xi} e_{2j-1} \right)}{(1+ \lambda_i^2) (1+ \lambda_j^2)}.  
\end{align*}
Hence, it follows that at $p \in X$ 
\begin{align*} 
&\sum_{\xi=1}^{2n} i(e_\xi) D_\xi \left(G_\n^{-1}\circ \FF_\n^\sharp \right)^{\flat} \\
=&
\sum_{i, j=1}^n \frac{ F_{2i-1, 2j-1 ;2i-1} e^{2j-1} + F_{2i-1, 2j; 2i-1} e^{2j} + F_{2i, 2j-1 ;2i} e^{2j-1} + F_{2i, 2j; 2i} e^{2j}}
{(1+ \lambda_i^2) (1+ \lambda_j^2)} \\
&+
\sum_{i, j=1}^n \frac{ \lambda_i \lambda_j 
\left( F_{2i, 2j-1 ;2i-1} e^{2j} - F_{2i, 2j; 2i-1} e^{2j-1} -(F_{2i-1, 2j-1 ;2i} e^{2j} - F_{2i-1, 2j; 2i} e^{2j-1}) \right)}
{(1+ \lambda_i^2) (1+ \lambda_j^2)}. 
\end{align*}
By \eqref{eq:Daz 3} and \eqref{eq:Daz 4}, we have 
\begin{align*}
F_{2i-1, 2j-1 ;2i-1} + F_{2i, 2j-1 ;2i} 
=& 
F_{2i, 2j ;2i-1} - F_{2i-1, 2j ;2i} = -F_{2i-1, 2i; 2j}, \\
F_{2i-1, 2j; 2i-1} + F_{2i, 2j; 2i}
=&
-F_{2i, 2j-1;2i-1} + F_{2i-1, 2j-1; 2i} \\
=&
F_{2j-1, 2i ;2i-1} + F_{2i-1, 2j-1; 2i}
= F_{2i-1, 2i; 2j-1}, \\
F_{2i, 2j-1 ;2i-1} -F_{2i-1, 2j-1 ;2i} 
=& 
F_{2i, 2j-1 ;2i-1} + F_{2j-1, 2i-1 ;2i} 
=
-F_{2i-1,2i;2j-1}, \\
- F_{2i, 2j; 2i-1} + F_{2i-1, 2j; 2i} 
=&
F_{2j, 2i; 2i-1} + F_{2i-1, 2j; 2i}
= 
F_{2i-1,2i;2j}. 
\end{align*}
Hence, we obtain at $p \in X$ 
\begin{align} \label{eq:MC02 12}
\begin{split}
&\sum_{\xi=1}^{2n} i(e_\xi) D_\xi \left(G_\n^{-1}\circ \FF_\n^\sharp \right)^{\flat} \\
=&
\sum_{i, j=1}^n 
\left( 
\frac{ - F_{2i-1, 2i ;2j} e^{2j-1} + F_{2i-1, 2i; 2j-1} e^{2j}} {(1+ \lambda_i^2) (1+ \lambda_j^2)} 
+
\frac{ \lambda_i \lambda_j 
\left( - F_{2i-1, 2i ;2j-1} e^{2j} + F_{2i-1, 2i ;2j} e^{2j-1} \right)}
{(1+ \lambda_i^2) (1+ \lambda_j^2)} 
\right). 
\end{split}
\end{align}
Then, by \eqref{eq:MC02 1}, \eqref{eq:MC02 11} and \eqref{eq:MC02 12}, 
we obtain at $p \in X$ 
\begin{align} \label{eq:MC02 2}
H (\n) = 
\sum_{i, j=1}^n 
\frac{ \left(\det G_\n \right)^{1/4}  
\left( - F_{2i-1, 2i ;2j} e^{2j-1} + F_{2i-1, 2i; 2j-1} e^{2j} \right) }{(1+ \lambda_i^2) (1+ \lambda_j^2)}. 
\end{align}

Now, we relate \eqref{eq:MC02 2} to $d \theta_\n$. We first prove the following. 
\begin{lemma} \label{lem:diff theta}
At $p \in X$, we have 
$$
e_\xi (\theta_\n) = \sum_{i=1}^n \frac{F_{2i-1,2i;\xi}}{1+\lambda_i^2}. 
$$
\end{lemma}

\begin{proof}
Since $\zeta_\n=r_\n e^{\i \theta_\n}$, we have 
$$
e_\xi (\theta_\n) = e_\xi (\Im (\log \zeta_\n)) = \Im \left( \frac{e_\xi (\zeta_\n)}{\zeta_\n} \right). 
$$
Differentiating $(\om + \i \FF_\n)^n = \zeta_\n \om^n$, we have 
$$
n \i D_\xi \FF_\n \wedge (\om + \i \FF_\n)^{n-1} = e_\xi (\zeta_\n) \om^n. 
$$
Dividing both hand sides by $\zeta_\n$ and take the imaginary part, we obtain 
$$
n D_\xi \FF_\n \wedge {\rm Re} \left( \frac{(\om + \i \FF_\n)^{n-1}}{\zeta_\n} \right) = e_\xi (\theta_\n) \om^n. 
$$
By \cite[(4.3)]{KY}, we have 
${\rm Re} \left( (\om + \i \FF_\n)^{n-1}/\zeta_\n \right) = \om_\n^{n-1}/r_\n^2 = (n-1)! *_\n \om_\n/r_\n^2$, 
where $*_\n$ is the Hodge star defined by $\eta_\n$.  
Using $\om_\n^n=r_\n^2 \om^n$ by \cite[(4.2)]{KY}, we obtain 
\begin{align} \label{eq:diff theta 1}
\left \la D_\xi \FF_\n, \om_\n \right \ra_\n = e_\xi (\theta_\n),  
\end{align}
where $\la \cdot, \cdot \ra_{\n}$ is the metric on $T^* X$ induced from $\eta_\n$. 

We write the left hand side of \eqref{eq:diff theta 1} at $p \in X$. 
Denote by $\la \cdot, \cdot \ra$ is the metric on $T^* X$ induced from $g$. 
We compute at $p \in X$ 
\begin{align*}
\left \la D_\xi \FF_\n, \om_\n \right \ra_\n
=&
\left \la ((\id_{TX} + \FF_\n^\sharp)^{-1})^* D_\xi \FF_\n, \om \right \ra \\
=&
\sum_{i=1}^n
(D_\xi \FF_\n) \left( (\id_{TX} + \FF_\n^\sharp)^{-1}(e_{2i-1}), (\id_{TX} + \FF_\n^\sharp)^{-1}(e_{2i}) \right). 
\end{align*}
By \eqref{eq:Daz 1}, we have 
$$
(\id_{TX} + \FF_\n^\sharp)^{-1}(e_{2i-1}) = \frac{e_{2i-1} - \lambda_i e_{2i}}{1+ \lambda_i^2}, \qquad
(\id_{TX} + \FF_\n^\sharp)^{-1}(e_{2i}) = \frac{\lambda_i e_{2i-1} + e_{2i}}{1+ \lambda_i^2}. 
$$
Hence, we obtain 
$$
\left \la D_\xi \FF_\n, \om_\n \right \ra_\n
=
\sum_{i=1}^n \frac{(D_\xi \FF_\n) (e_{2i-1}, e_{2i})}{1+ \lambda_i^2}
=
\sum_{i=1}^n \frac{F_{2i-1, 2i; \xi}}{1+ \lambda_i^2}. 
$$
By \eqref{eq:diff theta 1}, the proof of Lemma \ref{lem:diff theta} is completed. 
\end{proof}

By Lemma \ref{lem:diff theta}, we compute at $p \in X$ 
$$
J d \theta_\n 
= \sum_{j=1}^n e_{2j-1} (\theta_\n) e^{2j-1}(J (\cdot)) + e_{2j} (\theta_\n) e^{2j}(J (\cdot))
=
\sum_{i, j=1}^n \frac{-F_{2i-1, 2i; 2j-1} e^{2 j} + F_{2i-1, 2i; 2j} e^{2j-1}}{1+\lambda_i^2}. 
$$
Then, by \eqref{eq:Daz 1}, we obtain 
\begin{align} \label{eq:Daz 5}
(G_\n^{-1})^* (J d \theta_\n)
=
\sum_{i, j=1}^n \frac{-F_{2i-1, 2i; 2j-1} e^{2 j} + F_{2i-1, 2i; 2j} e^{2j-1}}{(1+\lambda_i^2)(1+\lambda_j^2)}. 
\end{align}
Then, by \eqref{eq:MC02 2} and \eqref{eq:Daz 5}, the proof is completed. 
\end{proof}

\begin{remark} \label{rem:JY MC}
We can recover \cite[Proposition 3.4]{JY} from \eqref{eq:MC} and Theorem \ref{thm:Dazord}. 

Suppose that $L \to X$ is a holomorphic line bundle and the holomorphic structure is fixed. 
Set 
$$
\Aa_{met}= \{ h \mid \mbox{ a Hermitian metric of } L \}. 
$$
Fixing $h \in \Aa_{met}$, we have $\Aa_{met}= \{ e^{-f} h \mid f \in \Om^0 \}$. 
Denote by $\Aa_0$ the space of Hermitian connections of $(L,h)$. 
Define 
$\mathcal{I}: \Aa_{met} \to \Aa_0$ by 
$$
\mathcal{I} (e^{-f} h)= \n - \frac{\i}{2} d_c f, 
$$
where $\n$ is the Chern connection of $h$. 
Since $2 \p \bp=-\i d d_c$ in our notation given in \cite[Appendix A.2]{KY},  
we see that 
$F_{h'} = F_{\mathcal{I} (h')}$ for any $h' \in \Aa_{met}$,  
where 
$F_{h'}$ is the curvature 2-form of the Chern connection of $h' \in \Aa_{met}$. 
Then, the volume functional $V_{JY}: \Aa_{met} \to \R$ in \cite[Definition 3.1]{JY} is given by 
$$
V_{JY} = V \circ \mathcal{I}. 
$$
Thus, 
$$
\left. \frac{d}{dt} V_{JY} (e^{-t f} h) \right|_{t=0} 
= 
\delta_\n V \left(-\frac{\i}{2} d_c f \right) 
$$
is computed in \cite[Proposition 3.4]{JY}. 
By Theorem \ref{thm:Dazord}, we have 
$$
\delta_\n V \left(-\frac{\i}{2} d_c f \right) = 
\frac{1}{2} \la d_c f, H(\n) \ra_{L^2} 
= - \frac{1}{2} \int_X \left \la(G_\n^{-1})^* (J d \theta_\n), d_c f \right \ra (\det G_\n)^{1/4} \vol_g. 
$$
Denote by ${}^t T$ the transpose of a linear map $T:T^* X \to T^* X$ with respect to $g$. 
By \eqref{eq:Gn}, we have 
$$
(G_\n^{-1})^* 
= {}^t\! \left( \left(\id_{TX} + \FF_\n^\sharp \right)^{-1} \right)^* \left( \left(\id_{TX} + \FF_\n^\sharp \right)^{-1} \right)^*. 
$$
Then, 
\begin{align*}
\left \la (G_\n^{-1})^* (J d \theta_\n), d_c f \right \ra  
=&
\left \la \left( \left(\id_{TX} + \FF_\n^\sharp \right)^{-1} \right)^* (J d \theta_\n), 
\left( \left(\id_{TX} + \FF_\n^\sharp \right)^{-1} \right)^* d_c f  \right \ra \\
=&
\la J d \theta_\n, d_c f \ra_\n,  
\end{align*}
where $\la \cdot, \cdot \ra_{\n}$ is the metric on $T^* X$ induced from $\eta_\n$. 
Since $\eta_\n$ is Hermitian and $d_c f = - J df$, we have 
$$
\la J d \theta_\n, d_c f \ra_\n = - \la d \theta_\n, d f \ra_\n. 
$$
Hence, by \eqref{eq:Gn} again, we obtain 
\begin{align*}
\left. \frac{d}{dt} V_{JY} (e^{-t f} h) \right|_{t=0} 
= 
\delta_\n V \left(-\frac{\i}{2} d_c f \right) 
= \frac{1}{2} \int_X \la d \theta_\n, d f \ra_\n \sqrt{\det \left(\id_{TX} + \FF_\n^\sharp \right)} \vol_g. 
\end{align*}
Note that the formula above differs from \cite[Proposition 3.4]{JY} by a constant 
because we use the convention that $|a+ \bar a|^2= 2 |a|^2$ for $a \in (T^{1,0} X)^*$ (See also \cite[(3.7)]{JY}.)  
and adopt the different definition for dHYM connections as explained in Remark \ref{rem:diff JY} (2). 
\end{remark}

\section{Holonomy reductions} \label{sec:hol red} 

If a $\Sp$- or $G_2$-manifold admits a further reduction 
of the holonomy group, 
we have $\Sp$- or $G_2$-dDT connections 
from $G_2$-dDT or dHYM connections, respectively. 
In this section, 
as applications of results proved in previous sections, 
we show that 
all of $\Sp$- or $G_2$-dDT connections 
essentially arise in this way 
if the holonomy group reduces and 
the manifold is compact and connected.

\subsection{Reduction from $\Sp$ to $G_2$}

Let $(Y^7, \varphi)$ be a $G_2$-manifold and 
$L \rightarrow Y^7$ be a smooth complex line bundle with a Hermitian metric $h$. 
Set $X^8=S^1 \times Y^7$ and denote by 
$$
\pi_{S^1}: X^8=S^1 \times Y^7 \to S^1, \qquad 
\pi_Y: X^8=S^1 \times Y^7 \to Y^7
$$ 
the projections. 
The 8-manifold $X^8=S^1 \times Y^7$ is a $\Sp$-manifold 
with the $\Sp$-structure 
$$
\Phi = \pi_{S^1}^* dx \wedge \pi_Y^* \varphi + \pi_Y^* *_7 \varphi, 
$$ 
where $x$ is a coordinate of $S^1$
and $*_7$ is the Hodge star on $Y^7$. 
In \cite[Lemma 7.1]{KYFM}, 
we prove that 
a Hermitian connection $\nabla$ of $(L,h)$ 
is a $G_2$-dDT connection  
if and only if the pullback $\pi_Y^* \nabla$ is a $\Sp$-dDT connection 
of $\pi_Y^*L$. 
We can improve this if $Y^7$ is compact and connected as follows.

\begin{theorem} \label{thm:Spin7G2dDT}
Suppose that $Y^7$ is a compact and connected $G_2$-manifold. 

\begin{enumerate}
\item
The pullback $\pi_Y^*L \to X^8$ admits a $\Sp$-dDT connection if and only if 
$L \to Y^7$ admits a $G_2$-dDT connection. 

\item
For any $\Sp$-dDT connection $\widetilde \n$ of $\pi_Y^* L$, 
there exist a $G_2$-dDT connection $\n$ of $L$ and 
a closed 1-form $\xi \in \i \Om^1 (X^8)$ 
such that 
$\widetilde \n = \pi_Y^* \n + \xi$. 

\item 
Denote by $\Mm_\Sp$ the moduli space of $\Sp$-dDT connections of $\pi_Y^* L$ 
and denote by $\Mm_{G_2}$ the moduli space of $G_2$-dDT connections of $L$ 
as defined by \eqref{eq:mod Spin7} and \eqref{eq:mod G2}. 
Then, $\Mm_\Sp$ is homeomorphic to $S^1 \times \Mm_{G_2}$. 
\end{enumerate}
\end{theorem}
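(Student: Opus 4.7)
The plan is to reduce (1) and (3) to (2), and to prove (2) via a slicing argument combined with the ``mirror'' of the Cayley and associator equalities (Theorems \ref{thm:Cayvol ineq} and \ref{thm:assovol ineq}).

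For (2), I would fix a reference connection $\n_0$ on $L \to Y$ and write any $\widetilde \n \in \Aa_0$ on $\pi_Y^* L$ as $\widetilde \n = \pi_Y^*\n_0 + \i(f\, dx + \beta)$, where $f \in C^\infty(X)$ and $\beta = \{\beta_x\}_{x \in S^1}$ is a smooth family of 1-forms on $Y$. With $\widetilde \n_x := \n_0 + \i \beta_x$ and $\mu_x := \p \beta_x/\p x - d_Y f(x,\cdot)$, a direct computation gives
\[
F_{\widetilde \n} = F_{\widetilde \n_x}^{\mathrm{lift}} + \i\, dx \wedge \mu_x.
\]
The key pointwise input is a Schur-complement calculation: writing $(-\i F_{\widetilde \n})^\sharp$ as a block $8 \times 8$ matrix with $\p/\p x$ in the first coordinate, one finds
\[
\det\bigl(\id_{TX} + (-\i F_{\widetilde \n})^\sharp\bigr) = \det\bigl(\id_{TY} + (-\i F_{\widetilde \n_x})^\sharp\bigr)\,\bigl(1 + Q_{\widetilde \n_x}(\mu_x)\bigr),
\]
where $Q_{\widetilde \n_x}(\mu) \geq 0$ is a positive-semidefinite quadratic form in $\mu$ built from $G_{\widetilde \n_x}^{-1}$ in \eqref{eq:Gn}. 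Integrating yields
\[
V_{\Sp}(\widetilde \n) \;\geq\; \int_{S^1} V_{G_2}(\widetilde \n_x)\, dx,
\]
with equality if and only if $\mu_x \equiv 0$.

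Next I invoke the topological matching of the two ``mirror'' integrands. Both $\int_X\bigl(1 + \frac{1}{2}\la F_{\widetilde \n}^2,\Phi\ra + \frac{*F_{\widetilde \n}^4}{24}\bigr)\vol_g$ and $\int_Y\bigl(1 + \frac{1}{2}\la F_{\widetilde \n_x}^2,*_7\varphi\ra\bigr)\vol_g$ are topological; evaluating the former at $\widetilde \n = \pi_Y^*\n_0$ and using $\Phi = dx \wedge \varphi + *_7\varphi$ together with $F_{\pi_Y^*\n_0}^4 = 0$ (an 8-form pulled back from $Y^7$) shows that the former equals $L_{S^1}$ times the latter, where $L_{S^1}$ is the length of $S^1$. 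Writing $T_0$ for the common associator value on $Y$, we obtain
\[
V_{\Sp}(\widetilde \n) \;\geq\; \int_{S^1} V_{G_2}(\widetilde \n_x)\, dx \;\geq\; L_{S^1}\,|T_0|.
\]
If $\widetilde \n$ is $\Sp$-dDT, Theorem \ref{thm:Cayvol ineq} forces $V_{\Sp}(\widetilde \n) = L_{S^1}|T_0|$ and hence equality throughout: $\mu_x \equiv 0$, and by continuity in $x$ each slice $\widetilde \n_x$ achieves equality in Theorem \ref{thm:assovol ineq}, hence is $G_2$-dDT. Integrating $\mu_x = 0$ gives $\beta_x = \beta_0 + d_Y F(x,\cdot)$ for $F(x,y) := \int_0^x f(s,y)\,ds$; setting $\n := \widetilde \n_0$, a direct check confirms $\widetilde \n - \pi_Y^*\n = \i(f\,dx + d_Y F)$ is closed. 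This proves (2); for (1), the converse direction is \cite[Lemma 7.1]{KYFM} and the forward direction follows from (2).

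For (3), I would define $\Psi \colon S^1 \times \Mm_{G_2} \to \Mm_{\Sp}$ by $(t,[\n]) \mapsto [\pi_Y^*\n + \i\, t\, dx]$ with $t$ normalized to $\R/(2\pi/L_{S^1})\Z$. Well-definedness uses that $(2\pi/L_{S^1})\,dx = g^{-1}dg$ for $g = e^{2\pi \i x/L_{S^1}}$ and that pulled-back gauge transformations on $Y$ lie in $\Kk_U(X)$. Bijectivity comes from (2) combined with the K\"unneth isomorphism $H^1(X,\R)/2\pi H^1(X,\Z) \cong S^1 \times \bigl(H^1(Y,\R)/2\pi H^1(Y,\Z)\bigr)$ applied to the closed correction 1-form $\xi$ from (2): the $H^1(Y)$ component is absorbed into $[\n] \in \Mm_{G_2}$ using the freedom to modify $\n$ by closed 1-forms on $Y$, while the $S^1$ component becomes $t$. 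Continuity of $\Psi^{\pm 1}$ is routine from the explicit formulas and Hodge theory on $S^1 \times Y$.

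The main obstacle is establishing the Schur-complement identity at the exact level of $G_{\widetilde \n_x}$: since $\id + (-\i F)^\sharp$ is not symmetric, it is only the algebraic identity $M^{-1} + (M^T)^{-1} = 2G_F^{-1}$ for $M = \id + (-\i F)^\sharp$ that forces the error term to be captured exactly by the metric operator of \eqref{eq:Gn}. This tightness is precisely what makes the slicing compatible with the two ``mirror'' inequalities. A secondary subtlety in (3) is tracking normalizations so that the integer lattices in $H^1(S^1,\Z)$ and $H^1(Y,\Z)$ are correctly matched with $\Kk_U(X)$ via Lemma \ref{lem:orbit}.
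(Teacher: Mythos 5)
Your argument is correct, and for part (2) it takes a genuinely different route from the paper, so it is worth comparing. The paper proves (1)--(2) by integrating the ``mirror'' Cayley equality, killing the cross terms $\int_X F_{\widetilde\n}^4$ and $\int_X \la F_{\widetilde\n}^2, \pi_{S^1}^*dx\wedge\pi_Y^*\varphi\ra\vol_g$ by Chern--Weil (both land in $H^8_{dR}(Y^7)=0$), and then invoking the pointwise Lemma \ref{lem:Cay asso pt}, whose equality case directly yields $i(\p/\p x)F_{\widetilde\n}=0$ and the associator equation; the closed correction $\xi$ is then extracted via injectivity of $\pi_Y^*$ on $H^2_{dR}$. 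You instead prove the exact Schur-complement factorization $\det(\id_{TX}+(-\i F_{\widetilde\n})^\sharp)=\det(\id_{TY}+(-\i F_{\widetilde\n_x})^\sharp)\,(1+\mu_x(G_{\widetilde\n_x}^{-1}\mu_x^\sharp))$ — which is correct, and is a sharpening of the determinant comparison hidden inside the paper's Lemma \ref{lem:Cay asso pt} (your identity $M+M^tr=2\,\id$, hence $\tfrac12(M^{-1}+(M^{-1})^{tr})=G^{-1}$, is exactly what identifies the Schur complement $1+v^{tr}M^{-1}v$ with $1+v^{tr}G^{-1}v$) — then run Fubini to get $V_\Sp(\widetilde\n)\geq\int_{S^1}V_{G_2}(\widetilde\n_x)\,dx$, and match the two topological lower bounds by evaluating the Cayley invariant at the reference pullback connection rather than computing the cross terms. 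What your approach buys is the intermediate inequality $V_\Sp\geq\int_{S^1}V_{G_2}$ and a more explicit reconstruction of $(\n,\xi)$ (namely $\n=\widetilde\n_0$ and $\xi=\i(f\,dx+d_YF)$, which is indeed globally defined and closed since $d_YF(x,\cdot)=\beta_x-\beta_0$); what it costs is having to verify the factorization, which you correctly flag as the technical crux. One wording slip: you call $Q_{\widetilde\n_x}$ positive \emph{semi}definite, but the equality analysis ($Q(\mu_x)=0\Rightarrow\mu_x=0$) needs positive definiteness — which does hold, since $Q(\mu)=\mu(G_{\widetilde\n_x}^{-1}\mu^\sharp)$ and $G_{\widetilde\n_x}$ is positive definite by Remark \ref{rem:det Gn}. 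Parts (1) and (3) follow the same lines as the paper; for (3) the paper is more careful, constructing an explicit two-sided inverse $\Xi$ rather than arguing bijectivity through the K\"unneth decomposition, but your sketch fills in the same way.
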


\begin{remark}
An analogous result for $\Sp$-instantons is given in \cite[Theorem 1.17]{Wang}, 
where the moduli space of irreducible $\Sp$-instantons of $\pi_Y^* L$ is homeomorphic to 
the product of $S^1$ and the moduli space of irreducible $G_2$-instantons of $L$. 
An analogous result for compact Cayley submanifolds is given in \cite[Proposition 5.20]{Ohst}, 
where submanifolds are allowed to have boundaries.
This implies that the moduli space of all local Cayley deformations of $S^1 \times A^3$, 
where $A^3$ is a given compact associative submanifold in $Y^7$, 
is identified with the moduli space of all local associative deformations of $A^3$. 
\\

\end{remark}

\begin{proof}
We first prove (1). 
By \cite[Lemma 7.1]{KYFM}, for any $G_2$-dDT connection $\nabla$ of $(L,h)$, 
$\pi_Y^* \nabla$ is a $\Sp$-dDT connection of $\pi_Y^*L$. 
Thus, we only have to prove the converse. 

Suppose that a $\Sp$-dDT connection $\widetilde \n$  of $\pi_Y^*L$ is given. 
By Remark \ref{rem:novanish} and the connectedness of $X^8$, 
we may assume that 
$1+\la F_{\widetilde \n}^2, \Phi \ra/2 + * F^4_{\widetilde \n}/24 >0$ 
everywhere without loss of generality. 
Then, 
Theorem \ref{thm:Cayvol ineq} implies that 
\begin{align*}
V(\widetilde \n)
&= \int_X \left( 1 + \frac{1}{2} \la F_{\widetilde \n}^2, \Phi \ra + \frac{* F^4_{\widetilde \n}}{24}  \right) \vol_g \\
&= \int_X 
\left( \left( 1 + \frac{1}{2} \la F_{\widetilde \n}^2, \pi_{S^1}^* dx \wedge \pi_Y^* \varphi + \pi_Y^* *_7 \varphi \ra \right) \vol_g
+ \frac{F^4_{\widetilde \n}}{24} \right). 
\end{align*}
We compute 
$$
\int_X F^4_{\widetilde \n} 
= \left( (-2\pi \i c_1(\pi^*_Y L))^4 \right) \cdot [X]
= 16 \pi^4 \left( \pi_Y^* c_1(L)^4 \right) \cdot [X]
= 0 
$$
since $c_1(L)^4 \in H^8_{dR} (Y^7) = \{ 0 \}$. 
Similarly, since 
$\la F_{\widetilde \n}^2, \pi_{S^1}^* dx \wedge \pi_Y^* \varphi \ra \vol_g
= F_{\widetilde \n}^2 \wedge \pi_Y^* *_7 \varphi$, 
we have 
\begin{align*}
\int_X \la F_{\widetilde \n}^2, \pi_{S^1}^* dx \wedge \pi_Y^* \varphi \ra \vol_g
=&
\left( (-2\pi \i c_1(\pi^*_Y L))^2 \cup [\pi_Y^* *_7 \varphi] \right) \cdot [X] \\
=& 
-4 \pi^2 \pi^*_Y \left( c_1(L)^2 \cup [*_7 \varphi] \right) \cdot [X]
= 0
\end{align*}
since $c_1(L)^2 \cup [*_7 \varphi] \in H^8_{dR} (Y^7) = \{ 0 \}$. 
Hence, we obtain 
\begin{align} \label{eq:Spin7G2 1}
\int_X \left ( 1 + \frac{1}{2} \la F_{\widetilde \n}^2, \pi_Y^* *_7 \varphi \ra \right) \vol_g 
= V(\widetilde \n). 
\end{align} 
By Lemma \ref{lem:Cay asso pt},  
\eqref{eq:Spin7G2 1} implies that 
$1 + \la F_{\widetilde \n}^2, \pi_Y^* *_7 \varphi \ra/2 = \sqrt{\det (\id_{TX} + (-\i F_{\widetilde \n})^\sharp)}$ 
and 
\begin{align} \label{eq:Spin7G2 2}
i \left( \frac{\p}{\p x} \right)F_{\widetilde \n}=0 \quad \mbox{and} \quad 
F_{\widetilde \n} \wedge \pi_Y^* *_7 \varphi + \frac{1}{6} F_{\widetilde \n}^3 =0. 
\end{align}
Now, fix a Hermitian connection $\n_0$ of $L$ and set $\widetilde \n = \pi_Y^* \n_0 + \widetilde \gamma$ 
for $\widetilde \gamma  \in \i \Om^1(X^8)$. 
Then, $F_{\widetilde \n} = \pi_Y^* F_{\n_0} + d \widetilde \gamma$. 
Since 
$$
0=i \left( \frac{\p}{\p x} \right)F_{\widetilde \n} 
= i \left( \frac{\p}{\p x} \right) d \widetilde \gamma 
\quad \mbox{and} \quad
L_{\frac{\p}{\p x}} d \widetilde \gamma = 0,  
$$
where $L_{\p/\p x}$ is the Lie derivative in the direction of $\p/\p x$, 
it follows that 
$d \widetilde \gamma \in \i \pi_Y^* \Om^2(Y^7)$. 
Since $\pi_Y^*:H^2_{dR} (Y^7) \to H^2_{dR} (X^8)$ is injective, 
there exists a 1-form $\gamma \in \i \Om^1(Y^7)$ such that 
$d \widetilde \gamma = \pi_Y^* d \gamma$. 
Thus, there exists a closed 1-form $\xi \in  \i \Om^1(X^8)$ such that 
$\widetilde \gamma = \pi_Y^* \gamma + \xi$. 
Hence, we obtain  
$$
\widetilde \n = \pi_Y^* \n_0 + \pi_Y^* \gamma + \xi. 
$$
By \eqref{eq:Spin7G2 2}, the fact that 
$F_{\widetilde \n} = \pi_Y^* F_{\n_0 + \gamma}$ 
and $\pi_Y^*$ is injective, 
we see that $\n_0 + \gamma$ is a $G_2$-dDT connection. 
Then, the proof of (1) is completed. 
This argument also implies (2). \\

Next, we prove (3). 
Let $\widehat \Mm_\Sp$ and $\widehat \Mm_{G_2}$ be the set of 
$\Sp$-dDT connections of $\pi_Y^* L$ and 
$G_2$-dDT connections of $L$ as defined by \eqref{eq:premod Spin7} and \eqref{eq:premod G2}, respectively. 
Define a continuous map 
$\widehat \Theta: \i Z^1(S^1) \times \widehat \Mm_{G_2} \to \widehat \Mm_\Sp$ by 
$$
\widehat \Theta (\alpha, \n) = \pi_{S^1}^* \alpha + \pi_Y^* \n,  
$$
where $Z^1(S^1)$ is the space of closed 1-forms on $S^1$. 
By Lemma \ref{lem:orbit}, it induces a continuous map 
$$
\Theta: \frac{\i Z^1(S^1)}{\i \left( 2\pi \Hh^1_\Z (S^1) \oplus d \Om^0(S^1) \right)} 
\times \Mm_{G_2} \to \Mm_\Sp. 
$$
We show that $\Theta$ is a homeomorphism by constructing the inverse map. 
Fixing $pt_{S^1} \in S^1$ and $pt_Y \in Y^7$, 
define $\iota_{S^1}: S^1 \hookrightarrow S^1 \times Y^7$ and $\iota_Y: Y^7 \hookrightarrow S^1 \times Y^7$
by 
$\iota_{S^1}(z)=(z, pt_Y)$ and  $\iota_Y (y) = (pt_{S^1}, y)$. 
Define a continuous map 
$
\widehat \Xi: \widehat \Mm_\Sp \to \i Z^1(S^1) \times \widehat \Mm_{G_2} 
$
by 
$$
\widehat \Xi (\widetilde \n) = \left( \iota_{S^1}^* \left( \widetilde \n - \pi_Y^* \iota_Y^* \widetilde \n \right), 
\iota_Y^* \widetilde \n \right),  
$$
where we regard $\widetilde \n - \pi_Y^* \iota_Y^* \widetilde \n$ as a $\i \R$-valued 1-form on $X^8$. 
First, we show that $\widehat \Xi$ is well-defined. 
For any $\widetilde \n \in \widehat \Mm_\Sp$, there exist a $G_2$-dDT connection $\n \in \widehat \Mm_{G_2}$ 
and a closed 1-form $\xi \in \i \Om^1 (X^8)$ such that 
\begin{align} \label{eq:Spin7G2 3}
\widetilde \n = \pi_Y^* \n + \xi
\end{align} 
by (2). Then, 
$$
\iota_{S^1}^* \left( \widetilde \n - \pi_Y^* \iota_Y^* \widetilde \n \right)
=
\iota_{S^1}^* \left( \pi_Y^* (\n- \iota_Y^* \widetilde \n) + \xi \right)
= 
\iota_{S^1}^* \xi, 
$$
and hence,  $\iota_{S^1}^* \left( \widetilde \n - \pi_Y^* \iota_Y^* \widetilde \n \right) \in \i Z^1(S^1). $
We also have 
$$
\iota_Y^* \widetilde \n = \iota_Y^* \pi_Y^* \n + \iota_Y^* \xi = \n + \iota_Y^* \xi.
$$ 
Since $F_{\iota_Y^* \widetilde \n} = F_{\n + \iota_Y^* \xi} = F_\n$, 
we see that $\iota_Y^* \widetilde \n \in \widehat \Mm_{G_2}$.

The map $\widehat \Xi$ induces the continuous map 
$$
\Xi: \Mm_\Sp \to 
\frac{\i Z^1(S^1)}{\i \left( 2\pi \Hh^1_\Z (S^1) \oplus d \Om^0(S^1) \right)} \times \Mm_{G_2}.   
$$
Indeed, if $\Sp$-dDT connections $\widetilde \n, \widetilde \n'$ are in the same $\Gg_U$-orbit, 
Lemma \ref{lem:orbit} implies that 
$\widetilde \n' - \widetilde \n = \eta$ for $\eta \in \i \left( 2\pi \Hh^1_\Z (X^8) \oplus d \Om^0(X^8) \right)$. 
Then, since 
\begin{align*}
\iota_{S^1}^* (\eta - \pi_Y^* \iota_Y^* \eta) = \iota_{S^1}^* \eta &\in \i \left( 2\pi \Hh^1_\Z (S^1) \oplus d \Om^0(S^1) \right), \\
\iota_Y^* \eta &\in \i \left( 2\pi \Hh^1_\Z (Y^7) \oplus d \Om^0(Y^7) \right), 
\end{align*}
we see that $\widehat \Xi$ induces $\Xi$. 

Now, we show that $\Theta$ and $\Xi$ are mutually inverse. 
For any $(\alpha, \n) \in \i Z^1(S^1) \times \widehat \Mm_{G_2}$, we compute 
\begin{align*}
(\widehat \Xi \circ \widehat \Theta) (\alpha, \n)
=&
\left( \iota_{S^1}^* 
\left( \pi_{S^1}^* \alpha + \pi_Y^* \n - \pi_Y^* \iota_Y^* (\pi_{S^1}^* \alpha + \pi_Y^* \n) \right),  
\iota_Y^* (\pi_{S^1}^* \alpha + \pi_Y^* \n) \right) \\
=&
(\alpha, \n), 
\end{align*}
which implies that $\Xi \circ \Theta$ is the identity map.  
For any $\widetilde \n \in \widehat \Mm_\Sp$, we compute 
\begin{align*}
(\widehat \Theta \circ \widehat \Xi) (\widetilde \n)
=
\pi_{S^1}^* \iota_{S^1}^* \left( \widetilde \n - \pi_Y^* \iota_Y^* \widetilde \n \right) 
+ \pi_Y^* \iota_Y^* \widetilde \n. 
\end{align*}
Substituting \eqref{eq:Spin7G2 3} into this, we further compute 
\begin{align*}
(\widehat \Theta \circ \widehat \Xi) (\widetilde \n) = 
\pi_{S^1}^* \iota_{S^1}^* \xi + \pi_Y^* \n + \pi_Y^* \iota_Y^* \xi
=
\widetilde \n + \left( \pi_{S^1}^* \iota_{S^1}^* \xi + \pi_Y^* \iota_Y^* \xi - \xi \right). 
\end{align*}
Since $H^1_{dR} (S^1) \oplus H^1_{dR} (Y^7) \cong H^1_{dR} (X^8)$ via 
$([\eta_{S^1}], [\eta_Y]) \mapsto [\pi_{S^1}^* \eta_{S^1} + \pi_Y^* \eta_Y]$ 
by the K\"unneth formula, we see that 
there exist 
$\eta_{S^1} \in \i Z^1 (S^1), \eta_Y \in \i Z^1 (Y^7)$ and $f_0 \in \i \Om^0(X^8)$ such that 
$\xi = \pi_{S^1}^* \eta_{S^1} + \pi_Y^* \eta_Y + d f$. 
Then, 
$$
\pi_{S^1}^* \iota_{S^1}^* \xi + \pi_Y^* \iota_Y^* \xi - \xi
= 
\pi_{S^1}^* \iota_{S^1}^* d f + \pi_Y^* \iota_Y^* d f - d f \in \i d \Om^0(X^8). 
$$
Thus, by Lemma \ref{lem:orbit}, 
$(\widehat \Theta \circ \widehat \Xi) (\widetilde \n)$ and $\widetilde \n$
are in the same $\Gg_U$-orbit, 
which implies that $\Theta \circ \Xi$ is the identity map. 
Hence, we obtain the homeomorphism 
$$
\Mm_\Sp \cong 
\frac{\i Z^1(S^1)}{\i \left( 2\pi \Hh^1_\Z (S^1) \oplus d \Om^0(S^1) \right)} \times \Mm_{G_2}
\cong 
\frac{H^1(S^1, \R)}{2 \pi H^1(S^1, \Z)} \times \Mm_{G_2},  
$$
and the proof is completed. 
\end{proof}

\subsection{Reduction from $G_2$ to ${\rm SU}(3)$}
Let $(Y^6,\om,g,J,\Om)$ be a real 6 dimensional Calabi--Yau manifold and 
$L \rightarrow Y$ be a smooth complex line bundle with a Hermitian metric $h$. 
Set $X^7=S^1 \times Y^6$ and denote by 
$$
\pi_{S^1}: X^7=S^1 \times Y^6 \to S^1, \qquad 
\pi_Y: X^7=S^1 \times Y^6 \to Y^6
$$ 
the projections. 
The 7-manifold $X^7=S^1 \times Y^6$ is a $G_2$-manifold 
and the $G_2$-structure $\varphi$ and its Hodge dual $* \varphi$ are given by 
\[
\varphi=\pi_{S^1}^* dx \wedge \pi_Y^* \om + \pi_Y^* \mathop{\mathrm{Re}} \Om, \qquad 
* \varphi = \frac{1}{2} \pi_Y^* \om^2 - \pi_{S^1}^* dx \wedge \pi_Y^* \Im \Om, 
\]
where $x$ is a coordinate of $S^1$. 
In \cite[Lemma 3.5]{KY}, we prove that 
a Hermitian connection $\nabla$ of $(L,h)$ is a dHYM connection with phase $1$ 
if and only if the pullback $\pi_Y^* \nabla$ 
is a $G_2$-dDT connection of 
$\pi_Y^* L$. 
We can improve this if $Y^6$ is compact and connected as follows.

\begin{theorem} \label{thm:G2dDTdHYM}
Suppose that $Y^6$ is a compact and connected Calabi--Yau manifold. 
\begin{enumerate}
\item
The pullback $\pi_Y^*L \to X^7$ admits a $G_2$-dDT connection if and only if 
$L \to Y^6$ admits a dHYM connection with phase 1. 

\item
For any $G_2$-dDT connection $\widetilde \n$ of $\pi_Y^* L$, 
there exist a dHYM connection $\n$ with phase 1 of $L$ 
and a closed 1-form $\xi \in \i \Om^1(X^7)$ such that 
$\widetilde \n = \pi_Y^* \n + \xi$. 

\item 
Denote by $\Mm_{G_2}$ the moduli space of $G_2$-dDT connections of $\pi_Y^* L$ 
and denote by $\Mm_{dHYM}$ the moduli space of dHYM connections with phase 1 of $L$ 
as defined by \eqref{eq:mod G2}. 
Then, $\Mm_{G_2}$ is homeomorphic to $S^1 \times \Mm_{dHYM}$. 

In particular, $\Mm_{G_2}$ is a torus of dimension $1+b^1(Y^6)=b^1(X^7)$ by \cite[Theorem 1.2 (1)]{KY}. 
\end{enumerate}
\end{theorem}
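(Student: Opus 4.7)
The proof parallels Theorem \ref{thm:Spin7G2dDT} step by step, with the roles of $\Sp$ and $G_2$ replaced by $G_2$ and ${\rm SU}(3)$. For (1) and (2), one implication follows from \cite[Lemma 3.5]{KY}. For the converse, let $\widetilde \n$ be a $G_2$-dDT connection on $\pi_Y^*L$. By Remark \ref{rem:novanish G2} and the connectedness of $X^7$, I may assume without loss of generality that $1 + \la F_{\widetilde \n}^2, *\varphi \ra/2 > 0$ everywhere. Then Theorem \ref{thm:assovol ineq} gives
\[
V(\widetilde \n) = \int_X \left( 1 + \frac{1}{2} \la F_{\widetilde \n}^2, *\varphi \ra \right) \vol_g.
\]
Writing $\varphi = \pi_{S^1}^*dx \wedge \pi_Y^*\om + \pi_Y^*{\rm Re}\,\Om$, the cohomological computation shows that $\int_X F_{\widetilde \n}^2 \wedge \pi_Y^*{\rm Re}\,\Om = 0$ because $c_1(L)^2 \cup [{\rm Re}\,\Om] \in H^7_{dR}(Y^6) = \{0\}$.

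I next appeal to a pointwise refinement analogous to Lemma \ref{lem:Cay asso pt}: at each point of $X^7$, decompose $F_{\widetilde \n} = \pi_{S^1}^*dx \wedge \eta + \beta$ with $i(\p/\p x)\eta = i(\p/\p x)\beta = 0$, then combine the $G_2$-associator identity (Theorem \ref{thm:asso eq}) on $X^7$ with the Calabi--Yau $3$ special Lagrangian identity (Theorem \ref{thm:SL3 eq}) applied fiberwise to $\beta$ on $Y^6$. Saturation of the resulting inequality should force simultaneously $\eta = 0$ (so $i(\p/\p x)F_{\widetilde \n} = 0$) and the pointwise dHYM condition with phase $1$ for $\beta$. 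From $i(\p/\p x)F_{\widetilde \n} = 0$ and closedness of $F_{\widetilde \n}$, one gets $L_{\p/\p x}F_{\widetilde \n} = 0$, so $F_{\widetilde \n}$ lies in the image of $\pi_Y^*$. Writing $\widetilde \n = \pi_Y^*\n_0 + \widetilde\gamma$ for a fixed Hermitian connection $\n_0$ on $L$ and $\widetilde\gamma \in \i\Om^1(X^7)$, this yields $d\widetilde\gamma \in \i\pi_Y^*\Om^2(Y^6)$, and by the injectivity of $\pi_Y^*: H^2_{dR}(Y^6) \to H^2_{dR}(X^7)$ there exist $\gamma \in \i\Om^1(Y^6)$ and a closed $\xi \in \i\Om^1(X^7)$ with $\widetilde\gamma = \pi_Y^*\gamma + \xi$; hence $\widetilde \n = \pi_Y^*(\n_0 + \gamma) + \xi$, and the pointwise condition identifies $\n_0 + \gamma$ as a dHYM connection with phase $1$. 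This establishes the converse direction of (1) together with the decomposition in (2).

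For (3), I construct mutually inverse continuous maps exactly as in the proof of Theorem \ref{thm:Spin7G2dDT}(3). Let $\widehat\Mm_{dHYM}$ and $\widehat\Mm_{G_2}$ denote the spaces of dHYM connections with phase $1$ of $L$ and of $G_2$-dDT connections of $\pi_Y^*L$, respectively, before modding out by $\Gg_U$. Set $\widehat\Theta: \i Z^1(S^1) \times \widehat\Mm_{dHYM} \to \widehat\Mm_{G_2}$ by $(\alpha, \n) \mapsto \pi_{S^1}^*\alpha + \pi_Y^*\n$. Fixing basepoints $pt_{S^1}\in S^1$, $pt_Y\in Y^6$ and the inclusions $\iota_{S^1}: S^1 \hookrightarrow X^7$, $\iota_Y: Y^6 \hookrightarrow X^7$, define $\widehat\Xi: \widehat\Mm_{G_2} \to \i Z^1(S^1) \times \widehat\Mm_{dHYM}$ by $\widetilde \n \mapsto \bigl(\iota_{S^1}^*(\widetilde \n - \pi_Y^*\iota_Y^*\widetilde \n),\, \iota_Y^*\widetilde \n\bigr)$; well-definedness of $\widehat\Xi$ uses (2). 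As in the proof of Theorem \ref{thm:Spin7G2dDT}(3), $\widehat\Theta$ and $\widehat\Xi$ descend via Lemma \ref{lem:orbit} to mutually inverse continuous maps between $\Mm_{G_2}$ and $\bigl(\i Z^1(S^1)/\i(2\pi\Hh^1_\Z(S^1) \oplus d\Om^0(S^1))\bigr) \times \Mm_{dHYM} \cong S^1 \times \Mm_{dHYM}$.

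The main obstacle is the pointwise refinement in the second paragraph. I expect that expanding $\la F_{\widetilde \n}^2, *\varphi \ra$ using $*\varphi = \frac{1}{2} \pi_Y^*\om^2 - \pi_{S^1}^*dx \wedge \pi_Y^*\Im\Om$ and comparing with $\det(\id_{TX} + (-\i F_{\widetilde \n})^\sharp)$ produces a sum-of-squares identity whose integrated saturation simultaneously forces $\eta = 0$ and the Calabi--Yau special Lagrangian equality for $\beta$. Carrying out this algebra cleanly --- and correctly applying Theorem \ref{thm:SL3 eq} when $\beta$ may depend on the $S^1$ coordinate --- is the delicate step.
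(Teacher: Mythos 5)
Your proposal follows the paper's proof essentially verbatim: the forward implication via \cite[Lemma 3.5]{KY}, the cohomological vanishing of $\int_X F_{\widetilde\n}^2\wedge\pi_Y^*{\rm Re}\,\Om$ reducing the volume identity to $\int_X (1+\tfrac14\la F_{\widetilde\n}^2,\pi_Y^*\om^2\ra)\vol_g=V(\widetilde\n)$, the pointwise saturation argument, the descent of the curvature via $i(\p/\p x)F_{\widetilde\n}=0$ together with injectivity of $\pi_Y^*$ on $H^2_{dR}$, and the $\widehat\Theta/\widehat\Xi$ construction for (3). The ``delicate step'' you flag is precisely Lemma \ref{lem:asso SL3 pt}, which the paper already states and proves by exactly the route you sketch (decompose $F=e^1\wedge F_1+F_2$, apply Corollary \ref{cor:SL3 eq pt} to the $Y$-component $F_2$ --- a pointwise statement, so possible dependence of $F_2$ on the $S^1$ coordinate is harmless --- and compare $\det(I_6+F_2^\sharp)\le\det(I_7+F^\sharp)$ with equality iff $F_1=0$), so your outline is correct and readily completed.
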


\begin{remark}
An analogous result for $G_2$-instantons is given in \cite[Theorem 1.17]{Wang}, 
where the moduli space of irreducible $G_2$-instantons of $\pi^* L$ is homeomorphic to 
the product of $S^1$ and the moduli space of irreducible Hermitian Yang--Mills connections with 0-slope of $L$. 
An analogous result for compact associative submanifolds is given in \cite[Proposition 4.6]{Gayet}, 
where 
the moduli space of all local associative deformations of $\{ * \} \times A^3$, 
where $A^3$ is a given compact special Lagrangian submanifold in $Y^6$, 
is identified with the product of $S^1$ and 
the moduli space of all local special Lagrangian deformations of $A^3$. 

Corollary \ref{cor:flat G2} (2) and Theorem \ref{thm:G2dDTdHYM} 
imply that we will need to consider a non-flat line bundle over a manifold with full holonomy $G_2$ 
to construct nontrivial examples on a compact and connected $G_2$-manifold.

It is conjectured in \cite[Conjecture 1.5]{CJY} that the existence of a dHYM connection is 
equivalent to a certain stability condition and the conjecture is partially proved in \cite{Chen}. 
Theorem \ref{thm:G2dDTdHYM} (1) implies that 
the existence of a $G_2$-dDT connection of $\pi_Y^*L \to X^7$ would be equivalent to this stability condition.  
More generally, 
the existence of a $G_2$-dDT connection of a general line bundle over a general $G_2$-manifold 
might be related to a certain stability condition. 
\end{remark}

\begin{proof}
We first prove (1). 
The proof is also almost the same as for Theorem \ref{thm:Spin7G2dDT}. 
By \cite[Lemma 3.5]{KY}, for any dHYM connection $\n$ with phase $1$ of $(L,h)$, 
$\pi_Y^* \nabla$ is a $G_2$-dDT connection of $\pi_Y^*L$. 
Thus, we only have to prove the converse. 

Suppose that a $G_2$-dDT connection $\widetilde \n$  of $\pi_Y^*L$ is given. 
By Remark \ref{rem:novanish G2} and the connectedness of $X^7$, 
we may assume that 
$1+\la F_{\widetilde \n}^2, * \varphi \ra/2 >0$ 
everywhere without loss of generality. 
Denote by $*=*_7$ and $*_6$ the Hodge stars on $X^7$ and $Y^6$, respectively. 
Then, 
Theorem \ref{thm:assovol ineq} implies that 
\begin{align*}
V(\widetilde \n)
&= \int_X \left( 1 + \frac{1}{2} \la F_{\widetilde \n}^2, * \varphi \ra \right) \vol_g \\
&= \int_X \left( 1 + \frac{1}{2} \left \la F_{\widetilde \n}^2,  \frac{1}{2} \pi_Y^* \om^2 
- \pi_{S^1}^* dx \wedge \pi_Y^* \Im \Om \right \ra \right) \vol_g.  
\end{align*}
Since 
$
- \la F_{\widetilde \n}^2, \pi_{S^1}^* dx \wedge \pi_Y^* \Im \Om \ra \vol_g 
=
- F_{\widetilde \n}^2 \wedge \pi_Y^* *_6  \Im \Om
=
F_{\widetilde \n}^2 \wedge \pi_Y^* {\rm Re} \Om, 
$
where we use \eqref{eq:SU3 wk}, it follows that 
\begin{align*}
- \int_X \la F_{\widetilde \n}^2, \pi_{S^1}^* dx \wedge \pi_Y^* \Im \Om \ra \vol_g
=&
\left( (-2\pi \i c_1(\pi^*_Y L))^2 \cup [\pi_Y^* {\rm Re} \Om] \right) \cdot [X] \\
=& 
-4 \pi^2 \pi^*_Y \left( c_1(L)^2 \cup [{\rm Re} \Om] \right) \cdot [X]
= 0
\end{align*}
since $c_1(L)^2 \cup [{\rm Re} \Om] \in H^7_{dR} (Y^6) = \{ 0 \}$. 
Hence, we obtain 
\begin{align} \label{eq:G2dDTdHYM 1}
\int_X \left ( 1 +\frac{1}{4} \la F_{\widetilde \n}^2, \pi_Y^* \om^2 \ra \right) \vol_X 
= V(\widetilde \n). 
\end{align} 
By Lemma \ref{lem:asso SL3 pt}, 
\eqref{eq:G2dDTdHYM 1} implies that 
$1 + \la F_{\widetilde \n}^2, \pi_Y^* \om^2 \ra/4 = \sqrt{\det (\id_{TX} + (-\i F_{\widetilde \n})^\sharp)}$ 
and 
\begin{align*}
i \left( \frac{\p}{\p x} \right)F_{\widetilde \n}=0, \quad 
{\rm Im} \left( \frac{(\pi_Y^* \om+ F_{\widetilde \n})^3}{3!} \right) =0 
\quad \mbox{and} \quad 
\pi^{\lb 2,0 \rb} (F_{\widetilde \n})=0. 
\end{align*}
Then, (1) follows by the same argument as in Theorem \ref{thm:Spin7G2dDT} (1). 
This argument also implies (2). 
Since (3) follows from (2) by the same argument as in Theorem \ref{thm:Spin7G2dDT}, 
the proof is completed. 
\end{proof}

\subsection{Reduction from $\Sp$ to ${\rm SU} (4)$}
Let $(X^8,\om,g,J,\Om)$ be a real 8 dimensional Calabi--Yau manifold 
and $L \rightarrow Y$ be a smooth complex line bundle with a Hermitian metric $h$. 
Then, $X^8$ has an induced $\Sp$-structure $\Phi$ given by 
\[\Phi=\frac{1}{2} \om^2 + \mathop{\mathrm{Re}} \Om. \]
In \cite[Lemma 7.2]{KYFM}, 
if 
the $(0,2)$-part $F_\n^{0,2}$ of the curvature of $F_\n$ of 
a Hermitian connection $\nabla$ of $(L,h)$ vanishes, 
we show that $\n$ is a dHYM connection with phase $1$ 
if and only if it is a $\Sp$-dDT connection. 
We can improve this if $X^8$ is compact and connected as follows.

\begin{theorem} \label{thm:Spin7dDTdHYM}
Suppose that 
$X^8$ is a compact and connected Calabi--Yau manifold. 
\begin{enumerate}
\item
The complex line bundle $L \to X^8$ admits a holomorphic structure and a $\Sp$-dDT connection 
if and only if $L \to X^8$ admits a dHYM connection with phase 1. 

\item
Suppose that there exists a dHYM connection $\n_0$ with phase $1$. 
Then, any $\Sp$-dDT connection is a dHYM connection with phase $1$. 
Hence, the moduli space of $\Sp$-dDT connections 
agrees with that of dHYM connections with phase $1$, 
which is a $b^1$-dimensional torus by \cite[Theorem 1.2 (1)]{KY}. 
\end{enumerate}
\end{theorem}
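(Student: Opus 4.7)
The plan is to derive both statements by comparing the two sharp volume lower bounds proved earlier in the paper: Theorem \ref{thm:Cayvol ineq} for the $\Sp$-dDT side and Theorem \ref{thm:SL34vol ineq} (taken in complex dimension $n=4$ with phase $\theta = 0$) for the dHYM side. The easy direction of (1)---that any dHYM connection $\n_0$ with phase $1$ is automatically $\Sp$-dDT and endows $L$ with a holomorphic structure (from $F_{\n_0}^{0,2}=0$)---is \cite[Lemma 7.2]{KYFM}. The substance of the argument lies in the converse: given a holomorphic structure on $L$ and any $\Sp$-dDT connection $\widetilde{\n}$, I would show that $\widetilde{\n}$ itself is a dHYM connection with phase $1$.

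The first key step is a purely topological identity. Using $\Phi = \om^2/2 + \mathop{\mathrm{Re}} \Om$ together with Wirtinger's identity $\mathrm{Vol}(X) = [\om]^4/4! \cdot [X]$, a direct binomial expansion of $([\om]-2\pi\i c_1(L))^4/4!$ shows that the topological invariant appearing in Theorem \ref{thm:Cayvol ineq} minus that appearing in Theorem \ref{thm:SL34vol ineq} equals
\[
-2\pi^2 \bigl( c_1(L)^2 \cup [\mathop{\mathrm{Re}} \Om] \bigr) \cdot [X].
\]
By the Lefschetz $(1,1)$-theorem, the hypothesis that $L$ is holomorphic forces $c_1(L) \in H^{1,1}(X,\R)$, so $c_1(L)^2$ is pure of type $(2,2)$, while $[\mathop{\mathrm{Re}} \Om] \in H^{4,0} \oplus H^{0,4}$. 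Their cup product thus lies in $H^{6,2} \oplus H^{2,6}$, which vanishes since $\dim_\C X = 4$. The two topological invariants therefore coincide; call the common value $T$.

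Then the chain
\[
V(\widetilde{\n}) \;=\; |T| \;\leq\; V(\widetilde{\n}),
\]
where the equality is the $\Sp$-dDT case of Theorem \ref{thm:Cayvol ineq} applied to $\widetilde{\n}$, and the inequality is Theorem \ref{thm:SL34vol ineq}, forces equality in the dHYM bound. By its equality clause, $\widetilde{\n}$ is a dHYM connection with phase $1$, which proves (1). For (2), the given $\n_0$ supplies the holomorphic structure on $L$ via $F_{\n_0}^{0,2}=0$, so the argument just given applies to every $\Sp$-dDT connection; combined with \cite[Lemma 7.2]{KYFM} in the reverse direction, the two moduli spaces coincide, and \cite[Theorem 1.2(1)]{KY} then identifies this common space with a $b^1$-dimensional torus. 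The main obstacle is the Hodge-type vanishing $c_1(L)^2 \cup [\mathop{\mathrm{Re}} \Om] = 0$: this is the only place where the hypothesis that $L$ carries a holomorphic structure enters, and without it the two sharp bounds cannot be matched and the sandwich argument fails.
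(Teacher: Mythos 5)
Your proposal is correct and follows essentially the same route as the paper: both hinge on the vanishing of $c_1(L)^2 \cup [\mathop{\mathrm{Re}}\Om]$ for Hodge-type reasons (the paper phrases this as $\int_X F_{\widetilde\n}^2 \wedge \mathop{\mathrm{Re}}\Om = 0$ inside the integral identity $V(\widetilde\n)=\int_X(1+\tfrac12\la F^2,\Phi\ra+\tfrac{*F^4}{24})\vol_g$, while you match the two topological constants directly), after which the sandwich between the sharp bounds of Theorems \ref{thm:Cayvol ineq} and \ref{thm:SL34vol ineq} forces the dHYM equality clause. One cosmetic remark: you do not need the Lefschetz $(1,1)$-theorem, only the elementary fact that the curvature of a Chern connection is of type $(1,1)$, so $c_1(L)\in H^{1,1}$.
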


\begin{remark}
An analogous result for $\Sp$-instantons is given in \cite[Theorem 3.1]{Lewis}, 
where any $\Sp$-instanton is a Hermitian Yang--Mills connection 
if a vector bundle admits a Hermitian Yang--Mills connection. 
An analogous result for compact Cayley submanifolds is given in \cite[Proposition 5.12]{Ohst}, 
where submanifolds are allowed to have boundaries.
This implies that 
the moduli space of all local Cayley deformations of $C^4$, where $C^4$ is a given compact special Lagrangian submanifold in $X^8$, 
is identified with the moduli space of all local special Lagrangian deformations of $C^4$.

Corollary \ref{cor:flat Spin7} (2), Theorems \ref{thm:Spin7G2dDT} and \ref{thm:Spin7dDTdHYM} 
imply that we will need to consider a non-flat line bundle over a manifold with full holonomy $\Sp$ 
to construct nontrivial examples on a compact and connected $\Sp$-manifold. 
\end{remark}

\begin{proof}
We first prove (1). The proof is almost the same as for 
Theorems \ref{thm:Spin7G2dDT} and \ref{thm:G2dDTdHYM}. 
By \cite[Lemma 7.2]{KYFM}, 
any dHYM connection $\n$ with phase $1$ is a $\Sp$-dDT connection. 
The vanishing of the $(0,2)$-part $F_\n^{0,2}$ implies that 
there exists a holomorphic structure on $L$ by \cite[Proposition 1.3.7]{Kob}. 
Thus, we only have to prove the converse. 

Suppose that a $\Sp$-dDT connection $\widetilde \n$  of $L$ is given. 
By Remark \ref{rem:novanish} and the connectedness of $X^8$, 
we may assume that 
$1+\la F_{\widetilde \n}^2, \Phi \ra/2 + * F^4_{\widetilde \n}/24 >0$ 
everywhere without loss of generality. 
Then, 
Theorem \ref{thm:Cayvol ineq} implies that 
\begin{align*}
V(\widetilde \n)
=& \int_X \left( 1 + \frac{1}{2} \la F_{\widetilde \n}^2, \Phi \ra + \frac{* F^4_{\widetilde \n}}{24}  \right) \vol_g \\
=& \int_X \left( \vol_g + \frac{1}{2} F_{\widetilde \n}^2 \wedge \left(\frac{1}{2} \om^2 + {\rm Re} \Om \right) 
+ \frac{F^4_{\widetilde \n}}{24} \right).  
\end{align*}
Since $L$ admits a holomorphic structure, we have $c_1(L) \in H^{1,1}(X)$, which implies that 
$$
\int_X F_{\widetilde \n}^2 \wedge {\rm Re} \Om 
= \left( (-2\pi \i c_1(L))^2 \cup [{\rm Re} \Om] \right) \cdot [X] 
= 0. 
$$
Hence, we obtain 
\begin{align} \label{eq:Spin7dDTdHYM 1}
V(\widetilde \n)
=
\int_X \left( \vol_g + \frac{1}{4} F_{\widetilde \n}^2 \wedge \om^2 + \frac{F^4_{\widetilde \n}}{24} \right)
= 
\int_X {\rm Re} \left( \frac{(\om+ F_{\widetilde \n})^4}{4!} \right). 
\end{align} 
Then, by Theorem \ref{thm:SL4 eq}, 
$\widetilde \n$ is a dHYM connection with phase 1. 
This argument also implies (2) and the proof is completed. 
\end{proof}

\appendix

\section{The proof of Theorems \ref{thm:Cayley eq}, \ref{thm:asso eq}, 
\ref{thm:SL3 eq} and \ref{thm:SL4 eq}}
\label{app:eq pt}

In this appendix, we prove Theorems \ref{thm:Cayley eq}, \ref{thm:asso eq}, 
\ref{thm:SL3 eq} and \ref{thm:SL4 eq}, 
which follow from 
Proposition \ref{prop:Cayley eq pt}, Corollaries \ref{cor:asso eq pt}, \ref{cor:SL3 eq pt} and Proposition \ref{prop:SL4 eq pt}.

\subsection{The Cayley case}

Use the notation of Section \ref{sec:Spin7 geometry}. 
Set $W =\R^8$ with the standard basis $\{\, e_{i} \,\}_{i=0}^{7}$ 
and its dual $\{\, e^{i} \,\}_{i=0}^{7}$. 
Let $g$ be the standard inner product on $W$. 
For a 2-form $F \in \Lambda^2 W^*$, define $F^\sharp \in {\rm End} (W)$ by 
\[
g(F^\sharp (u), v) = F(u, v)
\]
for $u,v \in W$.

\begin{proposition} \label{prop:Cayley eq pt}
For any $F \in \Lambda^2 W^*$, we have 
\begin{align*}
\left( 1- \frac{1}{2} \la F^2, \Phi \ra + \frac{* F^4}{24}  \right)^2
+
4 \left| \pi^2_7 \left( F - \frac{1}{6} * F^3 \right) \right|^2
+
2 \left| \pi^4_7 \left( F^2 \right) \right|^2 
=
\det (I_8 + F^\sharp), 
\end{align*}
where $I_8$ is the identity matrix of dimension $8$. 
\end{proposition}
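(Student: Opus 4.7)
The plan is to verify the identity by decomposing $F$ according to the $\Sp$-irreducible splitting $\Lambda^2 W^* = \Lambda^2_7 W^* \oplus \Lambda^2_{21} W^*$ from \eqref{decom1-L-W8}, writing $F = F_7 + F_{21}$ with $F_7 := \pi^2_7(F)$ and $F_{21} := \pi^2_{21}(F)$, and then reducing both sides to a polynomial identity in a short list of $\Sp$-invariants of the pair $(F_7, F_{21})$. The key algebraic inputs will be Proposition \ref{prop:2form norm}, the wedge-product decomposition \eqref{eq:F2decomp}, the formula \eqref{eq:Spin7 proj 2} for $\pi^2_7$, and Lemma \ref{lem:24form}.

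For the right-hand side, Lemma \ref{lem:det} gives the Pfaffian-type expansion $\det(I_8 + F^\sharp) = 1 + |F|^2 + |F^2/2|^2 + |F^3/6|^2 + |F^4/24|^2$. Writing $F^2 = F_7^2 + 2 F_7 \wedge F_{21} + F_{21}^2$ and using \eqref{eq:F2decomp}, the three summands are mutually orthogonal except for the overlap of $F_7^2$ and $F_{21}^2$ in $\Lambda^4_1 W^* \oplus \Lambda^4_{27} W^*$. Proposition \ref{prop:2form norm} then gives $|F_7^2|^2 = \tfrac{3}{2}|F_7|^4$, $|F_7 \wedge F_{21}|^2 = \tfrac{1}{2}|F_7|^2 |F_{21}|^2$, and $|F_{21}^2|^2 = |F_{21}|^4 + \tfrac{1}{3}\ast F_{21}^4$. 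These formulas, together with the analogous decompositions of $F^3$ and $F^4$, reduce each $|F^k/k!|^2$ to expressions in $|F_7|^2$, $|F_{21}|^2$, $\ast F_{21}^4$, and a small number of additional pairing invariants.

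For the left-hand side, the scalar piece is controlled by $\la F^2, \Phi \ra = 3|F_7|^2 - |F_{21}|^2$, which follows from $\beta \wedge \Phi = 3\ast\beta$ on $\Lambda^2_7 W^*$ and $\gamma \wedge \Phi = -\ast\gamma$ on $\Lambda^2_{21} W^*$ (see \eqref{eq:Spin7 proj 2}). By \eqref{eq:F2decomp}, only the mixed term contributes, so $\pi^4_7(F^2) = 2 \pi^4_7(F_7 \wedge F_{21})$. The quantity $\pi^2_7(F - \tfrac{1}{6}\ast F^3)$ is expanded using $\pi^2_7(\alpha) = \tfrac{1}{4}(\alpha + \ast(\Phi \wedge \alpha))$ and the $\Sp$-decomposition of $\ast F^3$. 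The cross term $-\tfrac{1}{12}\la F^2, \Phi \ra \ast F^4$ arising in the expansion of $(1 - \tfrac{1}{2}\la F^2, \Phi \ra + \tfrac{\ast F^4}{24})^2$ is then handled via Lemma \ref{lem:24form} applied with $\xi = \Phi$, which yields $\ast(\Phi \wedge (\ast F^3)^2) = \tfrac{3}{2}\la F^2, \Phi \ra \ast F^4$ and thereby ties the scalar square to the $\pi^2_7$-norm contribution on the left.

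The main obstacle will be the bookkeeping of the cross-invariants: the pairing $\la F_7^2, F_{21}^2 \ra$ and the split of $|F_7 \wedge F_{21}|^2$ between its $\Lambda^4_7 W^*$ and $\Lambda^4_{35} W^*$ components enter both sides through different routes and must cancel in a precisely correlated way. Once Proposition \ref{prop:2form norm} and Lemma \ref{lem:24form} are used to express everything in the same invariants, the identity collapses to a polynomial identity that can be verified term by term, matching each elementary-symmetric expression in the eigenvalues of $F^\sharp$ produced on the right.
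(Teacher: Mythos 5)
Your plan is essentially the paper's own proof: the paper likewise splits the identity into homogeneous degrees (Lemma \ref{lem:Caypt deg}), writes $F = F_7 + F_{21}$, and reduces everything to the invariants $|\xi_1|^2$, $|\xi_2|^2$, $|\xi_3|^2$, $\la \xi_1, \xi_3\ra$, $*(\xi_2\wedge\xi_3)$, $*\xi_3^2$ (with $\xi_1 = F_7^2$, $\xi_2 = F_7\wedge F_{21}$, $\xi_3 = F_{21}^2$) via Lemma \ref{lem:det}, Proposition \ref{prop:2form norm}, the decomposition \eqref{eq:F2decomp}, and Lemma \ref{lem:24form} applied with $\xi=\Phi$, exactly as you propose. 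The one slip in your outline is the claim that the three summands of $F^2$ are mutually orthogonal except for the pairing of $F_7^2$ with $F_{21}^2$: the further invariant $*\bigl((F_7\wedge F_{21})\wedge F_{21}^2\bigr)$, coming from the common $\Lambda^4_{35}W^*$ components of $F_7\wedge F_{21}$ and $F_{21}^2$, is nonzero in general and enters both $|\pi^4_{35}(F^2)|^2$ and the cross term $\la \pi^2_7(F), \pi^2_7(*F^3)\ra$, so it must be carried through the bookkeeping until it cancels at the end.
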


This statement is essential. Using Proposition \ref{prop:Cayley eq pt}, 
we can show all the main statements in Appendix \ref{app:eq pt} 
(Corollaries \ref{cor:asso eq pt}, \ref{cor:SL3 eq pt} and Proposition \ref{prop:SL4 eq pt}).

Proposition \ref{prop:Cayley eq pt} follows from 
the following Lemmas \ref{lem:det} and \ref{lem:Caypt deg}. 
Note that a similar identity to Lemma \ref{lem:det} holds for any dimension. 

\begin{lemma} \label{lem:det}
For any $F \in \Lambda^2 W^*$, we have 
\begin{align*}
\det (I_8 + F^\sharp) = 
1+ |F|^2 + \left| \frac{F^2}{2!} \right|^2
+ \left| \frac{F^3}{3!} \right|^2
+ \left| \frac{F^4}{4!} \right|^2. 
\end{align*}
\end{lemma}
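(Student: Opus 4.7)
The plan is to reduce to a normal form where $F^\sharp$ is block-diagonal and then expand both sides as symmetric functions in the eigenvalues of the blocks.

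First I would invoke the standard normal form theorem for skew-symmetric endomorphisms: since $F^\sharp \in \mathfrak{so}(W)$, there exist an orthonormal basis $\{f_1,\dots,f_8\}$ of $W$ and real numbers $\lambda_1,\lambda_2,\lambda_3,\lambda_4$ such that, writing $\{f^i\}$ for the dual basis,
\[
F \;=\; \sum_{i=1}^{4} \lambda_i\, f^{2i-1}\wedge f^{2i},
\qquad
F^\sharp \;=\; \sum_{i=1}^{4} \lambda_i\bigl(f^{2i-1}\otimes f_{2i} - f^{2i}\otimes f_{2i-1}\bigr).
\]
Both sides of the asserted identity depend only on $F$ (not on the $\Sp$-structure), so we may compute in this adapted basis.

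Next I would compute the left-hand side directly. In the chosen basis, $I_8 + F^\sharp$ is block-diagonal with four $2\times 2$ blocks $\begin{pmatrix} 1 & \lambda_i \\ -\lambda_i & 1 \end{pmatrix}$, each of determinant $1+\lambda_i^2$. Hence
\[
\det(I_8 + F^\sharp) \;=\; \prod_{i=1}^{4}(1+\lambda_i^2) \;=\; \sum_{k=0}^{4} e_k\!\left(\lambda_1^2,\lambda_2^2,\lambda_3^2,\lambda_4^2\right),
\]
where $e_k$ denotes the $k$th elementary symmetric polynomial.

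Then I would evaluate the right-hand side term by term. Because the wedge products $f^{2i-1}\wedge f^{2i}$ are orthonormal and pairwise ``disjoint'' as 2-forms, raising $F$ to the $k$th power gives
\[
\frac{F^k}{k!} \;=\; \sum_{1\le i_1<\dots<i_k\le 4} \lambda_{i_1}\cdots\lambda_{i_k}\; f^{2i_1-1}\wedge f^{2i_1}\wedge\cdots\wedge f^{2i_k-1}\wedge f^{2i_k},
\]
and since the summands are orthonormal $2k$-forms,
\[
\left|\frac{F^k}{k!}\right|^2 \;=\; \sum_{1\le i_1<\dots<i_k\le 4} \lambda_{i_1}^2\cdots\lambda_{i_k}^2 \;=\; e_k\!\left(\lambda_1^2,\lambda_2^2,\lambda_3^2,\lambda_4^2\right).
\]
Summing for $k=0,1,2,3,4$ (with the $k=0$ term equal to $1$) gives exactly $\prod_i(1+\lambda_i^2)$, matching the left-hand side.

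There is essentially no obstacle here: the only ingredient beyond bookkeeping is the real normal form for $\mathfrak{so}(8)$, and the rest is the elementary identity $\prod(1+x_i) = \sum_k e_k(x)$ applied to $x_i=\lambda_i^2$. The same argument works verbatim in any even dimension, which is why the analogous statement holds generally and was mentioned in the note preceding the lemma.
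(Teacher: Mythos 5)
Your proof is correct and follows the paper's own argument essentially verbatim: both reduce to the $\mathrm{O}(8)$ normal form $F=\sum_i\lambda_i\,f^{2i-1}\wedge f^{2i}$, compute $\det(I_8+F^\sharp)=\prod_i(1+\lambda_i^2)$, and identify the expansion with $\sum_k|F^k/k!|^2$ via elementary symmetric polynomials in the $\lambda_i^2$. No issues.
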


\begin{proof}
Since $F^\sharp$ is skew-symmetric, 
there exist $\lambda_1,\lambda_2,\lambda_3,\lambda_4 \in \R$ 
and $h \in {\rm O}(8)$ such that 
$$
h^{-1} F^\sharp h = 
\left(
\begin{array}{cc}
0 & - \lambda_1 \\
\lambda_1& 0
\end{array}
\right) 
\oplus 
\left(
\begin{array}{cc}
0 & - \lambda_2 \\
\lambda_2& 0
\end{array}
\right) 
\oplus 
\left(
\begin{array}{cc}
0 & - \lambda_3 \\
\lambda_3& 0
\end{array}
\right) 
\oplus 
\left(
\begin{array}{cc}
0 & - \lambda_4 \\
\lambda_4& 0
\end{array}
\right).  
$$
In other words, we have 
$$
h^* F =\lambda_1 e^{01} + \lambda_2 e^{23} + \lambda_3 e^{45} + \lambda_4 e^{67}. 
$$
Then, we obtain 
\begin{align*}
&\det (I_8 + F^\sharp) \\
=& 
(1 + \lambda_1^2) (1 + \lambda_2^2) (1 + \lambda_3^2) (1 + \lambda_4^2) \\
=&
1 
+ \sum_i \lambda_i^2
+ \sum_{i < j} \lambda_i^2 \lambda_j^2
+ \sum_{i < j< k} \lambda_i^2 \lambda_j^2 \lambda_k^2
+ \lambda_1^2 \lambda_2^2 \lambda_3^2 \lambda_4^2 \\
=&
1+ |F|^2 + \left| \frac{F^2}{2!} \right|^2
+ \left| \frac{F^3}{3!} \right|^2
+ \left| \frac{F^4}{4!} \right|^2. 
\end{align*}
\end{proof}

\begin{lemma} \label{lem:Caypt deg}
For any $F \in \Lambda^2 W^*$, we have 
\begin{align}
\label{eq:Caypt2}
- \la F^2, \Phi \ra + 4 \left| \pi^2_7 \left( F \right) \right|^2 &= |F|^2,  \\
\label{eq:Caypt4}
\frac{1}{4} \la F^2, \Phi \ra^2 + \frac{* F^4}{12} 
- \frac{4}{3}
\left \la \pi^2_7 (F),  \pi^2_7 \left( * F^3 \right) \right \ra
+
2 \left| \pi^4_7 \left( F^2 \right) \right|^2 
&=
\frac{1}{4} \left| F^2 \right|^2, \\
\label{eq:Caypt6}
- \la F^2, \Phi \ra \frac{* F^4}{24} + \frac{1}{9} \left| \pi^2_7 \left( * F^3 \right) \right|^2
&= \frac{1}{36} \left| F^3 \right|^2. 
\end{align}
\end{lemma}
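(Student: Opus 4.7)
The three identities in Lemma \ref{lem:Caypt deg} are pointwise algebraic statements on the $\Sp$-module $(W^*,\Phi)$, so my plan is to decompose $F = F_7 + F_{21}$ with $F_k := \pi^2_k(F) \in \Lambda^2_k W^*$ and reduce each identity to a relation among the four scalar invariants $|F_7|^2$, $|F_{21}|^2$, $\la F_7^2,F_{21}^2\ra$ and $*F_{21}^4$. The tools I shall use are already collected in Section \ref{sec:Spin7 geometry}: the characterizations $F_7\wedge\Phi = 3*F_7$ and $F_{21}\wedge\Phi = -*F_{21}$, the projection formula \eqref{eq:Spin7 proj 2}, the wedge decomposition \eqref{eq:F2decomp}, Proposition \ref{prop:2form norm}, Lemma \ref{lem:24form}, together with the standard observation that in the $\Sp$-decomposition of $\Lambda^4 W^*$ the summand $\Lambda^4_1\oplus\Lambda^4_7\oplus\Lambda^4_{27}$ is self-dual and $\Lambda^4_{35}$ is anti-self-dual.

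Identity \eqref{eq:Caypt2} will drop out immediately: the characterizations give $F_7^2\wedge\Phi = 3|F_7|^2\,\vol$ and $F_{21}^2\wedge\Phi = -|F_{21}|^2\,\vol$, while the cross term $F_7\wedge F_{21}\wedge\Phi$ vanishes because $\la F_7,F_{21}\ra = 0$; so $\la F^2,\Phi\ra = 3|F_7|^2 - |F_{21}|^2$, and combining with $|F|^2 = |F_7|^2 + |F_{21}|^2$ and $|\pi^2_7(F)|^2 = |F_7|^2$ is \eqref{eq:Caypt2}. For \eqref{eq:Caypt6} I shall apply Lemma \ref{lem:24form} with $\xi = \Phi$ to obtain $\tfrac{3}{2}\la F^2,\Phi\ra *F^4 = *(\Phi\wedge(*F^3)^2) = \la\Phi\wedge*F^3, F^3\ra$. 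Hodge-dualizing \eqref{eq:Spin7 proj 2} applied to the $2$-form $*F^3$ gives $\Phi\wedge *F^3 = *(4\pi^2_7(*F^3) - *F^3)$, whence the right-hand side evaluates to $4|\pi^2_7(*F^3)|^2 - |F^3|^2$; dividing by $36$ and rearranging yields \eqref{eq:Caypt6}.

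Identity \eqref{eq:Caypt4} is the main work. Expanding $F^2 = F_7^2 + 2F_7\wedge F_{21} + F_{21}^2$, the three summands land in $\Lambda^4_1\oplus\Lambda^4_{27}$, $\Lambda^4_7\oplus\Lambda^4_{35}$ and $\Lambda^4_1\oplus\Lambda^4_{27}\oplus\Lambda^4_{35}$ respectively by \eqref{eq:F2decomp}. Splitting each into its self- and anti-self-dual parts and applying Proposition \ref{prop:2form norm}, I shall express $|F^2|^2$, $*F^4 = |(F^2)_+|^2 - |(F^2)_-|^2$, and in particular $|\pi^4_7(F^2)|^2 = |F_7|^2|F_{21}|^2 + 2\la F_7^2,F_{21}^2\ra$ as explicit polynomials in the four invariants. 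The coupling term I rewrite as $\la\pi^2_7(F),\pi^2_7(*F^3)\ra = \la F_7,*F^3\ra = *(F_7\wedge F^3)$; expanding $F^3 = F_7^3 + 3F_7^2 F_{21} + 3F_7 F_{21}^2 + F_{21}^3$ and using $F_7\wedge F_7^3 = \tfrac{3}{2}|F_7|^4\,\vol$ (Proposition \ref{prop:2form norm}), $F_7^3\wedge F_{21} = \tfrac{3}{2}|F_7|^2\,(*F_7)\wedge F_{21} = 0$ by orthogonality, and $F_7^2\wedge F_{21}^2 = \la F_7^2,F_{21}^2\ra\,\vol$, it becomes $\tfrac{3}{2}|F_7|^4 + 3\la F_7^2,F_{21}^2\ra + *(F_7\wedge F_{21}^3)$. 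Substituting everything into \eqref{eq:Caypt4} and collecting, all terms should cancel perfectly except for a residual multiple of $*(F_7\wedge F_{21}^3)$.

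The main obstacle is therefore the auxiliary vanishing $F_7\wedge F_{21}^3 = 0$ for all $F_7\in\Lambda^2_7 W^*$ and $F_{21}\in\Lambda^2_{21} W^*$, equivalently $\pi^2_7(*F_{21}^3) = 0$, or that the cubing map $\Lambda^2_{21} W^*\to\Lambda^6 W^*$ factors through $\Lambda^6_{21} W^*$. To establish it I shall apply Lemma \ref{lem:24form} with $F$ replaced by $F_{21}$ and $\xi = \Phi$: since $\la F_{21}^2,\Phi\ra = -|F_{21}|^2$, the same manipulation as in the proof of \eqref{eq:Caypt6} reduces the problem to the scalar identity $|F_{21}^3|^2 = \tfrac{3}{2}|F_{21}|^2\,*F_{21}^4$. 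This final scalar identity I expect to verify either by a direct computation using Proposition \ref{prop:2form norm} together with the self/anti-self-dual decomposition of $F_{21}^2$, or — if the direct route proves recalcitrant — by invoking the $\Sp$-branching statement that $\mathrm{Sym}^3 W_{21}$ contains no copy of the irreducible $W_7$, which forces every $\Sp$-equivariant cubic map $\Lambda^2_{21} W^*\to\Lambda^6_7 W^*$ to be zero.
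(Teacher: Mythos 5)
Your arguments for \eqref{eq:Caypt2} and \eqref{eq:Caypt6} are correct and essentially identical to the paper's: the paper also gets \eqref{eq:Caypt6} by combining Lemma \ref{lem:24form} with $\xi=\Phi$ and the identity \eqref{eq:Caypt2} applied to the $2$-form $*F^3$.

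For \eqref{eq:Caypt4}, however, your plan hinges on an auxiliary vanishing that is false. Take $F_{21}=2e^{01}-e^{23}-e^{45}$, which lies in $[\Lambda^{1,1}_0]\subset\Lambda^2_{21}W^*$ for the ${\rm SU}(4)$-structure inducing $\Phi$ as in \eqref{eq:Phi4 SU4}. Then $F_{21}^3=12\,e^{012345}\neq 0$ while $F_{21}^4=0$, so $|F_{21}^3|^2=144\neq 0=\tfrac{3}{2}|F_{21}|^2\,{*F_{21}^4}$, and $\pi^2_7(*F_{21}^3)=\pi^2_7(12e^{67})=3\om\neq 0$. Hence $F_7\wedge F_{21}^3\neq 0$ for suitable $F_7$, the scalar identity you propose to verify is false, and the branching claim fails as well ($W_7$ does occur in ${\rm Sym}^3W_{21}$, this cubic map being a nonzero equivariant witness). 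The underlying problem is that your list of invariants is one short: $*(F_7\wedge F_{21}^3)$ is a fifth independent invariant, and it enters not only the coupling term (where you correctly find it) but also $*F^4$ and $|F^2|^2$, through the anti-self-dual component $\pi^4_{35}(F^2)$, which mixes $F_7\wedge F_{21}$ with $F_{21}^2$. Concretely, $*F^4=\tfrac{3}{2}|F_7|^4+6\la F_7^2,F_{21}^2\ra+4*(F_7\wedge F_{21}^3)+{*F_{21}^4}$ and $|F^2|^2=|F_7^2|^2+4|F_7\wedge F_{21}|^2+|F_{21}^2|^2+2\la F_7^2,F_{21}^2\ra-4*(F_7\wedge F_{21}^3)$, so your four-invariant expressions for these quantities cannot be right. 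Once the fifth invariant is tracked everywhere, its net coefficient in \eqref{eq:Caypt4} is $\tfrac{1}{3}$ (from $*F^4/12$) plus $1$ (from $-\tfrac{1}{4}|F^2|^2$) minus $\tfrac{4}{3}$ (from the coupling term), which is zero: no vanishing lemma is needed. This is exactly how the paper's proof proceeds, carrying the term $*(\xi_2\wedge\xi_3)$ with $\xi_2=F_7\wedge F_{21}$, $\xi_3=F_{21}^2$ through the norms $|\pi^4_j(F^2)|^2$ until it cancels against the coupling term. Your outline can be repaired by enlarging the invariant list accordingly, but as written the reduction to $F_7\wedge F_{21}^3=0$ is a dead end.
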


\begin{proof}
Set 
$$
F = F_7 + F_{21} \in \Lambda^2_7 W^* \oplus \Lambda^2_{21} W^*.  
$$
Then, we have 
\begin{align*}
- \la F^2, \Phi \ra 
=&
-* (F \wedge F \wedge \Phi) \\
=&
-* (F \wedge *(3 F_7 - F_{21}))
=
- 3 |F_7|^2 + |F_{21}|^2, 
\end{align*}
which implies \eqref{eq:Caypt2}. 
\\

Next, we show \eqref{eq:Caypt6}. 
By \eqref{eq:Caypt2}, we have 
$$
|F^3|^2=|*F^3|^2 = 4 \left| \pi^2_7 \left( * F^3 \right) \right|^2 - *\left( (*F^3)^2 \wedge \Phi \right).
$$
By Lemma \ref{lem:24form}, it follows that  
$$
*\left( (*F^3)^2 \wedge \Phi \right) = \frac{3}{2} \la F^2, \Phi \ra* F^4. 
$$
Hence, we obtain \eqref{eq:Caypt6}. 
\\

Next, we prove \eqref{eq:Caypt4}. 
Set 
\begin{align*}
\xi_1 &=F_7^2, & \xi_2 &=F_7 \wedge F_{21}, & \xi_3 &=F_{21}^2, \\ 
\eta &= F^2, & \eta_j &= \pi^4_j (\eta)  &  \mbox{for } j&=1,7,27,35. 
\end{align*}
Recall the decomposition \eqref{eq:F2decomp} and that 
$\Lambda^4_1 W^*  \oplus \Lambda^4_7 W^* \oplus \Lambda^4_{27} W^*$ 
and $\Lambda^4_{35} W^*$ are the spaces of 
self dual 4-forms and anti self dual 4-forms, respectively. 
Then, we have $F^2 = \xi_1 + 2 \xi_2 + \xi_3$ and 
\begin{align} \label{eq:Caypt4 1}
\begin{split}
\xi_1&= * \xi_1, \\
\xi_1 \wedge \xi_2 &=0, \\
\xi_2 \wedge (\xi_3 + *\xi_3) &=0,\\
* \xi_2^2 &=* (\xi_1 \wedge \xi_3) = \la \xi_1, \xi_3 \ra. 
\end{split}
\end{align}
We first show that $|\eta_j|^2$ is described in terms of $\xi_k$'s. 

\begin{lemma} \label{lem:Caypt4 xy}
We have 
\begin{align*}
|\eta_1|^2
&=
\frac{18 |\xi_1|^2-36|\xi_2|^2 + 3 |\xi_3|^2 -* \xi_3^2}{42}, \\
|\eta_1|^2 + |\eta_{27}|^2
&=
|\xi_1|^2 + \frac{|\xi_3|^2 + * \xi_3^2}{2} + 2 \la \xi_1, \xi_3 \ra, \\
|\eta_7|^2
&=
2 |\xi_2|^2 + 2 \la \xi_1, \xi_3 \ra, \\
|\eta_{35}|^2
&=
2 |\xi_2|^2 + \frac{1}{2} |\xi_3|^2 -2 \la \xi_1, \xi_3 \ra 
-4 *(\xi_2 \wedge \xi_3) - \frac{1}{2} * \xi_3^2. 
\end{align*}
\end{lemma}
\begin{proof}
Since $|\Phi|^2=14$, we have 
$
\eta_1 = \la \eta, \Phi \ra \Phi/14, 
$
and hence, 
\begin{align} \label{eq:Caypt4 2}
|\eta_1|^2 = \frac{\la \eta, \Phi \ra^2}{14}. 
\end{align}
Since $\eta = F^2$, it follows that 
$$
\la \eta, \Phi \ra = * (F \wedge F \wedge \Phi) = * (F \wedge *(3F_7 - F_{21})) = 3 |F_7|^2 - |F_{21}|^2
$$
and 
$$
|\eta_1|^2 
= \frac{(3 |F_7|^2 - |F_{21}|^2)^2}{14}
= \frac{9 |F_7|^4 -6 |F_7|^2 |F_{21}|^2 + |F_{21}|^4}{14}. 
$$
By Proposition \ref{prop:2form norm}, it follows that 
$$
|F_7|^4= \frac{2}{3} |\xi_1|^2, \qquad 
|F_7|^2 |F_{21}|^2 = 2 |\xi_2 |^2, \qquad 
|F_{21}|^4 = |\xi_3|^2 - \frac{1}{3} * \xi_3^2. 
$$
This gives the desired formula for $|\eta_1|^2$.

Next, we compute $|\eta_1|^2 + |\eta_{27}|^2$. 
By \eqref{eq:F2decomp} and \eqref{eq:Caypt4 1}, we have 
$$
\eta_1 + \eta_{27} = \pi^4_1(\xi_1+\xi_3) + \pi^4_{27}(\xi_1+\xi_3) = \xi_1 + \frac{\xi_3+ * \xi_3}{2}. 
$$
Then, by \eqref{eq:Caypt4 1}, it follows that 
\begin{align*}
|\eta_1|^2 + |\eta_{27}|^2 
&= |\eta_1 + \eta_{27}|^2 \\
&= |\xi_1|^2 + \frac{|\xi_3+ * \xi_3|^2}{4} + \la \xi_1, \xi_3+ * \xi_3 \ra \\
&= |\xi_1|^2 + \frac{|\xi_3|^2 +  * \xi_3^2}{2} + 2 \la \xi_1, \xi_3 \ra. 
\end{align*}

Next, we compute $|\eta_7|^2$. 
By \eqref{eq:F2decomp} and \eqref{eq:Caypt4 1}, we compute 
$$
\eta_7 = 2 \pi^4_7(\xi_2) = \xi_2+ * \xi_2. 
$$
Hence, by \eqref{eq:Caypt4 1}, we obtain 
$$
|\eta_7|^2 = 2|\xi_2|^2 + 2 * \xi_2^2 = 2|\xi_2|^2 + 2 \la \xi_1, \xi_3 \ra. 
$$

Finally, we compute $|\eta_{35}|^2$. 
By \eqref{eq:F2decomp} and \eqref{eq:Caypt4 1}, we have 
$$
\eta_{35} = \pi^4_{35}(2 \xi_2+\xi_3) = \frac{2 \xi_2+\xi_3 - * (2 \xi_2+\xi_3)}{2}. 
$$
Then, by \eqref{eq:Caypt4 1}, we obtain 
\begin{align*}
2 |\eta_{35}|^2 
=&  |2 \xi_2+\xi_3|^2 - * (2 \xi_2+\xi_3)^2 \\
=& 4 |\xi_2|^2 + |\xi_3|^2 + 4 \la \xi_2, \xi_3 \ra - * \left( 4 \xi_2^2 + 4 \xi_2 \wedge \xi_3 + \xi_3^2 \right) \\
=& 
4 |\xi_2|^2 + |\xi_3|^2 -4 \la \xi_1, \xi_3 \ra 
-8 *(\xi_2 \wedge \xi_3) - * \xi_3^2. 
\end{align*}
\end{proof}

Then, 
we compute 
$$
\left \la \pi^2_7 (F),  \pi^2_7 \left( * F^3 \right) \right \ra
=
*(F_7 \wedge F \wedge F^2)
=
* \left((\xi_1 +\xi_2) \wedge (\xi_1 +2 \xi_2 + \xi_3) \right). 
$$
By \eqref{eq:Caypt4 1}, we obtain 
\begin{align} \label{eq:Caypt4 3}
\frac{4}{3} \left \la \pi^2_7 (F),  \pi^2_7 \left( * F^3 \right) \right \ra
=
\frac{4}{3} \left( |\xi_1|^2 + 3 \la \xi_1, \xi_3 \ra + * (\xi_2 \wedge \xi_3) \right). 
\end{align}
By the equation  
$$
*F^4 = * (\eta \wedge \eta)
= |\eta_1|^2 + |\eta_7|^2 + |\eta_{27}|^2 -|\eta_{35}|^2
$$
and \eqref{eq:Caypt4 2}, it follows that 
\begin{align*}
&\frac{1}{4} \la F^2, \Phi \ra^2 + \frac{* F^4}{12} 
+
2 \left| \pi^4_7 \left( F^2 \right) \right|^2 
-
\frac{1}{4} \left| F^2 \right|^2 \\
=&
\frac{7}{2} |\eta_1|^2 + \frac{1}{12} \left(|\eta_1|^2 + |\eta_7|^2 + |\eta_{27}|^2 -|\eta_{35}|^2 \right)
+2 |\eta_7|^2 \\
&-\frac{1}{4} \left(|\eta_1|^2 + |\eta_7|^2 + |\eta_{27}|^2 + |\eta_{35}|^2 \right) \\
=&
\frac{1}{6} 
\left(21 |\eta_1|^2 - \left( |\eta_1|^2 + |\eta_{27}|^2 \right) + 11|\eta_7|^2  - 2 |\eta_{35}|^2 \right).  
\end{align*}
By Lemma \ref{lem:Caypt4 xy}, we further compute 
\begin{align*}
=& 
\frac{1}{6} 
\left(
\left(9|\xi_1|^2-18|\xi_2|^2 + \frac{3 |\xi_3|^2 -* \xi_3^2}{2} \right) \right. \\
&- \left( |\xi_1|^2 + \frac{|\xi_3|^2 + * \xi_3^2}{2} + 2 \la \xi_1, \xi_3 \ra \right) 
+
11 \left( 2 |\xi_2|^2 + 2 \la \xi_1, \xi_3 \ra \right) \\
&-2 
\left. \left( 2 |\xi_2|^2 + \frac{1}{2} |\xi_3|^2 -2 \la \xi_1, \xi_3 \ra 
-4 *(\xi_2 \wedge \xi_3) - \frac{1}{2} * \xi_3^2 \right)
\right) \\
=&
\frac{4}{3} \left(  |\xi_1|^2 + 3 \la \xi_1, \xi_3 \ra + *(\xi_2 \wedge \xi_3) \right). 
\end{align*}
This together with \eqref{eq:Caypt4 3} implies \eqref{eq:Caypt4}. 
\end{proof}

\subsection{The associator case}

We can show the following by Proposition \ref{prop:Cayley eq pt}.  
Set $V =\R^7$ and use the notation of Section \ref{sec:G2 geometry}. 

\begin{corollary} \label{cor:asso eq pt}
For any $F \in \Lambda^2 V^*$, we have 
\begin{align*}
\left( 1 - \frac{1}{2} \la F^2, * \varphi \ra \right)^2
+
\left| * \varphi \wedge F - \frac{1}{6} F^3 \right|^2
+
\frac{1}{4} |\varphi \wedge * F^2|^2 
=
\det \left( I_7 + F^\sharp \right), 
\end{align*}
where $I_7$ is the identity matrix of dimension $7$. 
\end{corollary}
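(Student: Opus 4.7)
The plan is to deduce the associator identity from the Cayley identity (Proposition~\ref{prop:Cayley eq pt}) by suspending the $G_2$-structure on $V$ to a $\Sp$-structure on $W = \R e_0 \oplus V$ via $\Phi = e^0 \wedge \varphi + \ast_7\varphi$ (so that $\ast_8 \Phi = \Phi$), viewing $F \in \Lambda^2 V^*$ as an $e_0$-annihilating $2$-form on $W$, and then matching the three Cayley terms against the three associator terms.

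First I would record the elementary Hodge compatibility
\[
\ast_8 \alpha = (-1)^k\, e^0 \wedge \ast_7 \alpha, \qquad \ast_8(e^0 \wedge \beta) = \ast_7 \beta,
\]
for $\alpha \in \Lambda^k V^*$ and $\beta \in \Lambda^{k-1} V^*$, which follows immediately from $\vol_W = e^0 \wedge \vol_V$. The easy reductions then dispose of the ``scalar'' terms in Proposition~\ref{prop:Cayley eq pt}: (a) $F^4 \in \Lambda^8 V^* = 0$, so $\ast_8 F^4$ vanishes; (b) $F^\sharp$ kills $e_0$ and preserves $V$, so $\det(I_8 + F^\sharp) = \det(I_7 + F^\sharp|_V)$; and (c) since $F^2 \in \Lambda^4 V^*$ is orthogonal to $e^0 \wedge \varphi$ in $W$, one has $\la F^2, \Phi\ra_W = \la F^2, \ast_7\varphi\ra_V$.

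The substantive work is the identification of the two $W_7$-projection terms with their $G_2$-counterparts. For the $\pi^2_7$ term on $W$, I would parametrise $\Lambda^2_7 W^*$ by $V^*$ via $\lambda^2$ from Lemma~\ref{lem:lambdas}, decompose $F = F_7 + F_{14}$ with $F_7 = i(u)\varphi$ (the $G_2$-splitting), and write $\ast_8 F^3 = e^0 \wedge v^\flat$ with $v^\flat := \ast_7 F^3 \in V^*$. Using \eqref{eq:Spin7 proj 2} together with the identities of Lemma~\ref{lem:G2 identities} (in particular $\varphi \wedge i(u)\varphi = 2\ast_7(i(u)\varphi)$ and $\ast_7\varphi \wedge i(u)\varphi = 3\ast_7 u^\flat$), I expect $\pi^2_7(F_{14}) = 0$ while $\pi^2_7(F_7)$ and $\pi^2_7(e^0 \wedge v^\flat)$ emerge as explicit scalar multiples of $\lambda^2(u^\flat)$ and $\lambda^2(v^\flat)$. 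Since $\lambda^2$ is an isometry and $\ast_7\varphi \wedge F = 3\ast_7 u^\flat$, the $W$-side $4|\pi^2_7(F - \tfrac{1}{6}\ast_8 F^3)|^2$ should collapse to a constant multiple of $|a\,u^\flat - b\,v^\flat|^2$ with specific $a,b$, matching the $V$-side $|\ast_7\varphi \wedge F - \tfrac{1}{6}F^3|^2$ after Hodge duality.

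For the $\pi^4_7$ term I would use the dual strategy: parametrise $\Lambda^4_7 W^*$ by $V^*$ via $\lambda^4$, and evaluate $\la F^2, \lambda^4(\alpha)\ra_W$ for $\alpha \in V^*$. Because $F^2$ has no $e^0$ component, only the $\alpha \wedge \varphi$ piece of $\lambda^4(\alpha) = \tfrac{1}{\sqrt{8}}(e^0 \wedge i(\alpha^\sharp)\ast_7\varphi - \alpha \wedge \varphi)$ contributes, and the pairing reduces (up to $-1/\sqrt{8}$) to $\la \alpha, \ast_7(\varphi \wedge \ast_7 F^2)\ra_V$; isometry of $\lambda^4$ then yields $2|\pi^4_7(F^2)|^2 = \tfrac{1}{4}|\varphi \wedge \ast_7 F^2|^2$. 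The hard part will be the bookkeeping in these two projection steps: tracking the $(-1)^k$ in $\ast_8 = \pm e^0 \wedge \ast_7$ across even and odd degrees, and reconciling the coefficients produced by $\lambda^2$ and $\lambda^4$ with the overall factors $4$ and $2$ in Proposition~\ref{prop:Cayley eq pt}. Once these are matched, substitution into the Cayley identity yields the corollary directly.
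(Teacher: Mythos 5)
Your proposal is correct and follows essentially the same route as the paper: reduce to Proposition \ref{prop:Cayley eq pt} via the product $\Sp$-structure $\Phi = e^0\wedge\varphi + \ast_7\varphi$, dispose of the scalar terms ($F^4=0$, the determinant splitting, $\la F^2,\Phi\ra=\la F^2,\ast_7\varphi\ra$), and convert the two $W_7$-projection terms using the isometries $\lambda^2$ and $\lambda^4$ of Lemma \ref{lem:lambdas}. The only cosmetic difference is that the paper computes $2\pi^2_7(F)=\lambda^2(\ast_7(F\wedge\ast_7\varphi))$ directly by adjointness, without first splitting $F=F_7+F_{14}$, but your detour lands in the same place with the same coefficients.
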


\begin{proof}
By $V^* \ni e^\mu \mapsto e^\mu \in W^*$ for $\mu=1, \cdots, 7$, 
we identify $F \in \Lambda^2 V^*$ with an element of $\Lambda^2 W^*$. 
We rewrite Proposition \ref{prop:Cayley eq pt} to obtain Corollary \ref{cor:asso eq pt}. 
For clarification, denote by $*=*_8$ and $*_7$ 
the Hodge star operators on $W$ and $V$, respectively.

Since $\Phi = e^0 \wedge \varphi + *_7 \varphi$, we have 
$\la F^2, \Phi \ra = \la F^2, *_7 \varphi \ra$. 
Since $F \in \Lambda^2 V^*$, we have 
$F^4=0$ and $I_8 + F^\sharp = 1 \oplus (I_7 + F^\sharp)$. 
Then, 
\begin{align} \label{eq:asso eq pt 1}
\begin{split}
\left( 1- \frac{1}{2} \la F^2, \Phi \ra + \frac{* F^4}{24}  \right)^2
&= 
\left( 1- \frac{1}{2} \la F^2, *_7 \varphi \ra \right)^2, \\
\det (I_8 + F^\sharp) 
&= \det (I_7 + F^\sharp). 
\end{split}
\end{align}
Next, we rewrite 
$4 \left| \pi^2_7 \left( F - *_8 (F^3/6) \right) \right|^2$. 
By Lemma \ref{lem:lambdas}, we have 
\begin{align*}
\pi^2_7 (F) = \sum_{\mu=1}^7 \la F, \lambda^2 (e^\mu) \ra \cdot \lambda^2 (e^\mu). 
\end{align*}
Since 
$$
2 \la F, \lambda^2 (e^\mu) \ra 
= \la F, i(e_\mu) \varphi \ra
= \la e^\mu \wedge F, \varphi \ra
= \la e^\mu, *_7 (F \wedge *_7 \varphi) \ra, 
$$
it follows that 
$$
2 \pi^2_7 (F) = \lambda^2 (*_7 (F \wedge *_7 \varphi)). 
$$
Similarly, since $*_8 F^3 = e^0 \wedge *_7 F^3$, we have 
$$
2 \pi^2_7 (*_8 F^3) 
= 2 \sum_{\mu=1}^7 \la *_8 F^3, \lambda^2 (e^\mu) \ra \cdot \lambda^2 (e^\mu)
= \sum_{\mu=1}^7 \la *_7 F^3, e^\mu \ra \cdot \lambda^2 (e^\mu)
= \lambda^2 (*_7 F^3). 
$$
Hence, we obtain 
\begin{align} \label{eq:asso eq pt 2}
4 \left| \pi^2_7 \left( F - \frac{1}{6} *_8 F^3 \right) \right|^2 
= 
\left| F \wedge *_7 \varphi - \frac{1}{6} F^3 \right|^2. 
\end{align}

Finally, we rewrite $2 \left| \pi^4_7 \left( F^2 \right) \right|^2$. 
By Lemma \ref{lem:lambdas}, we have 
\begin{align*}
\pi^4_7 (F^2) = \sum_{\mu=1}^7 \la F^2, \lambda^4 (e^\mu) \ra \cdot \lambda^4 (e^\mu). 
\end{align*}
Since 
$$
\sqrt{8} \la F^2, \lambda^4 (e^\mu) \ra
=
- \la F^2, e^\mu \wedge \varphi \ra
=
- \la e^\mu, *_7 (\varphi \wedge *_7 F^2) \ra, 
$$
it follows that 
$$
\sqrt{8} \pi^4_7 (F^2) = - \lambda^4 (*_7 (\varphi \wedge *_7 F^2)). 
$$
Hence, we obtain 
\begin{align} \label{eq:asso eq pt 3}
2 \left| \pi^4_7 \left( F^2 \right) \right|^2
= 
\frac{1}{4} \left| \varphi \wedge *_7 F^2 \right|^2. 
\end{align}
By \eqref{eq:asso eq pt 1}, \eqref{eq:asso eq pt 2}, \eqref{eq:asso eq pt 3} and 
Proposition \ref{prop:Cayley eq pt}, we obtain 
Corollary \ref{cor:asso eq pt}. 
\end{proof}

Consider $*_7 \varphi$ as a 4-form of $W=\R^8=\R e_0 \oplus V$ by pullback. 
Then, we see the following.

\begin{lemma} \label{lem:Cay asso pt}
For any $F \in \Lambda^2 W^*$, we have 
\begin{align*}
\left| 1- \frac{1}{2} \la F^2, *_7 \varphi \ra \right|
\leq \sqrt{\det (I_8 + F^\sharp)}. 
\end{align*}
The equality holds if and only if 
$$
i(e_0)F=0, \quad 
F \wedge *_7 \varphi - \frac{1}{6} F^3 =0. 
$$
\end{lemma}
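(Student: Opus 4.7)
The plan is to reduce the eight-dimensional inequality to the associator inequality of Corollary~\ref{cor:asso eq pt} in seven dimensions by splitting off the $e^0$-component. Write $F = e^0 \wedge a + F'$ with $a \in V^*$ and $F' \in \Lambda^2 V^*$, so that $i(e_0) F = 0$ is equivalent to $a = 0$. Since $*_7 \varphi \in \Lambda^4 V^*$ has no $e^0$-component, one immediately checks that $\la F^2, *_7 \varphi \ra = \la (F')^2, *_7 \varphi \ra$, so the left-hand side of the asserted inequality depends only on $F'$.

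Next I will use Lemma~\ref{lem:det} in both dimensions $8$ and $7$, together with the identity $F^k = k\, e^0 \wedge a \wedge (F')^{k-1} + (F')^k$ (valid for $k \leq 4$, using $(F')^4 = 0$ on the seven-dimensional $V$) and the orthogonal decomposition $\Lambda^k W^* = (e^0 \wedge \Lambda^{k-1} V^*) \oplus \Lambda^k V^*$, to obtain
$$
\det (I_8 + F^\sharp) = \det (I_7 + (F')^\sharp) + |a|^2 + |a \wedge F'|^2 + \tfrac{1}{4}|a \wedge (F')^2|^2 + \tfrac{1}{36}|a \wedge (F')^3|^2.
$$
The correction on the right is a sum of non-negative terms, and the $|a|^2$ summand alone forces it to vanish precisely when $a = 0$. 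Applying Corollary~\ref{cor:asso eq pt} to $F' \in \Lambda^2 V^*$ gives $(1 - \tfrac{1}{2}\la (F')^2, *_7 \varphi \ra)^2 \leq \det (I_7 + (F')^\sharp)$, and chaining the two estimates yields $(1 - \tfrac{1}{2}\la F^2, *_7 \varphi \ra)^2 \leq \det (I_8 + F^\sharp)$, which is the desired inequality.

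The delicate point is the equality clause. Equality in the reduction step clearly requires $a = 0$. Equality in the 7-dimensional associator inequality however requires both $*_7 \varphi \wedge F' - \tfrac{1}{6}(F')^3 = 0$ and $\varphi \wedge *_7 (F')^2 = 0$, so at first glance the characterisation appears to demand two conditions rather than the single one stated. I will close this gap by recalling the implication already exploited in the proof of Theorem~\ref{thm:asso eq} via \cite[Remark~3.3]{KY}: the $G_2$-dDT equation $F' \wedge *_7 \varphi - \tfrac{1}{6}(F')^3 = 0$ automatically forces $\varphi \wedge *_7 (F')^2 = 0$. Combined with $a = 0$ (so that $F = F'$), this precisely recovers the stated equality condition $F \wedge *_7 \varphi - \tfrac{1}{6} F^3 = 0$.
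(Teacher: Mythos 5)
Your proof is correct and follows essentially the same route as the paper's: the decomposition $F = e^0\wedge a + F'$, the comparison $\det(I_7+(F')^\sharp)\le\det(I_8+F^\sharp)$ via Lemma \ref{lem:det} with equality iff $a=0$, the reduction to Corollary \ref{cor:asso eq pt}, and the closing of the equality clause by the known implication that $F'\wedge *_7\varphi-\tfrac16(F')^3=0$ forces $\varphi\wedge *_7(F')^2=0$ (the paper cites \cite[Corollary C.3]{KY} for this at exactly the same point). Your explicit formula for the nonnegative correction term is a slightly more transparent packaging of the same computation.
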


\begin{proof}
Set 
$F=e^0 \wedge F_1 + F_2$, 
where $F_1 \in V^*$ and $F_2 \in \Lambda^2 V^*$.  
Then, by the fact that $F^2= 2 e^0 \wedge F_1 \wedge F_2 + F_2^2$ 
and Corollary \ref{cor:asso eq pt}, 
we have 
\begin{align*}
\left| 1- \frac{1}{2} \la F^2, *_7 \varphi \ra \right|
=
\left| 1- \frac{1}{2} \la F_2^2, *_7 \varphi \ra \right|
\leq 
\sqrt{\det (I_7 + F_2^\sharp)}.  
\end{align*}
The equality holds if and only if 
$F_2 \wedge *_7 \varphi - F_2^3/6 =0$. 
Here we use the fact that  
$F_2 \wedge *_7 \varphi - F_2^3/6 =0$ implies that 
$\varphi \wedge * F_2^2=0$ by \cite[Corollary C.3]{KY}. 

By the proof of Lemma \ref{lem:det}, we have 
\begin{align*}
\det (I_7 + F_2^\sharp) 
&= 
1+ |F_2|^2 + \left| \frac{F_2^2}{2!} \right|^2
+ \left| \frac{F_2^3}{3!} \right|^2, \\
\det (I_8 + F^\sharp) 
&= 
1+ |F|^2 + \left| \frac{F^2}{2!} \right|^2
+ \left| \frac{F^3}{3!} \right|^2
+ \left| \frac{F^4}{4!} \right|^2. 
\end{align*}
Since $F=e^0 \wedge F_1 + F_2, 
F^2 = 2 e^0 \wedge F_1 \wedge F_2 + F_2^2$ and 
$F^3 = 3 e^0 \wedge F_1 \wedge F_2^2 + F_2^3$, 
we have 
$|F|^2=|F_1|^2+|F_2|^2$, 
$|F^2|^2 = 4 |F_1 \wedge F_2|^2 + |F_2^2|^2$ and 
$F^3 = 9 |F_1 \wedge F_2^2|^2 + |F_2^3|^2$. 
Hence, we obtain 
\begin{align*}
\det (I_7 + F_2^\sharp) \leq \det (I_8 + F^\sharp) 
\end{align*}
and the equality holds if and only if $F_1=0$. 
Then, the proof is completed. 
\end{proof}

\subsection{The 3-dimensional special Lagrangian case}
We can show the following by Corollary \ref{cor:asso eq pt}. 
Set $U=\R^6$ and use the notation of Section \ref{sec:SU3 geometry}.

\begin{corollary} \label{cor:SL3 eq pt}
For any $F \in \Lambda^2 U^*$, we have 
\begin{align*}
\left| \frac{(\om+ \i F)^3}{3!} \right|^2
+ 2 \left| \pi^{\lb 3,1 \rb} \left( \frac{(\om+ \i F)^2}{2!} \right) \right|^2 
=
\det (I_6 + F^\sharp), 
\end{align*}
where $I_6$ is the identity matrix of dimension 6. 
In particular, 
$$
\left| {\rm Re} \left( e^{-\i \theta} \frac{(\om+ \i F)^3}{3!} \right) \right| 
\leq \sqrt{\det (I_6 + F^\sharp)}
$$ 
for any $F \in \Lambda^2 U^*$ and $\theta \in \R$. 
The equality holds 
if and only if 
$$
{\rm Im} \left( e^{-\i \theta} \frac{(\om+ \i F)^3}{3!} \right) =0
\quad \mbox{and} \quad \pi^{\lb 2,0 \rb} (F)=0. 
$$
\end{corollary}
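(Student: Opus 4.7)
The plan is to deduce Corollary \ref{cor:SL3 eq pt} from the associator-case identity in Corollary \ref{cor:asso eq pt} by embedding $U = \R^6$ as the orthogonal complement of $e_1$ in $V = \R^7$ and equipping $V$ with the compatible $G_2$-structure
\[
\varphi = e^1 \wedge \om + \mathop{\mathrm{Re}}\Om, \qquad *_7\varphi = \tfrac{1}{2}\om^2 - e^1 \wedge \mathop{\mathrm{Im}}\Om,
\]
so that any $F \in \Lambda^2 U^*$ is regarded as a 2-form on $V$ with $i(e_1)F = 0$. Since $F^\sharp$ then annihilates $\R e_1$, one has $\det(I_7 + F^\sharp) = \det(I_6 + F^\sharp)$, and the task reduces to re-expressing each of the three left-hand terms of Corollary \ref{cor:asso eq pt} in terms of complex data on $U$. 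The key is to expand $(\om + \i F)^3 = \om^3 + 3\i\om^2 \wedge F - 3\om \wedge F^2 - \i F^3$ and follow its real and imaginary parts through the identity.

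First, because $i(e_1)F = 0$, the $e^1 \wedge \mathop{\mathrm{Im}}\Om$-component of $*_7\varphi$ drops out of $\la F^2, *_7\varphi\ra$, which gives $1 - \tfrac{1}{2}\la F^2, *_7\varphi\ra = *_6 \mathop{\mathrm{Re}}\bigl((\om + \i F)^3/3!\bigr)$ and hence converts the first term of Corollary \ref{cor:asso eq pt} into $|\mathop{\mathrm{Re}}((\om + \i F)^3/3!)|^2$. Second, the decomposition
\[
*_7\varphi \wedge F - \tfrac{1}{6}F^3 \;=\; \bigl(\tfrac{1}{2}\om^2 \wedge F - \tfrac{1}{6}F^3\bigr) - e^1 \wedge \mathop{\mathrm{Im}}\Om \wedge F
\]
splits into orthogonal summands (one in $\Lambda^6 U^*$, the other in $e^1 \wedge \Lambda^5 U^*$), so the second term contributes $|\mathop{\mathrm{Im}}((\om + \i F)^3/3!)|^2 + |\mathop{\mathrm{Im}}\Om \wedge F|^2$. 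Summing the first two terms gives $|(\om + \i F)^3/3!|^2$ plus the leftover $|\mathop{\mathrm{Im}}\Om \wedge F|^2$, which must ultimately pair with $\tfrac{1}{4}|\varphi \wedge *_7 F^2|^2$ to produce $2|\pi^{\lb 3,1\rb}((\om + \i F)^2/2!)|^2$.

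Matching that last pair is the main obstacle. Writing $F = F^{2,0} + F^{1,1} + F^{0,2}$ and using $\Lambda^{4,0} = \Lambda^{0,4} = 0$ on $\C^3$, a type analysis yields $\pi^{\lb 3,1\rb}(\om F) = \om \wedge \pi^{\lb 2,0\rb}(F)$ and $\pi^{\lb 3,1\rb}(F^2) = 2 F^{1,1} \wedge \pi^{\lb 2,0\rb}(F)$, so
\[
2\bigl|\pi^{\lb 3,1\rb}\bigl((\om + \i F)^2/2\bigr)\bigr|^2 = 2\bigl|\om \wedge \pi^{\lb 2,0\rb}(F)\bigr|^2 + 2\bigl|F^{1,1} \wedge \pi^{\lb 2,0\rb}(F)\bigr|^2.
\]
Corollary \ref{cor:SU3 2form norm} gives $|\mathop{\mathrm{Im}}\Om \wedge F|^2 = 2|\pi^{\lb 2,0\rb}(F)|^2$, and the identity $\om \wedge \alpha = *_6\alpha$ for $\alpha \in \lb \Lambda^{2,0}\rb$ (Lemma \ref{lem:SU3 2form}) sharpens this to $|\om \wedge \pi^{\lb 2,0\rb}(F)|^2 = |\pi^{\lb 2,0\rb}(F)|^2$, accounting for the first summand. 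For the second summand the plan is to simplify $\varphi \wedge *_7 F^2 = -e^1 \wedge \mathop{\mathrm{Re}}\Om \wedge *_6 F^2$, apply Corollary \ref{cor:SU3 2form norm} to $*_6 F^2$, and convert the resulting $\pi^{\lb 2,0\rb}(*_6 F^2)$ via \eqref{eq:Hodge proj SU3} into $*_6\pi^{\lb 3,1\rb}(F^2)$, yielding $\tfrac{1}{4}|\varphi \wedge *_7 F^2|^2 = 2|F^{1,1} \wedge \pi^{\lb 2,0\rb}(F)|^2$.

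The pointwise inequality then follows at once from $|\mathop{\mathrm{Re}}(e^{-\i\theta}(\om + \i F)^3/3!)| \le |(\om + \i F)^3/3!|$ combined with the just-proved identity. For equality, both $\mathop{\mathrm{Im}}(e^{-\i\theta}(\om + \i F)^3/3!) = 0$ and $\pi^{\lb 3,1\rb}((\om + \i F)^2/2) = 0$ must hold; using the factorization $\pi^{\lb 3,1\rb}((\om + \i F)^2/2) = -(F^{1,1} - \i\om) \wedge \pi^{\lb 2,0\rb}(F)$ together with $|\om \wedge \pi^{\lb 2,0\rb}(F)| = |\pi^{\lb 2,0\rb}(F)|$ shows the second condition is equivalent to $\pi^{\lb 2,0\rb}(F) = 0$, matching the stated equality condition.
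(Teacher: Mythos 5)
Your proposal is correct and follows essentially the same route as the paper: reduce to Corollary \ref{cor:asso eq pt} via the embedding $U \subset V$ with $\varphi = e^1\wedge\om + \mathop{\mathrm{Re}}\Om$, rewrite the three terms using $*_7\varphi = \tfrac12\om^2 - e^1\wedge\mathop{\mathrm{Im}}\Om$, and convert the leftover terms $|\mathop{\mathrm{Im}}\Om\wedge F|^2$ and $\tfrac14|\mathop{\mathrm{Re}}\Om\wedge *_6F^2|^2$ into $2|\pi^{\lb 3,1\rb}((\om+\i F)^2/2)|^2$ via Corollary \ref{cor:SU3 2form norm} and \eqref{eq:Hodge proj SU3}. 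Your extra type decomposition $\pi^{\lb 3,1\rb}(\om\wedge F)=\om\wedge\pi^{\lb 2,0\rb}(F)$, $\pi^{\lb 3,1\rb}(F^2)=2F^{1,1}\wedge\pi^{\lb 2,0\rb}(F)$ is only a more explicit way of regrouping the same two summands and also cleanly justifies the equality criterion.
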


Note that the identity depends only on the K\"ahler structure $(\om, g, J)$ and 
is independent of the holomorphic volume form $\Om$ (the Calabi--Yau structure).

\begin{proof}
By $U^* \ni e^i \mapsto e^i \in V^*$ for $i=2, \cdots, 7$, 
we identify $F \in \Lambda^2 U^*$ with an element of $\Lambda^2 V^*$. 
As in the proof of Corollary \ref{cor:asso eq pt}, 
we rewrite Corollary \ref{cor:asso eq pt} to obtain Corollary \ref{cor:SL3 eq pt}. 
For clarification, denote by $*=*_7$ and $*_6$ the Hodge star operators on $V$ and $U$, respectively.

Since $\varphi = e^1 \wedge \om + {\rm Re} \Om$ and $*_7 \varphi = \om^2/2 -e^1 \wedge \Im \Om$, 
we have 
$$
\frac{1}{2} \la F^2, *_7 \varphi \ra
=
\frac{1}{4} \la F^2, \om^2 \ra
= \frac{1}{2} *_6 (F^2 \wedge \om), 
$$
where we use $*_6 \om^2=2 \om$. Thus, 
$$
1- \frac{1}{2} \la F^2, *_7 \varphi \ra = *_6 \left( \frac{\om^3 -3 \om \wedge F^2}{6} \right) 
= *_6 {\rm Re} \left( \frac{(\om+ \i F)^3}{6} \right). 
$$
We also compute 
\begin{align*}
\left| *_7 \varphi \wedge F - \frac{1}{6} F^3 \right|^2
=& \left |\Im \Om \wedge F \right |^2 + \left | \frac{1}{2} \om^2 \wedge F - \frac{1}{6} F^3 \right |^2 \\
=& 
\left |\Im \Om \wedge F \right |^2 + \left | \frac{\Im (\om+ \i F)^3}{6} \right |^2
\end{align*}
and 
$$
\frac{1}{4} \left |\varphi \wedge *_7 F^2 \right |^2 = \frac{1}{4} |{\rm Re} \Om \wedge *_6 F^2|^2. 
$$
Hence, Corollary \ref{cor:asso eq pt} implies that 
\begin{align} \label{eq:SL3 eq pt 1}
\left | \frac{(\om+ \i F)^3}{6} \right |^2 + \left |\Im \Om \wedge F \right |^2 + \frac{1}{4} \left |{\rm Re} \Om \wedge *_6 F^2 \right |^2
= 
\det (I_6 + F^\sharp). 
\end{align}
By Corollary \ref{cor:SU3 2form norm}, Lemma \ref{lem:SU3 2form} and \eqref{eq:Hodge proj SU3}, we have 
\begin{align} \label{eq:SL3 eq pt 2}
\begin{split}
\left|\Im \Om \wedge F \right|^2 
&= 2 \left|\pi^{\lb 2,0 \rb}(F) \right|^2 = 2 \left|\pi^{\lb 2,0 \rb}(*_6 (\om \wedge F)) \right|^2 
= 2 \left|\pi^{\lb 3,1 \rb}(\om \wedge F) \right|^2, \\
\left|{\rm Re} \Om \wedge *_6 F^2 \right|^2 
&= 2 \left|\pi^{\lb 2,0 \rb}(*_6 F^2) \right|^2 = 2 \left|\pi^{\lb 3,1 \rb}(F^2) \right|^2. 
\end{split}
\end{align} 
Thus, it follows that 
\begin{align*}
\left|\Im \Om \wedge F \right|^2 + \frac{1}{4} \left|{\rm Re} \Om \wedge *_6 F^2 \right|^2
=&
2 \left|\pi^{\lb 3,1 \rb}(\om \wedge F) \right|^2 + \frac{1}{2} \left|\pi^{\lb 3,1 \rb}(F^2) \right|^2 \\
=&
\frac{1}{2} \left|\pi^{\lb 3,1 \rb} ((\om + \i F)^2) \right|^2. 
\end{align*}
This together with \eqref{eq:SL3 eq pt 1} implies the identity of Corollary \ref{cor:SL3 eq pt}. 
The last statement follows from \eqref{eq:SL3 eq pt 1} and the first equation of \eqref{eq:SL3 eq pt 2}. 
\end{proof}

Consider $\om$ as a 2-form on $V=\R^7=\R e_1 \oplus U$ by pullback. 
Then, we can prove the following in the same way as Lemma \ref{lem:Cay asso pt} 
using Corollary \ref{cor:SL3 eq pt}. 

\begin{lemma} \label{lem:asso SL3 pt}
For any $F \in \Lambda^2 V^*$, we have 
\begin{align*}
\left| 1- \frac{1}{4} \la F^2, \om^2 \ra \right|
\leq \sqrt{\det (I_7 + F^\sharp)}. 
\end{align*}
The equality holds if and only if 
$$
i(e_1)F=0, \quad 
{\rm Im} \left( \frac{(\om+ \i F)^3}{3!} \right) =0 
\quad \mbox{and} \quad 
\pi^{\lb 2,0 \rb} (F)=0. 
$$
\end{lemma}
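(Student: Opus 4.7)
The plan is to mimic the proof of Lemma \ref{lem:Cay asso pt} almost verbatim, substituting Corollary \ref{cor:SL3 eq pt} for Corollary \ref{cor:asso eq pt}. Decompose $F = e^1 \wedge F_1 + F_2$ with $F_1 \in U^*$ and $F_2 \in \Lambda^2 U^*$, so that $i(e_1)F = F_1$. Since $\omega^2 \in \Lambda^4 U^*$, only the $\Lambda^2 U^*$ part of $F^2$ pairs with $\omega^2$; using $F^2 = 2 e^1 \wedge F_1 \wedge F_2 + F_2^2$, one obtains $\langle F^2, \omega^2\rangle = \langle F_2^2, \omega^2\rangle$.

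Next, I would rewrite $1 - \langle F_2^2, \omega^2\rangle/4$ as a Hodge-star of a top form on $U$. Using $*_6 \omega^2 = 2\omega$ and $\omega^3/6 = \mathrm{vol}_6$, a direct expansion gives
\[
1 - \tfrac14 \langle F_2^2, \omega^2 \rangle = *_6 \,\mathrm{Re}\!\left(\tfrac{(\omega + \i F_2)^3}{3!}\right).
\]
Applying Corollary \ref{cor:SL3 eq pt} to $F_2 \in \Lambda^2 U^*$ with phase $\theta = 0$ then yields
\[
\left|1 - \tfrac14 \langle F_2^2, \omega^2\rangle\right| \leq \sqrt{\det(I_6 + F_2^\sharp)},
\]
with equality iff $\mathrm{Im}\bigl((\omega + \i F_2)^3/6\bigr) = 0$ and $\pi^{\llbracket 2,0\rrbracket}(F_2) = 0$.

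Finally, I would compare the determinants. By the same eigenvalue argument as in Lemma \ref{lem:det} (a skew-symmetric endomorphism of $\R^7$ has one zero eigenvalue plus three conjugate pairs), one has $\det(I_7 + F^\sharp) = 1 + |F|^2 + |F^2/2|^2 + |F^3/6|^2$, and similarly for $F_2$ on $U$. From $F = e^1 \wedge F_1 + F_2$, computing $F^k$ term by term gives $|F|^2 = |F_1|^2 + |F_2|^2$, $|F^2|^2 = 4|F_1\wedge F_2|^2 + |F_2^2|^2$, and $|F^3|^2 = 9|F_1 \wedge F_2^2|^2 + |F_2^3|^2$. Therefore
\[
\det(I_7 + F^\sharp) = \det(I_6 + F_2^\sharp) + |F_1|^2 + |F_1 \wedge F_2|^2 + \tfrac{1}{4}|F_1 \wedge F_2^2|^2 \geq \det(I_6 + F_2^\sharp),
\]
with equality iff $F_1 = 0$, i.e.\ $i(e_1)F = 0$. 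Chaining the two inequalities gives the stated bound, and the equality case collapses as follows: $F_1 = 0$ forces $F_2 = F$, and then the Corollary \ref{cor:SL3 eq pt} equality conditions become the stated $\mathrm{Im}((\omega + \i F)^3/3!) = 0$ and $\pi^{\llbracket 2,0\rrbracket}(F) = 0$.

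I do not anticipate a genuine obstacle: all the pieces (the pointwise SU(3) equality, the eigenvalue form of $\det(I + F^\sharp)$, and the orthogonal decomposition of $F$ along $e^1$) are already in hand. The only mildly delicate point is the bookkeeping to verify the identity $*_6 \,\mathrm{Re}((\omega + \i F_2)^3/6) = 1 - \langle F_2^2, \omega^2\rangle/4$, but this is a one-line expansion using $*_6 \omega^2 = 2\omega$ and $\omega^3 = 6\,\mathrm{vol}_6$.
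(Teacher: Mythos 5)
Your proposal is correct and is exactly the argument the paper intends: the paper gives no written proof of Lemma \ref{lem:asso SL3 pt}, stating only that it follows ``in the same way as Lemma \ref{lem:Cay asso pt} using Corollary \ref{cor:SL3 eq pt},'' and your decomposition $F = e^1\wedge F_1 + F_2$, the reduction $\la F^2,\om^2\ra = \la F_2^2,\om^2\ra$, the application of Corollary \ref{cor:SL3 eq pt} with $\theta=0$, and the determinant comparison via the Lemma \ref{lem:det} expansion are precisely that adaptation. The identity $1-\tfrac14\la F_2^2,\om^2\ra = *_6\,\mathrm{Re}\bigl((\om+\i F_2)^3/3!\bigr)$ you flag as the one delicate point is already verified inside the proof of Corollary \ref{cor:SL3 eq pt}, so nothing is missing.
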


\subsection{The 4-dimensional special Lagrangian case}
We can show the following by Proposition \ref{prop:Cayley eq pt}.  
Set $W=\R^8$ and use the notation of Section \ref{sec:SU4 geometry}. 

\begin{proposition}\label{prop:SL4 eq pt}
For any $F \in \Lambda^2 W^*$, we have 
\begin{align*}
&\left| \frac{(\om+ \i F)^4}{4!} \right|^2
+ 2 \left| \pi^{\lb 4,2 \rb} \left( \frac{(\om+ \i F)^3}{3!} \right) \right|^2 
+ 8 \left| \pi^{\lb 4,0 \rb} \left( \frac{(\om+ \i F)^2}{2!} \right) \right|^2 \\
=&
\det (I_8 + F^\sharp). 
\end{align*}
In particular, 
$$
\left| {\rm Re} \left( e^{-\i \theta} \frac{(\om+ \i F)^4}{4!} \right) \right| 
\leq \sqrt{\det (I_8 + F^\sharp)}
$$ 
for any $F \in \Lambda^2 W^*$ and $\theta \in \R$. 
The equality holds 
if and only if 
$$
{\rm Im} \left( e^{-\i \theta} \frac{(\om+ \i F)^4}{4!} \right) =0
\quad \mbox{and} \quad \pi^{\lb 2,0 \rb} (F)=0. 
$$
\end{proposition}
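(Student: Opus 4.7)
The plan is to deduce Proposition \ref{prop:SL4 eq pt} from Proposition \ref{prop:Cayley eq pt} by exploiting the inclusion $\mathrm{SU}(4) \subset \mathrm{Spin}(7)$: the $\mathrm{SU}(4)$-structure $(\om, \Om)$ on $W = \R^8$ induces the $\mathrm{Spin}(7)$-structure $\Phi = \tfrac{1}{2}\om^2 + \mathrm{Re}\,\Om$ via \eqref{eq:Phi4 SU4}, so both propositions have the same right-hand side $\det(I_8 + F^\sharp)$, and it suffices to identify the two left-hand sides. This parallels the derivations of Corollary \ref{cor:asso eq pt} (associator case) and Corollary \ref{cor:SL3 eq pt} (three-dimensional SL case) from their respective predecessors.

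First I would expand $(\om + \i F)^k/k!$ for $k = 2, 3, 4$ into real and imaginary parts. In particular,
\[
\frac{(\om+\i F)^4}{4!} = \frac{\om^4 - 6\om^2 F^2 + F^4}{24} + \i \, \frac{\om^3 F - \om F^3}{6},
\]
and since this is a complex top-degree form, $|(\om+\i F)^4/4!|^2 = (\ast\mathrm{Re})^2 + (\ast\mathrm{Im})^2$. Using $\ast\om = \tfrac{1}{6}\om^3$ and $\ast\om^2 = \om^2$ from \eqref{eq:SU4 wk}, the two scalars simplify to $x := 1 - \tfrac{1}{4}\la F^2, \om^2\ra + \tfrac{\ast F^4}{24}$ and $z := \la F, \om\ra - \tfrac{1}{6}\la \om, \ast F^3\ra$, so the left-hand side term $(A) := |(\om+\i F)^4/4!|^2$ equals $x^2 + z^2$. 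Since $\Phi = \tfrac{1}{2}\om^2 + \mathrm{Re}\,\Om$, the first scalar in Proposition \ref{prop:Cayley eq pt} is $x - y$ with $y := \tfrac{1}{2}\la F^2, \mathrm{Re}\,\Om\ra$, giving $(P) = (x-y)^2$.

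The second step is to refine the $\mathrm{Spin}(7)$-projections using the $\mathrm{SU}(4)$-splittings $\Lambda^2_7 = \R\om \oplus A_+$ and $\Lambda^4_7 = \R\,\mathrm{Im}\,\Om \oplus (\om \wedge A_-)$ from \eqref{eq:SU4 id7}. A direct calculation gives $4|\pi_{\R\om}(F - \tfrac{1}{6}\ast F^3)|^2 = z^2$, so the $\R\om$-part of $(Q)$ absorbs the imaginary-part contribution of $(A)$. Writing $(F^{2,0})^2 = \lambda\Om$ with $\lambda \in \C$ and using $\Om \wedge \bar\Om = 16\,\mathrm{vol}$ (hence $|\mathrm{Re}\,\Om|^2 = |\mathrm{Im}\,\Om|^2 = 8$), one finds $y = 8\,\mathrm{Re}\,\lambda$ and $\la F^2, \mathrm{Im}\,\Om\ra = -16\,\mathrm{Im}\,\lambda$, so that the $\R\,\mathrm{Im}\,\Om$-part of $(R)$ together with $y^2$ equals $(C) = 64|\lambda|^2$. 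What remains is to match the $A_+$-part of $(Q)$ and the $\om \wedge A_-$-part of $(R)$ with the $\lb 4,2\rb$-projection in $(B)$, using the $\mathrm{SU}(4)$-equivariant maps of Lemma \ref{lem:lambdas} and the conversion $2|\pi_{\om \wedge A_-}(\cdot)|^2 = |\pi_{A_-}(\ast(\om\wedge\cdot))|^2$ of Lemma \ref{lem:SU4 4form}.

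The main obstacle will be the non-squared cross-term $-2xy$ arising from $(P) = (x-y)^2$, which must cancel against a cross-pairing inside $(B)$. Expanding $(\om+\i F)^3/6 = \om^3/6 + \i\om^2 F/2 - \om F^2/2 - \i F^3/6$ and setting $\phi = F^{2,0}$, $\psi = F^{1,1}$, $\alpha = \phi + \bar\phi$, one finds $\mathrm{Re}\,\pi^{\lb 4,2\rb}((\om+\i F)^3/6) = -\om\psi\alpha$ and $\mathrm{Im}\,\pi^{\lb 4,2\rb}((\om+\i F)^3/6) = \tfrac{1}{2}(\om^2\alpha - \alpha(\phi\bar\phi + \psi^2))$. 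The cross-term inside $|\mathrm{Im}|^2$ yields $-\tfrac{1}{2}\la \om^2\alpha, \alpha(\phi\bar\phi + \psi^2)\ra$, and showing that this equals $-2xy$ after applying Proposition \ref{prop:2form norm} and the identities of Section \ref{sec:SU4 geometry} is the delicate calculation. Once the pointwise identity is proved, the inequality $|\mathrm{Re}(e^{-\i\theta}(\om+\i F)^4/4!)| \le \sqrt{\det(I_8+F^\sharp)}$ and its equality case follow from $|\mathrm{Re}(e^{-\i\theta}z)| \le |z|$ applied to $z = (\om+\i F)^4/4!$, with equality forcing $(B) = (C) = 0$ (hence $\pi^{\lb 2,0\rb}(F) = 0$) and $\mathrm{Im}(e^{-\i\theta}(\om+\i F)^4/4!) = 0$, which is exactly the dHYM condition with phase $e^{\i\theta}$.
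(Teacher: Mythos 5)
Your reduction strategy is the paper's own: both arguments pass from Proposition \ref{prop:Cayley eq pt} to the ${\rm SU}(4)$ statement via $\Phi=\tfrac12\om^2+{\rm Re}\,\Om$ and the refinements $\Lambda^2_7=\R\om\oplus A_+$, $\Lambda^4_7=\R\,\Im\Om\oplus(\om\wedge A_-)$, and the individual matchings you record (e.g.\ $4|\pi_{\R\om}(F-\tfrac16*F^3)|^2=z^2$, and the $\R\,\Im\Om$-part of the $\Lambda^4_7$ term combining with $y^2$ to give $2|\pi^{\lb 4,0\rb}(F^2)|^2$) are correct; they appear in the paper as Lemma \ref{lem:SL4pt deg} and \eqref{eq:SL4pt4 3}. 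However, the step you yourself flag as ``the delicate calculation'' --- in particular the quartic cross-term $\tfrac14\la F^2,\om^2\ra\la F^2,{\rm Re}\,\Om\ra$ arising from expanding $(x-y)^2$ --- is left unresolved, and it is not a routine consequence of Proposition \ref{prop:2form norm} and the identities of Section \ref{sec:SU4 geometry}. The paper proves the required identity (Lemma \ref{lem:SL4pt4 rw}) by applying Lemma \ref{lem:24form} with $\xi={\rm Re}\,\Om$ to the one-parameter family $F(t)=F+t\om$ and extracting the coefficient of $t^2$; without that device (or an equivalent one) your outline does not close.

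The sharper gap is in the equality case. You assert that $(B)=(C)=0$ ``hence $\pi^{\lb 2,0\rb}(F)=0$,'' but this does not follow. $(C)=0$ only says $(F^{2,0})^2=0$, which is satisfied by any decomposable $F^{2,0}$ such as $f^{12}$, and $(B)=0$ only gives $\pi^{\lb 2,0\rb}(F-\tfrac16*F^3)=0$ together with $\pi^{\lb 4,2\rb}(\om\wedge F^2)=0$; none of these kills $F^{2,0}$ by itself. Deducing $\pi^{\lb 2,0\rb}(F)=0$ from the three conditions jointly is a genuine argument: the paper diagonalizes $F^{1,1}$ by a ${\rm U}(4)$ rotation and shows that the resulting systems $\beta_i(\mu_j+\mu_k)=0$ and $\beta_i(1-\mu_j\mu_k)=0$ force $\beta_i(1+\mu_j^2)=0$, hence $\beta_i=0$ (see \eqref{eq:SL4 eq pt 5}--\eqref{eq:SL4 eq pt 7}). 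This step is missing from your proposal, so the stated characterization of the equality case is not yet proved.
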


As in Corollary \ref{cor:SL3 eq pt}, 
note that the identity depends only on the K\"ahler structure $(\om, g, J)$ and 
is independent of the holomorphic volume form $\Om$ (the Calabi--Yau structure).

We prove Proposition \ref{prop:SL4 eq pt} by rewriting Proposition \ref{prop:Cayley eq pt} (Lemma \ref{lem:Caypt deg}). 
We first prove the following. 

\begin{lemma} \label{lem:SL4pt deg}
For any $F \in \Lambda^2 W^*$, we have 
\begin{align}
\label{eq:SL4pt2}
- \frac{1}{2} * (\om^2 \wedge F^2) + \frac{1}{36} (* (\om^3 \wedge F))^2 
+ 2 |\pi^{\lb 2,0 \rb} (F)|^2 
&= |F|^2,  \\
\label{eq:SL4pt4}
\begin{split}
\frac{1}{16} (* (\om^2 \wedge F^2))^2 + \frac{* F^4}{12} 
-\frac{1}{18} *(\om^3 \wedge F) * (\om \wedge F^3) & \\
+ 
\frac{1}{2} \left| \pi^{\lb 4,2 \rb} \left( \om \wedge F^2 \right) \right|^2 
- \frac{2}{3}
\left \la \pi^{\lb 2,0 \rb} (F),  \pi^{\lb 2,0 \rb} \left( * F^3 \right) \right \ra
+
2 \left| \pi^{\lb 4,0 \rb} \left( F^2 \right) \right|^2 
&=
\frac{1}{4} \left| F^2 \right|^2, 
\end{split}
\\
\label{eq:SL4pt6}
- \frac{1}{48} *(\om^2 \wedge F^2) * F^4 + \frac{1}{36} (* (\om \wedge F^3))^2
+\frac{1}{18} \left| \pi^{\lb 2,0 \rb} \left( * F^3 \right) \right|^2
&= \frac{1}{36} \left| F^3 \right|^2. 
\end{align}
\end{lemma}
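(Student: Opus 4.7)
My plan is to deduce each identity in Lemma \ref{lem:SL4pt deg} from the corresponding $\Sp$-identity of Lemma \ref{lem:Caypt deg} by substituting $\Phi = \tfrac{1}{2}\om^2 + {\rm Re}\,\Om$ from \eqref{eq:Phi4 SU4} and refining the $\Sp$-decompositions $\Lambda^2_7 W^* = \R \om \oplus A_+$ and $\Lambda^4_7 W^* = \R\,\Im\Om \oplus (\om \wedge A_-)$ from \eqref{eq:SU4 id7} into their ${\rm SU}(4)$-irreducible summands. The algebraic engine will be the polarized version of Lemma \ref{lem:Apm}, namely $*({\rm Re}\,\Om \wedge \alpha \wedge \beta) = 2(\la \pi_{A_+}\alpha, \pi_{A_+}\beta \ra - \la \pi_{A_-}\alpha, \pi_{A_-}\beta \ra)$ for 2-forms $\alpha, \beta$ (valid because $[\Lambda^{1,1}]$ is annihilated by $\wedge\,{\rm Re}\,\Om$ in complex dimension 4), together with Lemma \ref{lem:SU4 4form}, which rewrites $|\pi_{\om \wedge A_-}(F^2)|^2 = \tfrac{1}{2}|\pi_{A_-}(*(\om \wedge F^2))|^2$. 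The elementary normalizations I will need are $|\om|^2 = 4$, $|\Im\Om|^2 = 8$ (by a direct expansion of $\Om = f^{1234}$), $\la F, \om \ra = \tfrac{1}{6}*(F \wedge \om^3)$ and $\la *F^3, \om \ra = *(\om \wedge F^3)$.

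For \eqref{eq:SL4pt2}, I will expand $\la F^2, \Phi \ra$ and split $|\pi^2_7 F|^2 = |\pi_{\R\om}F|^2 + |\pi_{A_+}F|^2 = \tfrac{1}{36}(*(\om^3 \wedge F))^2 + |\pi_{A_+}F|^2$; the identity $*(F^2 \wedge {\rm Re}\,\Om) = 2(|\pi_{A_+}F|^2 - |\pi_{A_-}F|^2)$ then promotes $4|\pi_{A_+}F|^2$ into $2|\pi^{\lb 2,0 \rb}F|^2$. For \eqref{eq:SL4pt6}, I first recast \eqref{eq:Caypt6} as $|F^3|^2 = 4|\pi^2_7(*F^3)|^2 - \tfrac{3}{2}\la F^2, \Phi \ra\,*\!F^4$ (using Lemma \ref{lem:24form} with $\xi = \Phi$), decompose $|\pi^2_7(*F^3)|^2$ exactly as in the previous case, and apply Lemma \ref{lem:24form} once more with $\xi = {\rm Re}\,\Om$ to obtain $|\pi_{A_+}(*F^3)|^2 - |\pi_{A_-}(*F^3)|^2 = \tfrac{3}{4}\,{*(F^2 \wedge {\rm Re}\,\Om)}\,*\!F^4$, which precisely absorbs the ${\rm Re}\,\Om$-contribution of $\la F^2, \Phi \ra\,*\!F^4$.

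For \eqref{eq:SL4pt4}, I will substitute into \eqref{eq:Caypt4} the squared expansion of $\la F^2, \Phi \ra$, the decompositions $\pi^2_7 = \pi_{\R\om} + \pi_{A_+}$ and $\pi^4_7 = \pi_{\R\,\Im\Om} + \pi_{\om \wedge A_-}$, together with $|\pi_{\R\,\Im\Om}(F^2)|^2 = \tfrac{1}{8}(*(F^2 \wedge \Im\Om))^2$, $2|\pi^{\lb 4,0 \rb}(F^2)|^2 = \tfrac{1}{4}[(*(F^2 \wedge {\rm Re}\,\Om))^2 + (*(F^2 \wedge \Im\Om))^2]$, and $|\pi^{\lb 4,2 \rb}(\om \wedge F^2)|^2 = |\pi_{A_+}(*(\om \wedge F^2))|^2 + |\pi_{A_-}(*(\om \wedge F^2))|^2$ (the last via \eqref{eq:Hodge proj SU4}). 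After the $\tfrac{1}{16}(*(\om^2 \wedge F^2))^2$, $\tfrac{1}{12}\,{*\!F^4}$, $-\tfrac{1}{18}\,{*(\om^3 \wedge F)}\,{*(\om \wedge F^3)}$ and $(*(F^2 \wedge \Im\Om))^2$ terms all match, the remaining residual identity is
\begin{align*}
\tfrac{1}{2}\bigl(|\pi_{A_+}(*(\om \wedge F^2))|^2 - |\pi_{A_-}(*(\om \wedge F^2))|^2\bigr) &+ \tfrac{2}{3}\bigl(\la \pi_{A_+}F, \pi_{A_+}(*F^3) \ra - \la \pi_{A_-}F, \pi_{A_-}(*F^3) \ra\bigr) \\
&= \tfrac{1}{4}\,{*(F^2 \wedge \om^2)}\cdot {*(F^2 \wedge {\rm Re}\,\Om)}.
\end{align*}

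This residual identity is the main obstacle. My approach will be to convert both $A_+ - A_-$ differences on the left to wedge-product expressions via the polarized Lemma \ref{lem:Apm}, obtaining $\tfrac{1}{4}*({\rm Re}\,\Om \wedge (*(\om \wedge F^2))^2) + \tfrac{1}{3}*({\rm Re}\,\Om \wedge F \wedge *F^3)$, and then to relate these to ${*(F^2 \wedge \om^2)}\cdot {*(F^2 \wedge {\rm Re}\,\Om)}$ through K\"ahler/Lefschetz-type identities for $*(\om \wedge F^2)$ on $\C^4$ (which reduce it to the $[\Lambda^{1,1}_0]$ and $\lb \Lambda^{2,0} \rb$ components of $F^2$) combined with a direct wedge-product computation in the spirit of Lemma \ref{lem:24form}. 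Should this bookkeeping become unwieldy, the fallback is a pointwise verification: since both sides are ${\rm SU}(4)$-invariant polynomials in $F$, it suffices to split $F = F_{[1,1]} + F_{\lb 2,0 \rb}$ and reduce each summand to a canonical form under ${\rm SU}(4)$, whereupon the problem becomes a finite-dimensional algebraic check with free parameters the eigenvalues of $F_{[1,1]}^\sharp$ and a small number of coefficients of $F_{\lb 2,0 \rb}$.
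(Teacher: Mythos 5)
Your overall strategy is exactly the paper's: specialize the $\Sp$-identities of Lemma \ref{lem:Caypt deg} via $\Phi=\tfrac12\om^2+{\rm Re}\,\Om$ and refine $\Lambda^2_7=\R\om\oplus A_+$, $\Lambda^4_7=\R\,\Im\Om\oplus(\om\wedge A_-)$ into ${\rm SU}(4)$-pieces. Your treatments of \eqref{eq:SL4pt2} and \eqref{eq:SL4pt6} coincide with the paper's (including the use of Lemma \ref{lem:24form} with $\xi={\rm Re}\,\Om$ and the polarized form of Lemma \ref{lem:Apm}), and for \eqref{eq:SL4pt4} your bookkeeping correctly isolates the same residual identity that the paper states as Lemma \ref{lem:SL4pt4 rw} (your version is literally the same equation after $\la F^2,\om^2\ra=*(F^2\wedge\om^2)$ and $\la F^2,{\rm Re}\,\Om\ra=*(F^2\wedge{\rm Re}\,\Om)$).

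The one genuine gap is that this residual identity --- the only nontrivial step --- is not actually proved. Your primary route (``K\"ahler/Lefschetz-type identities \ldots combined with a direct wedge-product computation in the spirit of Lemma \ref{lem:24form}'') is left as a gesture, and it is not clear what concrete computation would close it. The paper's device is a polarization trick: apply Lemma \ref{lem:24form} with $\xi={\rm Re}\,\Om$ to $F(t)=F+t\om$ and take $\tfrac{d^2}{dt^2}\big|_{t=0}$ of both sides; using $\om\wedge{\rm Re}\,\Om=0$ and $*{\rm Re}\,\Om={\rm Re}\,\Om$, $*\om^2=\om^2$, this yields
\begin{equation*}
3\,*\!\left({\rm Re}\,\Om\wedge(*(\om\wedge F^2))^2\right)+2\,*\!\left({\rm Re}\,\Om\wedge *(\om^2\wedge F)\wedge(*F^3)\right)=3\,\la F^2,{\rm Re}\,\Om\ra\,\la F^2,\om^2\ra,
\end{equation*}
which becomes your residual identity after converting the left side by Lemmas \ref{lem:SU4 2form} and \ref{lem:Apm} (only the $\lb\Lambda^{2,0}\rb$-part of $*(\om^2\wedge F)$, namely $2\pi^{\lb 2,0\rb}F$, survives against ${\rm Re}\,\Om$). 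Your fallback of a pointwise check is valid in principle but heavier than you suggest: the identity is quartic in $F$, and after diagonalizing $F_{[1,1]}$ by ${\rm U}(4)$ the $\lb\Lambda^{2,0}\rb$-component still carries six complex parameters modulo only a torus, so the ``finite-dimensional algebraic check'' involves on the order of fifteen free real parameters. I would recommend replacing both routes with the differentiation argument above.
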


\begin{proof}
We rewrite Lemma \ref{lem:Caypt deg}. 
Since 
$\Phi=\om^2/2+{\rm Re}\Om$ and 
$\pi^2_7(F)= \pi_{\R \om} (F) + \pi_{A_+}(F) = \la F, \om \ra \om/4 +  \pi_{A_+}(F)$ by \eqref{eq:SU4 id7}, 
\eqref{eq:Caypt2} is described as 
$$
- \frac{1}{2} * (\om^2 \wedge F^2) + \la F, \om \ra^2 + \left \{ - \la F^2, {\rm Re} \Om \ra + 4 |\pi_{A_+}(F)|^2 \right \} = |F|^2. 
$$ 
By \eqref{eq:SU4 wk} and Lemma \ref{lem:Apm}, we have 
$$
- \la F^2, {\rm Re} \Om \ra + 4 |\pi_{A_+}(F)|^2
= 2 |\pi_{A_+}(F)|^2 + 2 |\pi_{A_-}(F)|^2
= 2 |\pi^{\lb 2,0 \rb} (F)|^2. 
$$
This together with $* \om = \om^3/6$ implies \eqref{eq:SL4pt2}. 
\\

Next, we show \eqref{eq:SL4pt6}. 
By \eqref{eq:Caypt6}, we have 
$$
- \frac{1}{48} \la F^2, \om^2 \ra * F^4 + \frac{1}{36} \la * F^3, \om \ra^2 + \frac{1}{9} \left| \pi_{A_+} \left( * F^3 \right) \right|^2
- \frac{1}{24} \la F^2, {\rm Re} \Om \ra * F^4
= \frac{1}{36} \left| F^3 \right|^2. 
$$
By Lemmas \ref{lem:24form} and \ref{lem:Apm}, it follows that  
$$
\la F^2, {\rm Re} \Om \ra* F^4
= \frac{2}{3} *\left( (*F)^3 \wedge {\rm Re} \Om \right) 
= \frac{4}{3} \left( \left| \pi_{A_+} \left( * F^3 \right) \right|^2 - \left| \pi_{A_-} \left( * F^3 \right) \right|^2 \right),   
$$
which implies that 
\begin{align*}
\frac{1}{9} \left| \pi_{A_+} \left( * F^3 \right) \right|^2 - \frac{1}{24} \la F^2, {\rm Re} \Om \ra * F^4
=&
\frac{1}{18} \left( \left| \pi_{A_+} \left( * F^3 \right) \right|^2 + \left| \pi_{A_-} \left( * F^3 \right) \right|^2 \right) \\
=&
\frac{1}{18} \left| \pi^{\lb 2,0 \rb} \left( * F^3 \right) \right|^2.  
\end{align*}
Hence, we obtain \eqref{eq:SL4pt6}. 
\\

Next, we prove \eqref{eq:SL4pt4}. 
We rewrite \eqref{eq:Caypt4}. 
Since $\Phi=\om^2/2+{\rm Re}\Om$, we have 
\begin{align}\label{eq:SL4pt4 1} 
\frac{1}{4} \la F^2, \Phi \ra^2 
= \frac{1}{16} \la F^2, \om^2 \ra^2 + \frac{1}{4} \la F^2, \om^2 \ra \la F^2, {\rm Re} \Om \ra + \frac{1}{4} \la F^2, {\rm Re} \Om \ra^2. 
\end{align}
Since $\pi^2_7(F)= \pi_{\R \om} (F) + \pi_{A_+}(F) = \la F, \om \ra \om/4 +  \pi_{A_+}(F)$, we have 
\begin{align}\label{eq:SL4pt4 2} 
\begin{split}
- \frac{4}{3}
\left \la \pi^2_7 (F),  \pi^2_7 \left( * F^3 \right) \right \ra
&= - \frac{4}{3} \left( \frac{1}{4} \la F, \om \ra \la *F^3, \om \ra  + \la \pi_{A_+}(F), \pi_{A_+}(*F^3) \ra \right) \\
&=
- \frac{1}{18} *(\om^3 \wedge F) *(\om \wedge F^3) - \frac{4}{3} \la \pi_{A_+}(F), \pi_{A_+}(*F^3) \ra. 
\end{split}
\end{align}
By \eqref{eq:SU4 id7}, \eqref{eq:SU4 40}, $|{\rm Re} \Om|^2=8$ and Lemma \ref{lem:SU4 4form}, we have 
\begin{align} \label{eq:SL4pt4 3} 
\begin{split}
2 \left| \pi^4_7 \left( F^2 \right) \right|^2 
=&
2 \left| \pi_{\R \Im \Om} \left( F^2 \right) \right|^2 + 2 \left| \pi_{\om \wedge A_-} \left( F^2 \right) \right|^2 \\
=&
2 \left| \pi^{\lb 4,0 \rb} \left( F^2 \right) \right|^2 -2 \left| \pi_{\R {\rm Re} \Om} \left( F^2 \right) \right|^2
+ \left| \pi_{A_-} \left( *(\om \wedge F^2) \right) \right|^2 \\
=& 
2 \left| \pi^{\lb 4,0 \rb} \left( F^2 \right) \right|^2 -\frac{1}{4} \la F^2, {\rm Re} \Om \ra^2 
+ \left| \pi_{A_-} \left( *(\om \wedge F^2) \right) \right|^2. 
\end{split}
\end{align}
Thus, by \eqref{eq:SL4pt4 1}, \eqref{eq:SL4pt4 2} and \eqref{eq:SL4pt4 3}, 
\eqref{eq:Caypt4} is equivalent to 
\begin{align} \label{eq:SL4pt4 4} 
\begin{split}
\frac{1}{16} \la F^2, \om^2 \ra^2 + \frac{* F^4}{12} 
- \frac{1}{18} *(\om^3 \wedge F) *(\om \wedge F^3) 
+ 2 \left| \pi^{\lb 4,0 \rb} \left( F^2 \right) \right|^2& \\
+ \frac{1}{4} \la F^2, \om^2 \ra \la F^2, {\rm Re} \Om \ra 
- \frac{4}{3} \la \pi_{A_+}(F), \pi_{A_+}(*F^3) \ra
+ \left| \pi_{A_-} \left( *(\om \wedge F^2) \right) \right|^2
&=
\frac{1}{4} \left| F^2 \right|^2. 
\end{split}
\end{align}
Now, we prove the following lemma. 
\begin{lemma} \label{lem:SL4pt4 rw}
We have 
\begin{align*}
 \frac{1}{4} \la F^2, \om^2 \ra \la F^2, {\rm Re} \Om \ra 
=&
\frac{2}{3} \la \pi_{A_+}(F), \pi_{A_+}(*F^3) \ra - \frac{2}{3} \la \pi_{A_-}(F), \pi_{A_-}(*F^3) \ra \\
&+ \frac{1}{2} \left| \pi_{A_+} \left( *(\om \wedge F^2) \right) \right|^2
- \frac{1}{2} \left| \pi_{A_-} \left( *(\om \wedge F^2) \right) \right|^2. 
\end{align*}
\end{lemma}
\begin{proof}
Set $F(t)=F+t \om$ for $t \in \R$. 
By Lemma \ref{lem:24form}, we have 
\begin{align} \label{eq:SL4pt4 rw 1} 
*\left( {\rm Re} \Om \wedge (* F(t)^3)^2 \right)
= \frac{3}{2} \la F(t)^2, {\rm Re} \Om \ra* F(t)^4. 
\end{align}
We compute 
\begin{align} \label{eq:SL4pt4 rw 2}
\begin{split}
&\left. \frac{d^2}{dt^2} *\left( {\rm Re} \Om \wedge (* F(t)^3)^2 \right) \right |_{t=0} \\
=&
6 \left. \frac{d}{dt} *\left( {\rm Re} \Om \wedge (* F(t)^3) \wedge * \left( \om \wedge F(t)^2 \right) \right) \right |_{t=0} \\
=&
6 *\left( {\rm Re} \Om \wedge 
\left( 3 *(\om \wedge F^2) \wedge *(\om \wedge F^2) + 2 (*F^3) \wedge *(\om^2 \wedge F) \right) \right)
\end{split}
\end{align}
and 
\begin{align} \label{eq:SL4pt4 rw 3}
\begin{split}
&\frac{3}{2} \left. \frac{d^2}{dt^2} \la F(t)^2, {\rm Re} \Om \ra* F(t)^4 \right |_{t=0} \\
=&
\frac{3}{2} \left. \frac{d}{dt} \left( \la 2 \om \wedge F(t), {\rm Re} \Om \ra* F(t)^4
+ 4 \la F(t)^2, {\rm Re} \Om \ra* (\om \wedge F(t)^3)
 \right) \right |_{t=0} \\
=&
6 \left. \frac{d}{dt}  \la F(t)^2, {\rm Re} \Om \ra* (\om \wedge F(t)^3) \right |_{t=0} \\
=&
18 \la F^2, {\rm Re} \Om \ra \la F^2, \om^2 \ra, 
\end{split}
\end{align}
where we use $* {\rm Re} \Om = {\rm Re} \Om, *\om^2=\om^2$ and $\om \wedge {\rm Re} \Om =0$. 
Hence, \eqref{eq:SL4pt4 rw 1}, \eqref{eq:SL4pt4 rw 2} and \eqref{eq:SL4pt4 rw 3} imply that 
\begin{align} \label{eq:SL4pt4 rw 4}
3 *\left( {\rm Re} \Om \wedge (*(\om \wedge F^2))^2 \right) 
+ 2 *\left( {\rm Re} \Om \wedge *(\om^2 \wedge F) \wedge (*F^3) \right)
=
3 \la F^2, {\rm Re} \Om \ra \la F^2, \om^2 \ra. 
\end{align}
By Lemmas \ref{lem:SU4 2form} and \ref{lem:Apm}, we have 
\begin{align} \label{eq:SL4pt4 rw 5}
\begin{split}
3 *\left( {\rm Re} \Om \wedge (*(\om \wedge F^2))^2 \right) 
&= 6 \left( \left| \pi_{A_+} \left( *(\om \wedge F^2) \right) \right|^2 - \left| \pi_{A_-} \left( *(\om \wedge F^2) \right) \right|^2 \right), \\
2 *\left( {\rm Re} \Om \wedge *(\om^2 \wedge F) \wedge (*F^3) \right)
&= 
4 *\left( {\rm Re} \Om \wedge F \wedge (*F^3) \right) \\
&= 8 \left( \la \pi_{A_+}(F), \pi_{A_+}(*F^3) \ra - \la \pi_{A_-}(F), \pi_{A_-}(*F^3) \ra \right). 
\end{split}
\end{align}
Then, by \eqref{eq:SL4pt4 rw 4} and \eqref{eq:SL4pt4 rw 5}, the proof is completed. 
\end{proof}
Then, by Lemma \ref{lem:SL4pt4 rw}, we obtain 
\begin{align*}
&\frac{1}{4} \la F^2, \om^2 \ra \la F^2, {\rm Re} \Om \ra 
- \frac{4}{3} \la \pi_{A_+}(F), \pi_{A_+}(*F^3) \ra
+ \left| \pi_{A_-} \left( *(\om \wedge F^2) \right) \right|^2 \\
=&
- \frac{2}{3} \left \{ \la \pi_{A_+}(F), \pi_{A_+}(*F^3) \ra + \la \pi_{A_-}(F), \pi_{A_-}(*F^3) \ra \right \} \\
&+ \frac{1}{2} \left \{ \left| \pi_{A_+} \left( *(\om \wedge F^2) \right) \right|^2 + \left| \pi_{A_-} \left( *(\om \wedge F^2) \right) \right|^2 \right \} \\
=&
- \frac{2}{3} \la \pi^{\lb 2,0 \rb} (F), \pi^{\lb 2,0 \rb} (*F^3) \ra 
+ \frac{1}{2} \left| \pi^{\lb 2,0 \rb} \left( *(\om \wedge F^2) \right) \right|^2 \\
=&
- \frac{2}{3} \la \pi^{\lb 2,0 \rb} (F), \pi^{\lb 2,0 \rb} (*F^3) \ra 
+ \frac{1}{2} \left| \pi^{\lb 4,2 \rb} \left( \om \wedge F^2 \right) \right|^2.  
\end{align*}
Then, this together with \eqref{eq:SL4pt4 4} implies \eqref{eq:SL4pt4}. 
\end{proof}

\begin{proof}[Proof of Proposition \ref{prop:SL4 eq pt}] 
Since 
\begin{align*}
* \frac{(\om+ \i F)^4}{4!} 
=& * \left( \frac{\om^4 +4 \i \om^3 \wedge F -6 \om^2 \wedge F^2 -4 \i \om \wedge F^3 + F^4}{4!} \right )\\
=& \left( 1-\frac{1}{4} * (\om^2 \wedge F^2) + \frac{* F^4}{24} \right)
+ \i * \left( \frac{\om^3 \wedge F - \om \wedge F^3}{6} \right ), 
\end{align*}
we have 
\begin{align} \label{eq:SL4 eq pt 1}
\begin{split} 
\left| \frac{(\om+ \i F)^4}{4!} \right|^2
=&
\left( 1-\frac{1}{4} * (\om^2 \wedge F^2) + \frac{* F^4}{24} \right)^2 
+ \left( * \left ( \frac{\om^3 \wedge F - \om \wedge F^3}{6} \right ) \right)^2 \\
=&
1+ \left( \frac{* F^4}{24} \right)^2 
+ \left \{ -\frac{1}{2} * (\om^2 \wedge F^2) + \frac{1}{36} (*(\om^3 \wedge F))^2 \right \} \\
&+ \left \{ \frac{1}{16} (* (\om^2 \wedge F^2))^2 + \frac{* F^4}{12} -\frac{1}{18} *(\om^3 \wedge F) *(\om \wedge F^3) \right \} \\
&+ \left \{ -\frac{1}{48} *(\om^2 \wedge F^2) *F^4 + \frac{1}{36} (*(\om \wedge F^3))^2 \right \}. 
\end{split}
\end{align}
Since 
\begin{align*}
\pi^{\lb 4,2 \rb} \left( \frac{(\om+ \i F)^3}{3!} \right)
=& 
\pi^{\lb 4,2 \rb} \left( \frac{ 3\i \om^2 \wedge F -3 \om \wedge F^2 -\i F^3}{6} \right) \\
=&
- \frac{1}{2} \pi^{\lb 4,2 \rb} (\om \wedge F^2) + \i \pi^{\lb 4,2 \rb} \left( \frac{\om^2 \wedge F}{2} - \frac{F^3}{6} \right)
\end{align*}
and 
$$
\pi^{\lb 4,2 \rb} \left( \frac{\om^2 \wedge F}{2} \right) = \frac{\om^2 \wedge \pi^{\lb 2,0 \rb}(F)}{2} = * \pi^{\lb 2,0 \rb}(F)
$$
by Lemma \ref{lem:SU4 2form}, we have
\begin{align} \label{eq:SL4 eq pt 2}
\begin{split} 
&2 \left| \pi^{\lb 4,2 \rb} \left( \frac{(\om+ \i F)^3}{3!} \right) \right|^2 \\
=&
\frac{1}{2} \left| \pi^{\lb 4,2 \rb} (\om \wedge F^2) \right|^2 
+ 2 \left| \pi^{\lb 2,0 \rb} \left( F - \frac{*F^3}{6} \right) \right|^2 \\
=&
2 \left| \pi^{\lb 2,0 \rb} \left( F \right) \right|^2
+ 
\left \{ \frac{1}{2} \left| \pi^{\lb 4,2 \rb} (\om \wedge F^2) \right|^2 - \frac{2}{3} 
\la \pi^{\lb 2,0 \rb} (F), \pi^{\lb 2,0 \rb} (* F^3) \ra
\right \}
+ \frac{1}{18} |\pi^{\lb 2,0 \rb} (* F^3)|^2. 
\end{split}
\end{align}
Since 
$\pi^{\lb 4,0 \rb} \left( (\om+ \i F)^2 \right) = - \pi^{\lb 4,0 \rb} \left( F^2 \right)$, 
we have 
\begin{align}
\begin{split} \label{eq:SL4 eq pt 3}
8 \left| \pi^{\lb 4,0 \rb} \left( \frac{(\om+ \i F)^2}{2!} \right) \right|^2 
=2 \left| \pi^{\lb 4,0 \rb} \left( F^2 \right) \right|^2. 
\end{split}
\end{align}
Then, by \eqref{eq:SL4 eq pt 1}, \eqref{eq:SL4 eq pt 2}, \eqref{eq:SL4 eq pt 3}, 
Lemmas \ref{lem:SL4pt deg} and \ref{lem:det}, 
we obtain the first identity of Proposition \ref{prop:SL4 eq pt}. \\

By the first identity of Proposition \ref{prop:SL4 eq pt}, 
${\rm Im} \left( e^{-\i \theta} (\om+ \i F)^4/4! \right) =0$ for $\theta \in \R$ 
and $\pi^{\lb 2,0 \rb} (F)=0$ imply that 
$
\left| {\rm Re} \left( e^{-\i \theta} (\om+ \i F)^4/4! \right) \right| = \sqrt{\det (I_8 + F^\sharp)}. 
$
We prove the converse.
Suppose that 
$
\left| {\rm Re} \left( e^{-\i \theta} (\om+ \i F)^4/4! \right) \right| = \sqrt{\det (I_8 + F^\sharp)}
$ 
for $\theta \in \R$. 
Then, by the first identity of Proposition \ref{prop:SL4 eq pt}, \eqref{eq:SL4 eq pt 2} and \eqref{eq:SL4 eq pt 3}, we have 
${\rm Im} \left( e^{-\i \theta} (\om+ \i F)^4/4! \right ) =0$ and 
\begin{align} \label{eq:SL4 eq pt 4}
\pi^{\lb 2,0 \rb} \left( F - \frac{*F^3}{6} \right) =0, \qquad 
\pi^{\lb 4,2 \rb} (\om \wedge F^2)=0, \qquad 
\pi^{\lb 4,0 \rb} \left( F^2 \right)=0. 
\end{align}
We show that \eqref{eq:SL4 eq pt 4} implies that $\pi^{\lb 2,0 \rb} (F)=0$. 
Set 
$$
F = F'+F'' \in [\Lambda^{1,1}] \oplus \lb \Lambda^{2,0} \rb \quad \mbox{and} \quad
F''= \beta + \bar{\beta} \in \Lambda^{2,0} \oplus \Lambda^{0,2}. 
$$
Since 
\begin{align*}
F^2 =& (F')^2 + 2F' \wedge F'' + (F'')^2  \\
&\in [\Lambda^{2,2}] \oplus \lb \Lambda^{3,1} \rb 
\oplus \left( \lb \Lambda^{4,0} \rb \oplus [\Lambda^{2,2}] \right), \\
F^3 =& \left( (F')^3 + 3 F' \wedge (F'')^2 \right) + \left( 3 (F')^2 \wedge F'' + (F'')^3 \right) 
\in [\Lambda^{3,3}] \oplus \lb \Lambda^{4,2} \rb, 
\end{align*}
\eqref{eq:SL4 eq pt 4} is equivalent to 
$$
F''-\frac{1}{6} * \left( 3 (F')^2 \wedge F'' + (F'')^3 \right) =0, \qquad
\om \wedge F' \wedge F''=0, 
\qquad 
\pi^{\lb 4,0 \rb} \left( (F'')^2 \right)=0.
$$
Since 
$(F'')^3 = 3 \beta^2 \wedge \bar \beta + 3 \beta \wedge \bar \beta^2 \in \Lambda^{4,2} \oplus \Lambda^{2,4}$, 
\eqref{eq:SL4 eq pt 4} is equivalent to 
$$
\beta - \frac{1}{2} * \left( (F')^2 \wedge \beta + \beta^2 \wedge \bar \beta \right) =0, \qquad
\om \wedge F' \wedge \beta=0, 
\qquad 
\beta^2 =0, 
$$
and hence, 
\begin{align} \label{eq:SL4 eq pt 5}
\beta - \frac{1}{2} * \left( (F')^2 \wedge \beta \right) =0, \qquad
\om \wedge F' \wedge \beta=0, 
\qquad 
\beta^2 =0.
\end{align}
Since all equations in \eqref{eq:SL4 eq pt 5} are ${\rm U}(4)$ equivariant and $\mathfrak{u} (4) \cong [\Lambda^{1,1}]$, 
we may assume that 
\begin{align*}
\beta =& \beta_1 f^{12} + \beta_2 f^{13} + \beta_3 f^{14} + \beta_4 f^{23} + \beta_5 f^{24} + \beta_6 f^{34}, \\
F' 
=& \mu_1 e^{01} + \mu_2 e^{23} + \mu_3 e^{45} + \mu_4 e^{67} \\
=& \frac{\i}{2} \left( \mu_1 f^1 \wedge \overline{f^1} + \mu_2 f^2 \wedge \overline{f^2} 
+ \mu_3 f^3 \wedge \overline{f^3} + \mu_4 f^4 \wedge \overline{f^4} \right)
\end{align*}
for $\beta_j \in \C$ and $\mu_k \in \R$, 
where $e^i$ and $f^j$ are defined at the beginning of Section \ref{sec:SU4 geometry}. 
Since $\om = (\i/2) \sum f^j \wedge \overline {f^j}$, we see that 
\begin{align*}
4 \om \wedge F' 
=& (\mu_1 + \mu_2) f^{12} \wedge \overline{f^{12}}
+  (\mu_1 + \mu_3) f^{13} \wedge \overline{f^{13}}
+  (\mu_1 + \mu_4) f^{14} \wedge \overline{f^{14}} \\
&+  (\mu_2 + \mu_3) f^{23} \wedge \overline{f^{23}}
+  (\mu_2 + \mu_4) f^{24} \wedge \overline{f^{24}}
+  (\mu_3 + \mu_4) f^{34} \wedge \overline{f^{34}}. 
\end{align*}
Hence, $\om \wedge F' \wedge \beta=0$ is equivalent to 
\begin{align} \label{eq:SL4 eq pt 6}
\begin{split}
&\beta_1 (\mu_3 + \mu_4) = \beta_2 (\mu_2 + \mu_4) = \beta_3 (\mu_2 + \mu_3) \\
=&\beta_4 (\mu_1 + \mu_4) = \beta_5 (\mu_1 + \mu_3) = \beta_6 (\mu_1 + \mu_2) =0. 
\end{split}
\end{align}
Since 
\begin{align*}
2 (F')^2
=& \mu_1 \mu_2 f^{12} \wedge \overline{f^{12}} 
+ \mu_1 \mu_3 f^{13} \wedge \overline{f^{13}} + \mu_1 \mu_4 f^{14} \wedge \overline{f^{14}} \\
&+ \mu_2 \mu_3 f^{23} \wedge \overline{f^{23}} 
+ \mu_2 \mu_4 f^{24} \wedge \overline{f^{24}} +\mu_3 \mu_4 f^{34} \wedge \overline{f^{34}}, \\
2 (F')^2 \wedge \beta
=&
\beta_1 \mu_3 \mu_4 f^{1234} \wedge \overline{f^{34}} - \beta_2 \mu_2 \mu_4 f^{1234} \wedge \overline{f^{24}}
+ \beta_3 \mu_2 \mu_3 f^{1234} \wedge \overline{f^{23}} \\
&+ \beta_4 \mu_1 \mu_4 f^{1234} \wedge \overline{f^{14}}
-\beta_5 \mu_1 \mu_3 f^{1234} \wedge \overline{f^{13}} + \beta_6 \mu_1 \mu_2 f^{1234} \wedge \overline{f^{12}}
\end{align*}
and $* (f^{1234} \wedge \overline{f^{34}}) = 4 f^{12}$, etc., 
$\beta - (1/2) * \left( (F')^2 \wedge \beta \right) =0$ is equivalent to 
\begin{align} \label{eq:SL4 eq pt 7}
\begin{split}
&\beta_1 (1- \mu_3 \mu_4) = \beta_2 (1- \mu_2 \mu_4) = \beta_3 (1- \mu_2 \mu_3) \\
=&\beta_4 (1- \mu_1 \mu_4) = \beta_5 (1- \mu_1 \mu_3) = \beta_6 (1- \mu_1 \mu_2) =0. 
\end{split}
\end{align}
Then, \eqref{eq:SL4 eq pt 6} and \eqref{eq:SL4 eq pt 7} imply that $\beta=0$. Indeed, 
Since $\beta_1 (\mu_3 + \mu_4)=0$, we have 
$$
0=\beta_1 (1- \mu_3 \mu_4) = \beta_1 (1+\mu_3^2). 
$$
Since $\mu_3 \in \R$, we obtain $\beta_1=0$. Similarly, we obtain 
$\beta_2= \cdots = \beta_6=0$, and hence, $\beta =0$. 
\end{proof}

\section{Notation} \label{app:notation} 
We summarize the notation used in this paper. 
We use the following for a smooth manifold $X$. 

\vspace{0.5cm}
\begin{center}
\begin{tabular}{|lc|l|}
\hline
Notation                        & & Meaning \\ \hline \hline
 $i(\,\cdot\,)$                &  & The interior product \\
$\Gamma(X, E)$            & &  The space of all smooth sections of  a vector bundle $E \rightarrow X$\\
$\Om^k$                       & & $\Om^k = \Om^k (X) = \Gamma (X, \Lambda^k T^*X)$ \\
$b^k=b^k(X)$                  & & the $k$-th Betti number of $X$ \\
$H^k_{dR}=H^k_{dR}(X)$     & & the $k$-th de Rham cohomology of $X$ \\
$Z^1$                            & &  the space of closed 1-forms \\
\hline
\end{tabular}
\end{center}
\vspace{0.5cm}

When $(X, g)$ is an oriented Riemannian manifold, we use the following. 

\vspace{0.5cm}
\begin{center}
\begin{tabular}{|lc|l|}
\hline
Notation                        & & Meaning \\ \hline \hline
$v^{\flat} \in T^* X$       & & $v^{\flat} = g(v, \,\cdot\,)$ for $v \in TX$ \\ 
$\alpha^{\sharp} \in TX$ & & $\alpha = g(\alpha^{\sharp}, \,\cdot\,)$ for $\alpha \in T^* X$ \\
$\vol=\vol_g$                 & & The volume form induced from $g$ \\
\hline
\end{tabular}
\end{center}
\vspace{0.5cm}

When $(X, J)$ is a complex manifold, we use the following. 

\vspace{0.5cm}
\begin{center}
\begin{tabular}{|lc|l|}
\hline
Notation                       &  & Meaning \\ \hline \hline
$\Lambda^{p,q}$            & & $\Lambda^{p,q} = \Lambda^p (T^{1,0} X)^* \otimes \Lambda^q (T^{0, 1} X)^*$\\
$\Om^{p,q}$                  & & $\Om^{p,q} = \Gamma (X, \Lambda^{p,q})$  \\
$\lb \Lambda^{p,q} \rb$ (for $p \neq q$)  
& & 
$\lb \Lambda^{p,q} \rb 
= \{\,\alpha \in \Lambda^{p, q} \oplus \Lambda^{q, p} \mid \bar \alpha = \alpha \,\} 
\subset \Lambda^{p + q} T^* X$
\\
$[\Lambda^{p, p}]$          
& & $[\Lambda^{p, p}] 
= \{\, \alpha \in \Lambda^{p, p} \mid \bar \alpha = \alpha \,\} \subset \Lambda^{2p} T^* X$\\
$\lb \Om^{p,q} \rb$ (for $p \neq q$)         
                                   &  & $\lb \Om^{p,q} \rb = \Gamma(X, \lb \Lambda^{p,q} \rb)$ \\
$[\Om^{p,p}]$                 & & $[\Om^{p,p}] = \Gamma(X, [\Lambda^{p, p}])$ \\  
$\pi_S$                         & & \hspace{-1ex}\begin{tabular}{l} The orthogonal projection $\pi_S: \Lambda^k T^*X \to S$ \\
                                       or $\Om^k \to \Gamma(X, S)$ for a subbundle $S \subset T^* X$ \end{tabular} \\
$\pi^{\lb p,q \rb}$           &   & \hspace{-1ex}\begin{tabular}{l} The projection 
                                       $\pi^{\lb p,q \rb} = \pi_{\lb \Lambda^{p,q} \rb}: \Lambda^{p+q} T^* X \to \lb \Lambda^{p,q} \rb$ \\ 
                                       or $\Om^{p+q} \rightarrow \lb \Om^{p,q} \rb$ \end{tabular} \\
\hline
\end{tabular}
\end{center}
\vspace{0.5cm}

When $X$ is a manifold with a $G_2$- or $\Sp$-structure, we use the following.

\vspace{0.5cm}
\begin{center}
\begin{tabular}{|lc|l|}
\hline
Notation                      &  & Meaning \\ \hline \hline
$\Lambda^k_\l T^*X$    &  & \hspace{-1ex}\begin{tabular}{l} The subspace of $\Lambda^k T^*X$ corresponding to  \\
                                          an $\l$-dimensional irreducible subrepresentation \end{tabular} \\
$\Om^k_\l$                  &  &  $\Om^k_\l = \Gamma (X, \Lambda^k_\l T^*X)$ \\
$\pi^k_\l$                    &  & The projection $\Lambda^k T^*X \rightarrow \Lambda^k_\l T^*X$ or 
                                           $\Om^k \rightarrow \Om^k_\l$ \\
\hline
\end{tabular}
\end{center}
\vspace{0.5cm}


\end{document}